\numberwithin{equation}{section}
\newcommand{\NB}[1]{\todo[color=gray!40]{#1}}
\newcommand{\tombubble}[1]{\todo[color=green!40]{#1}}
\newcommand{\tom}[1]{{\color{green!60!black}#1}}
\newcommand{\elden}[1]{{\color{blue!60!black}#1}}
\newcommand{\eldenbubble}[1]{\todo[color=blue!40]{#1}}
\newcommand{\paul}[1]{{\color{red!60!black}#1}}
\newcommand{\paulbubble}[1]{{\todo[color=red!40]{#1}}}
\newcommand{\NB}[1]{}
\newcommand{\tombubble}[1]{}
\newcommand{\tom}[1]{}
\newcommand{\elden}[1]{}
\newcommand{\eldenbubble}[1]{}
\newcommand{\paul}[1]{}
\newcommand{\paulbubble}[1]{}
\newcommand{\compl}{{}^{{\kern -.5pt}\wedge}_{\ell}}
\newcommand{\comtwo}{{}^{{\kern -.5pt}\wedge}_{2}}
\newcommand{\comrho}{{}^{{\kern -.5pt}\wedge}_{\rho}}
\newcommand{\simp}{\mathrm{simp}}
\theoremstyle{plain}
\newtheorem*{theorem*}{Theorem}
\newtheorem{theorem}[equation]{Theorem}
\newtheorem{proposition}[equation]{Proposition}
\newtheorem{lemma}[equation]{Lemma}
\newtheorem{corollary}[equation]{Corollary}
\theoremstyle{definition}
\newtheorem{definition}[equation]{Definition}
\newtheorem{example}[equation]{Example}
\newtheorem{remark}[equation]{Remark}
\newtheorem{warning}[equation]{Warning}
\def\Map{\mathrm{Map}}
\def\map{\mathrm{map}}
\let\scr=\mathcal
\let\bb=\mathbb
\def\Z{\bb Z}
\def\R{\bb R}
\def\Q{\bb Q}
\def\A{\bb A}
\def\P{\bb P}
\def\N{\bb N}
\def\GL{\mathrm{GL}}
\def\1{\mathbbm{1}}
\def\h{\mathrm h}
\def\MGL{\mathrm{MGL}}
\def\eff{\mathrm{eff}}
\def\trdeg{\operatorname{trdeg}}
\def\ph{\mathord-}
\def\id{\mathrm{id}}
\def\op{\mathrm{op}}
\let\cat=\mathrm
\def\SH{\mathcal S\mathcal H}
\def\DM{\mathcal D\mathcal M}
\def\Mod{\mathcal M od}
\def\et{\mathrm{\acute et}}
\def\Cat{\mathcal{C}\mathrm{at}{}}
\def\Sm{{\cat{S}\mathrm{m}}}
\def\cd{\mathrm{cd}}
\def\Fun{\mathrm{Fun}}
\def\Span{\mathrm{Span}}
\def\vcd{\mathrm{vcd}}
\def\Fin{\cat F\mathrm{in}}
\DeclareMathOperator*{\colim}{colim}
\def\comp{\wedge}
\subjclass[2010]{Primary: 14F42, 55P42; Secondary: 14F20, 18N55, 18N60}
\def\Ind{\mathrm{Ind}}
\def\Pro{\mathrm{Pro}}
\newcommand{\wequi}{\simeq}
\def\adj{\leftrightarrows}
\newcommand{\Gm}{{\mathbb{G}_m}}
\newcommand{\Gmp}[1]{{\mathbb{G}_m^{\wedge #1}}}
\newcommand{\im}{\mathrm{im}}
\newcommand{\Mp}[1]{\1/#1}
\newcommand{\lra}[1]{\langle #1 \rangle}
\newcommand{\Spec}{\mathrm{Spec}}
\newcommand{\cof}{\mathrm{cof}}
\newcommand{\sk}{\mathrm{sk}}
\DeclareRobustCommand{\ul}{\underline}
\newcommand{\scomp}{\mathrm{sc}}
\def\Nis{\mathrm{Nis}}
\title{On \'etale motivic spectra and Voevodsky's convergence conjecture}
\author{Tom Bachmann}
\address{Department of Mathematics, Massachusetts Institute of Technology, Cambridge, MA, USA}
\email{tom.bachmann@zoho.com}
\author{Elden Elmanto}
\address{Department of Mathematics, Harvard University, Cambridge, MA, USA}
\email{elmanto@math.harvard.edu}
\author{Paul Arne {\O}stv{\ae}r}
\address{Department of Mathematics, University of Oslo, Norway}
\email{paularne@math.uio.no}
\begin{document}
	
\maketitle

\begin{abstract} We prove a new convergence result for the slice spectral sequence, 
following work by Levine and Voevodsky. This verifies a derived variant of Voevodsky's conjecture on convergence of the slice spectral sequence. This is, in turn,  a necessary ingredient for our main theorem: a Thomason-style \'etale descent result for the Bott-inverted motivic sphere spectrum, which generalizes and extends previous \'etale descent results for special examples of motivic cohomology theories.  
Combined with first author's \'etale rigidity results, we obtain a complete structural description of the \'etale motivic stable category.
\end{abstract}

\tableofcontents

\newpage
\section{Introduction}
The first goal of this paper is to push, as far as possible, the idea that inverting so-called ``Bott elements" on certain cohomology theories for schemes results in \'etale descent. The cohomology theories that we are interested in are those which are representable in Morel--Voevodsky's stable motivic homotopy category, and thus only satisfy Nisnevich descent. Results of this form are extremely useful when one tries to approximate the values of these cohomology theories by studying their \'etale sheafifications which are usually more tractable due to the presence of finer covers. For example, given appropriate finiteness hypotheses, there is a spectral sequence whose $E_2$-page involves \'etale cohomology, and converges to the \'etale sheafified version of these theories.  

The story for these motivic cohomology theories parallels the one for algebraic $K$-theory. Having proved the Nisnevich descent theorem for algebraic $K$-theory \cite{tt}, Thomason, in his ICM address \cite{thomason-icm}, explained how this ``(unleashed) a pack of new fundamental results for $K$-theory". One of them was the comparison result with \'etale $K$-theory \cite[\S 4]{thomason-icm}, referring to his seminal paper \cite{aktec}. More precisely, Thomason proved that Bott-inverted algebraic $K$-theory satisfy \'etale (hyper)descent. Whence, combined with the rigidity results of \cite{gabber-rigidity, suslin-closed}, there is a spectral sequence whose $E_2$-terms are \'etale cohomology groups, abutting to Bott-inverted algebraic $K$-theory. A recent treatment of this result using completely different methods was carried out in \cite{clausen-mathew}. 

The second goal of this paper, and a \emph{necessary ingredient} for the first, is to prove a variant of an important conjecture within the subject of stable motivic homotopy theory. In \cite{voe-open}, Voevodsky listed a collection of open problems in the subject, many of which are focused on understanding the slice filtration of a motivic spectrum. 

On the motivic spectrum representing algebraic $K$-theory, the slice filtration gives the most definitive construction of the elusive motivic spectral sequence whose $E_2$-page is motivic cohomology and abutting to algebraic $K$-theory. Such a spectral sequence was the subject of prior work of Bloch--Lichtenbaum, Levine and Friedlander--Suslin \cite{bloch-moving, levine-techniques, friedlander-suslin}. In contrast to previous constructions, this spectral sequence is constructed natively within the framework of the motivic stable homotopy category, analogous to the topological theory of Postnikov towers. It therefore extends to more general cohomology theories beyond algebraic $K$-theory and does not \emph{a priori} depend on difficult constructions involving moving algebraic cycles. We refer the reader to the survey article \cite[Section 2]{levine-appreciation} for an outline of Voevodsky's slice approach to the motivic spectral sequence.

As for any spectral sequence, an important issue is the problem of convergence. Voevodsky stated, as Conjecture 13 in \cite{voe-open}, that he expects convergence of his slice spectral sequence on the sphere spectrum, over any perfect field. Moreover, he expects that the slice spectral sequence induces a separated filtration on its homotopy sheaves; see also \cite[Conjecture 13]{voe-open}. This conjecture turns out to be false in general \cite[Remark on page 909]{levine2013convergence} and has since been modified by Levine to cover only fields of finite virtual cohomological dimension \cite[Conjecture 5]{levine2013convergence}. Roughly speaking, Levine conjectures that for such a field $k$, it is the \emph{$I(k)$-completed} slice tower which admits good convergence properties. Here, $I(k)$ is the so-called \emph{fundamental ideal}, defined to be the kernel of the rank map of the Grothendieck-Witt group of symmetric bilinear forms over $k$. Its action on any motivic spectrum is given by Morel's fundamental identification of $\pi_{0,0}$ of the motivic sphere spectrum with the Grothendieck-Witt group \cite{morel-pi0}. We will refer to  \cite[Conjecture 5]{levine2013convergence} as \emph{Levine-Voevodsky's slice convergence conjecture}.


\subsection{Slice convergence} In this paper, we offer a resolution of a version of Levine-Voevodsky's slice convergence conjecture. To state our results, recall that we have a map in the stable motivic homotopy category over a field $k$:
\[
\rho
\colon
\1
\rightarrow
\Gm,
\]
classifying the unit $-1\in k^{\times}$.

\begin{theorem}[Theorem~\ref{thm:convergence}]
\label{thm:conv-intro}
Let $k$ be a field of exponential characteristic $e$ and $\ell > 0$ coprime to $e$ such that $\vcd_\ell(k) < \infty$. 
Suppose that $E/\ell \in \SH(k)$ is bounded below\footnote{Throughout this paper we use homological notation for $t$-structures, as in \cite[\S1.2.1]{HA}.} in the homotopy $t$-structure and suppose that there exists $R \gg 0$ for which 
\[
E \xrightarrow{\rho^R} E \wedge \Gmp{R}
\] is zero; in other words it \emph{has bounded $\rho$-torsion}. Then, the filtration on $\pi_{i,j}(E)$ induced by Voevodsky's slice tower $f_{\bullet} E \rightarrow E$ is separated and exhaustive, 
i.e., the filtration is convergent.
\end{theorem}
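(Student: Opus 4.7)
The overall strategy is to reduce the convergence problem for $E$ to convergence for the mod-$\ell$ reduction $\bar E := E/\ell$, then lift using the bounded $\rho$-torsion hypothesis. Since $\colim_{n \to -\infty} f_n E \to E$ is an equivalence by construction of the slice filtration, exhaustiveness of the induced filtration on $\pi_{i,j}(E)$ is automatic, and the main content is separatedness: showing $\pi_{i,j}(f_n E) \to \pi_{i,j}(E)$ has image zero for $n \gg 0$. This broadly mirrors Levine's approach to slice convergence, with the $I(k)$-completeness hypothesis replaced by bounded $\rho$-torsion.

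The technical heart is the mod-$\ell$ case. The key lemma to establish is a connectivity estimate: \emph{for an $\ell$-torsion effective spectrum $F \in \SH^{\eff}(k)$ bounded below in the homotopy $t$-structure, $\Sigma^{n,n} F$ lies in $\SH(k)_{\geq n - c}$ for a constant $c$ depending only on $\vcd_\ell(k)$ and the connectivity of $F$.} The strategy is to filter $F$ by its own slice tower, whose slices are $\mathrm{H}\Z/\ell$-modules by Voevodsky's computation of the slices of $\1$; for each such slice, Beilinson--Lichtenbaum identifies mod-$\ell$ motivic cohomology with \'etale cohomology in a range, and $\vcd_\ell(k) < \infty$ provides the required bidegree vanishing. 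Applied to $f_n \bar E$, whose $n$-effective structure is inherited from $\bar E$, the lemma gives $\pi_{i,j}(f_n \bar E) = 0$ for $n$ sufficiently large, hence mod-$\ell$ separatedness.

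To lift to $E$, I would run the Bockstein long exact sequence associated to $E \xrightarrow{\ell} E \to E/\ell$,
\[
\pi_{i+1,j}(f_n \bar E) \to \pi_{i,j}(f_n E) \xrightarrow{\ell} \pi_{i,j}(f_n E) \to \pi_{i,j}(f_n \bar E),
\]
so that the mod-$\ell$ vanishing forces $\pi_{i,j}(f_n E)$ to be uniquely $\ell$-divisible for $n \gg 0$. The bounded $\rho$-torsion hypothesis (inherited by each $f_n E$) combined with $\vcd_\ell(k) < \infty$ is then used to rule out nontrivial $\ell$-divisible subgroups, yielding vanishing. The main obstacle will be the connectivity estimate at the heart of the mod-$\ell$ step: quantitatively bridging ``$n$-effective'' to ``$(n - c)$-connective'' mod $\ell$ requires careful slice-spectral-sequence bookkeeping and applies Beilinson--Lichtenbaum bidegree-by-bidegree. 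A secondary subtlety is that $E$ itself need not be bounded below, so the lifting from $\bar E$ to $E$ must rule out pathological infinitely $\ell$-divisible groups surviving in $\pi_{i,j}(f_n E)$; this is precisely where the bounded $\rho$-torsion does its work.
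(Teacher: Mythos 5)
Your proposed strategy is fundamentally different from the paper's, and unfortunately it runs into a genuine obstruction at the first step.

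The key lemma you want to use — that an $\ell$-torsion, effective, bounded-below $F$ has $\Sigma^{n,n}F \in \SH(k)_{\geq n-c}$ for some constant $c$ depending only on $\vcd_\ell(k)$ and the connectivity — is false for $\ell=2$ over an orderable field. Take $k = \R$ and $F = H\Z/2$: this is effective, $2$-torsion, and in the heart, but $\Sigma^{n,n}H\Z/2 = \Gmp{n}\wedge H\Z/2$ is exactly $0$-connective for every $n$ (since $\Gm$ is an invertible object of the heart, $\ul\pi_0(\Gmp{n}\wedge H\Z/2)_n \cong \Z/2 \neq 0$). The reason your slice-by-slice Beilinson--Lichtenbaum argument cannot yield vanishing here is precisely that for $\vcd_2(k) < \infty$ but $\cd_2(k) = \infty$, the relevant étale cohomology groups do not vanish above $\vcd_2$ — they stabilize and become $\rho$-periodic (this is Lemma~\ref{lemm:vcd-iso}). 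Multiplication by $[-1]$ becomes an isomorphism, not zero. So bounded $\rho$-torsion cannot be deferred to a ``lifting from $E/\ell$ to $E$'' step: it is an essential input to the effective vanishing itself, since $\rho$-torsion is exactly what kills those nonvanishing groups in high weight. The paper's Lemma~\ref{lemm:convergence-2} is the correct analogue of your connectivity estimate, and it takes bounded $\rho$-torsion (condition (c)) as a hypothesis; it is proved not via slices but via Levine's simplicial filtration and a vanishing of $[K^{MW}_n \ul\pi_i(E)_j]^{tr}$ (Lemma~\ref{lemm:convergence-1}), where for $\ell=2$ the key input is $\rho^R I^{\vcd}(K') = I^{\vcd + R}(K')$.

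Your lifting step has a second problem. You assert that each $f_n E$ ``inherits'' bounded $\rho$-torsion, but the paper explicitly flags this as unclear (Remark~\ref{rmk:fM-rho}): the truncation functors $f_n$ do not interact well with $\rho$. The paper's workaround is instead to observe that $f_M(E)/\rho^R$ is unconditionally $\rho$-torsion of bounded exponent (Remark~\ref{rmk:nilpotence}), apply the effective vanishing lemma to it, and then use the splitting $E/\rho^R \wequi E \vee \Sigma E \wedge \Gmp{-R}$ (available since $\rho^R = 0$ on $E$) to deduce that the tower map $f_{M+R}E \to f_M E$ factors through $f_{M+R}(E)/\rho^R$, which has the desired local vanishing. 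Note this yields that the \emph{tower maps} are eventually null, which suffices for separatedness; your proposal aims for the stronger $\pi_{i,j}(f_nE) = 0$, which is not what the theorem asserts and need not hold. In short, the correct order of operations is to reduce mod $\rho^R$ first and handle the $\ell$-torsion and $\rho$-torsion together inside the effective vanishing lemma, rather than reducing mod $\ell$ first and trying to reintroduce $\rho$ afterwards.
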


We comment on how this is related to Levine-Voevodsky's slice convergence conjecture. There are two phenomenon which led Levine to formulate \cite[Conjecture 5]{levine2013convergence} in place of \cite[Conjecture 13]{voe-open}:
\begin{enumerate}
\item Levine showed in \cite{levine-gw} that the induced filtration on the zero-th homotopy sheaf of the sphere spectrum is the $I$-adic filtration. Therefore there is no chance for the induced filtration on homotopy sheaves to be separated if this is not the case for the $I$-adic filtration;
\item as observed by Kriz \cite[Remark 1.1]{levine2013convergence} this shows that Voevodsky's original conjecture is false for the field $\mathbb{R}$; the point here is that $\mathbb{R}$ has \emph{infinite cohomological dimension} at the prime $2$ and thus is not $I$-adically complete.
\end{enumerate}
Motivated by these two considerations, it is natural to consider the element $\rho$ which interpolates between the layers of the $I$-adic filtration:
\[
I^1(k) \xrightarrow{\rho} I^2(k) \xrightarrow{\rho} \cdots I^j(k) \xrightarrow{\rho} I^{j+1}(k) \xrightarrow{\rho} \cdots,
\]
where we were led to guess that $\rho$-completion on the spectrum level is closely related to $I$-adic completion on the homotopy sheaves level. As a ``best hope" one could guess that $\rho$-completion would lead to slice convergence. However, as already observed in \cite{levine2013convergence}, the slice convergence property is not stable under various categorical operations. Therefore, in place of a completed statement, we offer Theorem~\ref{thm:conv-intro} where convergence does hold if $E$ is $\rho$-complete in a strong sense: that it has bounded $\rho$-torsion. We note that being $\rho$-complete means that it is an inverse limit of $\rho$-torsion objects. 

Theorem~\ref{thm:convergence} turns out to be good enough for many applications in motivic homotopy theory, beyond this paper. Most notably, it renders the computations of \cite{ormsby-rondigs} valid without having to slice complete. It also gives a new, streamlined proof of Thomason's homotopy limit problem by the first author and Hopkins \cite[Appendix A]{bachmann-hopkins} based on the slice spectral sequence. We may also view Theorem~\ref{thm:conv-intro} as a \emph{derived} version of Levine-Voevodsky's slice conjecture and we refer the interested reader to Remark~\ref{rmk:levine-conjecture} for more details.

\subsection{\'Etale descent}  Back to the main subject of this paper, we now formulate our main \'etale descent results. Any motivic spectrum $E \in \SH(S)$ comes equipped with a canonical map $E \rightarrow L_{\et}E$, the \emph{\'etale localization}, witnessing $L_{\et}E$ as the initial motivic spectrum receiving a map from $E$ and satisfying \'etale hyperdescent; this map is just the unit map of the usual adjunction between $\SH(S)$ and its \'etale local version $\SH_{\et}(S)$. Based on a version of the map $\rho$ over $S$, we prove:

\begin{theorem} \label{thm:main-intro} (see Theorem~\ref{thm:main})
Suppose $S$ is a scheme locally of finite dimension, and let $1 \le m, n \le \infty$.
Assume the following conditions hold.
\begin{enumerate} 
\item $1/\ell \in S$,
\item For every $s \in S$ we have $\vcd_\ell(s) < \infty$, and
\item There exists a \emph{good $\tau$-self map} (in the sense discussed in \S\ref{sec:bott-sph}):
 \[
 \tau: \1/(\ell^n,\rho^m) \to \1/(\ell^n,\rho^m)(r).
 \]
\end{enumerate} 
Then for every $E \in \SH(S)_{\ell,\rho}^\comp$ the map 
\[
E/(\ell^n,\rho^m) \to E/(\ell^n,\rho^m)[\tau^{-1}]_{\ell,\rho}^\comp
\] is an \'etale localization.
\end{theorem}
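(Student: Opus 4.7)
The plan is to verify the two halves of the assertion separately: (a) that the target $F := E/(\ell^n,\rho^m)[\tau^{-1}]_{\ell,\rho}^{\comp}$ is étale-local, and (b) that the unit map $E/(\ell^n,\rho^m)\to F$ becomes an equivalence after applying $L_{\et}$. I expect (b) to be comparatively formal, following from the first author's étale rigidity theorems (cited in the abstract): inside the $(\ell,\rho)$-completed étale motivic category the class $\tau$ is already a unit, so $\tau$-inversion and a further $(\ell,\rho)$-completion are invisible after $L_{\et}$. Consequently, the serious content of the theorem is the étale hyperdescent statement (a).

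For (a), the first move is to reduce to the case $S=\Spec k$ for $k$ a field with $\vcd_\ell(k)<\infty$. Because $S$ is locally finite-dimensional and the residue-field vcds are bounded, the étale site $S_{\et}$ has locally bounded cohomological dimension, so étale hyperdescent for a sufficiently bounded object can be checked at henselizations at points and then at residue fields by a standard Postnikov/descent spectral sequence argument. From here on we work over a field $k$.

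Over $k$, the strategy is to apply Voevodsky's slice tower to $F$ and invoke our Theorem~\ref{thm:conv-intro}. The bounded $\rho$-torsion hypothesis is automatic because $\rho^m=0$ already on $\1/(\ell^n,\rho^m)$ and hence on every smash product, on the $\tau$-localization, and after $(\ell,\rho)$-completion; the bounded-below hypothesis on $F/\ell$ will be preserved through smashing with $\1/(\ell^n,\rho^m)$, through $\tau$-inversion (since $\tau$ has vanishing topological stem, only raising weight by $r$), and through completion. Thus the slice spectral sequence of $F$ converges strongly. Next, identify the slices $s_qF$: each is a module over $H\Z/\ell^n$, and the effect of inverting $\tau$ and $(\ell,\rho)$-completing should replace mod-$\ell^n$ motivic cohomology by étale cohomology with $\mu_{\ell^n}^{\otimes \bullet}$ coefficients, which is tautologically étale-local. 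A strongly convergent filtration whose associated graded is étale-local assembles to an étale-local object, so $F$ satisfies étale hyperdescent.

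The main obstacle will be the last step: tracking the interaction of the slice functors with the $\tau$-inversion and $(\ell,\rho)$-completion, and pinning down the $\tau$-inverted, $(\ell,\rho)$-completed mod-$\ell^n$ motivic cohomology slices as sheaves of étale $\mu_{\ell^n}^{\otimes \bullet}$-cohomology. Controlling the multiplicative action of $\tau$ on the slice modules is precisely where the good-$\tau$-self-map hypothesis (3) does its essential work, and verifying that $(\ell,\rho)$-completion does not perturb the $E_2$-identification requires some care, especially in the reduction step where one wants compatibility across residue fields. Once these structural identifications are in hand, the slice convergence supplied by Theorem~\ref{thm:conv-intro} assembles them into the desired étale-local structure on $F$.
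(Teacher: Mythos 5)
Your overall decomposition — (a) the target is étale-local, (b) the unit is an étale equivalence — matches the paper's proof, and your intuition that (b) is comparatively formal is correct (the paper's Lemma~\ref{lemm:bott-element-etale-invertible} handles it). But both of the harder steps in your sketch diverge from the paper in ways that would need to be repaired.

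\textbf{The reduction to fields.} You propose reducing to $S=\Spec k$ by a ``standard Postnikov/descent spectral sequence argument'' on $S_\et$, and you suggest that the goodness hypothesis (3) is used only later, ``to control the multiplicative action of $\tau$ on the slice modules.'' Both statements are off. Étale-locality is not manifestly preserved under pullback $s^*$ to points (indeed base-change compatibility of $L_\et$ is \emph{Theorem}~\ref{thm:basechange}, an output of this theorem, not an input), so one cannot simply ``check at residue fields.'' The paper's actual reduction is exactly where the goodness hypothesis does all its work: to kill a map $K\to E'$ from a (desuspended) \v Cech/hypercover term, goodness factors it through $K\to K'':=K/(\ell^{n'},\rho^{m'})[\tau'^{-1}]^\wedge_{\ell,\rho}$; now the conservative family $\{s^*\}_{s\in S}$ (for $(\ell,\rho)$-complete objects, \cite[Prop.~B.3]{norms}) applies, since $s^*K$ is étale-locally zero, so it suffices that $s^*(K)/(\ell^{n'},\rho^{m'})[\tau'^{-1}]^\wedge_{\ell,\rho}$ be étale local over a field. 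This is a genuinely different mechanism than a descent spectral sequence, and without it your reduction step has a real gap.

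\textbf{The field case.} You want to apply the slice tower directly to $F=E/(\ell^n,\rho^m)[\tau^{-1}]^\wedge_{\ell,\rho}$, identify its slices as étale cohomology, and invoke convergence. This runs squarely into the colimit-versus-limit obstruction the paper calls out explicitly: $\tau$-inversion is a filtered colimit, the slice tower is a limit, and the slices of $F$ are not simply controlled in the way you suggest. The paper instead applies the slice tower to the \emph{sphere}, smashes with $\1/(\ell^n,\rho^m)[\tau^{-1}]/(\ell,\rho)$, and proves (Lemma~\ref{lemm:key}) that $\lim_n f_n(\1)\wedge(\ldots)=0$. This forces $\1/(\ell^n,\rho^m)[\tau^{-1}]/(\ell,\rho)\wequi\lim_n f^n(\1)\wedge(\ldots)$, a limit of finite extensions of slices $s_m(\1)\wedge(\ldots)$, each of which is étale local because the slices of $\1$ are motivic cohomology and $\tau$ is a $\tau$-self map. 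Strong convergence of the slice spectral sequence for $F$ is neither claimed nor needed; what is needed is a vanishing of the limit of the $f_n$-tower, which is what Theorem~\ref{thm:convergence} plus Lemma~\ref{lemm:convergence-application} provide. Relatedly, your assertion that ``the bounded $\rho$-torsion hypothesis is automatic because $\rho^m=0$ already on $\1/(\ell^n,\rho^m)$'' fails when $m=\infty$ (then $\1/(\ell^n,\rho^\infty)$ is the $\rho$-\emph{completion}, not $\rho$-torsion); the paper deals with this by an additional smash with $\1/(\ell,\rho)$ so that Remark~\ref{rmk:nilpotence} applies.

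Finally, the paper feeds the conclusion over fields back through the fact that $L_\et$ is smashing on $\SH(k)^\wedge_{\ell,\rho}$ (Corollary~\ref{cor:etale-locn-smashing}, using compact generation via Proposition~\ref{prop:compact-gen}) to pass from the sphere to general $E$; your sketch doesn't mention this step.
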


We note that if $m, n < \infty$, then the $(\ell,\rho)$-completion is unnecessary; see Remark~\ref{rmk:p-completion-unnec}. A summary of the good $\tau$-self maps that we managed to construct can be found in \S\ref{subsec:summary} and we will discuss this point further below. We point out some specializations of Theorem~\ref{thm:main-intro} at different primes powers.

\begin{enumerate}
\item Suppose that $\ell$ is an odd prime. Then recall that (as discussed in e.g., \S\ref{sec:standard}) for any motivic spectrum $E$, we have a splitting:
\[
E/\ell^n \simeq E^+/\ell^n \vee E^-/\ell^n.
\]
In this situation, \'etale localization only involve the ``$+$'' part of $E/\ell^n$: the map \[ E^+/\ell^n \to E^+/\ell^n[\tau^{-1}] \] is an étale localization.

\item If $\ell =2$ then the situation is more complicated. In the presence of a square root of $-1$ (e.g., over an algebraically closed field), $\rho$-completion is harmless and we get that 
\[ E/2^n \to E/2^n[\tau^{-1}] \] is an étale localization. Otherwise we have to contend ourselves with $\rho$-completion.
\end{enumerate}

\begin{remark}\label{rem:hyp} Thomason's theorem \cite{aktec} in fact proves that Bott inverted algebraic $K$-theory satisfies not only satisfies \'etale descent but also \'etale \emph{hyperdescent} (this has been reproved by \cite{clausen-mathew}). In particular there is no need to hypersheafify in order to obtain a conditionally convergent spectral sequence abutting to \'etale sheafified $K$-theory from \'etale cohomology. This is useful as \'etale sheafification is much less drastic process than hypersheafificaiton. Variations on this theme in motivic homotopy theory are being explored in \cite{bbx} where it is proved that under noetherian, finite dimensional and finite virtual cohomological dimension assumptions the $\infty$-category obtained by formally imposing \'etale hyperdescent, $\SH_{\et}(S)$, coincides with one obtained by formally imposing just \'etale descent. The input, just as in \cite{clausen-mathew}, is a Quillen-Lichtenbaum style result for the motivic sphere spectrum, analogous to Rost--Voevodsky's results for $K$-theory.

Granting this result, the reader may feel free to interpret the ``\'etale localization" appearing in Theorem~\ref{thm:main-intro} as inverting all desuspensions of \v{C}ech nerves of \'etale covers (as opposed to \'etale hypercovers). 
\end{remark}


\subsection{Strategy} Dealing with the prime $2$ is one of the key technical challenges of this paper that we were able to overcome in many cases. 
 We will now explain this point and the overall strategy of the proof of Theorem~\ref{thm:main-intro}. We proceed by examining the slice spectral sequence for the Bott inverted sphere spectrum over a field (Corollary~\ref{cor:sphere-inversion}), following the general strategy in \cite{elso} for the case of algebraic cobordism. An examination of the form of the slices \cite[Theorem 2.16]{1-line} reveals that they are just motivic cohomology and hence, modulo certain cases which are dealt with using the Beilinson--Lichtenbaum conjectures, satisfy \'etale descent after inverting these Bott elements due to Levine's results \cite{levine2000inverting}. However, as already elaborated in \cite[Section 4]{elso}, examining the slice tower of the Bott inverted sphere spectrum requires a delicate analysis --- the crux point is that the process of Bott inversion is a \emph{colimit}, while the slices (or, more accurately, the co-slices) try to approximate the sphere spectrum as a \emph{limit}. At the prime $2$, and for fields with non-finite $2$-cohomological dimension (such as $\R$), the situation is worse: even the slice filtration itself is \emph{not} convergent \cite[Remark]{levine2013convergence}. From this point of view, the situation seems hopeless. 

On the other hand, previous attempts to cope with this infinitary phenomenon to still obtain a Thomason-style descent theorem have found success under the assumption of finite \emph{virtual cohomological dimension}. To our knowledge, the first paper of this sort was written by the third author in \cite{pa-thesis} and later generalized in joint work with Rosenschon \cite{ro}. The case of hermitian $K$-theory was settled in \cite{bkso}.

It is at this spot that we employ a different analysis from \cite{elso}, which also led to substantial improvements even for the case of algebraic cobordism, other oriented theories and also recovers the descent results of the papers in the previous paragraph. In \emph{loc. cit.}, particularly in \cite[Section 6.5]{elso} and \cite[Section 4]{elso}, the last two authors (with Levine and Spitzweck) examined the resulting slice spectral sequences on Bott-inverted algebraic cobordism and on \'etale algebraic cobordism. This relied on subtle convergence results on inverting elements in a spectral sequence and an analysis of the constituent spaces and bonding maps of an \'etale-localized motivic spectra in a range. In this paper, we work \emph{directly} with the slice filtration and prove a convergence result in the form of Lemma~\ref{lemm:key} which cleanly isolates the role of the convergence of the slice filtration. At the prime $2$ and with the weaker assumption that the ambient field $k$ has finite virtual cohomological dimension, it turns out that convergence holds after completion with respect to the map 
\[
\rho
\colon
\1
\rightarrow
\Gm
\]
induced by the unit $-1\in k^{\times}$. This last statement heavily relies on our Theorem~\ref{thm:conv-intro} on slice convergence.


\subsection{Bott elements and multiplicativity of the Moore spectrum} There are two further, related, issues which we address in the paper: (1) construction of a suitable ``spherical" Bott element and (2) the lack of a multiplicative structure on the motivic Moore spectrum. What is at stake in point (1) is the fact that the sphere spectrum is \emph{not} oriented. In previous iterations of Thomason-style descent results like in \cite{aktec}, \cite{jardine}, \cite{elso}, one produces Bott elements out of $n$-th roots of unity present in the ambient scheme/ring/field which is naturally an $n$-torsion element in an appropriate group; see \cite[Section 6.4]{elso} or \cite[Appendix A]{aktec}. For the sphere spectrum, due to so-called ``$\eta$-logarithmic relation" in Milnor-Witt $K$-theory
\[
[ab] = [a] + [b] + \eta[a][b],
\]
a root of unity is $n_{\epsilon}$-torsion (see Proposition~\ref{prop:powers-n-epsilon}), rather than $n$-torsion. 
At odd primes, the right thing to do is to look at the part of the sphere spectrum where the Milnor-Witt number $n_{\epsilon} = n$, i.e., the so-called ``$+$" part. 

At the prime $2$ the story gets more interesting, coupled with the usual complication that the mod-$2$ Moore spectrum \emph{does not} admit a multiplication. Producing a mod-$2$ Bott element is the first time where $\rho$-completion enters the picture: see \S\ref{subsec:quad}. To address the multiplicativity issues around the mod-$2$ Moore spectra, various authors have considered Oka's action of the mod-$4$ Moore spectrum on the mod-$2$ Moore spectrum. This is actually \emph{insufficient} for our reduction steps (see the argument in Theorem~\ref{thm:main}), primarily because we do not know that this module structure satisfies the usual associativity axiom. In the appendix, we use an idea originating in the work of Davis--Lawson and Hopkins, that this action can be made \emph{asymptotically associative} if we are willing to consider the pro-system $\{ \1/2^n \}$; see \S\ref{sec:moore-mult}. 

In any case, \S\ref{subsec:summary} summarizes the Bott elements/self-maps that we were able to construct in motivic homotopy theory. This adds to the ever-growing list of canonical and vastly interesting elements in stable motivic homotopy theory (such as $\rho$ and $\eta$). 


\subsection{Some applications} We now summarize some applications of our main results. These results unveil certain surprising properties of the $\infty$-category (rather, the premotivic functor) $\SH_{\et}$ which were not a priori visible without our main result. 

\begin{theorem} \label{thm:apps} Assumptions as in Theorem \ref{thm:main-intro}, where $m,n<\infty$:
\begin{enumerate}
\item If $S$ is defined over a field containing a primitive $\ell$-th root of unity and satisfying $k^\times/\ell = \{1\}$, then there exists a Bott element $\tau$ in $\1_{\ell}^\comp$ and \[ \SH_\et(S)_\ell^\comp \wequi \SH(S)_\ell^\comp[\tau^{-1}]. \]
\item In general, $(\ell,\rho)$-complete \'etale localization is equivalent to Bousfield localization at the homology theory $\1/(\ell^n, \rho^m)[\tau^{-1}]$.
\item \'Etale localization is smashing on $\SH(S)_{\ell,\rho}^\wedge$.
\item If $f: T \rightarrow S$ is finite type, the base change functor $f^*$ on $\SH(S)_{\ell,\rho}^\wedge$ preserves \'etale local objects so that the \'etale local sphere (in the $(\rho,\ell)$-complete category) is stable under base change.
\end{enumerate}
\end{theorem}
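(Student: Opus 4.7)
The plan is to derive all four parts from Theorem \ref{thm:main-intro} together with standard features of Bousfield localizations, with the main extra work a limiting argument for (1) and a base-change compatibility check for (4).

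For (1), the hypothesis $k^\times/\ell = \{1\}$ implies that $-1$ is an $\ell$-th power, which forces $\rho \in \pi_{1,1}(\1)$ to become nilpotent after $\ell$-completion; consequently $(\ell,\rho)$-completion collapses to $\ell$-completion on $\SH(S)$. The primitive $\ell$-th root of unity supplies the Bott element $\tau$ in $\1_\ell^\comp$ via the recipe outlined in \S\ref{subsec:summary}, and by assumption this refines to a good $\tau$-self map in the sense of Theorem \ref{thm:main-intro}. I would then apply Theorem \ref{thm:main-intro} to $\1/\ell^n$ for each $n$ and assemble the resulting equivalences into the desired category equivalence, using that inverting $\tau$ commutes with the $\ell$-completion limit on the pro-system $\{\1/\ell^n\}$.

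Part (2) is a direct reinterpretation of Theorem \ref{thm:main-intro}: the identification of the étale localization $E \to L_\et E$ with $E \to E[\tau^{-1}]_{\ell,\rho}^\comp$ after reducing modulo $(\ell^n,\rho^m)$ exhibits $L_\et$ on the $(\ell,\rho)$-complete category as Bousfield localization at the homology theory represented by $\1/(\ell^n,\rho^m)[\tau^{-1}]$. Part (3) then falls out: each constituent of this Bousfield localization (smash with $\1/(\ell^n,\rho^m)$, $\tau$-inversion, and $(\ell,\rho)$-completion) commutes with colimits on the $(\ell,\rho)$-complete category, the last using $\vcd_\ell(s) < \infty$ together with Theorem \ref{thm:conv-intro}; hence $L_\et E \simeq E \wedge L_\et \1$ in $\SH(S)_{\ell,\rho}^\wedge$.

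For (4), I would invoke smashing from (3) and the functoriality of the construction in Theorem \ref{thm:main-intro} under base change: since $f^*$ is symmetric monoidal, sends the good $\tau$-self map on $S$ to the one on $T$, and preserves $(\ell,\rho)$-completion for $f$ of finite type, one obtains $f^* L_\et \1_S \simeq L_\et \1_T$. Combined with the smashing property, this forces $f^*$ to send étale local objects to étale local objects.

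The main obstacle I anticipate is the limit argument in part (1): promoting the Bott element from a compatible system on $\{\1/\ell^n\}$ to an honest element of $\1_\ell^\comp$ and checking that Bott inversion behaves well under $\ell$-completion requires the convergence input of Theorem \ref{thm:conv-intro} together with a careful pro-object argument. A secondary subtlety in (4) is to verify that base change genuinely commutes with the completion and the Bott construction in enough generality; this should follow from naturality of all the ingredients but will need to be spelled out.
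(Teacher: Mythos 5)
Your overall strategy — derive everything from Theorem~\ref{thm:main-intro}, exploit base-change-stability of the Bott element, and simplify $\rho$-completion in special cases — matches the paper's. But there are a few places where the details diverge from what the paper actually does, and two of them are genuine inaccuracies.

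\textbf{Part (1).} You claim $k^\times/\ell=\{1\}$ forces $\rho$ to be nilpotent after $\ell$-completion. This is correct for $\ell=2$: then $-1$ is a square, hence a sum of squares, and Remark~\ref{rmk:rho-completion}(1) gives nilpotence of $\rho$. But for odd $\ell$ the condition ``$-1$ is an $\ell$-th power'' is vacuous and $\rho$ is \emph{not} nilpotent on $\SH(S)_\ell^\comp$; instead $2$ is inverted, the category splits into $+$ and $-$ summands, and $\rho$-completion is projection to the $+$-part (Remark~\ref{rmk:rho-completion}(2)). The reconciliation (via the $-$-part vanishing étale-locally) is what the paper's citation of Remark~\ref{rmk:rho-completion} is hiding in Corollary~\ref{cor:main}(3), and your argument should go through this case distinction rather than assert nilpotence uniformly. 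More importantly, you flag ``promoting the Bott element from $\{\1/\ell^n\}$ to $\1_\ell^\comp$ and checking Bott inversion commutes with the completion limit'' as the main obstacle and propose a pro-object argument. That work is not needed here: Proposition~\ref{prop:tau-p-closed} already produces $\tau\in\pi_{0,-1}(\1_{\ell,\rho}^\comp)$ directly (the inverse-limit lifting is carried out there, not in the proof of part (1)), and Theorem~\ref{thm:main} is stated and proved for $m=n=\infty$. So part (1) is a one-line consequence once the $\rho$-completion bookkeeping is done.

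\textbf{Part (2).} Calling it a ``direct reinterpretation'' undersells the content slightly — Corollary~\ref{cor:integral} proves two things (that the étale localization map is a $\1/(\ell^n,\rho^m)[\tau^{-1}]$-equivalence, and that étale-local objects are $\1/(\ell^n,\rho^m)[\tau^{-1}]$-local), both invoking Theorem~\ref{thm:main}. The logic is right, just not as free as you suggest.

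\textbf{Part (3).} You argue by colimit-preservation of the constituents of the localization, invoking Theorem~\ref{thm:conv-intro}. The slice-convergence theorem plays no role at this stage (it is already baked into Theorem~\ref{thm:main} via Corollary~\ref{cor:sphere-inversion}); citing it here is a red herring. The paper's argument in Corollary~\ref{cor:smashing} is more direct: write $E\wedge L_\et(\1)\wequi\lim_{n,m}E\wedge L_\et(\1)/(\ell^n,\rho^m)$, identify each finite truncation with $E\wedge\1/(\ell^n,\rho^m)[\tau_{n,m}^{-1}]$ via Theorem~\ref{thm:main}, and conclude each term — hence the limit — is étale local. Your route via closure under colimits could be made to work, but as written it is vague about what ``$(\ell,\rho)$-completion commutes with colimits on the $(\ell,\rho)$-complete category'' means (colimits there are computed by completing the underlying colimit, so this statement is circular unless unpacked).

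\textbf{Part (4).} Here you are essentially aligned with the paper (Theorem~\ref{thm:basechange}): everything reduces to base-change-stability of the $\tau$-self map and the identification of étale-local objects from Theorem~\ref{thm:main}. Your worry about $f^*$ commuting with $(\ell,\rho)$-completion is resolved by working with the induced functor on completed categories (and by the fact that $f^*$ preserves $\rho$- and $\ell$-equivalences); the paper does not dwell on this and neither need you.

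In summary: the skeleton is right, but part (1) as written would send you down an unnecessary limit argument and contains an incorrect nilpotence claim for odd $\ell$, and part (3)'s invocation of slice convergence is misplaced.
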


These corollaries are discussed right after Theorem~\ref{thm:main}. The first statement should be considered the ``model statement" --- the limitation comes from the fact that the construction of higher Bott elements requires more multiplicative structure on the $\rho$-complete sphere than what we already know. In light of this, the second and third statement are weaker but also pleasant consequences of our results. In particular, they tell us that on the $(\rho,\ell)$-complete categories the inclusion of \'etale-local objects preserves colimits. Lastly, base change results in motivic homotopy theory are rewarding but often hard to come by --- our results proves this by way of knowing that the $\tau$-self maps are manifestly stable under base change.

\subsection{Overview of \'etale motivic cohomology theories} This paper completes, in many cases, our structural understanding of how \'etale motivic theories behave; we now sketch this. Suppose that $\mathcal{T}$ is a premotivic functor in the sense of \cite{triangulated-mixed-motives} or a functor satisfying Ayoub's axioms as in \cite[1.4.1]{ayoubthesis}. The most prominent examples are $\mathcal{T} = \SH, \DM\footnote{For this section, we take this in the sense of \cite[Chapter 11]{triangulated-mixed-motives}}$ or $\Mod_{E}$ where $E$ is a (highly structured) motivic ring spectrum such as $\MGL$ which is defined over $\Z$. We can also consider the \'etale local version of $\mathcal{T}$, which we denote by $\mathcal{T}_{\et}$ and comes equipped with a premotivic adjunction
\[
L_{\et}: \mathcal{T} \rightleftarrows \mathcal{T}_{\et}: i_{\et}.
\]

Consider the following categorified version of the arithemetic fracture square, which is a cartesian square of stable $\infty$-categories:
\[
\begin{tikzcd}
\mathcal{T}_{\et}' \ar{r} \ar{d} & \prod_\ell (\mathcal{T}_{\et})^{\wedge}_{\ell} \ar{d}\\
\mathcal{T}_{\et,\mathbb{Q}} \ar{r} & (\prod_\ell (\mathcal{T}_{\et})^{\wedge}_{\ell})_{\mathbb{Q}}.
\end{tikzcd}
\]
We have a natural induced functor $\mathcal{T}_{\et} \rightarrow \mathcal{T}'$ which is fully faithful but not necessarily essentially surjective. Nonetheless, this does mean that the values of invariants evaluated on schemes can be computed in $\mathcal{T}'_{\et}$, and thus an understanding of $\mathcal{T}'_{\et}$ gives us a lot of new information on $\mathcal{T}_{\et}$ as we now explain.

The rational part $\mathcal{T}_{\et,\Q}$ coincides with $\mathcal{T}_{\mathbb{Q}}$ in many cases. In fact, a result of Cisinski-D\'eglise (summarized in, say, \cite[Theorem 12.2]{elso}) furnishes equivalences
\[
\SH_{\et}(S)_{\Q} \simeq \SH(S)^+_{\Q} \simeq \DM(S)_{\Q},
\]
for any Noetherian, geometrically unibranch scheme $S$. Furthermore, as discussed in Remark~\ref{rem:recoll}, the \emph{prime-at-$\ell$} of $\mathcal{T}_{\et}$ part is always zero. Hence, what remain are the $\ell$-complete parts of $\mathcal{T}_{\et}$ where $\ell$ is coprime to the residual characteristics. In this situation, we seek two types of theorems concerning the induced adjunction
\[
L_{\et}: \mathcal{T}^\comp_\ell \rightleftarrows (\mathcal{T}_{\et})^\comp_\ell: i_{\et}.
\]

\begin{enumerate}
\item[Suslin-style rigidity] If $\ell$ is coprime to the residual characteristics, then $(\mathcal{T}_{\et})^\comp_\ell$ is described as a certain category of sheaves over the \emph{small} \'etale site.
\item[Thomason-style descent] If $\ell$ is coprime to the residual characteristics, then the endofunctor $i_{\et}L_{\et}$ is computed explicitly by an inversion of a ``Bott-element" $\tau$.
\end{enumerate}
 
In conjunction, these two results give us access to the values of the cohomology theories represented in $\mathcal{T}_{\et}$. One way to make this concrete, at least in cases where we have ``Postnikov completeness" of the \'etale site of a scheme $X$ (see \cite[Section 2]{clausen-mathew} for a modern discussion when this happens) we obtain a conditionally convergent spectral sequence:
\[
H^p_{\mathrm{et}}(X, \ul{\pi}^{\et}_{q,w}(E)) \Rightarrow [X(w)[q-p] , E[\tau^{-1}]] = (E[\tau^{-1}])^{p-q,-w}(X)
\]
The input of this spectral sequence is obtained as consequence of Suslin-rigidity; we note that it consists of usual \'etale cohomology groups with coefficients in a (torsion/$\ell$-complete) sheaf of abelian groups on the small \'etale site of $X$. The target is obtained as a consequence of Thomason-style descent. This philosophy was already known to Thomason at the beginning in \cite{aktec}. This should be contrasted with the slice spectral sequence, available in the Nisnevich/Zariski setting, where the input consists of motivic cohomology groups which are largely unknown.

We briefly recall Suslin-style rigidity. In \cite{bachmann-SHet,bachmann-SHet2}, the first author has established (again, in many cases) Suslin-style rigidity for $\mathcal{T} =\SH$. Earlier, analogous results were established by R{\"o}ndigs-{\O}stv{\ae}r \cite{rigidity-in-motivic-homotopy-theory} and for $\mathcal{T} = \DM$ by Ayoub \cite{ayoub2014realisation} and Cisinski-D\'eglise \cite{etalemotives}, building on the case of fields where we have Suslin's eponymous result (see \cite[Theorem 7.20]{mvw} for an exposition). 

In this light, what one needs to understand is the precise gap between $\mathcal{T}$ and $\mathcal{T}_{\et}$. For $\mathcal{T} = \SH$ (and up to certain $\rho$-completions), this is exactly the $\tau$-complete category, which remains mysterious.

\subsection{Terminology and notation}
We freely use the language of $\infty$-categories, as set out in \cite{HTT,HA}.
\subsubsection{Motivic homotopy theory}
\begin{itemize}
\item We denote by $\SH_\et(S)$ the localization of $\SH(S)$ at the étale hypercovers and by $\SH(S_\et)$ the stabilization of the \emph{small} hypercomplete étale $\infty$-topos of $S$.
  We denote by $L_\et$ the (various) étale hyperlocalization functors.
  We call a map $E \to F$ an étale localization if $L_\et E \wequi L_\et F \wequi F$.
\item We denote by $E_\ell^\comp = \lim_n E/\ell^n$ (or sometimes $E/\ell^\infty$) the $\ell$-completion of a spectrum (see e.g. \cite[Example 2.3]{bachmann-SHet}; we write $\SH(S)_\ell^\wedge$ for the category of $\ell$-complete motivic spectra.
\item We denote by $\SH(k)_{\ge 0}, \SH(k)_{\le 0}$ the homotopy $t$-structure on $\SH(k)$ defined in \cite[Section 5.2]{morel-trieste}.
\item We put $\1(1) = \Gm[-1]$ and $\1(n) = \1(1)^{\wedge n}$.
  For $E \in \SH(S)$ we put $E(n) = E \wedge \1(n)$.
  \item As is standard we write $T = \A^1/\A^1 - 0$ for the Tate object.
\item We denote by $H\Z$ Spitzweck's motivic cohomology spectrum \cite{spitzweck2012commutative} and write $\DM(S)$ for the $\infty$-category of modules over $H\Z$ \cite{RondigsModules}.
\end{itemize}

\subsubsection{Field theory} We will adopt the following terminology concerning field theory. 
Let $k$ be a field, $k^{\mathrm{sep}}$ a separable closure of $k$ and $G_k:=\mathrm{Gal}(k^{\mathrm{sep}}/k)$ the absolute Galois group. 
Moreover, we write:
\begin{itemize}
\item $\cd_\ell(k)$ for the \emph{$\ell$-cohomological dimension} of $k$ in the sense of \cite[Section 3.1]{serre-gal}.
\item $\vcd_2(k):= \cd_2(k[\sqrt{-1}])$ for the \emph{virtual $2$-cohomological dimension}.
\item More generally, for an integer $s$ we put \[ \vcd_s(k) := \max\{\vcd_\ell(k) \mid p|s\}, \text{ where } \vcd_\ell(k) := \cd_\ell(k(\sqrt{-1})). \]
Note that $\vcd_\ell(k) = \cd_\ell(k)$ unless possibly if $\ell=2$ \cite[Proposition II.10']{serre-gal}.
\item The \emph{cohomological dimension} $\cd(k)$ of $k$ is defined as $\cd(k) = \sup_\ell\{ \cd_\ell(k)\}$.
\end{itemize}

\subsection{Acknowledgements}
We would like to thank Joseph Ayoub, Jeremy Hahn, Mike Hopkins, Tyler Lawson, Marc Levine, Denis Nardin, and Markus Spitzweck for helpful discussions. We also thank an anonymous referee for comments on a previous draft.
The authors thank the Isaac Newton Institute for Mathematical Sciences for support and hospitality during the programme $K$-theory, algebraic cycles and motivic homotopy theory.
This work was supported by EPSRC Grant Number EP/R014604/1 and the RCN Frontier Research Group Project no. 250399 ``Motivic Hopf Equations" and no. 312472 ``Equations in Motivic Homotopy Theory." 
Part of this work was carried out while Elmanto was a postdoc at the Center for Symmetry and Deformation at the University of Copenhagen, 
which also supported Bachmann's visit in 2019. 
We are grateful for the Center's support.
{\O}stv{\ae}r was partially supported by the Humboldt Foundation, 
Professor Ingerid Dal and sister Ulrikke Greve Dals prize for excellent research in the humanities, 
and a Guest Professorship under the auspices of The Radbound Excellence Initiative.

\section{Preliminaries}
\subsection{Endomorphisms of the motivic sphere} \label{sec:standard}
We denote by \[ \lra{-1}: \1 \to \1 \in \SH(S) \] the map corresponding to the switch map $\P^1 \wedge \P^1 \to \P^1 \wedge \P^1$.
Clearly we have $\lra{-1}^2 = 1$.
It follows that if $E \in \SH(S)$ is such that $E \xrightarrow{2} E$ is an equivalence, then there is a canonical decomposition \[ E \wequi E^+ \vee E^- \] which is characterised by the fact that $\lra{-1}$ acts as the identity $\id$ on $E^+$ and as $-\id$ on $E^-$ \cite[\S16.2.1]{triangulated-mixed-motives}.
We denote by \[ \SH(S)[1/2]^+, \SH(S)[1/2]^- \subset \SH(S)[1/2] \] the full subcategories of those objects with $E \wequi E^{\pm}$. For $n \in \N$ we define the \emph{$n$-th Milnor-Witt number} as \[ n_\epsilon = \sum_{i = 1}^n \lra{(-1)^{i-1}} \in [\1, \1]_{\SH(S)}, \] where $\lra{1} := 1$.
We will make use of the following result.
\begin{proposition} \label{prop:powers-n-epsilon} Suppose that $S$ is a base scheme.
\begin{enumerate}
\item If $-1$ is a square on $S$, then $\lra{-1} = 1$.
\item Denote by $p_n: \Gm \to \Gm$ the map corresponding to $x \mapsto x^n$ (i.e., the $n$-th power map), and by $\bar{p}_n: \1 \to \1$ the desuspended map.
  Then $\bar{p}_n = n_\epsilon$.
\end{enumerate}
\end{proposition}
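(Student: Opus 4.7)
For part (1), my plan is to identify $\lra{-1}$ with the scaling automorphism $\psi_{-1} \colon T \to T$ on $T = \A^1/(\A^1\setminus 0)$ induced by multiplication by $-1$, and then to show that $\psi_{u^2}$ is $\A^1$-nullhomotopic for any $u \in \mathcal{O}(S)^\times$. For the first identification, the switch $T \wedge T \to T \wedge T$ is the action of the permutation matrix $\bigl(\begin{smallmatrix}0 & 1 \\ 1 & 0\end{smallmatrix}\bigr) \in \GL_2$ on $\A^2/(\A^2\setminus 0)$, which is $\A^1$-homotopic to $\mathrm{diag}(-1,1)$ via one elementary row operation (elementary matrices being $\A^1$-homotopic to the identity in $\GL_n$); the latter acts as $\psi_{-1} \wedge \id$. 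For the second step, a standard Whitehead-style computation writes $\mathrm{diag}(u, u^{-1}) \in \mathrm{SL}_2(\mathcal{O}(S))$ as an explicit product of elementary matrices (over an affine base this factorization is universal), showing it is $\A^1$-homotopic to the identity and thus acts trivially on $T \wedge T$; combined with $\psi_u \circ \psi_v = \psi_{uv}$ this forces $\psi_u$ to depend only on $u$ modulo squares. Under the hypothesis $-1 = a^2$, this yields $\lra{-1} = \psi_{-1} = \psi_{a^2} = 1$.

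For part (2), I would argue by induction on $n$, with the trivial base $n = 1$. For the inductive step, factor $p_{n+1} = m \circ (\id \times p_n) \circ \Delta$ where $m$ is the multiplication on $\Gm$ and $\Delta$ is the diagonal. Under the canonical stable splitting
\[
\Sigma^\infty_+(\Gm \times \Gm) \simeq \1 \vee \Sigma^\infty \Gm \vee \Sigma^\infty \Gm \vee (\Sigma^\infty \Gm \wedge \Sigma^\infty \Gm),
\]
the map $m$ decomposes as the sum of the two projections plus a reduced multiplication $\tilde m \colon \Gm \wedge \Gm \to \Gm$; Morel's calculation of $\pi_{1,1}(\1)$ identifies $\tilde m$ with the Hopf element $\eta$ after desuspension. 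The analogous decomposition of $\Delta$ produces a reduced diagonal representing $\rho = [-1]$. Assembling these pieces one obtains the inductive formula
\[
\bar p_{n+1} = 1 + \bar p_n + \eta \rho \cdot \bar p_n.
\]

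To finish, I plug in $\bar p_n = n_\epsilon$ and use the relation $\lra{-1} - 1 = \eta \rho$ to evaluate the cross term telescopically:
\[
(\lra{-1} - 1) \cdot n_\epsilon = \sum_{i=1}^n \bigl(\lra{(-1)^i} - \lra{(-1)^{i-1}}\bigr) = \lra{(-1)^n} - 1,
\]
giving $\bar p_{n+1} = 1 + n_\epsilon + \lra{(-1)^n} - 1 = (n+1)_\epsilon$ as required, closing the induction. The main obstacle is the identification $\tilde m \simeq \eta$ over a general base $S$: this is the substantive geometric content, ultimately coming from Morel's analysis of the first stable motivic homotopy group of the sphere, which one must suitably globalize from the case of fields.
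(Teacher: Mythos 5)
For part (1), your argument is essentially the same as the paper's: identify the switch (or the scaling by $-1$) with the action of a diagonal matrix on $T\wedge T$, and observe via the Whitehead-lemma factorization into elementary matrices that $\mathrm{diag}(u,u^{-1})$ is $\A^1$-homotopic to the identity. You make the intermediate step ``switch $=$ scaling by $-1$'' more explicit via the permutation matrix, but the core mechanism coincides with what the paper does.

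For part (2), your route genuinely diverges from the paper. The paper avoids any homotopy-group computation and instead reads off $\bar p_n$ directly through the framed-correspondence technology of Elmanto--Hoyois--Khan--Sosnilo--Yakerson: the composite $\P^1 \xrightarrow{q_n}\P^1 \to T$ is recognized as an equationally framed transfer with framing $t^n$, and the reconstruction theorem (established over an arbitrary base) plus their computation in Proposition B.1.4 yields $n_\epsilon$ outright, with no induction. Your argument is the classical Morel-style computation: split $\Sigma^\infty_+(\Gm\times\Gm)$, identify the reduced multiplication (desuspended) with $\eta$ and the reduced diagonal (desuspended) with $\rho = [-1]$, and telescope the logarithmic relation $[x^{n+1}]=[x]+[x^n]+\eta[x][x^n]$. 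The algebra is correct: $(\lra{-1}-1)n_\epsilon = \lra{(-1)^n}-1$ does telescope and closes the induction. The trade-off is exactly the one you flag at the end. The paper's approach imports nontrivial but already-globalized machinery; yours is more elementary and self-contained in spirit, but the inputs -- that the desuspended reduced multiplication is $\eta$, that the desuspended reduced diagonal is $\rho$, and the relation $\lra{-1}=1+\eta\rho$ -- are usually stated and proved by Morel over (perfect) fields, and the proposition is asserted over an arbitrary base $S$. To close this you would want to observe that all three identifications are geometric statements already available over $\Spec\Z$ (the Hopf map, the element $[-1]$, and the Steinberg-type relation $[x][x]=[x][-1]$ underlying $\tilde\Delta \leadsto \rho$ are constructed and verified there), and then pull back along $S\to\Spec\Z$. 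As written, that step is acknowledged but not carried out, so it remains a genuine gap rather than a mere omission of routine detail.
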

\begin{proof} 
(1) We claim the following more general statement: for $a \in \mathcal{O}^{\times}(S)$, denote by $\langle a \rangle: \1 \rightarrow \1$ the endomorphism in $\SH(S)$ induced by the endomorphism of $S$-schemes $\P^1 \rightarrow \P^1, [x:y] \mapsto [ax:y]$. Then we have
\[
\langle a^2 \rangle = 1.
\]

To prove the claim, we first note that the map $(x:y) \mapsto (a^2x:y)$ is equal to $(x:y) \mapsto (ax:a^{-1}y)$. We have a map $\GL_2(S) \to \Map(\P^1, \P^1)$. It is thus enough to connect the matrix $A = \begin{bmatrix}a^{-1} & 0 \\ 0 & a\end{bmatrix}$ to the identity matrix in $\GL_2(S)$ via $\A^1$-paths. By ``Whitehead's Lemma'', $A$ is a product of elementary matrices: \[ \begin{bmatrix}a^{-1} & 0 \\ 0 & a\end{bmatrix} = \begin{bmatrix}1 & 1/a \\ 0 & 1\end{bmatrix} \begin{bmatrix}1 & 0 \\ 1-a & 1\end{bmatrix}\begin{bmatrix}1 & -1 \\ 0 & 1\end{bmatrix}\begin{bmatrix}1 & 0 \\ 1-a^{-1} & 1\end{bmatrix}. \]
Since the space of elementary matrices is $\A^1$-path connected, the result follows.

(2) For this proof, we will need some rudiments of the theory of framed correspondences in the sense of \cite{EHKSY, garkusha2014framed}. Under the equivalence $\Sigma\Gm \simeq \P^1$, the map $p_n$ suspends to the map $q_n: \P^1 \to \P^1, (x:y) \mapsto (x^n:y^n)$; hence it suffices to prove the claim for this map. Further note that the projection $c: \P^1 \to \P^1/\P^1 \setminus 0 \wequi T$ is an equivalence, $\P^1 \setminus 0 \wequi \A^1$ being contractible. It follows that it suffices to determine the map 
\[
c \circ q_n: \P^1 \rightarrow T
\] which is precisely the map induced via Voevodsky's Lemma \cite[Corollary A.1.7]{EHKSY} from the equationally framed transfer with support $0\in \A^1$ and framing $t^n$ (in the sense of \cite[Definition 2.1.1]{EHKSY}). It follows from the main result of \cite{EHKSY2} (in particular, the agreement results \cite[Theorems 3.2.11, 3.3.6]{EHKSY2}) that this is stably the same as the map $\1 \to \1$ induced by the corresponding framed correspondence via the reconstruction theorem of \cite[Theorem 3.5.11]{EHKSY} and its generalization over any base \cite[Theorem 16]{framed-loc}. The result now follows from \cite[Proposition B.1.4]{EHKSY}, provided that their $n_\epsilon$ is the same as ours. Their $\lra{a}$ comes from the canonical action of $\scr O^\times(S) \to \Omega K(S)$ on framed correspondences, which coincides with the canonical action via fundamental classes by \cite[Theorem 3.2.11]{EHKSY2}, which coincides with our definition essentially by construction.
\end{proof}

\begin{remark} Given $u \in \scr O^\times(S)$ we let $[u]$ denote the resulting map $\1 \to \Gm$. 
One may show that \[ \lra{-1} = 1 + \eta[-1], \] where $\eta: \Gm \to \1$ is the geometric Hopf map \cite[\S6.2]{morel-trieste}.
\end{remark}

\subsection{$\rho$-completion}
In this section, we discuss $\rho$-completed motivic spectra in view of certain convergence results that we will use later; we use \cite[Section 2.2]{mnn} as a reference for the formalism of complete objects and completions; see also \cite{rigidity-in-motivic-homotopy-theory} for a previous reference in the motivic context. We put \[ \rho := [-1]: \1 \to \Gm \in \SH(S). \] 
By abuse of notation, we also denote the map $\Gmp{-1} \to \1$ (obtained by smashing $\rho$ with $\id_{\Gmp{-1}}$) by $\rho$.
We write $\1/\rho$ for the cofiber of $\rho: \Gmp{-1} \to \1$.
Recall that a morphism $\alpha: E \to F \in \SH(S)$ is a \emph{$\rho$-equivalence} if $\alpha \wedge \1/\rho: E/\rho \to F/\rho$ is an equivalence. We denote by \[ \SH(S)_\rho^\comp \] the localization of $\SH(S)$ at the $\rho$-equivalences. Recall that $E \in \SH(S)_\rho^\comp$, i.e., it is \emph{$\rho$-complete}, if and only if for all $F$ such that $F \otimes \1/\rho \simeq 0$, the space $\Map(F, E)$ is contractible.

From general principles the localization $\SH(S) \to \SH(S)_\rho^\comp$ has a fully faithful right adjoint, and the composite \emph{localization functor} is given by \cite[Formula (2.22)]{mnn} \[ E \mapsto E_\rho^\comp := \lim_n E/\rho^n. \]

\begin{remark}
Functors between premotivic categories that preserve cofibers and smashing with $\Gm$ preserve $\rho$-equivalences. Since $\Gm$ is invertible the latter condition is in particular satisfied by symmetric monoidal functors and their adjoints. It follows that the motivic base change functors $f^*, f_\#$, and $f_*$ preserve $\rho$-equivalences.
\end{remark}
\begin{warning}
On the other hand, the functors related to the slice filtration, like $f_n$ and $s_n$, do not interact well with $\rho$-equivalences.
See for example Remark \ref{rmk:fM-rho}.
\end{warning}

In order to streamline the exposition, in what follows we will extensively use the category $\SH(S)_\rho^\comp$.
In many cases, however, this has very little effect:

\begin{remark} \label{rmk:rho-completion}
\begin{enumerate}
\item Suppose $RS = \emptyset$\footnote{For a scheme $X$, we denote by $RX$ the set of pairs $(x, \alpha)$ with $x \in X$ and $\alpha$ an ordering of $k(x)$.}, 
i.e., $-1$ is a sum of squares locally in $\scr O_S$ \cite[Theorem 4.3.7]{bochnak2013real}.
  Then $\rho \in \pi_0(\1)_*$ is nilpotent, as follows, for example, from the main result of \cite{bachmann-real-etale}.
  It follows that \[ \SH(S)_\rho^\comp = \SH(S). \]
\item Similarly, over a general base, $\rho$ is nilpotent on $\1[1/2]^+$ and a unit on $\1[1/2]^-$, see \cite[Lemma 39]{bachmann-real-etale}.
  It follows that \[ \SH(S)[1/2]_\rho^\comp = \SH(S)[1/2]^+. \]
\end{enumerate}
\end{remark}

\begin{example} \label{ex:HZ-rho-complete}
Let $S$ be essentially smooth over a Dedekind domain.
Then $H\Z \in \SH(S)$ is $\rho$-complete.
Indeed it suffices to show that for $X \in \Sm_S$, $i, j \in \Z$, we have \[ [\Sigma^{i,j} X_+, H\Z \wedge \Gmp{-d}] = 0 \] for $d$ sufficiently large.
This follows from the vanishing of motivic cohomology in negative weights over such bases \cite[Corollary 7.19]{spitzweck2012commutative}.
\end{example}

\subsection{Virtual étale cohomological dimension} \label{subsec:virtual-coh-dim} We will need the following result about motivic cohomology in large degrees.

\begin{lemma} \label{lemm:vcd-iso}
For $m > \vcd_\ell(k)$ and $a \in \Z$ the map \[ H^m(k, \Z/\ell(a)) \xrightarrow{[-1]} H^{m+1}(k, \Z/\ell(a+1)) \] is an isomorphism.
\end{lemma}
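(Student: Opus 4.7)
The plan is to separate out the Beilinson-vanishing regime and reduce the remaining case to Galois cohomology. In the range $m > a$ both sides vanish, since Voevodsky's motivic complex $\Z/\ell(q)$ is supported in cohomological degrees $\le q$ at the generic point; hence $H^m(k, \Z/\ell(a)) = 0$ for $m > a$ and likewise $H^{m+1}(k, \Z/\ell(a+1)) = 0$ when $m > a$. The substantive regime is $m \le a$, where the Beilinson--Lichtenbaum theorem (a.k.a.\ Voevodsky--Rost) gives
\[
H^m(k, \Z/\ell(a)) \cong H^m_\et(k, \mu_\ell^{\otimes a}), \qquad H^{m+1}(k, \Z/\ell(a+1)) \cong H^{m+1}_\et(k, \mu_\ell^{\otimes(a+1)}),
\]
and Kummer theory identifies the motivic class $[-1] \in H^1(k, \Z(1)) = k^\times$ with $[-1] \in H^1_\et(k, \mu_\ell) = k^\times/\ell$. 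So it suffices to prove that cup product with $[-1]$ is an isomorphism on étale cohomology of $k$ in degrees $> \vcd_\ell(k)$.

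For odd $\ell$ this is immediate, since $\vcd_\ell(k) = \cd_\ell(k)$ and both étale groups vanish. For $\ell = 2$ I would set $k' := k(\sqrt{-1})$ and use the short exact sequence of discrete $G_k$-modules
\[
0 \to \Z/2 \to \Z/2[G_k/G_{k'}] \to \Z/2 \to 0
\]
given by the inclusion of the norm element and the augmentation. Shapiro's lemma identifies the cohomology of the middle term with $H^\ast_\et(k', \Z/2)$, which vanishes above $\cd_2(k') = \vcd_2(k)$. The boundary map in the resulting long exact sequence is cup product with the class in $H^1_\et(k, \Z/2) = k^\times/2$ classifying the quadratic extension $k'/k$, namely $[-1]$. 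Vanishing of the flanking $k'$-terms in degrees $m, m+1 > \vcd_2(k)$ then forces $\cup[-1]: H^m_\et(k, \Z/2) \to H^{m+1}_\et(k, \Z/2)$ to be an isomorphism.

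The only compatibilities that deserve remarking are that the Beilinson--Lichtenbaum comparison carries the motivic $[-1]$ to the étale $[-1]$ (this is built into the construction via Kummer theory), and that the Shapiro boundary really is $\cup[-1]$ (standard). The argument is essentially a packaging of the classical Galois-cohomological fact that above $\vcd_2$ the mod-$2$ cohomology of a field is $\cup[-1]$-periodic; the main conceptual obstacle, already handled above, is that one has to know such a periodicity exists at all for $\ell=2$, which is what makes passage through the Shapiro extension rather than direct vanishing necessary.
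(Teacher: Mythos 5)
Your proof is correct and follows the same skeleton as the paper's: separate off the range $m > a$ where both groups vanish by the Beilinson vanishing, pass to \'etale cohomology via Beilinson--Lichtenbaum in the range $m \le a$, dispose of odd $\ell$ immediately since then $\vcd_\ell = \cd_\ell$, and reduce the $\ell = 2$ case to $\cup[-1]$-periodicity of mod-$2$ Galois cohomology above $\vcd_2$. The only place you diverge is in how you establish that periodicity: the paper invokes its Lemma~\ref{lemm:vcd-vanishing}, which treats an arbitrary scheme $X$ with $1/2 \in \mathcal{O}_X$ and an arbitrary sheaf of $\Z/2$-vector spaces, and proves the periodicity by running the Hochschild--Serre spectral sequence for $X[\sqrt{-1}]/X$ and observing that $\cup\alpha$ is an isomorphism on all positive-degree columns. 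You instead use the short exact sequence of $G_k$-modules
\[ 0 \to \Z/2 \to \Z/2[G_k/G_{k'}] \to \Z/2 \to 0, \]
Shapiro's lemma, and vanishing over $k'$. Your route is more elementary and self-contained for the case at hand (a single long exact sequence rather than a spectral sequence), but is specific to fields, whereas the paper's lemma is stated in the generality it needs later (in the proof of Proposition~\ref{prop:compact-gen}, where it is applied to schemes). One small omission: you should note the degenerate case $\sqrt{-1} \in k$, where $k' = k$ and your short exact sequence collapses, but then $\vcd_2(k) = \cd_2(k)$ and both groups already vanish, so nothing is lost. The paper handles this case explicitly in Lemma~\ref{lemm:vcd-vanishing}.
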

\begin{proof}
If $a < m$ then both groups vanish by \cite[Lemma 3.2(2)]{MR1744945} (\cite[Lemma 5.2]{mvw}), so there is nothing to prove.
Thus let $a \ge m$, 
so that \[ H^m(k, \Z/\ell(a)) \wequi H^m_\et(k, \mu_\ell^{\otimes a}) \] by the Beilinson-Lichtenbaum conjecture \cite[Theorem 6.17]{voevodsky-BK} (\cite[Theorem 10.2]{mvw}).
If $\ell$ is odd then $\cd_\ell(k) = \vcd_\ell(k)$ and again both groups are zero.
If $\ell=2$ then $\mu_\ell^{\otimes a} = \Z/2$ and the claim follows from the Gysin sequence (see e.g., Lemma \ref{lemm:vcd-vanishing}).
\end{proof}

\section{Bott-inverted spheres}\label{sec:bott-sph}
\subsection{\texorpdfstring{$\tau$}{tau}-self maps}
\begin{definition} \label{def:tau-self-map}
Let $\ell$ be a prime with $\ell \in \mathcal{O}^\times(S)$ and $1 \le m, n \le \infty$.
If $m, n < \infty$ denote by $\1/(\ell^n, \rho^m)$ the obvious cofiber; if $n = \infty$ and $m<\infty$ we set $\1/(\ell^\infty, \rho^m) = (\1/\rho^m)_\ell^\comp$, 
and similarly if $m = \infty$.
Suppose given a map \[ \tilde\tau: \1/(\ell^n, \rho^m) \to \1/(\ell^n, \rho^m)(r). \]
\begin{enumerate}
\item We write \[ \1/(\ell^n,\rho^m)[\tilde\tau^{-1}] = \colim \left[ \1/(\ell^n,\rho^m) \xrightarrow{\tilde\tau} \1/(\ell^n,\rho^m)(r) \xrightarrow{\tilde\tau(r)}\1/(\ell^n,\rho^m)(2r) \to \dots \right]. \]
  More generally, for $E \in \SH(S)$ we set \[ E/(\ell^n, \rho^m)[\tilde\tau^{-1}] = E \wedge \1/(\ell^n,\rho^m)[\tilde\tau^{-1}]. \]

\item We call $\tilde\tau$ a $\tau$-self map if for every map $x \to S$, where $x$ is the spectrum of a field, the map \[ H\Z_x/(\ell^n, \rho^m) \to H\Z_x/(\ell^n, \rho^m)[\tilde\tau^{-1}]_{\ell,\rho}^\wedge \in \SH(x)_{\ell,\rho}^\comp \] is an étale localization.

\item We call $\tilde\tau$ a good $\tau$-self map if it is a $\tau$-self map and, for every $E, F \in \SH(S)$, every morphism $\alpha: E \to F/(\ell^n,\rho^m)[\tilde\tau^{-1}]_{\ell,\rho}^\comp$ factors through $E \to E/(\ell^{n'},\rho^{m'})[\tilde\tau'^{-1}]$, where $\tilde\tau'$ is another $\tau$-self map.
\end{enumerate}
\end{definition}

\begin{remark} \label{rmk:p-completion-unnec}
Unless $m=\infty$ or $n=\infty$, $E/(\ell^n, \rho^m)[\tilde\tau^{-1}]$ is already $(\rho, \ell)$-complete.
\NB{Call $E$ $a$-torsion if $a: E \to E$ is the zero map. 
Then sums of $a$-torsion spectra are $a$-torsion, and an extension of $a$-torsion spectra is $a^2$-torsion. Hence directed colimits of $a$-torsion spectra are $a^2$-torsion.}
\end{remark}

\begin{lemma} \label{lemm:bott-element-etale-invertible}
Let $\tau: \1/(\ell^n, \rho^m) \to \1/(\ell^n, \rho^m)(r)$ be a $\tau$-self map.
Then $L_\et(\tau)_{\ell,\rho}^\comp$ is an equivalence.
\end{lemma}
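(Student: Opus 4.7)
The plan is to show that the cofiber $C := \cof(\tau)$ in $\SH(S)_{\ell,\rho}^\comp$ satisfies $L_\et(C)_{\ell,\rho}^\comp = 0$; equivalently, that $L_\et(\tau)_{\ell,\rho}^\comp$ is an equivalence. I will first reduce to a field base, and then, over a field, use the slice filtration to propagate the hypothesis on $H\Z$ to all of $\1/(\ell^n,\rho^m)$.

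For the reduction to fields, the motivic base change functors $f^*$ commute with $L_\et$ and with $(\ell,\rho)$-completion, and the formation of $C$ is stable under $f^*$. Vanishing in $\SH_\et(S)_{\ell,\rho}^\comp$ can be tested on stalks of the \'etale topos, which by Suslin-style rigidity for the \'etale motivic category (as surveyed in the introduction) reduce to $\SH(x)_{\ell,\rho}^\comp$ for $x$ a residue field. It therefore suffices to show $L_\et(C_x)_{\ell,\rho}^\comp = 0$ for every $x \to S$ with $x$ the spectrum of a field.

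Fix such an $x$ and consider the slice filtration of $\1_x/(\ell^n, \rho^m)$. Because $\rho^m$ acts by zero on this spectrum and the sphere is bounded below in the homotopy $t$-structure, the hypotheses of Theorem~\ref{thm:conv-intro} are satisfied, so the slice filtration converges; by cofibers the same holds for $C_x$. The slices $s_q(\1_x/(\ell^n, \rho^m))$ are $H\Z_x/(\ell^n,\rho^m)$-modules, by Voevodsky's identification of the slices of the sphere (see, e.g., \cite[Theorem 2.16]{1-line}). By the $\tau$-self map hypothesis, $L_\et(\tau)_{\ell,\rho}^\comp$ is already an equivalence on $H\Z_x/(\ell^n,\rho^m)$, hence on any $H\Z_x/(\ell^n,\rho^m)$-module, and in particular on each such slice; taking cofibers gives $L_\et(s_q C_x)_{\ell,\rho}^\comp = 0$ for every $q$. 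Testing the graded pieces of the slice filtration on $C_x$ against an arbitrary \'etale-local, $(\ell,\rho)$-complete $E$ and invoking slice convergence to identify $[C_x, E]_{*,*}$ with the limit of its slice filtration, whose associated graded vanishes, then upgrades the slicewise vanishing to $L_\et(C_x)_{\ell,\rho}^\comp = 0$.

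The main obstacle is making precisely this last step rigorous: ensuring that slicewise vanishing in the \'etale-local, $(\ell,\rho)$-complete sense genuinely lifts to spectrum-level vanishing, given that neither $L_\et$ nor $(\ell,\rho)$-completion obviously commutes with the slice tower. This is exactly where the derived slice convergence of Theorem~\ref{thm:conv-intro} earns its keep — and why bounded $\rho$-torsion, crucial at the prime $2$ over fields of infinite cohomological $2$-dimension, is built into the setup of the lemma.
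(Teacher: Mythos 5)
Your strategy is genuinely different from the paper's, and it has a real gap. The paper proves Lemma~\ref{lemm:bott-element-etale-invertible} in a much more elementary way: after reducing to $m,n<\infty$, it uses the rigidity equivalence $\SH_\et(S)_\ell^\wedge \simeq \SH(S_\et^\wedge)_\ell^\comp$ from \cite[Theorem 3.1]{bachmann-SHet2} to conclude that pullbacks to \emph{separably closed} field points form a conservative family, reduces to such a point, and then observes that $L_\et\1/(\ell^n,\rho^m)$ is bounded below, so an equivalence can be detected in $H\Z$-homology, where it holds by Definition~\ref{def:tau-self-map}(2). No slice filtration is needed at all.

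The central problem with your proposal is that Lemma~\ref{lemm:bott-element-etale-invertible} carries \emph{no hypothesis} that $\vcd_\ell < \infty$, so you are not entitled to invoke Theorem~\ref{thm:conv-intro} over an arbitrary residue field $x$; the slice convergence theorem genuinely needs $\vcd_\ell(x)<\infty$. You gesture at reducing to stalks of the \'etale topos (which would be separably closed fields, where $\vcd=0$ and the issue disappears), but then immediately treat the reduced base as a general field where ``bounded $\rho$-torsion, crucial at the prime $2$ over fields of infinite cohomological $2$-dimension'' is supposedly doing real work --- these two stances are inconsistent. Once you are genuinely over a separably closed field, $\rho$ is nilpotent, $\vcd_\ell = 0$, and the entire slice-convergence apparatus is superfluous: a straight Hurewicz argument suffices, which is exactly what the paper does.

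Beyond that, your passage from slicewise vanishing to $L_\et(C_x)_{\ell,\rho}^\comp = 0$ is not rigorous as stated. You want to ``identify $[C_x,E]_{*,*}$ with the limit of its slice filtration,'' but the slice tower $f_\bullet C_x \to C_x$ is a tower \emph{over} $C_x$, not a filtration on mapping spaces out of it; to map out you need the co-slice tower $f^n = \mathrm{cof}(f_n \to \mathrm{id})$, together with the vanishing $\lim_n(f_n \wedge \text{stuff}) = 0$. That vanishing is precisely the content of Lemma~\ref{lemm:key}, which is the hard technical step of the paper and where Theorem~\ref{thm:conv-intro} is actually used --- in the service of the much stronger Corollary~\ref{cor:sphere-inversion} (that $\1/(\ell^\nu,\rho^\mu)[\tau^{-1}]/(\ell,\rho)$ is \'etale local, a statement in $\SH$ that cannot be detected after base change to separably closed fields). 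You have, in effect, imported the machinery for the hard theorem into the easy lemma while leaving its central technical ingredient undeveloped. Additionally, the slices $s_q(\1/(\ell^n,\rho^m))$ are $H\Z$-modules, not literally $H\Z/(\ell^n,\rho^m)$-modules, so the ``hence on any $H\Z_x/(\ell^n,\rho^m)$-module'' step would need more care even in a fixed-up version.
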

\begin{proof}
Suppose that $n=\infty$.
The map $L_\et(\tau)_{\ell,\rho}^\comp$ is an equivalence if and only if $L_\et(\tau/\ell)_{\ell,\rho}^\comp$ is (by $\ell$-completeness).
Since $\tau/\ell$ is also a $\tau$-self map, replacing $\tau$ by $\tau/\ell$ we may assume that $n<\infty$.
Similarly we may assume that $m<\infty$.

By \cite[Theorem 3.1]{bachmann-SHet2} we have $\SH_\et(S)_\ell^\wedge \wequi \SH(S_\et^\wedge)_\ell^\comp$.
This implies that pulling back along maps of the form $Spec(k) \to S$, where $k$ is separably closed, is a conservative family for $\SH_\et(S)_\ell^\comp$.
We may thus replace $S$ by the spectrum of a separably closed field $k$.
\NB{The only reason for this is to know that isos are detected in homology. Easy to see ab initio, too.}

Since $L_\et \1/(\ell^n,\rho^m) \in \SH_{\ge -1}$, in order to show that $L_\et(\tau)_{\ell,\rho}^\comp$ is an equivalence, it suffices to show this in homology.
In other words, we need to show that the map \[
H\Z/(\ell^n,\rho^m) \xrightarrow{\tau} H\Z/(\ell^n,\rho^m)(r)
\] is an étale equivalence.
This is true since by part (2) of Definition~\ref{def:tau-self-map}, we demand that $H\Z/(\ell^n,\rho^m) \to H\Z/(\ell^n,\rho^m)[\tau^{-1}]_{\ell,\rho}^\comp$ is an étale localization.
\end{proof}

\subsection{Cohomological Bott elements}
We shall in the next subsection construct $\tau$-self maps as multiplication by suitable elements.
In preparation, we study the analogous question for $H\Z$.

\begin{definition} \label{def:cohomological-bott-element}
Let $S = Spec(k)$, $k$ a field.
Let $1 \le m, n \le \infty$.
By a \emph{cohomological Bott element} we mean an element $\tau \in \pi_{0,-r} H\Z/(\ell^n, \rho^m)$ for some $r>0$ such that for every field $l/k$ and every choice of multiplication on $H\Z/(\ell^n, \rho^m)$ under the standard multiplication on $H\Z_\rho^\wedge/\ell^n$ (in the sense of \S\ref{sec:moore-mult}), the map \[ H\Z/(\ell^n,\rho^m)|_l \to H\Z/(\ell^n,\rho^m)|_l[\tau^{-1}]_{\ell,\rho}^\wedge \] is an étale localization.

Here, 
$H\Z/(\ell^n,\rho^m)[\tau^{-1}]$ denotes the mapping telescope of the self map given by multiplication by $\tau$.
\end{definition}

\begin{remark}
In light of Example \ref{ex:HZ-rho-complete}, $H\Z/\ell^m$ is already $\rho$-complete, so we shall suppress the additional completion from now on.
\end{remark}

\begin{remark} \label{rmk:product-bott-elts}
If $\tau_1, \tau_2$ are cohomological Bott elements, then so is $\tau_1 \tau_2$.
Indeed by cofinality, $E[(\tau_1\tau_2)^{-1}]$ (where $E=H\Z/(\ell^n, \rho^m)$) can be computed as a colimit over $\N \times \N$, with horizontal maps given by multiplication by $\tau_1$ and vertical maps by $\tau_2$.
Computing the horizontal colimit first, it suffices to show that multiplication by $\tau_2$ induces an equivalence on $E[\tau_1^{-1}]$; this follows from the assumption that $\tau_1$ and $\tau_2$ are both cohomological Bott elements.
\end{remark}

\begin{lemma} \label{lemm:bott:powers-reduction}
Let $\tau_1 \in \pi_{0,-r} H\Z/\ell$ and $\tau_2 \in \pi_{0,-r'} H\Z/\ell^{n}$ be elements such that $\tau_2$ reduces to a power of $\tau_1$ modulo $\ell$ (here $n=\infty$ is allowed).
\begin{enumerate}
\item $\tau_1$ is a cohomological Bott element if and only if $\tau_2$ is a cohomological Bott element.
\item If $\tau_2$ is a cohomological Bott element, then its image  $\tau_3$ in $\pi_{0,-r'}H\Z/(\ell^n,\rho^m)$ is a cohomological Bott element.
\end{enumerate}
\end{lemma}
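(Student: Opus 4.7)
The plan is to rely on two soft general principles:
(i) For $\ell$-nilpotent motivic spectra $X, Y$, a map $f\colon X \to Y$ is an étale localization if and only if $f \wedge \1/\ell$ is; this follows from the exactness of $L_\et$ together with the elementary fact that an $\ell$-nilpotent spectrum whose mod-$\ell$ reduction vanishes is itself zero (write out $Z \xrightarrow{\ell} Z \to Z/\ell$ and iterate).
(ii) For any element $\tau_1$ and any $\tau_1$-module $E$, the canonical comparison $E[\tau_1^{-1}] \wequi E[(\tau_1^k)^{-1}]$ is an equivalence by cofinality of mapping telescopes; in particular $\tau_1$ is a cohomological Bott element if and only if $\tau_1^k$ is.

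For part (1), I would establish both directions simultaneously through the chain of equivalences
\begin{align*}
&H\Z/\ell^n \to H\Z/\ell^n[\tau_2^{-1}] \text{ is an étale localization} \\
\Longleftrightarrow\ &H\Z/\ell^n \wedge \1/\ell \to (H\Z/\ell^n \wedge \1/\ell)[\tau_1^{-k}] \text{ is an étale localization} \\
\Longleftrightarrow\ &H\Z/\ell \to H\Z/\ell[\tau_1^{-k}] \text{ is an étale localization} \\
\Longleftrightarrow\ &\tau_1 \text{ is a cohomological Bott element.}
\end{align*}
The first equivalence is (i), using $\tau_2 \equiv \tau_1^k \pmod{\ell}$. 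The third is (ii). For the middle equivalence, I observe that the standard cofiber sequence $H\Z/\ell^{n-1} \xrightarrow{\ell} H\Z/\ell^n \to H\Z/\ell$, smashed with $\1/\ell$, exhibits $H\Z/\ell^n \wedge \1/\ell$ by induction as an iterated extension of copies of $H\Z/\ell$ (and its suspension), on each of which $\tau_1^k$ acts by its canonical self-map via the $H\Z/\ell$-module structure. Since the étale localization property is stable under cofibers (the functor $L_\et$ is exact and étale-local objects are closed under cofibers, giving a two-out-of-three property for étale localizations in a map of cofiber sequences), both directions of the middle equivalence follow by iterating two-out-of-three along this filtration. The case $n=\infty$ is reduced to finite $n$ using $H\Z/\ell^\infty \wequi \lim_n H\Z/\ell^n$ together with closure of étale-local objects under inverse limits in the $\ell$-complete category.

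For part (2), the mapping telescope commutes with smashing by the finite object $\1/\rho^m$, so
\[ H\Z/(\ell^n,\rho^m)[\tau_3^{-1}] \wequi H\Z/\ell^n[\tau_2^{-1}] \wedge \1/\rho^m. \]
Since $-\wedge \1/\rho^m$ is the cofiber by $\rho^m$, it is an exact functor that preserves étale localizations by the same two-out-of-three principle. Applying it to the étale localization $H\Z/\ell^n \to H\Z/\ell^n[\tau_2^{-1}]$ provided by hypothesis yields the étale localization statement for $\tau_3$. The $(\ell,\rho)$-completions in Definition~\ref{def:cohomological-bott-element} are automatic in the finite case since the objects are already $\ell^n$- and $\rho^m$-torsion.

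The main obstacle I expect is the universal quantifier over multiplicative structures in Definition~\ref{def:cohomological-bott-element}: to conclude that $\tau_3$ is a Bott element requires the localization condition for \emph{every} multiplication on $H\Z/(\ell^n,\rho^m)$ extending the standard one on $H\Z_\rho^\wedge/\ell^n$, not only those induced from multiplications on $H\Z/\ell^n$. I intend to handle this by observing that every such multiplication makes the canonical reduction maps $H\Z/\ell^n \to H\Z/\ell$ and $H\Z/\ell^n \to H\Z/(\ell^n,\rho^m)$ into module maps, so that ``multiplication by $\tau_2$" and ``multiplication by $\tau_3$" are compatible with the $\tau_1^k$-action used above, and the arguments propagate across all admissible choices.
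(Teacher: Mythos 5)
Your proof takes essentially the same route as the paper's: both reduce to the mod-$\ell$ case via cofiber sequences and closure properties of étale localizations, use cofinality of the $\tau_1^k$-telescope, and identify the mod-$(\ell^n,\rho^m)$ telescope as the cofiber of $\rho^m$ on the mod-$\ell^n$ telescope. The organization differs slightly: the paper induces on $n$ via the single cofiber sequence $H\Z/\ell^n \xrightarrow{\ell} H\Z/\ell^n \to H\Z/\ell \oplus \Sigma H\Z/\ell^{n-1}$ (using that $H\Z/\ell^n$ lives in the heart), while you pass through $H\Z/\ell^n \wedge \1/\ell$ via your principle (i) and then filter it by copies of $H\Z/\ell$; both work, but the paper's route is a bit more economical since it avoids the back-and-forth between the spectrum-level $\1/\ell$-reduction and the $H\Z$-level one. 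Two small cautions: the notation $(H\Z/\ell^n \wedge \1/\ell)[\tau_1^{-k}]$ in your chain is a mild abuse — $\tau_1$ does not literally act on $H\Z/\ell^n \wedge \1/\ell$, only its associated graded pieces; and in part (2) your comment that the $(\ell,\rho)$-completions are automatic "in the finite case" is correct for $m,n<\infty$ but does not cover $n=\infty$, which the lemma explicitly allows — there the $\ell$-completion of the telescope is genuinely needed (and is handled, as you do for part (1), by writing it as an inverse limit of the finite-$n$ cases). Your closing worry about arbitrary compatible multiplications is exactly the right concern and your resolution (that "under the standard multiplication" forces the reduction map to be a ring map, hence a $\tau_2$-module map, so the $\tau_3$-telescope is the $\rho^m$-cofiber of the $\tau_2$-telescope) is what the paper uses, if tersely.
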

\begin{proof}
(1) Suppose first that $n < \infty$.
We have a cofiber sequence $H\Z/\ell^n \xrightarrow{\ell} H\Z/\ell^n \to H\Z/\ell \oplus \Sigma H\Z/\ell^{n-1}$ in $\SH(k)$.
Thus if $H\Z/\ell^n[\tau_2^{-1}]$ is étale local then so is $H\Z/\ell[\tau_2^{-1}] \wequi H\Z/\ell[\tau_1^{-1}]$.
Moreover, the converse holds if also $H\Z/\ell^{n-1}[\tau_2^{-1}]$ is étale local; this will hold by induction on $n$.
Finally suppose $n=\infty$.
Then $H\Z/\ell[\tau_1^{-1}] \wequi H\Z_\ell^\comp[\tau_2^{-1}]_\ell^\comp/\ell$, so $\tau_1$ is a cohomological Bott element if $\tau_2$ is.
Conversely, if $\tau_1$ is a cohomological Bott element then $H\Z_\ell^\comp[\tau_2^{-1}]_\ell^\comp \wequi \lim_n H\Z/\ell^n[\tau^{-1}]$ is a limit of étale local spectra, by what we already established, so it is étale local.
In other words, $\tau_2$ is a cohomological Bott element.

(2) Write $\tau_3$ for the image.
Then for any choice of compatible multiplication, the map $H\Z/(\ell^n,\rho^m) \to H\Z/(\ell^n,\rho^m)[\tau_3^{-1}]$ is the cofiber of multiplication by $\rho^m$ on the map $H\Z/\ell^n \to H\Z/\ell^n[\tau_2^{-1}]$.
The latter is an étale localization by (1), and hence so is the former.

This concludes the proof.
\end{proof}

\begin{lemma} \label{lemm:bott-descent}
Let $l/k$ be a finite separable extension of degree coprime to $\ell$, and $\tau \in \pi_{0,-r} H\Z/\ell^n$.
Then $\tau$ is a cohomological Bott element if and only if $\tau|_l$ is a cohomological Bott element.
\end{lemma}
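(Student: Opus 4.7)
The ``only if'' direction is immediate from the definition: every field extension of $l$ is in particular a field extension of $k$, so the étale localization property for $\tau$ restricted to such extensions is inherited by $\tau|_l$.

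For the converse, suppose $\tau|_l$ is a cohomological Bott element, set $d = [l:k]$, fix an arbitrary field extension $k'/k$, and write $l' := l \otimes_k k' \cong \prod_i l_i$, a finite étale $k'$-algebra of total degree $d$ whose components $l_i$ are finite separable field extensions of $k'$ of degree dividing $d$, hence coprime to $\ell$. One must show that the map $\alpha \colon H\Z/(\ell^n,\rho^m)|_{k'} \to H\Z/(\ell^n,\rho^m)|_{k'}[\tau^{-1}]_{\ell,\rho}^\comp$ is an étale localization, equivalently that $C := \cof(L_\et \alpha) \in \SH(k')_{\ell,\rho}^\comp$ vanishes.

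The plan has two steps. First, letting $f \colon \Spec(l') \to \Spec(k')$ be the resulting étale cover, show $f^* C = 0$. The functor $f^*$ is exact, commutes with filtered colimits, with $\ell$- and $\rho$-completion, and with étale localization (since $f$ is itself étale), hence with the formation of $\alpha$ and $C$. Componentwise, $f^*\alpha$ becomes the analogous map over each $l_i$, which is an étale localization by the assumed Bott element property of $\tau|_l$ applied to the finite separable extension $l_i/l$; hence every component of $f^* C$ vanishes.

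Second, a transfer argument deduces $C = 0$ from $f^* C = 0$. Being the cofiber of a map of $H\Z$-modules, $C$ is itself a module over $H\Z$, on which $\eta$ acts as zero; hence $\lra{-1} = 1 + \eta \rho = 1$ and consequently $d_\epsilon = d$ as endomorphisms of $C$. For the finite étale map $f$ one has $f_\# \cong f_*$ by ambidexterity, and the motivic Euler-characteristic formula identifies the composite $\id \to f_* f^* \to \id$ (unit followed by counit) with multiplication by $\chi(l'/k') = d_\epsilon$. Applied to $C$, this composite factors through $f_* f^* C = 0$, so $d \cdot \id_C = 0$; since $C$ is $\ell$-power torsion (or $\ell$-complete) while $d$ is coprime to $\ell$, multiplication by $d$ is invertible on $C$, forcing $\id_C = 0$ and hence $C = 0$. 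The principal obstacle is to verify the motivic Euler-characteristic formula in this completed context with $l'$ possibly disconnected, together with the compatibility of $f^*$ with $L_\et$ and with the various completions and the mapping telescope; but these are standard consequences of ambidexterity for finite étale maps and of $f^*$ being simultaneously a left and right adjoint.
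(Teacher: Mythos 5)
Your overall strategy — run the transfer $\id \to f_*f^* \to \id$ for the finite étale cover $f$ coming from $l/k$, and exploit that this composite acts by an $\ell$-coprime integer on the $\ell$-torsion $H\Z$-module in question — is exactly what the paper means when it invokes ``transfers in $\DM(k)$.'' So the approach matches. However, the reformulation at the start of the converse direction has a genuine gap.

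You claim that $\alpha$ being an étale localization is ``equivalently that $C := \cof(L_\et \alpha)$ vanishes.'' That equivalence is false. By the paper's definition, $\alpha\colon A \to B$ is an étale localization if $L_\et A \simeq L_\et B \simeq B$, i.e., if (i) $L_\et\alpha$ is an equivalence \emph{and} (ii) $B$ is étale local. Vanishing of $\cof(L_\et\alpha)$ only gives (i); taking $\alpha = \id_A$ for $A$ not étale local already disproves the claimed equivalence. So as written, your proof only shows that $L_\et\alpha$ is an equivalence and never establishes that the target $B := H\Z/(\ell^n,\rho^m)|_{k'}[\tau^{-1}]_{\ell,\rho}^\comp$ is étale local, which is half of what is required.

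The gap is fixable by the same mechanism you already set up: since $\tau|_l$ is a cohomological Bott element, $f^*B$ is étale local componentwise over the $l_i$, hence $f_*f^*B$ is étale local (étale locality is preserved by $f_*\simeq f_\#$ for $f$ finite étale), and then $B$ is étale local as a retract of $f_*f^*B$ via your degree argument. This retract formulation is in fact what the paper's one-line proof does directly — it exhibits $H\Z/\ell^n[\tau^{-1}]$ as a summand of $(l/k)_*H\Z/\ell^n|_l[\tau|_l^{-1}]$ — and a retract of an étale localization is an étale localization, giving both (i) and (ii) at once; your ``first kill the cofiber, then the target'' presentation forces you to run the argument twice.

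A minor imprecision: the Euler characteristic $\chi(l'/k')$ of a finite étale cover of degree $d$ is the class of the trace form in $\GW(k')$, which is generally \emph{not} equal to $d_\epsilon$. Since you immediately pass to $H\Z$-modules, where $\eta=0$ and $\GW$ acts through the rank, both reduce to $d$ and the conclusion is unaffected, but the identification $\chi(l'/k') = d_\epsilon$ should not be asserted.
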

\begin{proof}
It suffices to show that $H\Z/\ell^n[\tau^{-1}]$ is a summand of $(l/k)_* H\Z/\ell^n|_l[\tau|_l^{-1}]$.
This is clear by the existence of transfers in $\DM(k)$ (see e.g., Corollary \ref{cor:galois-descent}).
\NB{this is total overkill...}).
\end{proof}

\begin{lemma} \label{lemm:bott-zeta}
Let $\zeta \in k$ be a primitive $\ell^n$-th root of unity and $\tau \in \pi_{0,-1} H\Z/\ell^n$.
Suppose that $\beta(\tau) = [\zeta]$, where \[ \beta: H^{0,1}(k, \Z/\ell^n) \to H^{1,1}(k, \Z) \] is the integral Bockstein and $[\zeta] \in H^{1,1}(k, \Z) \wequi k^\times$ is the element corresponding to $\zeta$. Then $\tau$ is a cohomological Bott element.
\end{lemma}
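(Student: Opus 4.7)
The plan is to reduce the statement to the case $n=1$ and then invoke Levine's theorem on inverting the motivic Bott element \cite{levine2000inverting}. First, by Lemma \ref{lemm:bott:powers-reduction}(2), it suffices to show that $\tau$, regarded as an element of $\pi_{0,-1}H\Z/\ell^n = \pi_{0,-1}H\Z/(\ell^n,\rho^\infty)$ (using $\rho$-completeness of $H\Z/\ell^n$ from Example \ref{ex:HZ-rho-complete}), is a cohomological Bott element in the sense that inverting it gives an étale localization of $H\Z/\ell^n$ over any field extension of $k$.

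To pass from $n$ to $1$, let $\bar\tau \in \pi_{0,-1}H\Z/\ell$ be the mod-$\ell$ reduction of $\tau$. Since $\tau$ tautologically reduces to $\bar\tau = \bar\tau^1$, Lemma \ref{lemm:bott:powers-reduction}(1) further reduces the claim to showing that $\bar\tau$ is a cohomological Bott element. Naturality of the connecting homomorphism applied to the map of short exact sequences from $(0\to \Z \xrightarrow{\ell^n} \Z \to \Z/\ell^n\to 0)$ to $(0\to\Z \xrightarrow{\ell} \Z \to \Z/\ell\to 0)$ with middle map $\mathrm{id}_\Z$ and left map multiplication by $\ell^{n-1}$ yields $\beta(\bar\tau) = \ell^{n-1}\beta(\tau) = \ell^{n-1}[\zeta] = [\zeta^{\ell^{n-1}}]$, using the identification $H^{1,1}(k,\Z) \cong k^\times$. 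Since $\zeta$ has order $\ell^n$, the element $\zeta' := \zeta^{\ell^{n-1}}$ is a primitive $\ell$-th root of unity, and this remains so after base change to any field extension $l/k$ (as $\ell$ is invertible in $l$).

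We may then invoke the main result of \cite{levine2000inverting}: over a field $l$ containing a primitive $\ell$-th root of unity $\zeta'$, and for a class in $H^{0,1}(l,\Z/\ell)$ whose integral Bockstein is $[\zeta']$, inverting the class on mod-$\ell$ motivic cohomology yields the étale localization. Applying this to $\bar\tau|_l$ for each $l/k$ completes the proof. The main obstacle is entirely absorbed into Levine's theorem, whose proof rests on the Beilinson--Lichtenbaum conjecture and transfer arguments in motivic cohomology; the remainder is routine bookkeeping, with the only mildly delicate step being the naturality computation tracking how the integral Bockstein behaves under mod-$\ell$ reduction to pin down that $\beta(\bar\tau)$ is still a primitive $\ell$-th root of unity.
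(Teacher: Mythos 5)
Your proof is correct, but takes a different route than the paper. The paper works directly with $\Z/\ell^n$-coefficients, without reducing to $n=1$: it observes that under the identification $\Z(1)\wequi\Gm$ the integral Bockstein corresponds to the inclusion $\mu_{\ell^n}\hookrightarrow\Gm$, so that $\tau$ defines an equivalence $\Z/\ell^n \wequi L_\et(\Z/\ell^n(1))$ (hence is an \'etale equivalence), and then concludes by citing Beilinson--Lichtenbaum \cite[Theorem 6.17]{voevodsky-BK} for the isomorphism $H^{**}(X,\Z/\ell^n)[\tau^{-1}]\cong H^{**}_\et(X,\Z/\ell^n)$. You instead reduce to $n=1$ via Lemma~\ref{lemm:bott:powers-reduction}(1) and then invoke Levine's theorem \cite{levine2000inverting} as a black box. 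Your Bockstein naturality computation is correct and verifies that the mod-$\ell$ reduction $\bar\tau$ still has Bockstein a primitive $\ell$-th root of unity; and Lemma~\ref{lemm:bott:powers-reduction}(1) does license the reduction (your preliminary invocation of part (2) of that lemma is superfluous, since by $\rho$-completeness of $H\Z/\ell^n$ from Example~\ref{ex:HZ-rho-complete} the objects $H\Z/\ell^n$ and $H\Z/(\ell^n,\rho^\infty)$ are literally the same). The main thing the paper's approach buys is self-containment: it explicitly verifies both that $\tau$ is an \'etale equivalence and that the telescope is \'etale local, whereas you fold both into Levine's theorem. Since Levine's result was originally conditional on Beilinson--Lichtenbaum (now a theorem) and is proved by the same mechanism, the two routes rest on the same input; the reduction to $n=1$ is harmless but gains nothing, as Beilinson--Lichtenbaum applies to $\Z/\ell^n$-coefficients directly.
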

\begin{proof}
Under the equivalence $\Z(1) \wequi \Gm$ \cite[Lemma 3.2(1)]{MR1744945} (\cite[Theorem 4.1]{mvw}), 
the Bockstein corresponds to the inclusion $\ul{\pi}_0(\Z/\ell^n(1)) \wequi \mu_{\ell^n} \hookrightarrow \Gm$.
It follows that $L_\et(\Z/\ell^n(1)) \wequi \mu_{\ell^n}$ and that $\tau$ defines an equivalence $\Z/\ell^n \wequi L_\et(\Z/\ell^n(1))$; in particular $\tau$ is an étale local equivalence.
It remains to show that for $X \in \Sm_k$ the induced map \[ H^{**}(X, \Z/\ell^n)[\tau^{-1}] = \colim_r H^{*,*+r}(X, \Z/\ell^n) \to H^{**}_\et(X, \Z/\ell^n) \] is an isomorphism.
This is immediate from the solution of the Beilinson-Lichtenbaum conjecture \cite[Theorem 6.17]{voevodsky-BK}.
\end{proof}

\subsection{Spherical Bott elements}
\begin{definition} \label{def:spherical-bott-element}
Let $1 \le m, n \le \infty$, $S$ a scheme.
By a \emph{spherical Bott element} we mean an element $\tau \in \pi_{0,-r}(\1/(\ell^n, \rho^m))$ such that for every map $x \to S$ where $x$ is the spectrum of a field, the induced element $H\Z \wedge \tau|_x \in \pi_{0,-r}(H\Z_x/(\ell^n,\rho^m))$ is a cohomological Bott element (see Definition \ref{def:cohomological-bott-element}).
\end{definition}

\begin{remark} \label{rmk:spherical-bott-base-change}
Spherical Bott elements are stable under base change, essentially by definition.
\end{remark}

\begin{lemma} \label{lemm:tau-self-map-construct}
Choose a multiplication\footnote{The existence of this structure will be extensively discussed in Appendix~\ref{sec:oka}.} on $\1/\ell^n \in \SH$ and one on $\1_\ell^\comp/\rho^m \in \SH(S)$.
Let $n' \le n$ and choose a $\1/\ell^n$-module structure $m$ on $\1/\ell^{n'}$.
Let $\tau \in \pi_{0,-r} \1/(\ell^n, \rho^m)$ be a spherical Bott element.

Then the composite \[ \tilde\tau: \1/(\ell^{n'},\rho^m) \xrightarrow{\tau \wedge \id} \Sigma^{0,r} \1/(\ell^{n},\rho^m) \wedge \1/(\ell^{n'},\rho^m)  \xrightarrow{m} \Sigma^{0,r} \1/(\ell^{n'},\rho^m) \] is a $\tau$-self map.
If the multiplication is chosen to be homotopy commutative, and the module structure homotopy associative, then the $\tau$-self map is good.
\end{lemma}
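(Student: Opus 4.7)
The plan is to verify the two conditions of Definition~\ref{def:tau-self-map} in turn. For the $\tau$-self map condition, I will first pull back to an arbitrary field-valued point $x \to S$ and smash $\tilde\tau$ with $H\Z_x$. Since $H\Z_x$ is a commutative ring spectrum, the resulting self map $H\Z_x \wedge \tilde\tau$ of $H\Z_x/(\ell^{n'},\rho^m)$ is $H\Z_x$-linear, and unwinding $\tilde\tau = m \circ (\tau \wedge \id)$ shows that it equals multiplication by the image $\tau'' \in \pi_{0,-r} H\Z_x/(\ell^{n'},\rho^m)$ of $H\Z_x \wedge \tau$ under the reduction map $H\Z_x/(\ell^n,\rho^m) \to H\Z_x/(\ell^{n'},\rho^m)$ induced by the module action on the unit of $\1/(\ell^{n'},\rho^m)$. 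In particular, $H\Z_x/(\ell^{n'},\rho^m)[\tilde\tau^{-1}] = H\Z_x/(\ell^{n'},\rho^m)[\tau''^{-1}]$, so the problem reduces to exhibiting $\tau''$ as a cohomological Bott element in the sense of Definition~\ref{def:cohomological-bott-element}.

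Since $\tau$ is spherical Bott, $H\Z_x \wedge \tau$ is a cohomological Bott element of $H\Z_x/(\ell^n,\rho^m)$ by Definition~\ref{def:spherical-bott-element}. Its image $\tau''$ in $H\Z_x/(\ell^{n'},\rho^m)$ shares the same mod-$\ell$ reduction, so I will apply the $\rho^m$-variant of Lemma~\ref{lemm:bott:powers-reduction}(1) -- whose proof, based on the cofiber sequence $H\Z/\ell^a \xrightarrow{\ell} H\Z/\ell^a \to H\Z/\ell \oplus \Sigma H\Z/\ell^{a-1}$, carries over verbatim after smashing with $\1/\rho^m$ -- to conclude that $\tau''$ is cohomological Bott as well. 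This yields the required étale localization $H\Z_x/(\ell^{n'},\rho^m) \to H\Z_x/(\ell^{n'},\rho^m)[\tilde\tau^{-1}]_{\ell,\rho}^\wedge$, establishing the $\tau$-self map property.

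For the goodness condition, assuming the multiplication is homotopy commutative and the module structure homotopy associative, I will equip $M := F/(\ell^{n'},\rho^m)[\tilde\tau^{-1}]_{\ell,\rho}^\wedge$ with the structure of a homotopy module over $R := \1/(\ell^n,\rho^m)[\tau^{-1}]$. The homotopy-associative action $m$ smashed with $F$ produces a $\1/(\ell^n,\rho^m)$-action on $F/(\ell^{n'},\rho^m)$; this extends to $R$ upon inverting $\tau$ (central by commutativity) and is preserved by $(\ell,\rho)$-completion since the completion is lax monoidal. Given any $\alpha: E \to M$, I form the extension $\tilde\alpha: E \wedge R \to M$ using the $R$-action and write
\[ \alpha \wequi \tilde\alpha \circ (E \wedge 1_R) : E \to E \wedge R \wequi E/(\ell^n,\rho^m)[\tau^{-1}] \to M, \]
which is the required factoring. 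Finally, multiplication by $\tau$ on $\1/(\ell^n,\rho^m)$ is itself a $\tau$-self map, since its $H\Z_x$-smash is multiplication by $H\Z_x \wedge \tau$, which is cohomological Bott by assumption; so goodness holds with $\tilde\tau' = \tau$, $n''=n$, $m''=m$.

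The main obstacle will be the careful bookkeeping of homotopy-coherent multiplicative structures on the Moore spectra and their quotients, especially at the prime $\ell=2$, where the relevant multiplication is only pro-asymptotically associative in the sense of Appendix~\ref{sec:oka}. One must check that the identification of $H\Z_x \wedge \tilde\tau$ with multiplication by $\tau''$ is really independent of the ambiguity in the Moore spectrum multiplications, and that the $R$-module structure on $M$ is well-defined and associative enough to support the factoring argument after both the $\tau$-inversion colimit and the $(\ell,\rho)$-completion. A secondary issue is verifying that the $(\ell^a,\rho^m)$-variant of Lemma~\ref{lemm:bott:powers-reduction} used above is valid; this should follow from the same cofiber-sequence argument but needs to be recorded.
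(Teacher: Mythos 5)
Your overall strategy tracks the paper's proof: reduce to a field-valued point, identify $H\Z_x\wedge\tilde\tau$ as multiplication by a cohomological Bott element, and obtain goodness via an algebra/module factoring argument.  However, the two issues you flag at the end as "obstacles to check" are not merely bookkeeping annoyances — they are precisely the content of the two results the paper supplies from the appendices, and leaving them open leaves the proof incomplete.

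For the $\tau$-self map property, the identification of $H\Z_x\wedge\tilde\tau$ with multiplication by $\tau''$ is exactly what Corollary~\ref{cor:multn-HZ-correct} is for.  The point is that after smashing with $H\Z$, the module structure on $H\Z/\ell^{n'}$ over $H\Z/\ell^n$ induced by the chosen (highly non-unique, and at $\ell=2$ not even associative) module structure $m$ on $\1/\ell^{n'}$ becomes the \emph{canonical} one, by the uniqueness statement of Lemma~\ref{lemm:uniqueness-mult}.  This is what makes the action of $\tau_{H\Z}$ agree with multiplication by its reduction $\tau''$, and what makes the resulting multiplication fall under the quantifier in Definition~\ref{def:cohomological-bott-element} ("every choice of multiplication \ldots under the standard one").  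Your $\rho^m$-extension of Lemma~\ref{lemm:bott:powers-reduction}(1) is fine as far as it goes, but without Corollary~\ref{cor:multn-HZ-correct} the argument does not close.

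For goodness, the factoring $\alpha\wequi\tilde\alpha\circ(E\wedge 1_R)$ is not a homotopy: $\tilde\alpha\circ(E\wedge 1_R)=\epsilon\circ\alpha$, where $\epsilon$ is right multiplication by the unit $\bar u$ on $M[\tau^{-1}]$, and $\epsilon$ is only an \emph{equivalence}, not the identity (the telescope $R=\1/(\ell^n,\rho^m)[\tau^{-1}]$ carries only a weakly-unital homotopy multiplication — see Lemma~\ref{lem:invert}(4) and Lemma~\ref{lem:modules}(3)).  The correct factoring is $\alpha=\epsilon^{-1}\circ\tilde\alpha\circ(E\wedge 1_R)$, which is exactly what Corollary~\ref{cor:factor-through-free} establishes (and is why the hypotheses there include homotopy commutativity and associativity, matching the lemma's assumptions).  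So your argument has the right shape, but both of the "obstacles" you defer are the actual work; they are resolved by Corollary~\ref{cor:multn-HZ-correct} and Corollary~\ref{cor:factor-through-free}, respectively, which is exactly how the paper's two-line proof goes.
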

\begin{proof}
We need to check that forming the mapping telescope of $\tilde\tau \wedge H\Z|_k$ is an étale localization, for every field $k$.
This is true basically by definition, as soon as we know that the multiplication on $H\Z/\ell^n$ induced by the one on $\1/\ell^n$ is the standard one.
This is indeed the case, by Corollary \ref{cor:multn-HZ-correct}.

Goodness of the $\tau$-self map follows from Corollary \ref{cor:factor-through-free}.
\end{proof}

\section{Construction of spherical Bott elements} \label{sec:construction}

\subsection{Construction via roots of unity} \label{sec:bott-primitive}
\begin{lemma} \label{lem:bott-exists}
Let $\ell^n \ne 2$ and suppose $S$ contains a primitive $\ell^n$-th root of unity $\zeta$.
Then there exists a spherical Bott element mod $(\ell^n, \rho^\infty)$.
\end{lemma}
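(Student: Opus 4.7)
The plan is to produce the spherical Bott element as a class in $\pi_{0,-1}(\1_S/(\ell^n,\rho^\infty))$ lifting $[\zeta]\colon \1 \to \Gm$, and to verify the Bott property via the unit $\1 \to H\Z$ and Lemma \ref{lemm:bott-zeta}. The starting point is a Milnor-Witt calculation. The $\eta$-logarithmic relation $[ab] = [a] + [b] + \eta[a][b]$ iterates to $[\zeta^N] = N_\epsilon \cdot [\zeta]$, so from $\zeta^{\ell^n} = 1$ one obtains $(\ell^n)_\epsilon \cdot [\zeta] = 0$ in $\pi_{*,*}(\1_S)$. Expanding $\langle -1\rangle = 1 + \eta\rho$ in the definition of $(\ell^n)_\epsilon$ yields $(\ell^n)_\epsilon = \ell^n + c\,\eta\rho$ for an integer $c$ (namely $c = (\ell^n-1)/2$ when $\ell$ is odd, and $c = \ell^n/2$ when $\ell = 2$ and $n \ge 2$); both values are integral precisely because $\ell^n \ne 2$, which is the hypothesis. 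Therefore $\ell^n \cdot [\zeta] = -c\,\eta\rho\cdot[\zeta]$, so the obstruction to $[\zeta]$ being $\ell^n$-torsion in $\pi_{-1,-1}(\1_S)$ lies entirely in the image of $\rho$.

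The construction of $\tau$ proceeds through the Bockstein long exact sequence associated to $\1_{S,\rho}^\wedge \xrightarrow{\ell^n} \1_{S,\rho}^\wedge \to \1_S/(\ell^n,\rho^\infty)$. The goal is to exhibit a class $\xi \in \pi_{-1,-1}(\1_{S,\rho}^\wedge)$ satisfying $\ell^n \xi = 0$ and differing from $[\zeta]$ only by an $\eta$-multiple. Because $\eta \in \pi_{1,1}(\1)$ is killed by the unit map to $H\Z$ (indeed $H^{-1,-1}(\kappa(x),\Z) = 0$), such a correction is invisible in motivic cohomology and therefore preserves the desired Bott property. Solving $\ell^n y = c\eta\rho[\zeta]$ with $y$ an $\eta$-multiple amounts to an inductive construction along the $\rho$-adic tower $\{\1/\rho^m\}_m$, producing compatible lifts $\xi_m$ and hence $\xi = \lim_m \xi_m$; the resulting $\ell^n$-torsion class yields $\tau \in \pi_{0,-1}(\1_S/(\ell^n,\rho^\infty))$ via the Bockstein sequence.

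For the Bott property, I would pass to a field point $x \to S$ and examine the image of $\tau$ in $\pi_{0,-1}(H\Z_x/\ell^n) \cong H^{0,1}(\kappa(x),\Z/\ell^n)$, where the equivalence uses $\rho$-completeness of $H\Z$ (Example \ref{ex:HZ-rho-complete}). The integral Bockstein embeds this group as $\mu_{\ell^n}(\kappa(x)) \subset H^{1,1}(\kappa(x),\Z) = \kappa(x)^\times$, and since the correction is an $\eta$-multiple it vanishes in $H\Z$, so the Bockstein of the image equals $[\zeta|_x]$. Lemma \ref{lemm:bott-zeta} then certifies that this image is a cohomological Bott element over $\kappa(x)$, which holds for every field point of $S$, making $\tau$ a spherical Bott element by Definition \ref{def:spherical-bott-element}.

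The main obstacle is the inductive construction of the correction $y$ in the second step: the first-order relation $\ell^n[\zeta] = -c\eta\rho[\zeta]$ only kills the obstruction modulo $\rho$, and coordinating the higher-order corrections compatibly through the entire $\rho$-adic tower takes significant care. The hypothesis $\ell^n \ne 2$ enters crucially, both to ensure $c$ is an honest integer and to avoid degeneracies in Milnor-Witt arithmetic at $2$ (where the $\eta\rho$ correction can behave anomalously).
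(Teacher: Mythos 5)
Your underlying idea — lift $[\zeta]$ through the Bockstein of $\1_{\ell,\rho}^\comp \xrightarrow{\ell^n} \1_{\ell,\rho}^\comp$ and appeal to Lemma \ref{lemm:bott-zeta} on $H\Z$ — is exactly the paper's, but you have not noticed the simplification that makes the whole thing trivial, and instead introduce a genuine gap where the paper has none.

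The correction term $c\,\eta\rho$ you compute in $(\ell^n)_\epsilon = \ell^n + c\,\eta\rho$ actually vanishes identically in the $(\ell,\rho)$-complete category once $\ell^n \ne 2$. The case split is: if $\ell$ is odd, then $\ell$-completion inverts $2$, and by Remark \ref{rmk:rho-completion}(2) the $\rho$-completion then lands in $\SH(S)[1/2]^+$, where $\lra{-1}=1$ and hence $\eta\rho = \lra{-1}-1 = 0$. If $\ell = 2$, then $\ell^n \ne 2$ forces $n \ge 2$, so $S$ contains a primitive $4$-th root of unity, i.e.\ $\sqrt{-1} \in S$, and Proposition \ref{prop:powers-n-epsilon}(1) gives $\lra{-1}=1$ on the nose, hence again $\eta\rho = 0$. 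In either case $(\ell^n)_\epsilon = \ell^n$ as endomorphisms of $\1_{\ell,\rho}^\comp$, so $\ell^n[\zeta] = (\ell^n)_\epsilon[\zeta] = [\zeta^{\ell^n}] = 0$ already, with no correction class $y$ to construct. This is the actual reason $\ell^n \ne 2$ is hypothesized — not, as you suggest, the integrality of $c$ (which is incidental).

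By missing this, you have manufactured the ``inductive construction of the correction $y$ along the $\rho$-adic tower'' as the central difficulty, and you yourself flag it as the main obstacle without resolving it. That step is not merely hard but unnecessary: there is nothing to correct. Worse, as written it is not clear the induction closes — solving $\ell^n y = c\eta\rho[\zeta]$ requires dividing by $\ell^n$, which is not available, and the compatibility of lifts through $\{\1/\rho^m\}_m$ is not established. If you instead observe $\eta\rho = 0$ at the outset, your proposed induction collapses to the empty argument and you recover the paper's two-line proof: $\ell^n[\zeta]_{\ell,\rho}^\comp = 0$, apply the Bockstein cofiber sequence for $\1_{\ell,\rho}^\comp/\ell^n(1) \to \Gm_{\ell,\rho}^\comp \xrightarrow{\ell^n} \Gm_{\ell,\rho}^\comp$ to produce $\tau$, and invoke Lemma \ref{lemm:bott-zeta}.
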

\begin{proof}
Note that either $\ell$ is odd or $-1$ is a square on $S$.
In both cases, using the notation $n_\epsilon$ from \S\ref{sec:standard}, we find that 
\[ \ell^n_\epsilon = \ell^n: \1_{\ell,\rho}^\comp \to \1_{\ell,\rho}^\comp.\] 
By construction we have a cofibration sequence \[ \1_{\ell,\rho}^\comp/\ell^n(1) \xrightarrow{\beta} \Gm_{\ell,\rho}^\comp \xrightarrow{\ell^n} \Gm_{\ell,\rho}^\comp, \] whence by Lemma \ref{lemm:bott-zeta} a spherical Bott element exists as soon as $\ell^n[\zeta]_{\rho,\ell}^\comp = 0$.
This is immediate from Proposition \ref{prop:powers-n-epsilon}.
\end{proof}

\subsection{Lifting to higher $\ell$-powers} \label{sec:bott-power}
Given an object $E$ in a triangulated symmetric monoidal category and a map $m: E \otimes E \to E$, by a \emph{derivation} $\delta: E \to E[1]$ we mean a map such that $\delta \circ m = m \circ (\id \otimes \delta + \delta \otimes \id)$.
\begin{lemma} \label{lemm:power-derivation}
Let $\scr C$ be a symmetric monoidal triangulated category. Suppose $E \in \scr C$ is a non-commutative, non-associative and non-unital algebra, and let $\delta: E \to E[1]$ be a derivation.
Then for $X \in \scr C$ and $t: X \to E$ we have \[ \delta(t^n) = \sum_{i = 1}^n t \dots t \delta(t) t \dots t: X^{\otimes n} \to E[1]. \]
Here the sum is over all possible ways of replacing one instance of $t$ in the string $t^n$ by $\delta(t)$, and both sides are associated from the left (i.e., $t^n = t(t(\dots(tt)\dots))$ and so on).
\end{lemma}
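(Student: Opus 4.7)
The plan is to proceed by induction on $n$, using the Leibniz rule satisfied by $\delta$ together with the specified left-bracketing convention $t^n = t \cdot t^{n-1}$.

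The base case $n = 1$ is trivial: the right-hand side has only one summand, obtained by replacing the unique $t$ by $\delta(t)$, so both sides equal $\delta(t)$. For the inductive step, assume the formula holds for $n - 1$. By the bracketing convention we may write $t^n = m \circ (t \otimes t^{n-1})$ as a morphism $X^{\otimes n} \to E$, where on the right $t^{n-1}: X^{\otimes (n-1)} \to E$ is itself built from the given bracketing. Applying the defining identity $\delta \circ m = m \circ (\delta \otimes \id + \id \otimes \delta)$ to the map $t \otimes t^{n-1}: X \otimes X^{\otimes(n-1)} \to E \otimes E$ yields
\[
\delta(t^n) \;=\; m \circ \bigl(\delta(t) \otimes t^{n-1}\bigr) \;+\; m \circ \bigl(t \otimes \delta(t^{n-1})\bigr).
\]

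The first summand is precisely the term in which $\delta(t)$ replaces the leftmost factor of $t$ in $t^n$. For the second, substitute the inductive expression $\delta(t^{n-1}) = \sum_{i=1}^{n-1} t\cdots\delta(t)\cdots t$, where in the $i$-th summand $\delta(t)$ sits in the $i$-th of $n-1$ slots. Composing with $m \circ (t \otimes -)$ prepends a factor of $t$ on the left, giving, for each $i$, a term in which $\delta(t)$ occupies the $(i+1)$-st of $n$ slots. Together with the first summand, we obtain exactly the $n$ expected terms indexed by which instance of $t$ is replaced by $\delta(t)$, proving the formula for $n$.

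The main subtlety, rather than any actual obstacle, is bookkeeping the bracketing: since $E$ is only non-commutative and non-associative, each term $t\cdots\delta(t)\cdots t$ must be parsed according to the same nested bracketing that defines $t^n$, and the inductive step produces precisely these bracketings because prepending $t\cdot(-)$ to the bracketed expression for $t^{n-1}$ is, by convention, how $t^n$ is itself formed. One might worry about Koszul signs when commuting the degree-one map $\delta$ past the degree-zero maps $t$, but no such signs arise: all the $t$'s have degree zero, so the naive Leibniz rule used above is the correct one in the triangulated setting.
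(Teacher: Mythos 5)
Your proof is correct and follows essentially the same route as the paper: write $t^n = m\circ(t\otimes t^{n-1})$ (left bracketing), apply the derivation identity $\delta\circ m = m\circ(\id\otimes\delta + \delta\otimes\id)$ to get $\delta(t^n)=\delta(t)t^{n-1}+t\delta(t^{n-1})$, and induct. Your remark on Koszul signs is also sound, since one only ever precomposes with degree-zero maps and the derivation identity is stated directly in terms of $\id\otimes\delta$ and $\delta\otimes\id$.
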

\begin{proof}
We write the map $\delta(t^{n+1})$ as \[ X^{\otimes n+1} \wequi X \otimes X^{\otimes n} \xrightarrow{t \otimes t^{\otimes n}} E \otimes E \xrightarrow{\id \otimes t^n} E \otimes E \xrightarrow{m} E \xrightarrow{\delta} E[1]. \]
The definition of a derivation implies that this is the sum of the two maps $t\delta(t^n)$ and $\delta(t)t^n$.
The result thus follows by induction, starting at $n=2$ where we use the definition of a derivation.
\end{proof}

Recall the notion of a regular multiplication from \S\ref{sec:def-set}.
\begin{proposition} \label{prop:bott-power}
Let $q$ be a prime power, fix a regular multiplication on $\Mp{q} \in \SH$ and assume there exists a spherical Bott element mod $(q,\rho^\infty)$.
Then for every $n \ge 1$ there exists a spherical Bott element mod $(q^n,\rho^\infty)$.
\end{proposition}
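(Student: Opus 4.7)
The plan is to prove the result by induction on $n$, constructing at each step a spherical Bott element $\tau_n \in \pi_{0,-qr}(\1/(q^n, \rho^\infty))$ whose reduction modulo $(q, \rho^\infty)$ equals $\tau^q$, where $\tau$ is the given mod-$(q,\rho^\infty)$ spherical Bott element and the power is formed using the regular multiplication on $\Mp q$. The base case $n=1$ is immediate: $\tau_1 := \tau^q$ is a spherical Bott element by Remark~\ref{rmk:product-bott-elts} applied after smashing with $H\Z$ at each field point of $S$.

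For the inductive step, the octahedral axiom applied to the factorization $q^{n+1} = q \cdot q^n$ yields a cofiber sequence
\[
\1/(q, \rho^\infty) \to \1/(q^{n+1}, \rho^\infty) \to \1/(q^n, \rho^\infty) \xrightarrow{\delta_n} \1/(q, \rho^\infty)[1]
\]
in $\SH(S)_\rho^\wedge$, so lifting $\tau_n$ to some $\tau_{n+1} \in \pi_{0,-qr}(\1/(q^{n+1}, \rho^\infty))$ reduces to showing $\delta_n(\tau_n) = 0$. A routine octahedral diagram chase identifies $\delta_n$ with the composite of the reduction $\bar\pi_n : \1/(q^n, \rho^\infty) \to \1/(q, \rho^\infty)$ with the Bockstein $\beta: \1/(q, \rho^\infty) \to \1/(q, \rho^\infty)[1]$. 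Since $\bar\pi_n(\tau_n) = \tau^q$ by induction, the obstruction equals $\beta(\tau^q)$. Regularity of the multiplication on $\Mp q$ ensures that $\beta$ is a derivation, so Lemma~\ref{lemm:power-derivation} expresses $\beta(\tau^q)$ as a sum of $q$ terms, each obtained by inserting $\beta(\tau)$ at one position of the product. Graded commutativity of the regular multiplication in the bidegree of interest identifies every summand with $\tau^{q-1}\beta(\tau)$, so the sum equals $q \cdot \tau^{q-1}\beta(\tau) = 0$ in $\pi_*(\1/(q, \rho^\infty))$. The resulting lift $\tau_{n+1}$ reduces modulo $(q, \rho^\infty)$ to $\tau^q$, and is therefore a spherical Bott element by Lemma~\ref{lemm:bott:powers-reduction}(1) applied pointwise after smashing with $H\Z$, completing the induction.

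The main technical obstacle is collapsing the derivation sum to a single representative, since Lemma~\ref{lemm:power-derivation} carries no commutativity hypothesis and for a generic multiplication the $q$ insertions need not coincide. This is where the full force of the \emph{regular multiplication} on $\Mp q$ (as defined in \S\ref{sec:def-set}) must be used: it should supply enough graded commutativity in the bidegree of $\tau$ and $\beta(\tau)$ for all summands to agree. For $q$ odd this is essentially automatic from the homotopy commutative structure available on $\Mp q$; for even prime powers $q$ one presumably has to appeal to the more delicate pro-asymptotic multiplicative structure developed in the appendix on Moore spectrum multiplications.
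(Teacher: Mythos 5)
Your identification of the connecting map $\delta_n$ with $\beta \circ \bar\pi_n$ is not correct, and this breaks the argument. Writing $\partial_n : \1/q^n \to \1[1]$, $\partial_1: \1/q \to \1[1]$ for the boundary maps and $u_1: \1 \to \1/q$ for the unit, the octahedral connecting map is $\bar\delta = u_1 \circ \partial_n$, whereas your composite is $\beta \circ \bar\pi_n = u_1 \circ \partial_1 \circ \bar\pi_n$. These differ: the commutative ladder defining the reduction $\bar\pi_n$ (with $q^{n-1}$ on the left column) yields $\partial_1 \bar\pi_n = q^{n-1}\partial_n$, so $\beta\bar\pi_n = q^{n-1}\bar\delta$, which is \emph{zero} for $n \ge 2$ since $q$ acts by zero on $\1/q$. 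But $\bar\delta$ itself is not zero in general — already in $D(\Z)$ it is the nontrivial class of $\mathrm{Ext}^1_\Z(\Z/q^n,\Z/q)\cong \Z/q$ classifying the non-split extension $\Z/q^{n+1}$. So if your identification held, the lifting obstruction would vanish for free and the whole proposition would be a tautology — a sign something has gone wrong. In particular the obstruction $\bar\delta(\tau_n)$ genuinely depends on the lift $\tau_n$, not only on its mod-$q$ reduction $\tau^q$, so the scheme of lifting a fixed-weight $\tau^q$ through all powers of $q$ cannot be controlled this way.

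The paper avoids this by working with the derivation on $\Mp{q^n}$ itself, not on $\Mp{q}$. It first arranges (via Oka's Lemma 5) a tower of \emph{regular multiplications} on all the $\Mp{q^n}$ such that the reduction maps are ring maps; then $\bar\delta = r\delta$ where $\delta = u_n\partial_n$ is the coboundary of $\Mp{q^n}$, and since this $\delta$ is a derivation one can apply Lemma~\ref{lemm:power-derivation} directly in $\Mp{q^n}$ and push forward along $r$. This forces the inductive scheme to raise the power at each step: given a Bott element $\tau$ mod $q^n$, it is $\tau^q$ (in $\pi_{0,*}\Mp{q^n}$, of increasing weight) that lifts mod $q^{n+1}$ — not a fixed $\tau^q$ carried through all stages. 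Your proposal omits the step of upgrading the multiplication to all $\Mp{q^n}$ entirely, which is in fact the main input the regularity hypothesis provides. Finally, the commutativity issue you flag is a real one (for $q\not\equiv 0\pmod 8$, $q$ even, the multiplication on $\Mp q$ is not homotopy commutative), but appealing to the pro-asymptotic structure in the appendix does not resolve it; the paper's mechanism is the observation that $\partial\delta=0$ forces $\delta(\tau)$ to commute and associate with everything, which collapses the $q$ summands even without commutativity or associativity of the ring.
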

\begin{proof}
Throughout the proof, all spectra are implicitly $(\rho,q)$-completed.

As a first step, choose regular multiplications on $\Mp{q^n}$ for all $n \ge 1$ such that the sequence of reduction maps \[ \dots \Mp{q^3} \to \Mp{q^2} \to \Mp{q} \] consists of morphisms of homotopy unital ring spectra. This is possible by \cite[Lemma 5]{oka1984multiplications}.
We shall proceed by induction on $n$.
Thus let $\tau$ be a spherical Bott element modulo $q^n$.
We claim that $\tau^q$ lifts to an element modulo $q^{n+1}$; this will be a spherical Bott element by Lemma \ref{lemm:bott:powers-reduction}(1).
To see this, we begin with the cofiber sequence in \cite[(3.3)]{rigidity-in-motivic-homotopy-theory} 
\[ \Mp{q} \to \Mp{q^{n+1}} \to \Mp{q^n} \xrightarrow{\bar\delta} \Mp{q}[1]. \]
We need to show that $\bar\delta(\tau^q) = 0$.
Let $r: \Mp{q^n} \to \Mp{q}$ be the reduction map; then $\bar\delta = r\delta$, where $\delta: \Mp{q^n} \to \Mp{q^n}[1]$ is the coboundary.
By our choice $r$ is multiplicative, and so by Lemma \ref{lemm:power-derivation} we have 
\[ \bar\delta(\tau^q) = r\left[ \delta(\tau)\tau^{q-1} + \tau\delta(\tau)\tau^{q-2} + \dots + \tau^{q-1}\delta(\tau) \right], \] where the sum consists of $q$ terms.

Note that $q = 0$ on $\Mp{q}$, since $\Mp{q}$ has a multiplication.
Suppose first that the multiplication on $\Mp{q^n}$ is associative and commutative.
Then each of the $q$ terms in our sum is the same\footnote{One might be concerned here about an absence of signs. We are given various maps $\1^{\wedge q} \to \1/q[1]$, differing by permutations \emph{of the source only}. But the switch map on $\1 \wedge \1$ is the identity, whence there are no signs.}, so the sum is zero, and we are done.

To finish the proof, we observe that we do not actually need the multiplication to be homotopy associative or commutative.
Since $\delta = u\partial$ (where $u: \1 \to \Mp{q^n}$ is the unit map) we get $\partial \delta = 0$.
This implies that any element commutes with $\delta(\tau)$, and any two elements associate with $\delta(\tau)$ (see Section \ref{sec:oka}).
Thus \[ \delta(\tau)\tau^{m+1} \stackrel{(1)}{=} \delta(\tau)(\tau\tau^m) \stackrel{(2)}{=} (\delta(\tau) \tau)\tau^m \stackrel{(3)}{=} (\tau \delta(\tau))\tau^m \stackrel{(2)}{=} \tau(\delta(\tau)\tau^m), \] where $(1)$ is by definition (everything being associated from the left), $(2)$ is because everything associates with $\delta(\tau)$ and $(3)$ is because everything commutes with $\delta(\tau)$.
This implies by induction that $\delta(\tau)\tau^m$ is independent of the order of multiplication, for any $m$, and so all the terms are the same, as before.
\end{proof}

\subsection{Construction by descent} \label{sec:Bott-descent}
Let $f: S' \to S$ be a morphism of schemes and $\sigma: S' \to S'$ be an automorphism over $S$.
Then $f \sigma = f = f \sigma^{-1}$ and so $\sigma^* f^* \stackrel{\alpha}{\wequi} f^*$, $f_* \sigma^{-1}_* \stackrel{\beta}{\wequi} f_*$.
This provides us with a sequence of equivalences \[ f_*f^* \stackrel{f_*\alpha}{\wequi} f_* \sigma^* f^* \wequi f_* \sigma^{-1}_* f^* \stackrel{\beta f^*}{\wequi} f_*f^*, \]
where we have used that an adjoint of an equivalence is canonically equivalent to the inverse.
Hence for every object $E \in \SH(S)$ we get an automorphism \[ \sigma_E: f_*f^* E \to f_*f^* E. \]
This construction is natural in $E$.

The following result is proved in Appendix \ref{sec:galois}; see Corollary \ref{cor:galois-descent} and the preceding paragraphs.
\begin{proposition} \label{prop:galois-descent}
Let $f:S' \to S$ be a finite Galois cover with group $G$.
\begin{enumerate}
\item The above incoherent construction refines to a coherent action, i.e., a functor 
\[BG \to \SH(S); * \mapsto f_*f^* E.
\]
\item The unit of adjunction $E \to f_*f^*E$ refines to a $G$-equivariant map (for the trivial action of $G$ on $E$).
\item Suppose that $E \in \SH(S)[1/2, 1/|G|]^+$.
  Then $E \to f_*f^*E$ exhibits $E$ as the homotopy fixed points of the $G$-action on $f_*f^* E$, and this limit diagram is preserved by any additive functor.
\end{enumerate}
\end{proposition}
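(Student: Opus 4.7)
The plan is to construct a coherent $G$-action on the endofunctor $f_* f^*$ of $\SH(S)$ and then identify its homotopy fixed points via an averaging argument using the finite étale trace. For parts (1) and (2), the Galois hypothesis means that $S'$ is naturally a $G$-object in $\Sm_S$, i.e., there is a functor $\bar f: BG \to \Sm_S$ sending the unique object to $S'$ with its $G$-action over $S$. Composing with $\SH$, viewed as a functor on $\Sm_S$ both contravariantly via $f^*$ and (by passage to right adjoints) covariantly via $f_*$, endows $\SH(S')$ with a coherent $G$-action. Since the structure map $f: S' \to S$ is $G$-equivariant for the trivial $G$-action on $S$, both $f^*: \SH(S) \to \SH(S')$ and $f_*: \SH(S') \to \SH(S)$ are $G$-equivariant, so their composite $f_* f^*$ acquires a coherent $G$-action on $\SH(S)$ whose underlying pointwise automorphisms are the $\sigma_E$ of the incoherent construction; evaluating at $E$ gives part (1). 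For part (2), the unit $\mathrm{id}_{\SH(S)} \to f_* f^*$ is a morphism in $\Fun(BG, \Fun(\SH(S), \SH(S)))$ with trivial source, so its component at $E$ is $G$-equivariant by naturality.

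For part (3), I would first invoke the trace. Since $f$ is finite étale, the identification $f_! \wequi f_*$ together with the counit of $f_! \dashv f^!$ yields a trace $\mathrm{tr}: f_* f^* E \to E$ whose composite with the unit $\eta$ is multiplication by the quadratic Euler characteristic $\chi(S'/S) \in \pi_{0,0}(\1_S)$. On $\SH(S)[1/2]^+$ the endomorphism $\lra{-1}$ acts as the identity, so $\chi(S'/S)$ reduces to its rank $|G|$; by hypothesis $|G|$ is invertible, so $\eta$ is a split monomorphism on $\SH(S)[1/2, 1/|G|]^+$.

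Still with $|G|$ invertible, the averaging operator $e := |G|^{-1} \sum_{\sigma \in G} \sigma: f_* f^* E \to f_* f^* E$ is a homotopy idempotent whose splitting identifies $(f_* f^* E)^{hG}$ with the image of $e$ and makes the natural norm $(f_* f^* E)_{hG} \to (f_* f^* E)^{hG}$ an equivalence. Because splitting idempotents is an absolute limit/colimit, this presentation of the homotopy fixed points is preserved by every additive functor, which handles the last clause of (3). Part (2) gives $e \circ \eta = \eta$, so $\eta$ factors through $(f_* f^* E)^{hG}$ as a map $\widetilde \eta$; let $C$ denote its cofiber. After pulling back along $f$, the Galois identification $S' \times_S S' \wequi G \times S'$ gives $f^* f_* f^* E \wequi \prod_G f^* E$ with $G$ acting by permutation, whose fixed points along the diagonal are $f^* E$; hence $f^* \widetilde \eta$ is an equivalence and $f^* C \wequi 0$, so $f_* f^* C \wequi 0$. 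The trace splitting of the previous paragraph, applied now to $C \in \SH(S)[1/2, 1/|G|]^+$, exhibits $C$ as a retract of $f_* f^* C = 0$, forcing $C \wequi 0$.

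The main obstacle is the coherent upgrade in (1): while the pointwise automorphisms $\sigma_E$ described before the proposition are transparent, lifting them to a $BG$-diagram requires careful handling of the $(f^*, f_*)$-adjunction as a morphism of $BG$-diagrams of presentable $\infty$-categories, i.e., of the assembly of $\SH$ into a $G$-equivariant presheaf on $\Sm_S$. Once this is in place, part (3) reduces to the classical averaging argument in the $1/|G|$-local setting.
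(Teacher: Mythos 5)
Your proposal is correct, but it proceeds by a genuinely different route from the paper's, which relegates all the hard work to Appendix~C. For parts (1) and (2) you argue directly: $S'$ defines a functor $BG \to \Sm_S$, the premotivic structure gives a $G$-equivariant map of constant $BG$-diagrams $f^* \colon c\,\SH(S) \to \SH(S'_\bullet)$, and passing to right adjoints objectwise yields $f_*$ and hence the coherent action on $f_*f^*$; the unit inherits equivariance by naturality. The paper instead builds the action via Grothendieck's Galois theory: the natural transformation $\SH(\ph) \to \SH(g(\ph))$ on $\Span(\Fin_G)$ transports the $G$-action on $G \in \Span(\Fin_G)$, so all coherence is outsourced to the genuine equivariant category $\SH(BG)$.

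For part (3) the real divergence appears. You split the unit using the finite \'etale transfer (ambidexterity): $\mathrm{tr}\circ\eta = \chi(S'/S)$, which in $\SH(S)[1/2,1/|G|]^+$ is the invertible scalar $|G|$; you factor $\eta$ through the image $(f_*f^*E)^{hG}$ of the averaging idempotent as $\widetilde\eta$, prove $f^*\widetilde\eta$ is an equivalence by the base-change identification $f^*f_*f^*E \wequi \prod_G f^*E$, and then kill the cofiber $C$ by feeding $f_*f^*C = 0$ back through the transfer retraction. The paper instead proves a purely diagrammatic statement in $\Span(\Fin_G)$ (Lemma~\ref{lemm:tautological-fixed-points}): writing $a,b$ for the span maps $* \to G \to *$, the fact that $ab$ is (up to a unit) the averaging idempotent and $ba = (G)$ is invertible forces $a$ to be an equivalence onto $G^{hG}$; this is then pushed forward along the symmetric monoidal functor $\SH(BG) \to \SH(S)[1/2,1/|G|]^+$, which inverts $(G)$ by Lemma~\ref{lemm:invert-G-plus} (itself a reduction to fields). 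Both arguments share the key input that, in a $\Z[1/|G|]$-linear idempotent-complete stable category, homotopy fixed points of a $G$-object are the image of the averaging idempotent (Proposition~\ref{prop:BG-n-inverted}, Corollary~\ref{cor:compute-fixed}) and are preserved by additive functors (Corollary~\ref{cor:universal-fixed}). Your argument buys concreteness and avoids the span-category formalism at the cost of invoking base change and the \'etale transfer; the paper's buys portability (the same Lemma~\ref{lemm:tautological-fixed-points} applies verbatim to $\DM(S,\Z[1/|G|])$ and any other target inverting $(G)$). One small point to tidy up: to conclude that $\chi(S'/S)$ reduces to $|G|$ on $\SH(S)[1/2]^+$ you need $\lra{a}=1$ for \emph{all} units $a$, not just $a=-1$; this is standard because $\eta = 0$ there, so $\lra{a} = 1 + \eta[a] = 1$, but it deserves a sentence. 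Also, as the paper does, the ``preserved by any additive functor'' clause should be read with $|G|$ invertible in the target as well, so that the averaging idempotent still computes fixed points there.
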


\begin{remark} \label{rmk:descent-summand}
In the situation of Proposition \ref{prop:galois-descent}(3), the map $E \to f_*f^*E$ is a split injection, i.e., $E$ is a summand of $f_*f^* E$.
See Corollary \ref{cor:compute-fixed} for details.
\end{remark}

\begin{corollary} \label{cor:bott-descent}
Let $q=\ell^n$ be an odd prime power, $q \ne 3$.
Assume $1/\ell \in S$ and let $S'$ be obtained from $S$ by adjoining a primitive $\ell$-th root of unity. 
If $S'$ affords a spherical Bott element mod $(q,\rho^\infty)$ then so does $S$.
\end{corollary}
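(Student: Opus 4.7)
The plan is to produce a Galois-invariant ``norm'' of the given Bott element on $S'$ and descend it along $f \colon S' \to S$. Write $G = \mathrm{Gal}(S'/S) \subseteq (\Z/\ell)^\times$, so $|G|$ divides $\ell-1$ and in particular is coprime to $\ell$. Set $E = \1/(q, \rho^\infty) \in \SH(S)$. Since $\ell$ is odd and $E$ is $q$-torsion, both $2$ and $|G|$ act invertibly on $E$; and by Remark \ref{rmk:rho-completion}(2) the $\rho$-completion annihilates the minus summand, so $E \in \SH(S)[1/2, 1/|G|]^+$. Proposition \ref{prop:galois-descent}(3) therefore identifies $E \wequi (f_* f^* E)^{hG}$, and because $|G|$ is invertible on the relevant homotopy groups the homotopy fixed-point spectral sequence collapses, yielding a natural isomorphism $\pi_{0,*}(E) \wequi \pi_{0,*}(f^* E)^G$.

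Let $\tau' \in \pi_{0,-r}(f^* E)$ be the given spherical Bott element. Since $q = \ell^n$ is an odd prime power with $q \ne 3$, $\Mp{q}$ admits a regular (and in particular homotopy commutative) multiplication by the discussion in Appendix \ref{sec:oka}, inducing a multiplication on $f^* E$. Form the norm
\[
N(\tau') \;=\; \prod_{\sigma \in G} \sigma^* \tau' \;\in\; \pi_{0,-r|G|}(f^* E).
\]
Homotopy commutativity makes the product independent of ordering, and reindexing $\sigma \mapsto \sigma_0 \sigma$ shows $\sigma_0^* N(\tau') = N(\tau')$ for every $\sigma_0 \in G$, so $N(\tau')$ is $G$-invariant and therefore descends to an element $\tau \in \pi_{0,-r|G|}(E)$.

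To check that $\tau$ is a spherical Bott element, let $x \to S$ be a field. The base change $S' \times_S x$ is finite étale over $x$ of degree $|G|$, so it splits as $\coprod_i \Spec(l_i)$ with each $l_i/x$ finite separable of degree dividing $|G|$ and hence coprime to $\ell$. The pullback $\tau|_{l_i}$ coincides with the product of the $G$-conjugates of $\tau'|_{l_i}$; each conjugate is a cohomological Bott element by the hypothesis on $\tau'$ applied to the field $l_i \to S'$, and by Remark \ref{rmk:product-bott-elts} their product remains a cohomological Bott element. Lemma \ref{lemm:bott-descent} then lifts this conclusion from $l_i$ to $x$, finishing the verification. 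The main obstacle is the descent step itself: a $G$-fixed element of $\pi_0(f^* E)$ only lifts to $\pi_0(E)$ because invertibility of $|G|$ kills the higher group cohomology, which is exactly what puts us inside $\SH(S)[1/2, 1/|G|]^+$; correspondingly, the exclusion $q \ne 3$ enters only as an auxiliary hypothesis, to guarantee the commutative multiplicative structure on $\Mp{q}$ needed to form the norm in the first place.
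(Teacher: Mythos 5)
Your argument is essentially the paper's proof: descend to $S$ using the Galois fixed points from Proposition~\ref{prop:galois-descent}(3) (with membership in $\SH(S)[1/2,1/|G|]^+$ secured because $q$ is odd, $|G|\mid(\ell-1)$, and the $\rho$-completion kills the minus summand), take the norm $\prod_{\sigma\in G}\sigma^*\tau'$, check that it descends, and then verify the spherical Bott property via Remark~\ref{rmk:product-bott-elts} and Lemma~\ref{lemm:bott-descent}.

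However, you misidentify what the hypothesis $q\ne 3$ buys. By Oka's results recalled in Appendix~\ref{sec:oka}, a multiplication on $\Mp{q}$ is homotopy commutative whenever $q$ is odd; no extra hypothesis is needed for that. What fails for $q=3$ is homotopy \emph{associativity} (the only odd prime power $\equiv\pm3\pmod 9$ is $3$ itself). Associativity is precisely what is needed to make the expression $\prod_{\sigma\in G}\sigma^*\tau'$ unambiguous as an element of $\pi_{0,-r|G|}$ without specifying a bracketing; commutativity then lets you reindex the product and conclude $G$-invariance. So when you write that ``homotopy commutativity makes the product independent of ordering'' and that ``$q\ne 3$ enters \ldots to guarantee the commutative multiplicative structure,'' you are glossing over the associativity requirement entirely, and attributing the exclusion of $q=3$ to the wrong axiom. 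The paper's proof explicitly says the multiplication is ``commutative \emph{and} associative (here we use that $q\ne 3$).'' The overall structure of your argument is sound, but this explanation should be corrected.
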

\begin{proof}
We again complete everything implicitly at $(\rho,\ell)$.

If $S=S'$ there is nothing to prove, so assume $S' \ne S$.
Thus $f: S' \rightarrow S$ is a Galois cover with Galois group $G \subset \Z/(\ell-1)$ \cite[Corollary 10.4]{neukirch2013algebraic}.
Consequently \[ \Mp{q} \in \SH(S)[1/2, 1/|G|]^+ \] and so by Proposition \ref{prop:galois-descent} we get $[\1, \Mp{q}(m)]_S = [\1, \Mp{q}(m)]_{S'}^G$.
Let $\tau \in [\1, \Mp{q}(m)]_{S'}$ be a spherical Bott element and put \[ \tau' = \prod_{g \in G} (g \tau) \in [\1, \Mp{q}(|G|m)]_{S'}. \]
Then for $h \in G$ we have \[ h\tau' = \prod_g (hg \tau) = \tau', \] since the multiplication in $\Mp{q}$ is commutative and associative (here we use that $q \ne 3$).
In other words $\tau'$ is fixed by $G$ and so defines an element $\tau_S \in [\1, \Mp{q}(|G|m)]_S$.
By construction, $f^*\tau_S = \prod_g (g^* \tau)$ is a product of spherical Bott elements and hence a spherical Bott element (see Remark \ref{rmk:product-bott-elts}).
By Lemma \ref{lemm:bott-descent} it follows that $\tau_S$ is also a spherical Bott element.
\end{proof}

\subsection{Construction over special fields} \label{subsec:quad}
\subsubsection{Quadratically closed fields}
\begin{proposition} \label{prop:tau-p-closed}
Let $\ell$ be a prime, $S = Spec(k)$.
Assume that $k$ affords a primitive $\ell$-th root of unity,
\NB{so $char(k) \ne \ell$...}, 
and $k^\times/\ell \wequi \{1\}$ (i.e., every element of $k$ admits an $\ell$-th root).
Then there exists a spherical Bott element \[ \tau \in \pi_{0,-1}(\1_{\ell,\rho}^\comp). \]
\end{proposition}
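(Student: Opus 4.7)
The plan is to build $\tau$ as a coherent limit of mod-$\ell^n$ spherical Bott elements from Lemma~\ref{lem:bott-exists}, exploiting the hypothesis $k^\times/\ell = \{1\}$ to align the mod-$\ell^n$ pieces. First I would note that the hypothesis makes $k^\times$ $\ell$-divisible, so $k$ contains a primitive $\ell^n$-th root of unity for every $n$, and I would fix a compatible system $\zeta_1,\zeta_2,\ldots\in k$ with $\zeta_{n+1}^\ell = \zeta_n$. When $\ell = 2$, the assumption forces $-1$ to be a square, so Proposition~\ref{prop:powers-n-epsilon}(1) gives $\lra{-1}=1$ and by Remark~\ref{rmk:rho-completion}(1) the $\rho$-completion is harmless; for odd $\ell$, the $(\ell,\rho)$-complete sphere lies in the ``$+$''-part by Remark~\ref{rmk:rho-completion}(2), where again $\lra{-1}=1$. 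In either case $n_\epsilon = n$ in all the relevant computations below.

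Next I would invoke Lemma~\ref{lem:bott-exists} to produce, for each $n$ with $\ell^n\ne 2$, an element $\tau_n \in \pi_{0,-1}(\1_{\ell,\rho}^\comp/\ell^n)$ with Bockstein $\beta_n(\tau_n) = [\zeta_n]$, via the cofiber sequence $\1_{\ell,\rho}^\comp/\ell^n(1) \xrightarrow{\beta_n} \Gm{}_{\ell,\rho}^\comp \xrightarrow{\ell^n} \Gm{}_{\ell,\rho}^\comp$. The key compatibility observation is that the standard map of cofiber sequences yields $\beta_n\circ r_n = \ell\cdot\beta_{n+1}$ for the reduction $r_n \colon \1/\ell^{n+1} \to \1/\ell^n$, while $\ell\cdot[\zeta_{n+1}] = [\zeta_{n+1}^\ell] = [\zeta_n]$ by the previous step. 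Hence $r_n(\tau_{n+1})$ and $\tau_n$ share the same Bockstein, so their difference lies in $\ker\beta_n = \mathrm{im}(\pi_0(\1_{\ell,\rho}^\comp(1))/\ell^n)$. I would then inductively adjust each $\tau_n$ within this kernel to arrange strict compatibility $r_n(\tau_{n+1}) = \tau_n$.

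Finally I would pass to the limit: the compatible system gives a class in $\lim_n \pi_{0,-1}(\1_{\ell,\rho}^\comp/\ell^n)$, and the Milnor short exact sequence
\[
0 \to \lim_n{}^1 \pi_{1,-1}(\1_{\ell,\rho}^\comp/\ell^n) \to \pi_{0,-1}(\1_{\ell,\rho}^\comp) \to \lim_n \pi_{0,-1}(\1_{\ell,\rho}^\comp/\ell^n) \to 0
\]
produces a lift $\tau \in \pi_{0,-1}(\1_{\ell,\rho}^\comp)$. Its mod-$\ell^n$ reductions are the $\tau_n$, whose images in motivic cohomology are cohomological Bott elements by Lemma~\ref{lemm:bott-zeta}; since pulling back to any field point $x\to S$ preserves the primitive-root condition (so $\zeta_n|_x$ remains primitive), $\tau$ is a spherical Bott element as required. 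The hardest part will be arranging strict compatibility of the $\tau_n$'s, a cocycle-filling problem in the inverse system $\{\pi_0(\1_{\ell,\rho}^\comp(1))/\ell^n\}$; the obstruction lives in a $\lim^1$-type quantity that is solvable because Bockstein-preserving modifications at each level are unconstrained, but the induction demands careful bookkeeping.
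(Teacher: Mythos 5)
Your route diverges from the paper's and stops short of a proof precisely at the step you flag as the hardest. The paper does \emph{not} produce an independent $\tau_n$ for each $n$ from a tower of compatible roots of unity; it takes a single $\tau_1$ (mod $q$, with $q=4$ if $\ell=2$, $q=\ell$ else) from Lemma~\ref{lem:bott-exists} and then lifts it inductively up the tower. The entire content of the inductive step is a single vanishing: the obstruction to lifting along the cofiber sequence $\1(1)_\rho^\comp/q \to \1(1)_\rho^\comp/q^{n+1} \to \1(1)_\rho^\comp/q^n \to \Gm_\rho^\comp/q$ lives in $\pi_0(\Gm_\rho^\comp/q)$, and the paper shows $\pi_0(\Gm_\rho^\comp/q) \wequi K_1^{MW}(k)/q \wequi k^\times/q = \{1\}$, using the fiber-product description of $K_1^{MW}$ together with $I(k)=0$ (for $\ell=2$) or the vanishing of $(I/I^2)[1/2]$ (for $\ell$ odd). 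This is exactly the point where the hypothesis $k^\times/\ell=\{1\}$ enters the homotopy theory, and it does not appear in your argument at all — you only use that hypothesis to manufacture $\ell^n$-th roots of unity.

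By contrast, your proposal constructs the $\tau_n$ separately and then wants to adjust them into a strictly compatible pro-system. You correctly observe that the difference $r_n(\tau_{n+1})-\tau_n$ lies in $\ker\beta_n$, but then hand off the alignment problem (``a cocycle-filling problem \ldots the induction demands careful bookkeeping'') without resolving it. That is a genuine gap: nothing you have written shows that such adjustments can be made coherently across all levels, and without a concrete vanishing result the $\lim^1$ issue is real, not a formality. (One can in fact argue that $\ker\beta_n = 0$ by Morel connectivity plus the harmlessness of $\rho$-completion in the two relevant cases, which would make your alignment automatic — but you do not establish this, and the paper never needs it because it lifts rather than adjusts.) I would also flag a minor subtlety: your identity $\beta_n\circ r_n = \ell\cdot\beta_{n+1}$ depends on which of the two ladder maps between the $\ell^n$- and $\ell^{n+1}$-cofiber sequences you call ``the'' reduction, and the sign/multiple requires checking. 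To repair your proof, the cleanest route is to abandon the compatible-roots device and adopt the paper's lifting argument, importing the computation $K_1^{MW}(k)/q = k^\times/q = \{1\}$ as the pivot.
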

\begin{proof}
Lemma \ref{lem:bott-exists} and its proof show that there exists $\tau_1 \in \pi_{0,-1}(\1_\rho^\comp/q)$, where $q=4$ if $\ell=2$ and $q=\ell$ else.
We shall show by induction that for each $n$: \begin{equation} \label{eq:ind-step} \text{there exists a lifting } \tau_{n+1} \in \pi_{0,-1}(\1_\rho^\comp/q^{n+1}) \text{ of } \tau_n. \tag{$*$} \end{equation}
By the Milnor exact sequence \cite[Proposition VI.2.15]{goerss2009simplicial} there is a surjection \[ \pi_{0,-1}(\1_{\ell,\rho}^\comp) \to \lim_n \pi_{0,-1}(\1_\rho^\comp/q^n); \] 
hence there is a (non-canonical) lift $\tau \in \pi_{0,-1}(\1_{\ell,\rho}^\comp)$.
This will be a spherical Bott element by Lemma \ref{lemm:bott:powers-reduction}(1).

It hence remains to prove \eqref{eq:ind-step}.
The cofiber sequence \[ \1(1)_\rho^\comp/q \to \1(1)_\rho^\comp/q^{n+1} \to \1(1)_\rho^\comp/q^n \to \1[1](1)_\rho^\comp/q \wequi \Gm_\rho^\comp/q \] implies that it is enough to prove the vanishing $\pi_0(\Gm_\rho^\comp/q) = 0$.

Suppose first that $\ell=2$.
Then $\rho$ is nilpotent by Remark~\ref{rmk:rho-completion}(1), so $\Gm_\rho^\comp \wequi \Gm$ and $\pi_0(\Gm_\rho^\comp/q) \wequi K_1^{MW}(k)/4$ \cite[Corollary 6.43]{A1-alg-top}.
Recall that we have the fiber product decomposition \cite[Theorem 5.4]{gsz} (\cite{morel-ideal})
\[ K_1^{MW}(k) \wequi I(k) \times_{I(k)/I^2(k)} K_1^M(k). \]
Since $k$ is quadratically closed, $I(k) = 0$ \cite[Lemma 31.1]{MR2427530} and hence $K_1^{MW}(k) \wequi K_1^M(k)$.
Thus \[ K_1^{MW}(k)/4 \wequi K_1^M(k)/4 \wequi k^\times/4 \wequi \{1\}, \] and we are done (here the last isomorphism again follows from the fact that $k$ is quadratically closed).

Now suppose that $\ell$ is odd.
Then $\Gm_\rho^\comp/q \wequi \Gm[1/2]^+/\ell$ and so \[ \pi_0(\Gm_\rho^\comp/q) \wequi K_1^{MW}(k)[1/2]^+/\ell \wequi K_1^M(k)/\ell \simeq k^{\times}/\ell \simeq \{1 \}. \] Here we have used the fact that for any field $k$, the group $I/I^2(k)[1/2] = 0$ since it is a module over $W/I(k) \wequi \Z/2$ and thus the fiber product decomposition for $K_1^{MW}$ is just a product. 

This concludes the proof.
\end{proof}

\subsubsection{The real numbers} \label{subsec:reals}
If $S = Spec(\R)$, then there exists $\tau_n \in \pi_{0,-r}(\1_2^\comp/\rho^n)$ lifting a power of $\tau \in \pi_{0,-1}H\Z/2$ \cite[Theorem 7.10 and its proof]{behrens2019c_2}.
Moreover $\1_2^\comp/\rho^n$ is an $\scr E_\infty$-ring \cite[Lemma 7.8]{behrens2019c_2}.
Consequently this defines a spherical Bott element modulo $(2^\infty, \rho^n)$ by Lemma \ref{lemm:bott-zeta}(2).

\subsection{Summary of \texorpdfstring{$\tau$}{tau}-self maps} \label{subsec:summary}
Using Lemma \ref{lemm:tau-self-map-construct} and the multiplications and module structures on Moore spectra \cite{oka1984multiplications}, as reviewed (and slightly extended in Corollary~\ref{cor:asymp}) in Appendix \ref{sec:moore-mult}, we find that there is a good $\tau$-self map modulo $(\ell^n, \rho^m)$ as soon as there is a spherical Bott element modulo $(\ell^{n'}, \rho^m)$ for some $n' \ge n$.
In particular we have said elements in the following cases:
\begin{enumerate}
\item $m = \infty$, $\ell$ odd, $n < \infty$ (use Corollary \ref{cor:bott-descent}, Proposition \ref{prop:bott-power}, and Lemma \ref{lem:bott-exists}).
\item $m = \infty$, $\ell=2$, $n<\infty$, $\sqrt{-1} \in S$ (using the same results).
\item $m = \infty$, $n=\infty$, $\ell$ arbitrary, $S$ defined over a field $k$ containing a primitive $\ell$-th root of unity and satisfying $k^\times/\ell \wequi \{1\}$ (use Proposition \ref{prop:tau-p-closed} and Remark \ref{rmk:spherical-bott-base-change}).
\item $m < \infty$, $\ell=2$, $n \le \infty$, $S$ defined over $\R$ (use \S\ref{subsec:reals} and Remark \ref{rmk:spherical-bott-base-change}).
\end{enumerate}

\section{Slice convergence} \label{sect:s-conv}
In this section we provide an extension of Levine's results on the convergence of the slice spectral sequence  \cite{levine2013convergence} or, more precisely, convergence of the \emph{slice tower} for motivic spectra satisfying certain $\ell$-torsion and $\rho$-torsion hypotheses; see Theorem~\ref{thm:convergence} for a precise statement. For this we use a very deep result: Voevodsky's resolution of the Milnor and Bloch-Kato conjectures \cite{ovv,voevodsky-BK}. We first set out our conventions on towers.

\begin{definition} \label{def:sep-conv}
Let $\scr C$ be a category and $E \in \scr C$.
\begin{enumerate}
\item By a \emph{tower over $E$} we mean an object $E_\bullet \in \Fun(\Z^\op, \scr C_{/E})$.
  We typically display towers as \[ \dots E_2 \to E_1 \to E_0 \to E_{-1} \to \dots \to E \quad\text{or}\quad E_\bullet \to E. \]
\item Suppose $\scr C$ is an abelian $1$-category.
  Given a tower $E_\bullet \to E$ in $\scr C$ we define the descending filtration \[ F_i E = \im(E_i \to E) \subset E. \]
  We call the tower $E_\bullet \to E$ \emph{separated} if \[ 0 = \cap_i F_i E \] and \emph{convergent} if in addition it is \emph{exhaustive}, i.e., \[ E = \cup_i F_i E. \]
\end{enumerate}
\end{definition}

Clearly if $F: \scr C \to \scr D$ is any functor and $E_\bullet \to E$ is a tower in $\scr C$, then $FE_\bullet \to FE$ is a tower in $\scr D$.


We shall utilize this definition of convergence to detect when maps are null.

\begin{lemma} \label{lemm:convergence-application}
Let $E_\bullet \to E$ be a tower in the category $\SH$ of spectra.
Denote by $E_i/E_{i+1}$ the cofiber of the canonical morphism $E_{i+1} \to E_i$.
Let $k \in \Z$ and assume that (a) the tower $\pi_k(E_\bullet \to E)$ is convergent, and (b) $\pi_k(E_i/E_{i+1}) = 0$ for every $i$.
Then $\pi_k(E) = 0$.
\end{lemma}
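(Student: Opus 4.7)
The plan is to combine the long exact sequence on $\pi_k$ coming from the cofiber sequences $E_{i+1} \to E_i \to E_i/E_{i+1}$ with the two halves of the convergence hypothesis. Concretely, let $F_i := F_i \pi_k(E) = \mathrm{im}(\pi_k(E_i) \to \pi_k(E))$ denote the descending filtration of Definition \ref{def:sep-conv} applied to the tower $\pi_k(E_\bullet) \to \pi_k(E)$ in abelian groups.

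First I would observe that hypothesis (b) forces the filtration to be locally constant. Indeed, the cofiber sequence $E_{i+1} \to E_i \to E_i/E_{i+1}$ induces a long exact sequence whose relevant portion reads
\[
\pi_k(E_{i+1}) \longrightarrow \pi_k(E_i) \longrightarrow \pi_k(E_i/E_{i+1}) = 0,
\]
so the transition map $\pi_k(E_{i+1}) \to \pi_k(E_i)$ is surjective. Postcomposing with $\pi_k(E_i) \to \pi_k(E)$ and comparing images shows $F_{i+1} = F_i$ for every $i \in \Z$.

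Next I would feed this into separatedness. Since $F_i$ is independent of $i$, the assumption $\bigcap_i F_i = 0$ from convergence (part of (a)) gives $F_i = 0$ for every $i$. Finally, exhaustiveness from (a) yields
\[
\pi_k(E) \;=\; \bigcup_i F_i \;=\; 0,
\]
which is the conclusion.

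I do not expect any real obstacle here: the content is purely the unravelling of Definition \ref{def:sep-conv} together with one application of the long exact sequence. The only mild subtlety is keeping the variance of the tower straight (it is indexed by $\Z^{\op}$, so $F_\bullet$ is decreasing in $i$ and separatedness really does correspond to an intersection, while exhaustiveness corresponds to a union as $i \to -\infty$); this is what makes the stationary filtration actually vanish rather than just stabilize.
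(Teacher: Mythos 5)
Your proof is correct and uses essentially the same mechanism as the paper's: hypothesis (b) gives surjectivity of $\pi_k(E_{i+1}) \to \pi_k(E_i)$ (the paper phrases this as lifting a given class along the cofiber sequence, you phrase it as the filtration $F_\bullet$ being constant), and then (a) forces the common value to be both $0$ (separatedness) and all of $\pi_k(E)$ (exhaustiveness). The only cosmetic differences are that the paper first reduces to $k=0$ and argues element-by-element rather than observing outright that the filtration is stationary.
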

\begin{proof}
It suffices to prove the result for $k =0$. Let $f: \1 \to E$ be any map.
We need to show that $f = 0$.
We shall show that $f \in F_n \pi_0 E$ for all $n$; then we are done by separatedness.
By definition of exhaustiveness, we have $f \in F_N \pi_0 E$ for some $N$; hence it suffices to show that $f \in F_n \pi_0 E$ implies $f \in F_{n+1} \pi_0 E$.
Hence suppose $f \in F_n \pi_0 E$, and pick $f_n: \1 \to E_n$ such that the composite $\1 \to E_n \to E$ is homotopic to $f$; this is possible by definition of $F_n \pi_0 E$.
Since $\pi_0(E_n/E_{n+1}) = 0$ by assumption, the composite $\1 \to E_n \to E_n/E_{n+1}$ is homotopic to zero, and hence $f_n$ lifts to a map $f_{n+1}: \1 \to E_{n+1}$.
It follows that $f \in F_{n+1} \pi_0 E$.
\end{proof}

In order to apply this result, we need a good supply of convergent towers.
We shall produce them from Voevodsky's slice tower, using a strengthening of Levine's convergence theorem for the slice filtration \cite[Theorem 7.3]{levine2013convergence} that we will establish next.

Recall that there is a functorial tower \[ \SH(S) \rightarrow \SH(S)^{\Z \cup \{ \infty\} }; E \mapsto (f_\bullet E \to E) \] called the \emph{slice tower} \cite{voe-open}; see also \cite[Section 3]{1-line} for a more extensive discussion and references therein.
Recall the definition of virtual cohomological dimension from \S\ref{subsec:virtual-coh-dim}.
In order to state our result, we shall make use of the following assumptions on $E \in \SH(k)$ and $t \in \Z$:
\begin{enumerate}[(a)]
\item $E \in \SH(k)_{\ge c}$ for some $c \in \Z$.
\item For $i, j \in \Z$, $K/k$ any finitely generated, separable field extension, and $a \in \ul{\pi}_i(E)_j(K)$ we have $t^r a = 0$ for $r$ sufficiently large.
\item There exists an integer $R \gg 0$ such that the endomorphism $\rho^{R}\colon E \rightarrow E \wedge \Gmp{R}$ is homotopic to zero.
\end{enumerate}
\begin{theorem} \label{thm:convergence}
Let $k$ be a field\footnote{Not necessarily perfect, contrary to Levine's assumption in \cite{levine2013convergence}.} of exponential characteristic $e$ and $t > 0$ coprime to $e$ such that $\vcd_t(k) < \infty$.

There exists a function 
\begin{equation} \label{eq:fn}
\Z^6 \to \N, (c, d, R, i, j, M) \mapsto N(c, d, R, i, j, M)
\end{equation}
such that for every $x \in X \in \Sm_k$ with $\dim X \le d$, $(i,j,M) \in \Z^3$ and $E \in \SH(k)$ satisfying (a) and (b), the following hold:
\begin{enumerate}
\item For $n > N(c,d,R,i,j,0)$ we have\footnote{Here for a sheaf $F$ on $\Sm_k$ we denote by $F_x$ its stalk at $x \in X$.}  \[ \ul{\pi}_{i,j}(f_n(E)/\rho^R)_x = 0. \]
  In particular, the tower
\begin{equation} \label{eq:pi-tower1}
 \ul\pi_{i,j}(f_\bullet(E)/\rho^R)_x \rightarrow \ul\pi_{i,j}(E/\rho^R)_x
 \end{equation} is separated.
\item In addition, if $E$ also satisfies (c), then the morphism \[ \ul{\pi}_{i,j}(f_{M+N(c,d,R,i,j,M)} E)_x \to \ul{\pi}_{i,j}(f_{M} E)_x \] is zero.
  In particular, the tower
\begin{equation} \label{eq:pi-tower2}
 \ul\pi_{i,j}(f_\bullet E)_x \rightarrow \ul\pi_{i,j}(E)_x
 \end{equation} is separated.
\end{enumerate}
\end{theorem}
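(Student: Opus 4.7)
The strategy is to bootstrap Levine's slice convergence theorem \cite[Theorem 7.3]{levine2013convergence}, which already supplies a quantitative bound of the required shape under the stronger assumption $\cd_t(k) < \infty$, from the $\cd_t$-setting to the $\vcd_t$-setting. The discrepancy between $\cd_t$ and $\vcd_t$ is detected by the at-most-quadratic extension $k' := k(\sqrt{-1})$, over which $\cd_t(k') = \vcd_t(k) < \infty$ and $\rho = 0$. The role of the hypothesis (c) and of the $\rho^R$-modification in (1) is precisely to control the descent from $k'$ back to $k$, since that descent is governed by the Galois cohomology of $\mathrm{Gal}(k'/k)$, whose periodicity class is $\rho$.

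\textbf{Part (2).} Assume (c). Pull back along $\pi \colon \Spec(k') \to \Spec(k)$. Since the slice functors commute with smooth base change (cf.\ \cite[\S 3]{1-line}), $\pi^* f_n E \simeq f_n \pi^* E$. For a point $x$ of a smooth $k$-scheme of dimension $\leq d$, the transcendence-degree inequality gives $\vcd_t(k(x)) \leq \vcd_t(k) + d$, so Levine's theorem applied upstairs supplies a function $N'(c,d,i,j,M)$ such that $\ul\pi_{i,j}(f_{M+N'} \pi^* E)_{x'} = 0$ for every $x'$ above $x$. To descend back to $k$, apply Proposition~\ref{prop:galois-descent}: a class on $k$ that vanishes after pullback is detected by a higher Galois-cohomology obstruction which is a multiple of $\rho$, and hypothesis (c) kills any such multiple once we have moved a further $CR$ steps down the slice tower. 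This yields a bound of the form $N(c,d,R,i,j,M) = N'(c+d, d, i, j, M) + CR$ for some absolute constant $C$.

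\textbf{Part (1).} We do not have (c), but $E/\rho^R$ automatically has bounded $\rho$-torsion: the cofiber sequence
\begin{equation*}
E \wedge \Gmp{-R} \xrightarrow{\rho^R} E \to E/\rho^R
\end{equation*}
shows that $\rho^R$ acts on $E/\rho^R$ with nilpotency order bounded in terms of $R$ alone, so $E/\rho^R$ satisfies an analogue of (c) with an enlarged exponent. Applying the triangulated functor $f_n$ to this triangle relates $f_n(E/\rho^R)$ to $f_n E$ and $f_n(E \wedge \Gmp{-R})$, both of which still satisfy (a) and (b) (after shifting $c$ by $R$). Rerunning the argument of Part~(2) on $E/\rho^R$ then produces the desired vanishing of $\ul\pi_{i,j}(f_n(E)/\rho^R)_x$ for $n$ above the stated bound $N(c,d,R,i,j,0)$.

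\textbf{Main obstacle.} The principal technical difficulty is the quantitative bookkeeping: each ingredient --- Levine's theorem, the dimension shift in $\vcd_t(k(x))$, the Galois descent from $k'$, and the Part~(1) cofiber-sequence reduction --- contributes a shift in the resulting function $N(c,d,R,i,j,M)$ that must be tracked simultaneously. The finite-$\vcd_t$ hypothesis enters Levine's theorem through the Beilinson--Lichtenbaum conjecture \cite{voevodsky-BK}, applied to the motivic cohomology of the $H\Z$-module slices $s_n E$ (cf.\ \cite[Theorem 2.16]{1-line}). The delicate step is verifying that the Galois descent really contributes only a linear-in-$R$ shift to $N$: this rests on the fact that $\mathrm{Gal}(k'/k) = \Z/2$ has cohomological period governed by $\rho$, so exactly $R$ successive $\rho$-steps in the slice tower suffice to absorb the obstruction classes annihilated by $\rho^R$.
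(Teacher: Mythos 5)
Your strategy --- pass to $k' = k(\sqrt{-1})$ where $\cd_t(k')=\vcd_t(k)<\infty$, invoke Levine's theorem upstairs, and descend --- is a genuinely different route from the paper, but as written it has a fatal gap at the prime $2$, which is the only prime where $\cd_t$ and $\vcd_t$ can differ and hence the only prime where there is anything to prove beyond Levine.

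The descent step you propose to carry out via Proposition~\ref{prop:galois-descent} requires $E \in \SH(S)[1/2,1/|G|]^+$: the proposition (and all of Appendix~\ref{sec:galois}) works only after $|G|$ is inverted, because it identifies homotopy fixed points with a direct summand via the idempotent $T/|G|^2$. Here $G = \mathrm{Gal}(k'/k) = \Z/2$, so you would need to invert $2$, which kills exactly the $2$-torsion phenomena that hypothesis (b) is designed to handle. Without inverting $2$, descent along $\Spec(k')\to \Spec(k)$ is not a split injection; it is governed by the full continuous cohomology of $C_2$, which has \emph{infinite} $2$-cohomological dimension, so the ``higher Galois-cohomology obstruction'' is an infinite tower, not a single class that is a multiple of $\rho$. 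Making that tower converge in a controlled, quantitative way is essentially the content of the theorem one is trying to prove, so the argument becomes circular. The paper avoids this by never passing to $k'$ in the convergence proof: instead, Lemma~\ref{lemm:convergence-1} works directly with Milnor--Witt $K$-theory of fields over $k$, exploiting the nilpotence of $h = \lra{-1}-1$ under the $\rho^R$-torsion hypothesis and the Elman--Lum identity $I(K')^{\vcd_2(K')+R} = 2^R I(K')^{\vcd_2(K')}$; that arithmetic fact, not Galois descent, is where the relation between $\rho$-torsion and $\vcd_2$ is actually cashed in.

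A secondary issue: your Part~(1) reduction conflates $f_n(E/\rho^R)$ with $f_n(E)/\rho^R$. These differ, since $f_n$ does not commute with smashing by $\Gmp{-R}$ without a degree shift; the statement to prove concerns the latter. The paper handles this by the observation (Remark~\ref{rmk:nilpotence}) that $\rho^{2R}$ acts by zero on $f_n(E)/\rho^R$, so condition (c) holds with $2R$ in place of $R$, and then applies the quantitative Lemma~\ref{lemm:convergence-2} directly to $f_n(E)/\rho^R$ rather than trying to run the argument through a cofiber sequence of slice towers.
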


A few remarks are in order.

\begin{remark} \label{rmk:levine-conjecture}
This result is closely related to \cite[Conjecture 5]{levine2013convergence}.
Indeed the main idea in our argument is that for fields with $\vcd_2(k) < \infty$, $\rho$ is the only obstruction to nilpotence of the ideal $I=I(k) \subset GW(k)$.
In particular, under assumption (c), $I$ acts nilpotently on each $\ul{\pi}_{i,j}(E)_x$, and thus in particular this module is $I$-adically complete.
Levine's conjecture thus predicts our separatedness result.

Conversely, over fields of finite $\vcd_2$, the $I$-adic and $\rho$-adic filtrations are commensurate on $K_*^{MW}$; it thus seems justified to think (over such fields) of derived $\rho$-completion as a form of $I$-adic completion.
We thus view our results as establishing a derived version of Levine's conjecture.
\end{remark}

\begin{remark} \label{rm:fn} The function~\eqref{eq:fn} indicates the dependence of the number $N(c, d, R, i, j, M)$ on the connectivity of $E$ (given by $c$), the bidegrees we are interested in (given by $(i, j)$), the effective cover of $E$ we are taking (given by $M$) and the dimension of the scheme (or, rather, the point; this is given by $d$). Note that this function does not depend on the number $r$ that appears in condition (b).
\end{remark}

\begin{remark} \label{rmk:exhaustive}
The slice tower is always exhaustive (see, for example, \cite[Lemma 3.1]{rso-solves}); hence the theorem implies that the towers \eqref{eq:pi-tower1}, \eqref{eq:pi-tower2} are convergent.
\end{remark}

\begin{remark} \label{rmk:invertible}
If $d$ is coprime to $t$, then (b) implies multiplication by $d$ is an isomorphism on $\ul{\pi}_{i,j}(E)$ for all $i,j$, and hence so is $d\colon E \rightarrow E$.
In particular, under the hypotheses of Theorem~\ref{thm:convergence}, we have that $E \in \SH(k)[1/e]$.
\end{remark}

\begin{remark} \label{rmk:convergence-simp}
It follows from Remark \ref{rmk:rho-completion} that condition (c) is vacuous if $k$ is unorderable (e.g., $\cd_2(k) < \infty$), or if $t$ is odd and $E \in \SH(k)[1/2]^+$.
In these cases the statement of Theorem \ref{thm:convergence} is not quite optimal; in fact the proof shows that \[ \ul{\pi}_{i,j}(f_{M} E)_x = 0 \] for $M \gg 0$ (depending on $i,j,c,d$).
\end{remark}

\begin{remark} \label{rmk:nilpotence}
If $a \in \pi_{p,q}\1$ and $F \in \SH(k)$, then standard arguments show that $a^2: F/a \to \Sigma^{2p,2q} F/a$ is the zero map (see e.g., \cite[Lemma 5.2]{rondigs2019remarks}).
It follows that for $E \in \SH(k)_{\ge c}$ Theorem \ref{thm:convergence} applies to $E/(\rho^a,t^b)$ (and also $E/t^b$ or $(E/t^b)^+$, if Remark \ref{rmk:convergence-simp} applies).
\end{remark}

Our result implies convergence of the slice spectral sequence in novel cases.
We record the following, even though we do not use it in the rest of the article.
\begin{corollary}
Let $k$ be a field of exponential characteristic $e$ and $t > 0$ coprime to $e$ such that $\vcd_t(k) < \infty$.
Suppose $E \in \SH(k)_{\ge c}$ for some $c \in \Z$.
\begin{enumerate}
\item The map $E_{t,\rho}^\comp \xrightarrow{\scomp} \scomp(E)_{t,\rho}^\comp$ induces an isomorphism on $\pi_{**}$;\NB{It is also the case that $E_{t,\rho}^\comp \wequi \lim_{m,n}\scomp(E/(t^n,\rho^m))$, but this seems like a less useful statement.}
here $\scomp$ denotes the slice completion functor \cite[\S 3]{1-line}.
\item There is a conditionally convergent spectral sequence \[ \pi_{p,n}(s_q(E)_{t,\rho}^\comp) \Rightarrow \pi_{p,n}(E_{t,\rho}^\comp), \] using the indexing conventions of \cite{1-line}.
\end{enumerate}
\end{corollary}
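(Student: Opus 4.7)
I would deduce both parts from a single application of the separation statement Theorem~\ref{thm:convergence}(2), applied to $E$ after smashing with the compact object $\1/(t^a,\rho^b)$. For part (1), note that by construction of the slice completion the fibre of $E\to\scomp(E)$ is $\lim_n f_n E$; since $(-)_{t,\rho}^\comp$ is a right adjoint it commutes with this limit, so it suffices to show that $(\lim_n f_n E)_{t,\rho}^\comp\simeq 0$. As $\1/(t^a,\rho^b)$ is compact, smashing with it commutes with limits, and rearranging the limits over $(a,b)$ and $n$ the problem reduces to the assertion that
\[
\lim_n (f_n E)/(t^a,\rho^b)\simeq 0\quad\text{in }\SH(k)\text{ for all }a,b\ge 1.
\]

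To prove this vanishing I would check that every Nisnevich homotopy sheaf of this spectrum is zero; since the spectra $(f_n E)/(t^a,\rho^b)$ are uniformly bounded below in $n$ (with the same bound as $E$), so is their inverse limit, and left-completeness of Morel's homotopy $t$-structure on such bounded-below objects then forces triviality. Writing $G_n:=(f_n E)/(t^a,\rho^b)$, the Milnor sequence of Nisnevich sheaves
\[
0\to \lim\nolimits^1_n\,\ul\pi_{i+1,j}(G_n)\to \ul\pi_{i,j}(\lim_n G_n)\to \lim_n\,\ul\pi_{i,j}(G_n)\to 0
\]
reduces the claim to stalkwise vanishing of $\lim$ and $\lim^1$. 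At any stalk $x$, Theorem~\ref{thm:convergence}(2) applies to $F:=E/(t^a,\rho^b)$: this spectrum is bounded below, satisfies hypothesis (b) because $t^{2a}$ annihilates $F$, and satisfies hypothesis (c) because $\rho^{2b}$ annihilates $F$ (both by Remark~\ref{rmk:nilpotence}). The theorem then produces, for every $M$, an integer $N$ such that $\ul\pi_{i,j}(f_{M+N}F)_x\to\ul\pi_{i,j}(f_M F)_x$ is zero, whence the tower $\{\ul\pi_{i,j}(f_n F)_x\}$ is pro-zero and both $\lim$ and $\lim^1$ vanish.

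For part (2) the argument is formal. Since $(-)_{t,\rho}^\comp$ preserves cofibre sequences, the filtered spectrum $(f_\bullet E)_{t,\rho}^\comp$ has associated graded pieces $s_n(E)_{t,\rho}^\comp$; its inverse limit vanishes by part (1), while by exhaustiveness of the slice tower (Remark~\ref{rmk:exhaustive}) its colimit is identified with $E_{t,\rho}^\comp$. The standard spectral sequence of a tower then yields the conditionally convergent
\[
\pi_{p,n}(s_q(E)_{t,\rho}^\comp)\Rightarrow \pi_{p,n}(E_{t,\rho}^\comp).
\]
The main obstacle I anticipate is the sheaf-theoretic bookkeeping in the first step: one must commute $(t,\rho)$-completion, the inverse limit over the slice tower, and Nisnevich sheafification of homotopy groups, and carefully invoke left-completeness of the homotopy $t$-structure on a spectrum which is itself an inverse limit. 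Once this is in place the input from Theorem~\ref{thm:convergence}(2) is essentially decisive.
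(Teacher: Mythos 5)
You take a genuinely different route for part (1): rather than the paper's reduction to a $\pi_{**}$-isomorphism of the mod-$(t^m,\rho^n)$ objects, you attempt to prove outright that the fibre $(\lim_n f_n E)_{t,\rho}^\comp$ is zero, i.e.\ that $E_{t,\rho}^\comp\to\scomp(E)_{t,\rho}^\comp$ is an \emph{equivalence}. The paper explicitly notes (in the remark immediately after the corollary) that this stronger statement is true but ``requires a slightly more elaborate argument and will be treated elsewhere,'' and accordingly only claims the $\pi_{**}$-isomorphism. Your plan is therefore more ambitious than what is being asked, and this ambition is precisely where the gap opens.

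The central problem is that you identify $G_n:=(f_n E)/(t^a,\rho^b)$ with $f_n F$ where $F=E/(t^a,\rho^b)$, and then invoke Theorem~\ref{thm:convergence}(2) for $F$ to conclude the tower $\{\ul\pi_{i,j}(G_n)_x\}$ is pro-zero. But $f_n$ does \emph{not} commute with modding out by $\rho^b$: since $\rho$ is a map $\Gmp{-1}\to\1$ of nonzero weight, $f_n(E/\rho^b)$ is the cofibre of $\rho^b\colon (f_{n+b}E)(-b)\to f_n E$, whereas $(f_n E)/\rho^b$ is the cofibre of $\rho^b\colon (f_n E)(-b)\to f_n E$. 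These are different spectra, and the discrepancy is exactly the phenomenon the paper singles out in the Warning in \S2.2 and Remark~\ref{rmk:fM-rho}; indeed it is the reason Theorem~\ref{thm:convergence} is split into a part (1) about $(f_n E)/\rho^R$ and a harder part (2) (requiring hypothesis (c)) about $f_n E$ itself. Your conclusion about $G_n$ is actually correct, but it follows from Theorem~\ref{thm:convergence}\emph{(1)} applied to $E/t^a$ (which does satisfy (a) and (b)): that statement directly yields $\ul\pi_{i,j}\bigl((f_n(E/t^a))/\rho^b\bigr)_x=0$ for $n$ large, and here $f_n$ \emph{does} commute with $/t^a$ since $t^a$ has weight zero. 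So the needed input is there, but coming from the other half of the theorem, and your chain of reasoning as written does not reach it. Two subsidiary points: (i) passing from ``all stalks of all homotopy sheaves of $\lim_n G_n$ vanish'' to ``$\lim_n G_n\simeq 0$'' requires a bit more than you supply — you must control boundedness-below of the inverse limit and invoke non-degeneracy of Morel's homotopy $t$-structure, and the Milnor sequence you write should be justified at the level of sheaves; and (ii) in part (2), the identification of $\colim_n (f_n E)_{t,\rho}^\comp$ with $E_{t,\rho}^\comp$ is not a formal consequence of exhaustiveness, since $(-)_{t,\rho}^\comp$ is not a colimit-preserving endofunctor of $\SH(k)$; the paper proves this by an explicit computation against the compact generators $X(w)$, which you would need to reproduce or replace.
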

\begin{proof}
(1) It is enough to show that $E/(\rho^n,t^m) \to \scomp(E)/(\rho^n,t^m)$ is a $\pi_{**}$-isomorphism.
This is immediate from Theorem \ref{thm:convergence}(1) and Remark \ref{rmk:exhaustive} (and Remark \ref{rmk:nilpotence}, which tells us that Theorem \ref{thm:convergence} applies).

(2) We consider the completed slice tower \[ \dots \to f_n(E)_{t, \rho}^\comp \to f_{n-1}(E)_{t,\rho}^\comp \to \dots \to E_{t,\rho}^\comp. \]
The cones of the maps in this tower are given by the $(t,\rho)$-completed slices $s_n(E)_{t, \rho}^\comp$.
To prove conditional convergence for the corresponding spectral sequence displayed in (2), we need to show that \[ \pi_{**}(\lim_n(f_n(E)_{t,\rho}^\comp)) \wequi 0 \quad\text{and}\quad \pi_{**}(\colim_n (f_n(E)_{t,\rho}^\comp)) \wequi \pi_{**}(E_{t,\rho}^\comp). \]
Since limits commute we have the fiber sequence \[ \lim_n(f_n(E)_{t,\rho}^\comp) \wequi (\lim_n f_n(E))_{t,\rho}^\comp \to E_{t,\rho}^\comp \xrightarrow{\scomp} \scomp(E)_{t,\rho}^\comp. \]
Hence the claim about limits reduces to (1).
For the claim about colimits, observe that for $X \in \Sm_k$ and $n<w$ we have
\begin{gather*}
  \map(X(w), f_n(E)_{t,\rho}^\comp) \\
  \wequi \lim_m \cof(\map(X(w), \Sigma^{-m,-m} f_n(E)/t^m) \xrightarrow{\rho^m} \map(X(w), f_n(E)/t^m)) \\
  \wequi \lim_m \cof(\map(\Sigma^{m,m} X(w), f_n(E/t^m)) \xrightarrow{\rho^m} \map(X(w), f_n(E/t^m))) \\
  \wequi \lim_m \cof(\map(\Sigma^{m,m} X(w), E/t^m) \xrightarrow{\rho^m} \map(X(w), E/t^m)) \\
  \wequi \map(X(w), E_{t,\rho}^\comp),
\end{gather*}
where $X(w) := \Sigma^{0,w}\Sigma^\infty_+ X$.
Since the $X(w)$ are compact generators of $\SH(k)$, we deduce that in fact \[ \colim_n (f_n(E)_{t,\rho}^\comp) \wequi E_{t,\rho}^\comp. \]
\end{proof}
\begin{remark}
Under the assumptions of the corollary, it is in fact the case that $E_{t,\rho}^\comp \to \scomp(E)_{t,\rho}^\comp$ is an equivalence.
This requires a slightly more elaborate argument and will be treated elsewhere.
\end{remark}

In the rest of this section we prove Theorem \ref{thm:convergence}, adapting the argument of Levine \cite[Theorem 7.3]{levine2013convergence}.
Without loss of generality, we make the following standing assumptions:
\begin{enumerate}
\item $c = 0$,
\item $k$ is \emph{perfect} (using \cite[Corollary 2.1.7]{elmanto2018perfection} and Remark \ref{rmk:invertible}),
\item $k$ is infinite (using standard transfer arguments; see \cite[Appendix A]{levine2013convergence} for details),
\item $x$ is a generic point, so $X_x$ is the spectrum of a field of transcendence degree $\le d$ over $k$ (using unramifiedness of homotopy sheaves \cite[Lemma 6.4.4]{morel-conn}).
\end{enumerate}

With these assumptions at play, we quickly review Levine's approach to studying the slice filtration via the \emph{simplicial filtration} as in \cite[Section 4]{levine2013convergence}. For $E \in \SH(k)$ and for any $M \ge 0, X \in \Sm_k$, consider the mapping spectrum \[ f_ME(X)= \map(\Sigma^{\infty}X_+, f_ME) \in \SH; \] the functor $X \mapsto f_ME(X)$ is an $\A^1$-invariant Nisnevich sheaf of spectra. As elaborated in \cite[Section 3]{levine2013convergence}, we have an augmented simplicial spectrum of the form
\[
E^{(M)}(X, \bullet) \rightarrow f_ME(X),
\]
which is a colimit diagram. 
Under the Dold-Kan correspondence \cite[Theorem 1.2.4.1]{HA} we get the associated filtered spectrum
\begin{equation} \label{eq:simpfilt}
\sk_0 E^{(M)}(X, \bullet) \rightarrow \sk_1 E^{(M)}(X, \bullet) \rightarrow \cdots \to \sk_k E^{(M)}(X, \bullet) \rightarrow \cdots \to f_ME(X).
\end{equation}

Noting that \[ \ul{\pi}_i(E)_{-j}(K) = [\Sigma^i \Gmp{j} \wedge K_+, E] \] (where $K/k$ is a field extension) we obtain for $j \ge 0$ a spectral sequence of the form
\begin{equation} \label{eq:simpss}
E^1_{p,q}(K, E, M, j) \Rightarrow \ul{\pi}_{p+q}(f_ME)_{-j}(K).
\end{equation}
As usual, defining 
\[
F^{\simp}_{k}\ul{\pi}_{i}(f_ME)_{-j}(K):= \mathrm{Im}(\pi_{i}\sk_k(E^{(M)}(\Gmp{j} \wedge K_+, \bullet)) \rightarrow \ul{\pi}_{i}(f_ME)_{-j}(K)),
\]
we have the exhaustive increasing filtration
\begin{equation} \label{eq:pi-simpss}
F^{\simp}_{0}\ul{\pi}_{i}(f_ME)_{-j}(K) \rightarrow \cdots F^{\simp}_{k}\ul{\pi}_{i}(f_ME)_{-j}(K) \rightarrow F^{\simp}_{k+1}\ul{\pi}_{i}(f_ME)_{-j}(K) \cdots \rightarrow \ul{\pi}_{i}(f_ME)_{-j}(K),
\end{equation}
and the associated graded identifies with the $E^{\infty}$ page \cite[sentence before Lemma 4.4]{levine2013convergence}:
\[
gr_p^{\simp}\ul{\pi}_{i}(f_ME)_{-j}(K) \cong E^{\infty}_{p,i-p}(K,E,M,j).
\]

The next lemma concerns the spectral sequence~\eqref{eq:simpss} and the corresponding filtration on sections of homotopy sheaves. To state the result we need some notation. Suppose that $F, G, H$ are strictly $\A^1$-invariant sheaves on $\Sm_k$, $H$ has transfers in the sense of \cite[Chapter 4]{morel-book}, and there is a pairing $F \otimes G \rightarrow H$. Then for a finitely generated separable extension $K'$ of $k$, recall that we have the subgroup
\[
[ F G]^{tr}(K) := \langle tr_{K'/K} F(K')G(K') \rangle_{K'/K\,\text{finite}} \subset H(K).
\]
We refer to \cite[Section 4]{bachmann-hurewicz} or \cite[Section 7]{levine-gw} for details on this construction. The case that we are interested in will be $F = \ul{K}^{MW}_n \cong \ul{\pi}_0(\1)_n$ acting on $G=\ul{\pi}_{i}(E)_j$.

\begin{lemma}[Levine] \label{lemm:levine-simp} Suppose that $E \in \SH(k)_{\geq 0}$, i.e., (a) holds. Then for all $M \geq 0, j \ge 0$ and all $i \in \Z$ we have:
\begin{enumerate}
\item The spectral sequence~\eqref{eq:simpss} is convergent.
\item There is an inclusion of abelian groups 
\[
E^1_{p,q}(K, E, M, j) \subset \bigoplus_{w \in (\Delta^p_K, \partial \Delta_K^p)^{(M)}} \ul{\pi}_{q+M}(E)(K(w))_{-M-j}
\]
\item \[ F^{\simp}_{*}\ul{\pi}_{i}(f_ME)_{-j}(K) = 0 \text{ if } * < M, \] and \[ F^{\simp}_{*}\ul{\pi}_{i}(f_ME)_{-j}(K) = \ul{\pi}_{i}(f_ME)_{-j}(K) \text{ if } * > M+i. \]
\item Under the canonical map $\ul{\pi}_i(f_ME)_{-j} \rightarrow \ul{\pi}_i(E)_{-j}$, the image of $F^{\simp}_{M}\ul{\pi}_{i}(f_ME)_{-j}(K)$ in $\ul{\pi}_i(E)_{-j}(K)$ is the subgroup 
\[
[K_M^{MW} \ul{\pi}_i(E)_{-M-j}]^{tr}(K).
\]
\end{enumerate}
\end{lemma}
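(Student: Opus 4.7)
The four assertions are essentially repackagings of results in \cite[\S4--\S6]{levine2013convergence}, adapted to the current slightly more general setting (no perfectness, arbitrary separable extensions $K/k$, etc.). The overarching strategy is to analyze the skeletal filtration of the simplicial spectrum $E^{(M)}(X,\bullet)$ by computing its normalized terms and establishing connectivity estimates, which jointly control the spectral sequence \eqref{eq:simpss} and identify its bottom filtration piece via transfers from $0$-dimensional points.

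First I would dispose of (1), (2), and (3) simultaneously. The key point is that after Dold--Kan, the $p$-th associated graded of the skeletal filtration \eqref{eq:simpfilt} is naturally a wedge of shifted spectra indexed by the generic points of the effective part of $\Delta^p_K$ that are supported in codimension $\ge M$ relative to the boundary: concretely,
\[
\sk_p/\sk_{p-1}\bigl(E^{(M)}(\Gmp{j}\wedge K_+,\bullet)\bigr) \simeq \bigvee_{w \in (\Delta^p_K,\partial\Delta^p_K)^{(M)}} \Sigma^{p} E(K(w))_{-M-j}[?],
\]
where the internal shift comes from the $M$-effective filtration. Because $E \in \SH(k)_{\ge 0}$, each summand lies in $\SH_{\ge M+p-\dots}$, so the spectral sequence \eqref{eq:simpss} has only finitely many contributing terms in each total degree; this yields both the convergence claim (1) and, upon reading off the identification of $E^1_{p,q}$, the inclusion (2). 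For (3), one observes that $(\Delta^p_K,\partial\Delta^p_K)^{(M)}$ is empty whenever $p < M$, which kills the filtration below level $M$, while the bidegree formula together with the connectivity bound in (2) forces $E^1_{p,q}(K,E,M,j) = 0$ once $p > M + i$ in total degree $i$, giving the upper bound.

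For (4), which is the central content, the strategy is to identify $F^{\simp}_M\underline\pi_i(f_M E)_{-j}(K)$ with the group generated by transfers of products. The $M$-simplices of $E^{(M)}(\Gmp{j}\wedge K_+,\bullet)$ whose support is codimension exactly $M$ (i.e.\ closed points of $\A^M_K$) give rise, via the Morel--Levine transfer/Rost--Schmid formalism, to elements of the form $\mathrm{tr}_{K(w)/K}(\alpha\cdot\beta)$ with $\alpha \in K^{MW}_M(K(w))$ and $\beta \in \underline\pi_i(E)_{-M-j}(K(w))$; conversely every such transferred product arises this way by choosing a function with the appropriate divisor. Pushing this image forward along $f_M E \to E$ absorbs the Thom-space shift into the Milnor--Witt factor, yielding exactly $[K^{MW}_M\,\underline\pi_i(E)_{-M-j}]^{tr}(K)$.

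\textbf{Main obstacle.} The routine pieces (1)--(3) are essentially formal consequences of the skeletal filtration and connectivity of the slices, and can be deduced from \cite[Lemmas 4.1--4.4]{levine2013convergence} verbatim. The delicate part is (4): one must verify that the $M$-effective filtration is compatible with the Rost--Schmid style transfer pairing, and that no boundary relations in the simplicial complex produce elements outside $[K^{MW}_M\,\underline\pi_i(E)_{-M-j}]^{tr}(K)$. This requires the compatibility of Morel's transfers on $\underline\pi_*(E)_*$ with the cycle-theoretic transfers implicit in the simplicial construction, and is where one leans most heavily on Levine's analysis in \cite[\S6]{levine2013convergence}; the only verification needed in our setting is that none of the arguments there use perfectness of $k$ in an essential way (they do not, since all the inputs are unramified sheaves on $\Sm_k$ and standard transfers).
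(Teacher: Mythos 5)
Your proposal correctly reconstructs the content of the cited Levine results; the paper's own proof is simply a one-line pointer to \cite[Lemma 4.4]{levine2013convergence} for (1) and (2), \cite[Lemma 5.1]{levine2013convergence} for (3), and \cite[Theorem 5.3]{levine2013convergence} for (4), so your approach is the same, just more verbose. One small correction: your ``Main obstacle'' paragraph worries about whether Levine's arguments survive the loss of perfectness of $k$, but this is moot — the standing assumptions immediately preceding this lemma (stated just before the simplicial filtration is introduced) already reduce to the case that $k$ is perfect, via \cite[Corollary 2.1.7]{elmanto2018perfection} and Remark \ref{rmk:invertible}, so Levine's hypotheses hold verbatim and there is nothing to re-verify.
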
  

\begin{proof} The first two points are covered in \cite[Lemma 4.4]{levine2013convergence}, the third is immediate from (2) (see also \cite[Lemma 5.1]{levine2013convergence}) and the last point is \cite[Theorem 5.3]{levine2013convergence}.
\end{proof}

\begin{lemma} \label{lemm:convergence-permanence}
Let $E \in \SH(k)$ and $M \in \Z$. In the notation of Theorem~\ref{thm:convergence}:
\begin{enumerate}
\item[(a)] If $E$ satisfies (a) then so does $f_M E$.
\item[(b)] If $E$ satisfies (b) then so does $f_M E$.
\end{enumerate}
\end{lemma}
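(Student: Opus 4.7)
The plan is to deduce both parts from Lemma~\ref{lemm:levine-simp}, which packages the simplicial filtration on $\ul{\pi}_i(f_M E)_{-j}(K)$ together with an upper bound on its $E^1$-terms in terms of homotopy groups of $E$ itself. The strategy in both cases is: reduce to sections over finitely generated field extensions (using that homotopy sheaves of motivic spectra are unramified, \cite[Lemma~6.4.4]{morel-conn}), then study the filtration $F^{\simp}_\bullet \ul{\pi}_i(f_M E)_{-j}(K)$ and verify the desired property on each graded piece.

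For part (a), I would argue by exhausting the filtration. By Lemma~\ref{lemm:levine-simp}(3), the filtration is concentrated in the range $M \le p \le M+i$, with graded piece $E^\infty_{p,i-p}$. By part (2) of the same lemma, $E^\infty_{p,i-p}$ is a subquotient of $\bigoplus_w \ul{\pi}_{i-p+M}(E)_{-M-j}(K(w))$. The connectivity hypothesis $\ul{\pi}_q(E) = 0$ for $q<c$ forces $i-p+M \ge c$, i.e. $p \le i + M - c$. Intersecting with $p \ge M$ gives a nonempty range only when $i \ge c$. Therefore, for $i<c$ every graded piece vanishes, and exhaustiveness (Remark~\ref{rmk:exhaustive}) yields $\ul{\pi}_i(f_M E)_{-j}(K) = 0$.

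For part (b), fix $b \in \ul{\pi}_i(f_M E)_{-j}(K)$. By Lemma~\ref{lemm:levine-simp}(3) the filtration $F^{\simp}_\bullet$ has finite length (the nonzero range is $M \le p \le M+i$), so it suffices to check that each graded piece is pointwise $t$-power-torsion and then propagate by a short finite induction using the elementary fact that an extension of groups with bounded $t$-torsion on elements has the same property. Lemma~\ref{lemm:levine-simp}(2) embeds the graded piece $E^\infty_{p,i-p}$ into $\bigoplus_w \ul{\pi}_{i-p+M}(E)_{-M-j}(K(w))$; by hypothesis (b) applied to $E$, every summand is pointwise $t$-power-torsion, and any element of the direct sum has finite support and is therefore killed by a single power of $t$. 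Subquotients preserve this property, completing the induction.

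The one genuine point of care is that Lemma~\ref{lemm:levine-simp} is stated for weight index $j \ge 0$, while conditions (a), (b) of Theorem~\ref{thm:convergence} range over all $j \in \Z$. The plan is to handle negative weights via the standard identification $\Gm \wedge f_M F \wequi f_{M+1}(\Gm \wedge F)$, which reduces $\ul{\pi}_i(f_M E)_{-j}$ for $j<0$ to $\ul{\pi}_i(f_{M+|j|}(E \wedge \Gm^{\wedge |j|}))_0$ after shifting the effective cover and the spectrum. Since $\Gm \in \SH(k)_{\ge 1}$, smashing with $\Gm^{\wedge |j|}$ preserves (a), and it visibly preserves (b) since it only reindexes the weight of homotopy groups; thus the hypotheses transfer to $E \wedge \Gm^{\wedge |j|}$ and the above arguments apply. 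I expect this reindexing step to be the only technical wrinkle, but it is purely formal; the conceptual content of both parts is entirely in Levine's Lemma~\ref{lemm:levine-simp}.
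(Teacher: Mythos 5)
Your argument is correct in substance, but part (a) takes a genuinely different route from the paper. For (b) it is essentially the paper's proof: twist to normalize indices, then run through the finite simplicial filtration~\eqref{eq:pi-simpss} using the bound in Lemma~\ref{lemm:levine-simp}(2) and the stability of the pointwise $t$-power-torsion property under direct sums, subquotients, and finite extensions. For (a), however, the paper twists to $M=0$ and simply cites \cite[Proposition~4(3)]{bachmann-very-effective} (connectivity of the effective cover in the homotopy $t$-structure), whereas you re-derive this from Levine's simplicial filtration. Your version is self-contained at the cost of length. Note though that your appeal to Remark~\ref{rmk:exhaustive} is a miscitation: that remark concerns the slice tower $f_\bullet E$, not the simplicial filtration $F^\simp_\bullet$; what you actually need is already in Lemma~\ref{lemm:levine-simp} itself (part (1) for convergence, or more directly part (3), which forces $\ul{\pi}_i(f_M E)_{-j}(K)$ to be simultaneously $0$ and the whole group once $M+i<M$).

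Two small gaps. First, Lemma~\ref{lemm:levine-simp} is stated for $M \ge 0$ as well as $j \ge 0$, and your twist $\Gm \wedge f_M F \wequi f_{M+1}(\Gm \wedge F)$ only shifts $M$ up by $|j|$, which does not ensure $M+|j|\ge 0$ when $M$ is very negative; you should twist by $\Gmp{d}$ for $d$ large enough to make both indices nonnegative at once (the paper does exactly this for (a), and for (b) handles $M<0$ directly via $\ul{\pi}_0(f_M E)_0 \wequi \ul{\pi}_0(E)_0$). Second, Lemma~\ref{lemm:levine-simp}(2) bounds $E^1$, and $E^\infty$ is a subquotient of $E^1$, not a subgroup, so the word \emph{embeds} overstates it -- harmless here since pointwise $t$-power-torsion passes to subquotients, but worth phrasing precisely.
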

\begin{proof}
(a) We may assume (replacing $E$ by $E \wedge \Gmp{-M}$) that $M=0$.
The claim is then immediate from \cite[Proposition 4(3)]{bachmann-very-effective}.

(b) We may assume (replacing $E$ by $E \wedge \Sigma^{-i}\Gmp{j}$) that $i=j=0$.
If $M < 0$ there is nothing to prove, since $\ul{\pi}_0(f_ME)_0 \wequi \ul{\pi}_0(E)_0$.
Thus we may assume that $M \ge 0$. Consider the filtration~\eqref{eq:pi-simpss} of $\ul{\pi}_0(f_M E)_0(K)$.
Since this filtration is finite and exhaustive, it suffices to prove the claim on the level of associated graded groups, which are the $E^{\infty}$-terms of the spectral sequence~\eqref{eq:simpss}. The claim follows since the $E^{\infty}$-terms are subquotients of the $E^1$-terms, which by Lemma~\ref{lemm:levine-simp} are in turn subgroups of sums of groups of the form $\ul{\pi}_{a}(E)(K(w))_{b}$, which satisfy (b) by assumption.

%
\end{proof}

\begin{remark} \label{rmk:fM-rho}
It is not clear that if $E$ satisfies condition (c), then so does $f_M E$.
Instead the composite \[ f_ME \xrightarrow{\rho^R} f_M(E) \wedge \Gmp{R} \to f_{M-R}(E) \wedge \Gmp{R} \] is zero.
We are not going to use this observation (explicitly).
\end{remark}

\begin{lemma} \label{lemm:convergence-1}
Let $E$ satisfy conditions (b) and (c) of Theorem \ref{thm:convergence}.
There exists a function $M_0(d, R)$ (independent of $E$) such that for $n \ge M_0(d, R)$, $\trdeg(K/k) \leq d$, and $i, j \in \Z$ we have \[ [\ul{K}_n^{MW} \ul{\pi}_i(E)_j]^{tr}(K) = 0. \]
\end{lemma}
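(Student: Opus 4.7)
The strategy is to reduce the vanishing of $[\ul{K}_n^{MW} \ul{\pi}_i(E)_j]^{tr}(K)$ to a concrete statement about Milnor-Witt $K$-theory, which I would then establish using Voevodsky's resolution of the Milnor and Bloch-Kato conjectures together with Lemma~\ref{lemm:vcd-iso}. By definition, the group $[\ul{K}_n^{MW} \ul{\pi}_i(E)_j]^{tr}(K)$ is generated by transfers $tr_{K'/K}(a \cdot b)$ with $K'/K$ a finite extension, $a \in K_n^{MW}(K')$, and $b \in \ul{\pi}_i(E)_j(K')$; since $\trdeg(K'/k) \le d$ is preserved, it suffices to show that each such product $a \cdot b$ vanishes once $n$ exceeds a bound depending only on $d$ and $R$.

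The plan is to exploit conditions (b) and (c) to dispose of most of $K_n^{MW}$. By (c), $\rho^R$ annihilates every section of every homotopy sheaf of $E$; by (b), each $b$ is $t^r$-torsion for some $r$ depending on $b$. Using that $K_*^{MW} \cong \pi_{0,-*}\1$ acts centrally on $\ul{\pi}_*(E)_*$, any decomposition $a = \rho^R c + t^r c'$ with $c \in K_{n-R}^{MW}(K')$ and $c' \in K_n^{MW}(K')$ gives $a \cdot b = \rho^R(c \cdot b) + c' \cdot (t^r b) = 0$. A straightforward induction on $r$ (starting from $r=1$) then reduces matters to producing a function $M_0(d, R)$ such that
\[
K_n^{MW}(K')/(\rho^R, t) = 0 \quad \text{for all } n \ge M_0(d, R) \text{ and all } K' \text{ with } \trdeg(K'/k) \le d.
\]

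To prove this key claim, set $c := \vcd_t(k) + d$, which bounds $\vcd_t(K')$ by the usual additivity of cohomological dimension under transcendental extensions. For odd $t$, the Milnor-Witt identity $h\eta = 0$ together with $h = 2 + \eta\rho$ yields $2\eta = -\eta^2\rho$; since $2$ is invertible modulo odd $t$, the $\eta$-ideal is contained in the $\rho$-image modulo $t$, so the natural surjection $K_n^{MW}/(\rho, t) \twoheadrightarrow K_n^M/t$ is an isomorphism. Voevodsky's norm residue theorem identifies $K_n^M(K')/t$ with $H^n_\et(K', \mu_t^{\otimes n})$, which vanishes for $n > \cd_t(K') = \vcd_t(K')$.

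The $t = 2$ case is the main technical obstacle. Here Voevodsky's norm residue isomorphism combined with Lemma~\ref{lemm:vcd-iso} gives $K_n^M(K')/(\rho^R, 2) = 0$ for $n > c + R$, so one must additionally control the $\eta$-ideal $\ker(K_n^{MW} \twoheadrightarrow K_n^M)$ modulo $(\rho^R, 2)$. Using Morel's Cartesian-square description $K_n^{MW} = K_n^M \times_{k_n^M} I^n$, Voevodsky-Orlov-Vishik's resolution of the Milnor conjecture $I^n/I^{n+1} \cong k_n^M$, and the Arason-Pfister Hauptsatz $\bigcap_n I^n(K') = 0$, one deduces $I^n(K') = \rho^R I^{n-R}(K')$ for $n \ge c + R$. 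A fiber-product argument, adjusting representatives by the $2$-torsion symbol $\{-1,\dots,-1\} \in K_n^M(K')$ (which hits the generator $\rho^n$ of $k_n^M(K')$ in the stable range), then upgrades this to the desired vanishing of $K_n^{MW}(K')/(\rho^R, 2)$. Taking $M_0(d, R) := c + R + O(1)$ completes the plan.
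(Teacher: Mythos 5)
Your proof is correct and follows the same overall strategy as the paper: reduce to the vanishing of $K_n^{MW}(K')/(\rho^R, \ell^m)$ in high degree, split by primes, and in each case push the claim onto Voevodsky's norm residue theorem together with the finite-$\vcd$ hypothesis, arriving at the same bound $M_0 \approx \vcd_t(k) + d + R + 1$. Where you diverge is in the mechanism for the even prime, which is the substantial case. The paper observes that $h^R = (\eta\rho)^R = 0$ in $F_* = K_*^{MW}(K')/(\rho^R, 2)$, reduces via Morel's $K_*^{MW}/h \cong I^*$ directly to the single statement $I^{N+R}(K') = 2^R I^N(K')$, and cites Elman--Lum for that. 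You instead work with Morel's pullback square $K_n^{MW} \cong K_n^M \times_{k_n^M} I^n$, handle the $K^M$-factor and the $I$-factor separately, and glue using injectivity of $\rho^R$ on $k_{n-R}^M$ in the stable range; you also derive the $I$-adic stability $I^n = \rho^R I^{n-R}$ from the Milnor conjecture plus Arason--Pfister rather than citing Elman--Lum. Both routes work: the paper's $h$-nilpotence argument is slicker because it collapses the two factors into one step, while your fiber-product argument is more self-contained and makes explicit exactly where the Milnor conjecture enters. For the odd primes you use the $\eta$-logarithm relation $2\eta = -\eta^2\rho$ to kill $\eta$ modulo $(\rho, t)$, whereas the paper passes through the $\pm$-decomposition of $\SH[1/2]$; these are essentially equivalent. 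Two small details you should spell out: the bootstrap from $K_n^{MW}/(\rho, t) = 0$ in degrees $> N$ to $K_n^{MW}/(\rho^R, t) = 0$ in degrees $> N + R - 1$, and the prime decomposition of $t$ (your ``odd $t$''/``$t=2$'' dichotomy should be ``odd $\ell$''/``$\ell=2$'' over the primes dividing $t$).
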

\begin{proof}
We define the function by
\begin{equation} \label{eq:modr}
M_0 := M_0(d, R) := \vcd_t(k) + d + R + 1.
\end{equation}
Let $K/k$ be of transcendence degree $\le d$ and $K'/K$ finite.
Assumption (b) implies that every element of $\ul{\pi}_i(E)_j(K')$ is a sum of elements which are $\ell^m$-torsion, for various $\ell|t$ and $m \ge 0$.
Put $F_* = K_*^{MW}(K')/(\rho^R, \ell^m)$.
Together with assumption (c), the above observation implies that it suffices to establish the vanishing $F_{M_0} = 0$.

If $\ell$ is odd then $F_*$ is a quotient of $K_*^{MW}(K')[1/2]$ and so splits into $+$ and $-$ parts; moreover $\rho$ is an isomorphism on the $-$ part (see Remark \ref{rmk:rho-completion}(2)) and thus $F_*^-$ is zero. Hence, it is enough to show that $K_{M_0}^M(K')/\ell^m = 0$.
This is true by choice of $M_0$, the fact that $\mathrm{cd}_\ell(K') \leq d + \mathrm{cd}_\ell(k) = d + \vcd_\ell(k)$ \cite[Tag 0F0T]{stacks}, and the comparison between Milnor $K$-theory and \'etale cohomology \cite{voevodsky-BK}.

Now let $\ell=2$.
Note that if $M$ is an $A$-module and $a \in A$ satisfies $a^N M = 0$ for some $N$, then $M=0$ if and only if $M/a = 0$.
Hence we may assume that $m=1$. Note that $h$ is nilpotent on $F_*$: \[ 0 = (\eta \rho)^R = (\lra{-1} - 1)^R = (h-2)^R = h^R. \] It thus suffices to show that $(F_{M_0})/h = 0$.
But $K_*^{MW}/h = I^*$ \cite[Th\'eor\'eme 2.4.]{morel-ideal}, so it is enough to show the equality \[ \rho^R I(K')^{\vcd_2(K')} = I(K')^{\vcd_2(K')+R}. \] Under the isomorphism of \emph{loc. cit.}, the element $\rho$ is sent to the element $2 \in I(k) \subset W(k)$. Hence we need to check that $I(K')^{\vcd_2(K')+R} = 2^R I(K')^{\vcd_2(K')}$.
For a proof see p.\ 619 in \cite{elman-lum-2cohom}.
\end{proof}

The following result is the key step in the proof of Theorem \ref{thm:convergence}.
It establishes a vanishing region in the homotopy sheaves of sufficiently effective (and connective) spectra.
\begin{lemma} \label{lemm:convergence-2}
There exists a function $M(d, R, r, s)$ such that for \[ E \in \SH(k)^\eff(M(d,R,r,s)) \] satisfying (a)\footnote{Recall our standing convention that $c=0$, so this means $E \in \SH(k)_{\ge 0}$.}, (b) and (c) of Theorem \ref{thm:convergence}, and $i \le r, 0 \le j \le s$ we have \[ \ul{\pi}_i(E)_{-j}(K) = 0, \]
whenever $\trdeg(K/k) \leq d$.
\end{lemma}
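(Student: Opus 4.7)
The plan is to proceed by induction on $r$, with the base case handled by condition (a): when $r < 0$, connectivity of $E$ forces $\ul{\pi}_i(E) = 0$ for $i < 0$, so $M(d, R, r, s)$ may be defined arbitrarily (say $0$) in that range.

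For the inductive step $r \ge 0$, since $E \in \SH(k)^\eff(M)$ (where $M := M(d, R, r, s)$ is still to be determined) means $E \simeq f_M E$, I would invoke Lemma~\ref{lemm:levine-simp} to equip $\ul{\pi}_i(E)_{-j}(K) = \ul{\pi}_i(f_M E)_{-j}(K)$ with a finite exhaustive simplicial filtration $F^\simp_\bullet$ whose only potentially nontrivial graded pieces sit in levels $p \in \{M, M+1, \dots, M+i\}$. It then suffices to show each graded piece vanishes.

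For the bottom piece $p = M$: since $E = f_M E$, the canonical map $\ul{\pi}_i(f_M E)_{-j}(K) \to \ul{\pi}_i(E)_{-j}(K)$ is the identity, so Lemma~\ref{lemm:levine-simp}(4) identifies $F^\simp_M$ with $[K_M^{MW} \ul{\pi}_i(E)_{-M-j}]^{tr}(K)$, which vanishes by Lemma~\ref{lemm:convergence-1} as soon as $M \ge M_0(d, R)$. For an upper piece $p = M + a$ with $1 \le a \le i$: the graded piece is a subquotient of $E^1_{p, i-p}$, which by Lemma~\ref{lemm:levine-simp}(2) embeds into a sum of groups of the form $\ul{\pi}_{i-a}(E)(K(w))_{-M-j}$ with $\trdeg(K(w)/k) \le d + a \le d + r$. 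Since $i - a \le r - 1$, I would invoke the inductive hypothesis applied to $E$ with parameters $(d + r, R, r - 1, s')$ for any $s' \ge M + j$, which gives vanishing as soon as $M \ge M(d + r, R, r - 1, s')$.

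This leads to the recursive definition
\[
M(d, R, r, s) := \max\bigl(M_0(d, R),\, M(d + r, R, r - 1, 0)\bigr).
\]
The key subtlety — and the step I expect to be the main obstacle — is ensuring this recursion is well-defined: a priori the inductive call is at parameter $M + j$, making the requirement $M \ge M(d+r, R, r-1, M+j)$ circular. The escape is that $M(\cdot, \cdot, \cdot, \cdot)$ turns out to be independent of its last argument, which is proved by a parallel induction and traces back to the fact that the threshold $M_0(d, R)$ in Lemma~\ref{lemm:convergence-1} does not depend on the weight $j$ or the degree $i$ of the homotopy sheaf $\ul{\pi}_i(E)_j$. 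Unwinding the recursion shows that $M(d, R, r, s)$ is simply a maximum of finitely many values $M_0(d', R)$ with $d \le d' \le d + \tfrac{r(r+1)}{2}$, hence is finite.
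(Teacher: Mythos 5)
There is a genuine gap, and it is exactly the circularity you flag. The escape you propose --- that $M(d,R,r,s)$ is independent of its last argument $s$ --- is false, and the statement of the lemma forces it to be false. Concretely, take $k$ algebraically closed of characteristic $\ne\ell$, $d=0$, so that $M_0(0,R)=R+1$, and consider $E=\Gmp{R+1}\wedge (\1/(\ell,\rho))$. This $E$ is $(R+1)$-effective and satisfies (a), (b), (c) (by Remark~\ref{rmk:nilpotence}). But \[ \ul{\pi}_0(E)_{-(R+1)}(k) \;=\; \ul{\pi}_0(\1/(\ell,\rho))_0(k), \] and a short computation with the cofiber sequences defining $\1/\rho$ and $\1/(\ell,\rho)$, using $\ul{\pi}_0(\1)_*=\ul{K}^{MW}_*$ and $\rho=[-1]=0$ over an algebraically closed field, gives $\ul{\pi}_0(\1/(\ell,\rho))_0(k)\cong\Z/\ell\ne 0$. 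So for $s\ge R+1$ the lemma \emph{fails} at effectivity level $M_0(0,R)$; any valid $M(d,R,r,s)$ must grow with $s$. Your recursive formula, which unwinds to $M(d,R,r,s)=M_0(d',R)$ for a $d'$ depending only on $d$ and $r$, therefore cannot be correct, and the circularity $M\ge M(d+r,R,r-1,M+j)$ is not resolved.

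The paper's proof sidesteps this in two ways, both worth internalizing. First, it reduces to the case $s=0$ at the outset and only afterwards recovers general $s$ by the shift $E\mapsto E\wedge\Gmp{-j}$; that shift raises the required effectivity by exactly $j$, which is why the final formula is $M(d,R,r,s)=M(d,R,r,0)+s$ --- explicitly $s$-dependent, contrary to your claim. Second, and this is the crucial structural point your argument misses: even though $E$ is $M$-effective with $M$ possibly very large, the paper does \emph{not} apply the simplicial filtration at level $M$. It applies it at the \emph{fixed} level $n=M_0(d,R)$. Since $E$ is at least $n$-effective, $f_n E\wequi E$, so this is legitimate, and by Lemma~\ref{lemm:levine-simp}(2) the $E^1$-contributions to $\pi_r(f_nE)_0(K)$ sit in weight $-n$, not weight $-M$. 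The inductive call then lands at the fixed weight $n=M_0(d,R)$, giving the well-founded recursion $M(d,R,r,0)=\max\{M(d+r,R,r-1,M_0(d,R)),\,M_0(d,R)\}$. In your version the filtration level and hence the inner weight are tied to the unknown $M$, which is precisely what makes the recursion circular; the fix is to decouple them by always filtering at level $M_0(d,R)$.

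Your treatment of the bottom graded piece is correct, and the overall architecture (induction on $r$, using Lemma~\ref{lemm:levine-simp}(2)--(4) and Lemma~\ref{lemm:convergence-1}) is the right one --- the only missing idea is to run the filtration at the fixed level rather than the maximal one, plus the final $\Gm$-shift to pass from $s=0$ to general $s$.
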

\begin{proof}
We shall define $M(d,R, r, s)$ by induction on $r$.
If $i < 0$ then $\ul{\pi}_i(E)_{-j}(K)=0$ for any $j, K$ by assumption, and hence $M(d,R, r, s)=0$ works for all $r < 0$, $s \in \Z$.

Now suppose that $M(d,R, r',s)$ has been defined for all $r' < r$.
We first want to define $M(d,R,r,0)$; so we need to investigate $\ul{\pi}_r(E)_0(K)$. Consider the filtration~\eqref{eq:pi-simpss} on $\pi_r(f_n E)_0(K)$ with associated graded $gr_p^{\simp}\ul{\pi}_{r}(f_nE)_{0}(K)$.
It follows from Lemma \ref{lemm:levine-simp}(2) that these groups vanish when:
\begin{enumerate}
\item $p < n$ by codimension reasons, or
\item $p > n+r$ by connectivity of $E$.
\end{enumerate}
Conversely, whenever there is a non-zero contribution we must have $(*)$ $n \le p \le n+r$, and the contribution comes from some $\ul{\pi}_{r'}(E)_{-n}(K(w))$, where \[ \trdeg(K(w)/K) = p-n \le r \text{ and } r' = n+r-p \le r. \]
Here the last two inequalities just come from $(*)$.

We define \[ M(d,R,r,0) = \max\{M(d+r,R, r-1, M_0(d,R)), M_0(d,R)\}, \]
where $M_0(d,R)$ is the function from~\eqref{eq:modr}.
Set $n=M_0(d,R)$.
By assumption, $E$ is at least $n$-effective, so $f_nE \wequi E$ and the map \[ \alpha_n: \pi_r(f_nE)_0(K) \to \pi_r(E)_0(K) \] is an isomorphism.
We claim that for $p>n$, we have $gr_p^{\simp}\ul{\pi}_{r}(f_nE)_{0}(X)= 0$.
Indeed by the above discussion any contribution to this group arises from $\ul{\pi}_{r'}(E)_{-n}(L)$, where $r'<r$ and $\trdeg(L/k) \le d+r$.
This vanishes by induction and the construction of $M$.
Now, part (3) of Lemma~\ref{lemm:levine-simp} implies that the image of $\alpha_n$ is given by $F_n^{\simp}\ul{\pi}_{i}(f_nE)_{0}(K)$, which is identified in part (4) as the group \[ [K_n^{MW} \ul{\pi}_r(E)_{-n}]^{tr}(K), \] which vanishes by Lemma \ref{lemm:convergence-1}.
Hence our definition of $M(d,R,r,0)$ has the desired property.

Finally, since $\ul{\pi}_i(E)_{-j} = \ul{\pi}_i(E \wedge \Gmp{-j})_0$, we can define $M(d,R,r,s) = M(d,R,r,0) + s$. This concludes the construction of the desired function.
\end{proof}

\begin{proof}[Proof of Theorem \ref{thm:convergence}.]
Let us call a tower of sheaves $\ul{F}_\bullet$ \emph{locally null} if there exists a function $N(d)$ such that for $\trdeg(K/k) \le d$ and $i \ge N(d)$ we have $\ul{F}_{i}(K) = 0$.

(1) Under our standing assumptions (in particular (4)), the claim is equivalent to showing that $\ul{\pi}_i((f_\bullet E)/\rho^R)_j$ is locally null (with $N$ independent of $E$).
Replacing $E$ by $E \wedge \Gmp{j}$ (which still satisfies (a), (b)) replaces $\ul{\pi}_i(f_M(E)/\rho^R)_0$ by $\ul{\pi}_i(f_{M-j}(E)/\rho^R)_j$.
It thus suffices to treat the case $j=0$.
Note that $f_M(E)/\rho^R$ is $(M-R)$-effective, satisfies conditions (a) and (b) by Lemma \ref{lemm:convergence-permanence}, and satisfies condition (c) by Remark \ref{rmk:nilpotence} (with $2R$ in place of $R$).
Lemma \ref{lemm:convergence-2} thus shows that $\ul{\pi}_i(f_M(E)/\rho^R)_0(K) = 0$ as soon as $M \ge M(d,2R, i, 0) + R$.
The claim follows.

(2)
Consider the following commutative diagram
\begin{equation*}
\begin{CD}
f_M(E) @>>> f_M(E)/\rho^R @>a>> f_{M-R}(E) \\
@VVV         @VVV                @VVV      \\
E      @>>> E/\rho^R      @>b>> E.
\end{CD}
\end{equation*}
Here the vertical maps are the canonical ones, and the horizontal maps in the left hand square are the projections.
The map $b$ is a splitting $E/\rho^R \wequi E \vee \Sigma E \wedge \Gmp{-R} \to E$ (using condition (c)), and the map $a$ is the unique one making the right hand square commute 
(using that $f_M(E)/\rho^R$ is $(M-R)$-effective).
The bottom horizontal composite is $\id_E$ by construction, and hence the top horizontal composite is the canonical map $f_M(E) \to f_{M-R}(E)$.
Since $\ul{\pi}_i(f_\bullet(E)/\rho^R)_j$ is locally null by (1), we deduce that for $M$ sufficiently large depending only on $i, j, d$, the map \[ \ul{\pi}_i(f_{M+R} E)_j(K) \to \ul{\pi}_i(f_{M} E)_j(K) \]  factors through \[ \ul{\pi}_i(f_{M+R}(E)/\rho^R)_j(K) = 0. \]
This implies the claim.

Separatedeness of both towers follows immediately.
This concludes the proof.
\end{proof}

\section{Spheres over fields}
In this section we treat a special case of our main result to which we will reduce the general case.
Throughout $S=Spec(k)$, where $k$ is a field of exponential characteristic $e$, $\ell \ne e$ is a prime and 
\[ \tau: \1/(\ell^{\nu}, \rho^{\mu}) \to \1/(\ell^{\nu}, \rho^{\mu})(r) \] 
is a $\tau$-self map (see Definition \ref{def:tau-self-map}).
Here $0 \le \mu,\nu \le \infty$.

The following is the key technical result.
\begin{lemma} \label{lemm:key}
Suppose that $\vcd_\ell(k) < \infty$.
Then \[ \lim_n \left[ f_n(\1)/(\ell^\nu,\rho^\mu)[\tau^{-1}]/(\ell,\rho) \right] = 0. \]
\end{lemma}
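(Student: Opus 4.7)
The plan is to apply Theorem~\ref{thm:convergence} to the spectrum
\[ F := \1/(\ell^\nu,\rho^\mu)[\tau^{-1}] \wedge \1/(\ell,\rho), \]
and then to transfer the resulting slice convergence to the tower $\{f_n(\1)\wedge F\}_n$.

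First I would verify that $F$ satisfies the three hypotheses (a), (b), (c) of Theorem~\ref{thm:convergence}. Conditions (b) and (c) are essentially automatic: $F$ is a module over $\1/\ell$ and over $\1/\rho$, so multiplication by $\ell$ and by $\rho$ act as zero, and in particular $R=1$ works in (c). For (a), write $F = \colim_k G(kr)$ with $G = \1/(\ell^\nu,\rho^\mu)\wedge\1/(\ell,\rho)$; since $\1(r) \in \SH(k)_{\ge 0}$ (a consequence of Morel's connectivity theorem together with $\pi_{n,j}(\1(r))\cong \pi_{n,j-r}(\1)$), each $G(kr)$ lies in $\SH(k)_{\ge c}$ for a bound $c$ depending only on $G$, and filtered colimits preserve $\SH(k)_{\ge c}$, so $F$ itself is bounded below.

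Next, Theorem~\ref{thm:convergence}(2) then gives that for each $(i,j)$ and each point $x$ with $\trdeg(x/k) \le d$, the pro-system $\ul{\pi}_{i,j}(f_\bullet F)_x$ is Mittag-Leffler zero. Combined with exhaustiveness (Remark~\ref{rmk:exhaustive}) and the Milnor sequence, this yields
\[ \lim_n f_n(F) = 0 \in \SH(k). \]

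The main step is to pass from $\lim_n f_n(F)=0$ to the desired $\lim_n f_n(\1)\wedge F = 0$. Using the universal property of $f_n$ applied to the $n$-effective spectrum $f_n(\1)\wedge F$, we obtain a natural factorization $f_n(\1)\wedge F \to f_n(F) \to F$. I would then analyze the fiber of $f_n(\1)\wedge F \to f_n(F)$ inductively using the cofiber sequences $f_{m+1}(\1)\wedge F \to f_m(\1)\wedge F \to s_m(\1)\wedge F$ and the analogous tower for $F$; the comparison is controlled by smash products $s_m(\1)\wedge F$. After $\tau$-inversion and quotienting by $(\ell,\rho)$, each such layer is an étale-local $H\Z/(\ell,\rho)$-module (by the defining property of $\tau$-self maps together with Lemma~\ref{lemm:bott-element-etale-invertible}), and hence contributes in a bounded range of bidegrees controlled by $\vcd_\ell(k)<\infty$ and Beilinson--Lichtenbaum. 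Telescoping these estimates across $m$ gives pro-vanishing of the fiber, which together with Step~2 gives $\lim_n f_n(\1)\wedge F = 0$.

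The main obstacle in this approach is the last step: $f_n(\1)\wedge F$ and $f_n(F)$ are genuinely different functors of $F$, and controlling their difference requires combining the slice-by-slice description of the sphere (via $H\Z$-modules), the étale-locality of $\tau$-inversion, and the finite cohomological dimension assumption. This is exactly where the input of Theorem~\ref{thm:convergence} — as opposed to weaker forms of slice convergence — is essential.
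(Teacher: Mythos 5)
Your proposal diverges from the paper's proof at the critical step and leaves a genuine gap. You propose applying Theorem~\ref{thm:convergence} to the spectrum $F := \1/(\ell^\nu,\rho^\mu)[\tau^{-1}]/(\ell,\rho)$ itself, which (after also invoking the descent spectral sequence and unramifiedness, which you elide) would indeed give $\lim_n f_n(F)=0$. But the lemma asks for $\lim_n \bigl(f_n(\1)\wedge F\bigr)=0$, and these are different towers: the slice filtration is not smashing, so $f_n(\1)\wedge F$ and $f_n(F)$ are not identified in general. You explicitly flag this as the ``main obstacle'' and sketch a telescoping comparison between the two via the slices $s_m(\1)\wedge F$ versus $s_m(F)$, but you do not construct that comparison, and it is unclear how it would be controlled: the natural map goes $f_n(\1)\wedge F\to f_{n-c}(F)$ (using only that $F$ is boundedly effective), and there is no slice-by-slice identification of its fiber. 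Your proposal as written therefore does not constitute a proof.

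The paper takes an entirely different route that avoids comparing the two towers. It reduces (via compact generators, the Milnor sequence, the Nisnevich descent spectral sequence, and unramifiedness of homotopy sheaves) to vanishing of stalks $\ul{\pi}_{k,w}(L_n)_\xi$ for $k\le N$ and $n$ large. The key trick is then that homotopy sheaves commute with the colimit defining $\tau$-inversion, and the colimit only shifts the weight $w$; since one needs vanishing for \emph{all} $w$ anyway, the whole factor $F$ is absorbed into the weight parameter plus a bounded shift of $n$, reducing to $\ul{\pi}_{k,w}(f_n(\1)/(\ell,\rho))_\xi=0$ \emph{uniformly in $w$}. That uniformity in $w$ is exactly what a naive application of the bound $N(c,d,R,i,j,M)$ of Theorem~\ref{thm:convergence} would not give (the bound depends on $j$). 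Instead the paper uses Theorem~\ref{thm:convergence}(1) only to obtain separatedness of the tower $\ul{\pi}_{k,w}(f_\bullet(\1)/(\ell,\rho))_\xi$, then invokes Lemma~\ref{lemm:convergence-application} to reduce to uniform-in-$w$ vanishing of the \emph{slices} $\ul{\pi}_{k,w}(s_m(\1)/(\ell,\rho))_\xi$, and that uniform vanishing comes from the explicit description of $s_m(\1)$ as sums of $\Sigma^{m+s,m}H\Z/l$ together with Lemma~\ref{lemm:vcd-iso} (Beilinson--Lichtenbaum plus $\vcd_\ell(k)<\infty$). In short: the paper applies the convergence theorem to $\1/(\ell,\rho)$ (not to $F$), uses the colimit/weight trick to dispose of $\tau$-inversion at the level of stalks, and routes the final vanishing through the slices of the sphere. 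Your approach would require an independent comparison of $f_n(\1)\wedge F$ with $f_n(F)$ that is not supplied.
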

\begin{proof}
Write $L = \lim_n L_n$ for the limit in question.
For $X \in \Sm_k$, $w \in \Z$, spectra of the form $X(w) := \Sigma^{0, w} \Sigma^\infty X_+$ generate $\SH(k)$ (see, for example, \cite[Theorem 9.2]{cell} or \cite[Proposition 6.4]{hoyois-sixops} when $G$ is the trivial group), 
and hence it suffices to show that \[ [X(w)[i], L] = 0 \] for all $w, i$.
By the Milnor exact sequence \cite[Proposition VI.2.15]{goerss2009simplicial} \[ 0 \to \mathrm{lim}^1_n[X(w)[i+1], L_n]  \rightarrow [X(w)[i], L] \rightarrow \mathrm{lim}_n [X(w)[i], L_n] \to 0, \] it is enough to show that for $(i, w)$ fixed and $n$ sufficiently large we have $[X(w)[i], L_n] = 0$.

Consider the descent spectral sequence
\[
H^p_{\mathrm{Nis}}(X, \ul{\pi}_{q,w}(L_n)) \Rightarrow [X(w)[q-p] , L_n],
\]
which is strongly convergent due to the finite cohomological dimension of the Nisnevich site (see e.g., the proof of \cite[Proposition 4.3]{elso}).
This implies that it suffices to prove the following claim:
\begin{itemize}
\item For any $N \in \Z$ and $n$ sufficiently large (depending on $N$), all $w \in \Z$ and $k \le N$ we have $\ul{\pi}_{k,w}(L_n)|_{X_\Nis} = 0$.
\end{itemize}
\tombubble{Or use that Zariski and Nisnevich cohomology coincide.}
By unramifiedness of homotopy sheaves \cite[Lemma 6.4.4]{morel-conn}, it is enough to show that $\ul{\pi}_{k,w}(L_n)_\xi = 0$ for generic points $\xi$ of étale extensions of $X$.
Since homotopy sheaves commute with colimits, and $w \in \Z$ is arbitrary, for this it suffices to show that \[ \ul{\pi}_{k,w}(f_n(\1)/(\ell^\nu,\rho^\mu,\ell,\rho))_\xi = 0. \]
Up to adding a constant to $n$ (which only depends on $\mu$), it is thus enough to show the vanishing \[ \ul{\pi}_{k,w}(f_n(\1)/(\ell,\rho))_\xi = 0. \]

Using Theorem \ref{thm:convergence}(1), we find that the tower of abelian groups
\[
\ul{\pi}_{k,w}(f_{\bullet}(\1)/(\ell,\rho))_{\xi} \rightarrow \ul{\pi}_{k,w}(f_n(\1)/(\ell,\rho))_\xi
\]
is convergent (here $\bullet \ge n$, and the tower is trivially exhaustive).
Hence by Lemma \ref{lemm:convergence-application}, it suffices to prove the following claim:
\begin{itemize} 
\item For any $N \in \Z$ there exists $n$ (depending on $N$ and $\trdeg(\xi/k)$) such that for all $m \ge n$, $k \le N$ and $w \in \Z$ we have
\[ \ul{\pi}_{k,w}(s_m(\1)/(\ell,\rho))_\xi = 0.  \] 
\end{itemize}
By \cite[Theorem 2.12]{1-line}, each slice $s_m(\1)[1/e]$ is a finite sum of suspensions of motivic cohomology spectra \[ \Sigma^{m+s,m} H\Z/l, \] for certain $s \ge 0, l \ge 0$.

As before, up to possibly adding a constant to $n$, it is thus enough to show that \[\ul{\pi}_{k,w}(\Sigma^{m+s,m} H\Z/(\ell,\rho))_\xi = 0\] for $m$ sufficiently large.
Now \[ \ul{\pi}_{k,w}(\Sigma^{m+s,m} H\Z/\ell)_\xi = H^{m+s-k}(\xi, \Z/\ell(m-w)) \] and so we need to show that multiplication by $\rho$ induces an isomorphism on these groups, 
for $m$ sufficiently large (depending on $N$ and $\trdeg(\xi/k)$), all $w \in \Z$ and all $k \le N$, $s \ge 0$.
In particular $s-k \ge -N$.
Note also that we have $\vcd_\ell(\xi) \le \vcd_\ell(k) + \trdeg(\xi/k)$ by \cite[Tag 0F0T]{stacks}.
The required vanishing thus follows from Lemma \ref{lemm:vcd-iso}: we may put $m \ge n := \vcd_\ell(k) + \trdeg(\xi/k) + N + 1$. This concludes the proof.
\end{proof}

\begin{corollary} \label{cor:sphere-inversion}
If $\vcd_\ell(k) < \infty$, then $\1/(\ell^\nu,\rho^\mu)[\tau^{-1}]/(\ell,\rho)$ is étale local.
\end{corollary}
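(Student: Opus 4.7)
My plan is to exhibit $M := \1/(\ell^\nu,\rho^\mu)[\tau^{-1}]/(\ell,\rho)$ as a limit of finite iterated extensions of slices of $\1$ (smashed with $M$), each of which I will argue is étale local, and then conclude by closure of étale local spectra under limits and cofibers.

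The first step is to use Lemma \ref{lemm:key} as an assembly statement: smash the canonical fiber sequence $f_n\1 \to \1 \to \1/f_n\1$ with $M$ and apply $\lim_n$. Since the category is stable, this yields the fiber sequence
\[
\lim_n(f_n\1 \wedge M) \longrightarrow M \longrightarrow \lim_n((\1/f_n\1) \wedge M),
\]
and the left-hand term vanishes by Lemma \ref{lemm:key}. Hence $M \simeq \lim_n (\1/f_n\1) \wedge M$, and it suffices to prove that each layer $(\1/f_n\1) \wedge M$ is étale local. Since $\1/f_n\1$ is a finite iterated cofiber of the slices $s_i\1$ for $i < n$ (and étale local objects are closed under finite colimits in a stable category), this in turn reduces to showing that each $s_i\1 \wedge M$ is étale local.

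For the slice-by-slice analysis, I would argue as follows. The spectrum $M$ is $\ell$-torsion and $\ell$ is coprime to $e$, so $e$ acts invertibly on $M$ and $s_i\1 \wedge M \simeq s_i(\1)[1/e] \wedge M$. By \cite[Theorem 2.12]{1-line}, $s_i(\1)[1/e]$ is a finite sum of shifted motivic cohomology spectra $\Sigma^{?,?} H\Z/l$. Summands with $\gcd(l,\ell) = 1$ are killed by $\wedge \1/\ell$ and thus contribute nothing, so only $\ell$-power summands remain. For these, $H\Z/\ell^a \wedge M$ is, up to finite cofiber sequences, a sum of shifts of $H\Z/\ell \wedge \1/\rho \wedge \1/(\ell^\nu,\rho^\mu)[\tau^{-1}]$; this is an $H\Z$-module on which $\tau$ is inverted. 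The defining property of the cohomological Bott element (Definition \ref{def:cohomological-bott-element}) combined with the fact that $\tau$ is a $\tau$-self map (Definition \ref{def:tau-self-map}) gives that $H\Z/(\ell^\nu,\rho^\mu)[\tau^{-1}]_{\ell,\rho}^\wedge$ is étale local, and taking cofibers by $\ell$ and $\rho$ preserves étale locality; the resulting spectrum agrees with what we need (completion becomes vacuous after reducing mod $\ell$ and $\rho$).

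With each $s_i\1 \wedge M$ shown to be étale local, Step 2 of the plan propagates this to each $(\1/f_n\1) \wedge M$, and closure of the reflective subcategory of étale local objects under limits yields that $M \simeq \lim_n (\1/f_n\1) \wedge M$ is étale local, as desired. The main technical obstacle lies in the slice-by-slice analysis above: one must track carefully how the various mod $\ell^?$ factors interact when smashed with $\1/(\ell,\rho)$, and verify that the spherical Bott element $\tau$ really does act as a cohomological Bott element on the resulting motivic-cohomology modules. Once that bookkeeping is in place, however, the etale-locality conclusion for each layer is essentially the defining property of a $\tau$-self map.
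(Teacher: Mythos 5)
Your proposal is correct and follows essentially the same route as the paper's proof: smash the fiber sequence $f_n\1 \to \1 \to f^n\1$ with $M$, take $\lim_n$, kill the left-hand term via Lemma \ref{lemm:key}, and then argue slice by slice, reducing to the defining property of a $\tau$-self map on $H\Z$. The extra bookkeeping you supply in the slice-by-slice step (isolating the summands with $l$ an $\ell$-power, noting that $(\ell,\rho)$-completion is vacuous after reducing mod $(\ell,\rho)$) is all sound and matches what the paper leaves implicit under ``by the form of the slices.''
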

\begin{proof}
We have a cofiber sequence of towers
\begin{equation*}
\begin{CD}
\dots @>>> f_2(\1) @>>> f_1(\1) @>>> f_0(\1)=\1 \\
@.          @VVV        @VVV         @VVV       \\
\dots @>>> \1      @>{\id}>> \1 @>{\id}>> \1    \\
@.          @VVV        @VVV         @VVV       \\
\dots @>>> f^2(\1) @>>> f^1(\1) @>>> f^0(\1),
\end{CD}
\end{equation*}
where $f^i = \id/f_i$.
Smashing with $\1/(\ell^\nu,\rho^\mu)[\tau^{-1}]/(\ell,\rho)$ and applying the (exact) inverse limit functor, we obtain a cofiber sequence \[ \lim_n \left[ f_n(\1)/(\ell^\nu,\rho^\mu)[\tau^{-1}]/(\ell,\rho) \right] \to \1/(\ell^\nu,\rho^\mu)[\tau^{-1}]/(\ell,\rho) \xrightarrow{\alpha} \lim_n \left[ f^n(\1)/(\ell^\nu,\rho^\mu)[\tau^{-1}]/(\ell,\rho) \right]. \]
By Lemma \ref{lemm:key}, the map $\alpha$ is an equivalence.
Since étale local spectra are closed under limits and extensions and each $f^n(\1)$ is a finite extension of the slices $s_m(\1)$ (recall that $\1$ is effective), it suffices to show that $s_i(\1)/(\ell^\nu,\rho^\mu)[\tau^{-1}]/(\ell,\rho)$ is étale local for every $i$.
By the form of the slices of $\1$ recalled in the proof of Lemma \ref{lemm:key}, it suffices to show that \[ H\Z/(\ell^\nu,\rho^\mu)[\tau^{-1}]/(\ell,\rho) \] is étale local.
This holds by the definition of a $\tau$-self map.
\end{proof}

\section{Main result}
In this section we establish our main étale localization results.
Before doing so, we need some preliminaries.

\begin{proposition} \label{prop:compact-gen}
Let $1/\ell \in k$, $\vcd_\ell(k) < \infty$ and $S$ a finite type $k$-scheme.
\NB{Not minimal assumptions...}
If $X \in \Sm_S$ is quasi-separated and $w \in \Z$, then $\Sigma^\infty_+ X/(\ell, \rho) \wedge \Gmp{w} \in \SH_\et(S)_{\ell,\rho}^\comp$ is compact.
\end{proposition}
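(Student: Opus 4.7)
The plan is to transport the question from the big motivic étale category to the small étale $\infty$-topos, where compactness is controlled by bounded cohomological dimension, using Bachmann's rigidity theorem $\SH_\et(S)_\ell^\comp \wequi \SH(S_\et^\wedge)_\ell^\comp$ from \cite{bachmann-SHet2} (cited in the proof of Lemma~\ref{lemm:bott-element-etale-invertible}). Since $\Gmp{w}$ is invertible and $\Sigma^\infty_+ X/(\ell,\rho)$ is already $(\ell,\rho)$-complete (as a finite colimit of bounded-torsion objects, cf.\ Remark \ref{rmk:p-completion-unnec}), and since compact objects are preserved by finite (co)limits and smashing with invertibles, it suffices to show that $L_\et \Sigma^\infty_+ X/\ell$ is compact in $\SH_\et(S)_\ell^\comp$.

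First, I would observe that the hypotheses $\vcd_\ell(s) < \infty$ for all $s \in S$ together with $\dim S < \infty$ imply that $S_\et^\wedge$ has \emph{globally bounded} $\ell$-cohomological dimension. Indeed, $S$ is finite type over $k$, and by \cite[Tag 0F0T]{stacks}, for every $s \in S$ the residue field $k(s)$ satisfies $\cd_\ell(k(s)) \le \vcd_\ell(k) + \trdeg(k(s)/k) + 1 \le \vcd_\ell(k) + \dim S + 1$ (the extra $+1$ handles the $\ell=2$ case if one passes through $k(s)(\sqrt{-1})$); combined with $\dim S < \infty$ this gives a uniform bound on $\cd_\ell(S_\et)$.

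Second, under Bachmann's equivalence, the smooth quasi-separated $S$-scheme $X$ corresponds to the representable $X_\et \in S_\et$, and $L_\et \Sigma^\infty_+ X$ corresponds to $\Sigma^\infty_+ X_\et \in \SH(S_\et^\wedge)$. The quasi-separated hypothesis is precisely what guarantees that $X_\et$ is a QCQS object of the small étale site $S_\et$, since in $\Sm_S$ quasi-separated maps define QCQS representable sheaves. In a hypercomplete $\infty$-topos of bounded cohomological dimension, QCQS representables are compact (by Postnikov convergence and compactness of representables in the unstable $n$-truncated layers), and this passes to $\Sigma^\infty_+$ in the $\ell$-complete stabilization: mapping out of $\Sigma^\infty_+ X_\et/\ell$ is controlled, via a strongly convergent descent spectral sequence whose $E_2$-page is $\ell$-completed étale cohomology of $X_\et$ with coefficients in bounded-dimension homotopy sheaves, and every such bounded piece commutes with filtered colimits.

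The main obstacle will be cleanly verifying the final step: namely that the uniform bound on $\ell$-cohomological dimension of $S_\et^\wedge$ gives strong convergence of the descent spectral sequence uniformly enough that $\Sigma^\infty_+ X_\et/\ell$ tests filtered colimits correctly. One clean way is to invoke the Postnikov completeness of $(S_\et^\wedge)_\ell^\comp$ (see e.g.\ \cite[Section 2]{clausen-mathew}), reducing the compactness to the truncated setting where $X_\et$ QCQS directly suffices. The quasi-separated (rather than quasi-compact) hypothesis is harmless: a QS smooth $S$-scheme $X$ is a filtered union of qcqs open subschemes $X_\alpha$, but in the small étale site QCQS is preserved under filtered unions of opens with QC transition maps, so $X_\et$ itself is a QCQS object even when $X$ itself fails to be qc over $S$.
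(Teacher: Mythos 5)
Your proposed reduction contains an error that masks the entire subtlety of the $\ell=2$ case. You reduce to showing that $L_\et\Sigma^\infty_+ X/\ell$ (without the $\rho$) is compact, on the grounds that compact objects are closed under cofibers. The implication is logically sound, but the stronger statement you reduce to is \emph{false} in general when $\ell=2$. Your justification rests on the claim that the hypotheses force $S_\et^\wedge$ to have \emph{globally bounded} $\ell$-cohomological dimension, deduced from $\vcd_\ell(k) < \infty$. This deduction does not go through for $\ell=2$: for example $\vcd_2(\R) = 0$ but $\cd_2(\R) = \infty$, and passing through $k(s)(\sqrt{-1})$ is a degree-$2$ extension, which is of degree $\ell$ and hence tells you nothing about $\cd_2(k(s))$ itself. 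Over $\R$, the object $\Sigma^\infty_+ X/2$ is genuinely \emph{not} compact in $\SH_\et(S)_2^\comp$; this is precisely the obstruction. So the step "it suffices to show $L_\et\Sigma^\infty_+ X/\ell$ is compact" discards the $/\rho$, which is the only reason compactness holds at all.

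The paper's proof splits into two cases. For $\cd_\ell(k) < \infty$ (in particular $\ell$ odd), the stronger statement you aim for is correct and is cited directly from \cite[Corollary 5.7 and Example 5.9]{bachmann-SHet}; there your strategy works. For $\ell=2$ with possibly infinite $\cd_2$, the paper keeps the $/\rho$ throughout: it passes to the small étale site via \cite[Theorem 6.6]{bachmann-SHet}, uses Postnikov completeness to reduce to the functor $F \mapsto [S/\rho, F[i]]$ for étale sheaves $F$ of $\Z/2$-vector spaces, writes a long exact sequence
\[ \cdots \to H^i_\et(S, F) \xrightarrow{(-1)} H^{i+1}_\et(S, F) \to [S/\rho, F[i]] \to H^{i+1}_\et(S, F) \to \cdots, \]
and then invokes Lemma~\ref{lemm:vcd-vanishing}: multiplication by $(-1)$ is an \emph{isomorphism} on $H^i_\et(S, F)$ for $i > \cd_2(S[\sqrt{-1}])$, which is finite by the $\vcd$ hypothesis. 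This forces $[S/\rho, F[i]]$ to vanish in a uniformly bounded range of degrees, giving strong convergence and compatibility with filtered colimits. Your proof as written cannot reach this because you have already thrown out $\rho$ at the first step; you would need to restore it and replace the "$S_\et^\wedge$ has bounded $\cd_\ell$" claim with the Hochschild--Serre/$C_2$-periodicity argument of Lemma~\ref{lemm:vcd-vanishing}.
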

\begin{proof}
Since the pullback $\SH_\et(S)_{\ell,\rho}^\comp \to \SH_\et(X)_{\ell,\rho}^\comp$ preserves colimits, we may assume that $S=X$.

First suppose that $\cd_\ell(k) < \infty$ (e.g., $\ell$ odd).
It is enough to show that $\Sigma^\infty_+ S/\ell$ is compact in $\SH_\et(S)_\ell^\comp$.
This is proved in \cite[Corollary 5.7 and Example 5.9]{bachmann-SHet}.

Now let $\ell=2$. 
\tombubble{this argument is a bit awkward, I wonder if there is a better formulation...}
Write $\SH(S_\et)$ for the category of hypersheaves of spectra on the small étale site of $S$ (see e.g., \cite[\S2.2]{bachmann-SHet} for definitions).
We have the functors \[ E \mapsto \tau_{\ge n}E, \tau_{\le n}E, \pi_n(E): \SH(S_\et) \to \SH(S_\et) \] coming from the standard $t$-structure; they all preserve filtered colimits (for $\tau_{\le n}$ this follows from \cite[Proposition 1.3.2.7(2)]{SAG} and the other cases follow from this).
We thus have restricted functors \[ E \mapsto \tau_{\ge n}(E/\ell), \tau_{\le n}(E/\ell), \pi_n(E/\ell): \SH(S_\et)_\ell^\comp \to \SH(S_\et)_\ell^\comp \] with the same property.
Moreover $E/\ell \wequi \lim_n \tau_{\le n}(E/\ell)$ \cite[Lemma 2.16]{bachmann-SHet}.
Via \cite[Theorem 6.6]{bachmann-SHet} we transplant these functors to $\SH_\et(k)_\ell^\comp$.
In order to prove that $S/(\ell,\rho)$ is compact, it is enough to show that the functor $E \mapsto [S/\rho, E/\ell]$ preserves filtered colimits.
Using the Postnikov completeness result just recalled, there is a conditionally convergent spectral sequence 
\[ [S/\rho, \pi_i(E/\ell)[j]] \Rightarrow [S/\rho, E/\ell[i+j]]. \]
By standard arguments it is enough to show that the spectral sequence converges strongly for every $E$, and that $[S/\rho, \pi_i(E/\ell)[j]]$ is compatible with filtered colimits in $E$.
Consider $F = \pi_0(E/\ell) \in \SH(S_\et)^\heartsuit$.
This is a sheaf of $\Z/\ell^2$-modules on the small étale site of $S$.
There is a canonical filtration $0 \to K \to F \to F/\ell \to 0$ with $K, F/\ell$ sheaves of $\Z/\ell$-modules and an associated long exact sequence 
\[ \dots \to [S/\rho, K] \to [S/\rho, F] \to [S/\rho, F/\ell] \to \dots. \]
It will thus be enough to show that the functor \[ F \mapsto [S/\rho, F[i]] \] is compatible with filtered colimits of sheaves of $\Z/\ell$-vector spaces on the small étale site of $S$ and vanishes for $i<0$ or $i > \cd_2(S[\sqrt{-1}])$ (here we note that $\cd_2(S[\sqrt{-1}]) < \infty$, e.g., by \cite[Example 5.9]{bachmann-SHet}).

We claim that there is a long exact sequence \[ \dots \to H^i(S, F) \xrightarrow{(-1)} H^{i+1}(S, F) \to [S/\rho, F[i]] \to H^{i+1}(S, F) \to \dots. \]
Since étale cohomology commutes with filtered colimits (see e.g., \cite[A.2.3.2(1)]{SAG}), the result will follow from Lemma \ref{lemm:vcd-vanishing} below.
To prove the claim, consider the sequence of functors and adjoints \[ \SH(S_\et)_\ell^\comp \adj \SH_\et(S)_\ell^\comp \adj \DM_\et(S, \Z/\ell). \]
There is an object $F \in \DM_\et(S, \Z/\ell)$ with image $F$ in $\SH(S_\et)_\ell^\comp$; we are thus reduced to proving the analogous result in $\DM_\et(S, \Z/\ell)$.
To conclude we note that $\rho\colon \Gmp{-1} \rightarrow \1$ induces multiplication by $(-1)$ in $\DM_\et(k, \Z/2) \wequi D(k_\et, \Z/2)$.\tombubble{reference?}.
\end{proof}

Our next result follows from \cite[Theorem 99.13]{MR2427530} in the case of fields.
\begin{lemma} 
\label{lemm:vcd-vanishing} 
Let $X$ be a scheme with $1/2 \in X$ and set $X' = X[\sqrt{-1}]$.
Suppose $F$ is an étale sheaf of $\Z/2$-vector spaces on $X$.
Then for $i > \cd_2(X')$ the map \[ (-1): H^i_\et(X, F) \to H^{i+1}_\et(X, F) \] is an isomorphism.
\end{lemma}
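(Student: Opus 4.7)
The plan is to realize cup product with $(-1) \in H^1_\et(X, \Z/2)$ as the boundary map of a suitable short exact sequence, and then use that the étale pushforward along $f\colon X' \to X$ kills cohomology in high degrees because $X'$ has finite $2$-cohomological dimension.

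First I dispose of the trivial case: if $-1$ is already a square on $X$, then $(-1) = 0 \in H^1_\et(X, \Z/2)$, $X' \simeq X \sqcup X$, and $\cd_2(X) = \cd_2(X')$, so both $H^i_\et(X, F)$ and $H^{i+1}_\et(X, F)$ vanish for $i > \cd_2(X')$ and the claim is trivial. Thus I may assume $f\colon X' \to X$ is finite étale of degree $2$.

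The key ingredient is the fundamental short exact sequence of étale sheaves on $X$
\[
0 \to \underline{\Z/2} \to f_*\underline{\Z/2} \to \underline{\Z/2} \to 0,
\]
in which the first map is the unit of adjunction and the second is the trace. Exactness is immediate from a stalkwise check at geometric points, where it becomes the standard presentation of $\Z/2[\Z/2]$ as an extension of $\Z/2$ by $\Z/2$. Since $F$ is a sheaf of $\Z/2$-modules, the projection formula for the finite étale map $f$ yields $f_*f^*F \simeq F \otimes_{\Z/2} f_*\underline{\Z/2}$, so tensoring gives a short exact sequence
\[
0 \to F \to f_*f^*F \to F \to 0.
\]
I next identify its boundary map $\partial\colon H^i_\et(X, F) \to H^{i+1}_\et(X, F)$ with cup product by $(-1)$. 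The extension class of the fundamental sequence above lies in $\mathrm{Ext}^1(\underline{\Z/2}, \underline{\Z/2}) = H^1_\et(X, \Z/2)$ and, after identifying $\mu_2 \simeq \Z/2$, is exactly the class of the $\mu_2$-torsor $X' = X[\sqrt{-1}] \to X$; by Kummer theory this is $(-1)$. So $\partial$ is cup product with $(-1)$.

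Finally, because $f$ is finite étale, $R^j f_* = 0$ for $j > 0$, so Leray gives $H^i_\et(X, f_*f^*F) \simeq H^i_\et(X', F|_{X'})$, which vanishes for $i > \cd_2(X')$. In this range the long exact sequence collapses to
\[
0 \to H^i_\et(X, F) \xrightarrow{(-1)} H^{i+1}_\et(X, F) \to 0,
\]
which is the desired isomorphism. The main technical point is the identification of the extension class of $f_*\underline{\Z/2}$ with $(-1)$ under Kummer theory; once this is secured the argument is a formal long exact sequence computation combined with the vanishing of higher direct images for finite étale maps.
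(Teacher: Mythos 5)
Your proof is correct, but it takes a genuinely different route from the paper's. The paper runs the Hochschild--Serre spectral sequence
\[
H^p(C_2, H^q_\et(X', F)) \Rightarrow H^{p+q}_\et(X, F)
\]
for the Galois cover $X' \to X$, observes that $(-1)$ is detected by the polynomial generator $\alpha$ of $H^*(C_2,\Z/2)$, and then uses the purely algebraic fact that for every $\Z/2[C_2]$-module $G$ and every $i>0$ the map $\alpha\colon H^i(C_2,G)\to H^{i+1}(C_2,G)$ is an isomorphism (checked on the indecomposables $\Z/2$ and $\Z/2[C_2]$ and extended by filtered colimits). The vanishing of $H^q_\et(X',F)$ for $q>\cd_2(X')$ then forces the claim on the abutment. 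You instead realize cup product with $(-1)$ directly as the connecting map of the short exact sequence
\[
0 \to F \to f_*f^*F \to F \to 0,
\]
obtained by tensoring the unit/trace sequence $0\to\Z/2\to f_*\Z/2\to\Z/2\to 0$ with $F$ (flatness over the field $\Z/2$ being automatic), identifying the extension class with the $\mu_2$-torsor class of $X'\to X$, i.e.\ with $(-1)$ by Kummer theory. The middle term has $H^i_\et(X,f_*f^*F)\cong H^i_\et(X',F|_{X'})=0$ for $i>\cd_2(X')$ because $f$ is finite, so the long exact sequence collapses. Your argument avoids tracking spectral sequence convergence entirely and is arguably more conceptual, exhibiting $(-1)$ as a genuine Bockstein-type boundary; the paper's argument is a slick application of a standard tool and isolates the periodicity as a statement purely about $C_2$-group cohomology. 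One small thing worth making explicit in your write-up: you should also dispose of the case $\cd_2(X')=\infty$ (vacuous), and note that "$-1$ is a square on $X$" should be read as "$X'\to X$ admits a section" so that $X'\simeq X\sqcup X$; both are trivial but the paper flags them.
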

\begin{proof}
If $X=X'$ or $\cd_2(X') = \infty$, there is nothing to prove.
We may thus assume that $Aut(X'/X)$ is the group $C_2$ of order $2$, and that $\cd_2(X') < \infty$.
Consider the strongly convergent Hochschild--Serre spectral sequence \cite[Theorem III.2.20]{MilneEtaleCohomology} \[ H^p(C_2, H^q(X', F)) \Rightarrow H^{p+q}(X, F). \]
It is a module over the same spectral sequence with $F=\Z/2$.
The class $(-1) \in H^1(X, \Z/2)$ is detected by a class $\alpha \in H^1(C_2, H^0(X, \Z/2))$ coming from the isomorphism $H^*(C_2, \Z/2) \wequi H^*(\R\P^\infty, \Z/2) \wequi \Z/2[\alpha]$.
The class $\alpha$ has the property that if $G$ is any $\Z/2[C_2]$-module and $i > 0$, the multiplication map $\alpha: H^i(C_2, G) \to H^{i+1}(C_2, G)$ is an isomorphism.\footnote{Indeed $A := \Z/2[C_2] \wequi \Z/2[\epsilon]/\epsilon^2$, so any finitely generated $A$-module is a sum of copies of $A$ and $\Z/2$. For such modules the claim holds. A general $A$-module is a filtered colimit of finitely generated ones, and cohomology commutes with filtered colimits, so the claim follows in general.}
We deduce that $\alpha$ induces an isomorphism on all columns of the spectral sequence, except possibly the one containing $H^0(C_2, H^*(X', F))$.
The result follows.
\end{proof}

We deduce a weak form of Theorem \ref{thm:apps}(3) from the introduction, which we will use in the proof of our main result.
(The strong form will be deduced later.)
\begin{corollary} \label{cor:etale-locn-smashing}
Let $k$ be a field of exponential characteristic $e \ne \ell$ and $\vcd_\ell(k) < \infty$.
Then $L_\et: \SH(k)_{\ell,\rho}^\wedge \to \SH(k)_{\ell,\rho}^\wedge$ is a smashing localization, or in other words for $E \in \SH(k)_{\ell,\rho}^\wedge$ we have $L_\et(E) \wequi E \wedge L_\et(\1)$.
\end{corollary}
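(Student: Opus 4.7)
The plan is to exhibit compact generators of $\SH(k)_{\ell,\rho}^\wedge$ whose images under $L_\et$ remain compact, and then invoke a standard recognition principle: if $L: \scr C \to \scr D$ is a Bousfield localization of presentable stable $\infty$-categories carrying a set of compact generators of $\scr C$ into compact objects of $\scr D$, then $L$ is smashing. Indeed, for such $L$ with fully faithful right adjoint $i$, each compact generator $G$ witnesses
\[
[G, i\colim D_\beta] = [LG, \colim D_\beta] = \colim[LG, D_\beta] = \colim[G, iD_\beta]
\]
for any filtered system $\{D_\beta\}$ in $\scr D$; combined with $i$'s automatic preservation of finite colimits in the stable setting, this forces $i$ to preserve all colimits, so local objects are colimit-closed and $L$ is smashing.

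First I would check that $\SH(k)_{\ell,\rho}^\wedge$ is compactly generated by the family $G_{X,w} := \Sigma^\infty_+ X \wedge \Gmp{w}/(\ell,\rho)$ for $X \in \Sm_k$ quasi-separated and $w \in \Z$, together with their suspensions. Generation is standard: for $E \in \SH(k)_{\ell,\rho}^\wedge$ with $[G_{X,w}[n], E]=0$ for all data, unfolding the cofiber sequences defining $\1/(\ell,\rho)$ forces $\ell$ and $\rho$ to act invertibly on all bigraded mapping groups $[\Sigma^\infty_+ X \wedge \Gmp{w}, E]_*$, which together with $(\ell,\rho)$-completeness yields $E=0$. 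Compactness of $G_{X,w}$ in $\SH(k)_{\ell,\rho}^\wedge$ follows from its $(\ell,\rho)$-torsion: the fiber of $Y \to Y_{\ell,\rho}^\wedge$ is $(\ell,\rho)$-local, so mapping from $G_{X,w}$ into it vanishes, whence $\Map(G_{X,w}, -)$ is insensitive to $(\ell,\rho)$-completion and compactness is inherited from $\SH(k)$.

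By Proposition \ref{prop:compact-gen}, applied at $S = \Spec(k)$ (using the standing hypothesis $\vcd_\ell(k) < \infty$), each $L_\et G_{X,w}$ is compact in $\SH_\et(k)_{\ell,\rho}^\wedge$. The recognition principle above then applies and produces the smashing statement: the inclusion $i_\et: \SH_\et(k)_{\ell,\rho}^\wedge \hookrightarrow \SH(k)_{\ell,\rho}^\wedge$ preserves all colimits, so $L_\et(E) \wequi E \wedge L_\et(\1)$ for every $E \in \SH(k)_{\ell,\rho}^\wedge$. The main subtle point is the compactness of $G_{X,w}$ in $\SH(k)_{\ell,\rho}^\wedge$ — because $(\ell,\rho)$-completion is not itself smashing, this requires careful use of the bounded $(\ell,\rho)$-torsion of $G_{X,w}$; everything else is formal.
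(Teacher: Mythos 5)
The overall strategy — deduce smashing from Proposition \ref{prop:compact-gen} and compact generation — is the right one, and your verification that $\SH(k)_{\ell,\rho}^\wedge$ is compactly generated by the $(\ell,\rho)$-torsion objects $G_{X,w}$, and that the unit $i_\et$ of the localization preserves filtered (hence all) colimits, is sound. However, there is a genuine gap at the very last step: you write ``the inclusion $i_\et$ preserves all colimits, \emph{so} $L_\et(E) \wequi E \wedge L_\et(\1)$,'' and that ``so'' is not justified. In a presentably symmetric monoidal stable $\infty$-category, $i_\et$ preserving colimits does \emph{not} by itself yield the tensor formula $L_\et E \wequi E \wedge L_\et\1$; the two are equivalent only when the category is generated under colimits by \emph{dualizable} objects (so that étale-local objects form a $\otimes$-ideal, which then makes $E \wedge L_\et\1$ local for every $E$). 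Equivalently, one needs the unit $\1_{\ell,\rho}^\wedge$ together with tensoring-by-dualizables to generate, which is exactly where rigidity enters.

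Your generators $G_{X,w} = \Sigma^\infty_+ X \wedge \Gmp{w}/(\ell,\rho)$ with $X$ merely quasi-separated smooth are compact but typically \emph{not} dualizable (for non-proper $X$, $\Sigma^\infty_+ X$ has no strong dual). To close the gap you must either replace the generating family by smooth projective $X$ and invoke the nontrivial fact that such $X$ generate $\SH(k)[1/e]$ — which is precisely what the paper cites \cite[Corollary B.2]{levine2013algebraic} for — or directly import a recognition result such as \cite[Lemma 22]{bachmann-hurewicz} that packages ``compact-rigid generation of source and target $\Rightarrow$ the target is $\Mod_{L_\et\1}$.'' The paper's proof uses exactly this second route, marshalling three inputs: a family of generators (\cite[Example 2.3]{bachmann-SHet}), their rigidity (\cite[Corollary B.2]{levine2013algebraic}), and their compactness (Proposition \ref{prop:compact-gen}). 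Your argument supplies the third input but omits the second, which is essential and not formal. Once you add the rigidity step, the rest of your argument goes through.
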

\begin{proof}
The functor $L_\et: \SH(k)_{\ell,\rho}^\wedge \to \SH_\et(k)_{\ell,\rho}^\wedge$ will identify the target as (highly structured) modules over the étale-local sphere 
if we can show that both the source and target are compact-rigidly generated, 
see e.g., \cite[Lemma 22]{bachmann-hurewicz} .
This is indeed the case; see \cite[Example 2.3]{bachmann-SHet} (to obtain a family of generators), \cite[Corollary B.2]{levine2013algebraic} (to see that the generators are rigid) 
and Proposition \ref{prop:compact-gen} (to see that the generators are compact).
The result follows.
\end{proof}

With these preparations out of the way, we come to our main theorem.
We refer to \S\ref{subsec:summary} for a summary of the good $\tau$-self maps we managed to construct.

\begin{theorem} \label{thm:main}
Let $S$ be a scheme locally of finite dimension, $1 \le m, n \le \infty$.
Suppose $\tau: \1/(\ell^n,\rho^m) \to \1/(\ell^n,\rho^m)(r)$ is a $\tau$-self map.
Assume that $1/\ell \in S$ and for every $s \in S$ we have $\vcd_\ell(s) < \infty$.
Assume further that either $S$ is the spectrum of a field or $\tau$ is good.

Then for every $E \in \SH(S)_{\ell,\rho}^\comp$ the map \[ E/(\ell^n,\rho^m) \to E/(\ell^n,\rho^m)[\tau^{-1}]_{\ell,\rho}^\comp \] is an étale localization.
\end{theorem}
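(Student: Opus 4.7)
The plan is to split the statement into two pieces: (A) the target $F := E/(\ell^n,\rho^m)[\tau^{-1}]_{\ell,\rho}^\comp$ is étale local in $\SH(S)_{\ell,\rho}^\comp$, and (B) the canonical map $E/(\ell^n,\rho^m) \to F$ is an étale equivalence. Part (B) is essentially formal from Lemma~\ref{lemm:bott-element-etale-invertible}: inverting $\tau$ is invisible after applying $L_\et$ to $(\ell,\rho)$-complete objects, and the subsequent completion is harmless because étale-local spectra are closed under limits. The real content is (A).

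For (A) in the case $S = \Spec(k)$, Corollary~\ref{cor:sphere-inversion} tells us that $\1/(\ell^n,\rho^m)[\tau^{-1}]/(\ell,\rho)$ is étale local. Since étale-local spectra are closed under extensions and limits, the $(\ell,\rho)$-completion $\1/(\ell^n,\rho^m)[\tau^{-1}]_{\ell,\rho}^\comp$ is itself étale local: it is the limit over $i,j$ of iterated cofibers built from multiplication by $\ell$ and $\rho$ on $\1/(\ell^n,\rho^m)[\tau^{-1}]/(\ell,\rho)$. By Corollary~\ref{cor:etale-locn-smashing}, étale localization is smashing on $\SH(k)_{\ell,\rho}^\comp$, so the class of étale-local objects is closed under smash products with arbitrary objects; smashing with $E$ and then $(\ell,\rho)$-completing yields (A) for $F$ over a field.

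For general $S$, I would reduce to the field case using joint conservativity of pullbacks $f_s^*$ along residue fields on $\SH_\et(S)_{\ell,\rho}^\comp$, which follows from the rigidity results cited in the paper (notably \cite[Theorem 3.1]{bachmann-SHet2}) combined with the hypothesis $\vcd_\ell(s)<\infty$. The pullbacks commute with smash products, cofibers, and filtered colimits, and hence with the operation $E \mapsto E/(\ell^n,\rho^m)[\tau^{-1}]$, but do not in general commute with $(\ell,\rho)$-completion when $m=\infty$ or $n=\infty$; when both are finite, Remark~\ref{rmk:p-completion-unnec} renders the completion unnecessary and the reduction is immediate. The remaining (and harder) case is exactly what the goodness hypothesis is tailored to: goodness allows any map from an étale-acyclic source into $F$ to be factored through a $\tau'$-inverted quotient of that source, replacing the opaque $(\ell,\rho)$-completion by a filtered colimit that pullback respects. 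This factoring, combined with the field case, forces any such map to vanish, so $F$ is étale local.

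The main obstacle is precisely this interaction between $(\ell,\rho)$-completion and base change in the limit regime $m=\infty$ or $n=\infty$; the goodness of $\tau$ is the technical lever that unblocks the reduction from general $S$ to fields, and without it the argument only handles the finite-mod case directly.
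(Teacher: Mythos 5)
Your treatment of the étale-equivalence direction (via Lemma~\ref{lemm:bott-element-etale-invertible}) and the field case (Corollary~\ref{cor:sphere-inversion}, closure under limits and extensions, and the smashing statement Corollary~\ref{cor:etale-locn-smashing}) matches the paper. The gap is in the reduction from general $S$ to fields.

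You assert that when $m,n<\infty$ the reduction to fields is "immediate" from pullback conservativity, with goodness needed only to tame the completion in the infinite case. This is not correct, and the theorem hypothesis ("either $S$ is the spectrum of a field or $\tau$ is good") already signals that goodness is required for every non-field $S$ regardless of $m,n$. The issue is that conservativity of the family $\{s^*\colon \SH(S)_{\ell,\rho}^\comp \to \SH(s)_{\ell,\rho}^\comp\}$ detects equivalences (equivalently, vanishing of objects); it does not let you deduce "$F$ is étale local over $S$" from "$s^*F$ is étale local over each $s$". That deduction would amount to $s^*$ commuting with the right adjoint inclusion of the étale-local subcategory, which is exactly the content of Theorem~\ref{thm:basechange} and is a \emph{consequence} of the main theorem, not an available tool. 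The paper's actual route is: take $K$ arising from an étale hypercover, use goodness to factor any map $K \to E'$ through $K'':=K/(\ell^{n'},\rho^{m'})[\tau'^{-1}]_{\ell,\rho}^\comp$, and then reduce to showing $K''=0$. Now conservativity legitimately applies (it is a vanishing statement); $s^*$ preserves the colimits defining $K''$, so $s^*K''\simeq s^*K/(\ell^{n'},\rho^{m'})[\tau'^{-1}]_{\ell,\rho}^\comp$, which by the field case is étale local, while also being étale-acyclic since $s^*K$ is; hence $s^*K''=0$. Without the goodness factorization, one is stuck trying to prove a mapping-group vanishing $[K,E']=0$ directly, which conservativity does not address.

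Two further corrections. First, the conservativity the paper invokes in this step is joint conservativity of $\{s^*\}$ on $\SH(S)_{\ell,\rho}^\comp$ from \cite[Proposition B.3]{norms}, not the rigidity-based conservativity on $\SH_\et(S)_{\ell,\rho}^\comp$ (that one is used earlier, inside Lemma~\ref{lemm:bott-element-etale-invertible}). Second, your worry about $(\ell,\rho)$-completion failing to commute with pullback is misplaced here: working inside the complete categories, the colimit defining $[\tau^{-1}]$ is the completed colimit, and the colimit-preserving functors $s^*$ between complete categories automatically respect it; the obstacle is not completion but the very shape of the statement "étale local," which is what goodness circumvents.
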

\begin{proof}
By Lemma \ref{lemm:bott-element-etale-invertible}, the map in question is an étale equivalence.
We thus need to prove that $E' := E/(\ell^n,\rho^m)[\tau^{-1}]_{\ell,\rho}^\comp$ is étale local.

Let $\scr X \to X \in \Sm_S$ be an étale hypercover and $K = \Sigma^\infty_+ cof(\scr X \to X)$.
We need to show that $[K, E'[i](j)] = 0$ for all $i, j$.
Since $E$ was arbitrary, replacing it by $E(i)[j]$ we may as well assume that $i=j=0$.
By the definition of goodness, see Definition \ref{def:tau-self-map}(3), any map $K \to E'$ factors through $K \to K'' := K/(\ell^{n'},\rho^{m'})[\tau^{-1}]_{\ell,\rho}^\comp$.
It is thus enough to show that $K''  = 0$.
It follows from \cite[Proposition B.3]{norms} that the collection of functors $\{s^*: \SH(S)_{\ell,\rho}^\comp \to \SH(s)_{\ell,\rho}^\comp \mid s \in S\}$ is conservative; 
also each of these functors preserves colimits.
It is thus enough to show that $s^*(K'') = 0$.
By construction $s^*(K)$ is étale-locally equivalent to zero; hence it suffices to show that $s^*(K'') \wequi s^*(K)/(\ell^{n'},\rho^{m'})[\tau^{-1}]_{\ell,\rho}^\comp$ is étale local.
We have thus reduced to the case of fields.

Assume now that $S = Spec(k)$.
Then $\1/(\ell^n,\rho^m)[\tau^{-1}]/(\ell,\rho)$ is étale local by Corollary \ref{cor:sphere-inversion}, 
and hence so is $\1/(\ell^n,\rho^m)[\tau^{-1}]_{p, \rho}^\comp$, being a limit of extensions of the former term.
Note that \[ E' \wequi E_{\ell,\rho}^\comp \wedge \1/(\ell^n,\rho^m)[\tau^{-1}] \in \SH(k)_{\ell,\rho}^\comp, \] with the smash product being formed in $\SH(k)_{\ell,\rho}^\comp$ (i.e., completed).
Since étale localization is smashing on $\SH(k)_{\ell,\rho}^\comp$ by Corollary \ref{cor:etale-locn-smashing}, we deduce that $E'$ is étale local in $\SH(k)_{\ell,\rho}^\comp$, 
and hence also in $\SH(k)$.
\end{proof}


\begin{remark} \label{rem:recoll}
Suppose that $\ell$ is not invertible on $S$.
Let $j: U = S[1/\ell] \hookrightarrow S$ and $Z = S \setminus U$ the closed complement.
Since $\SH_\et(Z)_\ell^\wedge = 0$ \cite[Theorem A.1]{bachmann-SHet2}
This implies that for $E \in \SH(S)$ we have 
\NB{$\SH_\et(\ph)$ satisfies localization, so $L_\et E/\ell^n$ can be glued from $i_!^\et i^*_\et L_\et E/\ell^n \wequi i_!^\et L_\et i^* E/\ell^n = 0$ 
and $j_*^\et j^*_\et L_\et E/\ell^n \wequi j_* L_\et j^*E/\ell^n$.} 
\[ L_\et E/\ell^n \wequi j_* L_\et j^*E/\ell^n. \]
Consequently the assumption that $1/\ell \in S$ in Theorem \ref{thm:main} is essentially harmless.
\end{remark}

We conclude with three immediate applications.

\begin{corollary} \label{cor:main}
Assumptions as in Theorem \ref{thm:main}.

\begin{enumerate}
\item Let $\ell$ be odd, $n < \infty$ and $m=\infty$.
  Then \[ E/\ell^n \to (E/\ell^n)^+[\tau^{-1}] \] is an étale localization.
\item Let $\ell=2$, $n < \infty$, $m=\infty$ and assume that $-1$ a square in $S$.
  Then \[ E/2^n \to E/2^n[\tau^{-1}] \] is an étale localization.
\item Suppose that $S$ is defined over a field containing a primitive $\ell$-th root of unity and satisfying $k^\times/\ell = \{1\}$.
  Then we have \[ \SH_\et(S)_\ell^\comp \wequi \SH(S)_\ell^\comp[\tau^{-1}]. \]
\end{enumerate}
\end{corollary}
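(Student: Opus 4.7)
All three parts descend from Theorem~\ref{thm:main} applied to the good $\tau$-self maps tabulated in \S\ref{subsec:summary}; the task in each case is to simplify the $\rho$-completions appearing in the conclusion.

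For (1), the plan is to apply Theorem~\ref{thm:main} with the good $\tau$-self map from \S\ref{subsec:summary}(1), available because $\ell$ is odd and $n<\infty$. This yields that
\[ E/(\ell^n,\rho^\infty) \to E/(\ell^n,\rho^\infty)[\tau^{-1}]_{\ell,\rho}^\comp \]
is an étale localization. Since $\ell$ is odd, $E/\ell^n \in \SH(S)[1/2]$, and Remark~\ref{rmk:rho-completion}(2) identifies $\rho$-completion here with projection onto the plus part, so $E/(\ell^n,\rho^\infty) \wequi (E/\ell^n)^+$. The target is automatically $(\ell,\rho)$-complete, being $\ell^n$-torsion (hence $\ell$-complete) and in the plus part (where $\rho$ is nilpotent, hence $\rho$-complete), so it simplifies to $(E/\ell^n)^+[\tau^{-1}]$. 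Finally, the projection $E/\ell^n \to (E/\ell^n)^+$ is an étale equivalence: étale-locally $\sqrt{-1}$ exists, so Proposition~\ref{prop:powers-n-epsilon}(1) gives $\lra{-1}=1$, which forces $(E/\ell^n)^-$ to vanish étale-locally.

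Part (2) is the most immediate: with $-1$ a square in $S$, Remark~\ref{rmk:rho-completion}(1) yields $\SH(S)_\rho^\comp = \SH(S)$, so all $\rho$-completions are vacuous, and Theorem~\ref{thm:main} applied to the good $\tau$-self map of \S\ref{subsec:summary}(2) gives the statement directly.

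For (3), the crux will be to verify that under the hypotheses $\rho$ is already nilpotent on $\1_S$, so that $\SH(S)_{\ell,\rho}^\comp = \SH(S)_\ell^\comp$. When $\ell = 2$, $k^\times/\ell = \{1\}$ makes $-1$ a square in $k$; when $\ell \ge 3$, the primitive root $\zeta_\ell$ forces $k$ to contain the totally imaginary cyclotomic subfield $\Q(\zeta_\ell)$, in which $-1$ is already a sum of squares. In either case $-1$ is a sum of squares in $k$ and hence in every residue field of $S$, so $RS = \emptyset$, and Remark~\ref{rmk:rho-completion}(1) supplies the reduction. Theorem~\ref{thm:main} applied to the good $\tau$-self map of \S\ref{subsec:summary}(3) (with $n=m=\infty$) then says that $E \to E[\tau^{-1}]_\ell^\comp$ is an étale localization for every $E \in \SH(S)_\ell^\comp$. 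Combined with Lemma~\ref{lemm:bott-element-etale-invertible}, which shows that $\tau$ acts as an equivalence on étale-local objects, standard Bousfield-localization theory identifies the étale-local subcategory with the $\tau$-localization, giving $\SH_\et(S)_\ell^\comp \wequi \SH(S)_\ell^\comp[\tau^{-1}]$.

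The main obstacle is the reduction in part (3): propagating the hypotheses on $k$ to the formal non-reality of every residue field of $S$, so that $\rho$-completion becomes superfluous. Once this is in place, all three statements are corollaries of Theorem~\ref{thm:main} and the catalogue in \S\ref{subsec:summary} up to routine bookkeeping of completions.
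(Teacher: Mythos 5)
Your proposal takes essentially the same route as the paper's (very terse) proof, which simply cites Remark~\ref{rmk:rho-completion} and Remark~\ref{rmk:p-completion-unnec}; you are unpacking those citations and supplying the routine verifications. Two small remarks. First, in part~(3) your argument that $-1$ is a sum of squares in $k$ assumes $\operatorname{char}(k)=0$ (via $\Q(\zeta_\ell)\subset k$); in positive characteristic $p$ the field $k$ need not contain $\Q(\zeta_\ell)$, but $-1=(p-1)\cdot 1^2$ is a sum of squares in $\mathbb{F}_p\subset k$, so $RS=\emptyset$ still holds — worth adding a sentence to cover this. Second, the assertion that the target in (1) is ``$\ell^n$-torsion'' is only literally correct because $\ell$ is odd (so $\1/\ell^n$ has a unital multiplication and $\ell^n$ kills it); in (2) you should make the analogous remark explicitly (here $2^n$ kills $\1/2^n$ for $n\ge 2$, and in general one always has $\ell^{2n}$-torsion, which is all that is needed for $\ell$-completeness via Remark~\ref{rmk:p-completion-unnec}). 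These are minor and your overall argument is sound; the step showing $E/\ell^n\to (E/\ell^n)^+$ is an étale equivalence is a genuine gap-fill that the paper glosses over.
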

\begin{proof}
In each case the $\rho$-completion is unnecessary (or simplified to $(\ph)^+$ in case (1)), by Remark \ref{rmk:rho-completion}.
In (1), (2) also the $\ell$-completion is unnecessary, by Remark \ref{rmk:p-completion-unnec}.
\end{proof}

Denote by $L_\et^\wedge: \SH(S) \to \SH_\et(S)_{\ell,\rho}^\wedge$ the left adjoint to the inclusion $\iota: \SH_\et(S)_{\ell,\rho}^\wedge \subset \SH(S)$.
\begin{corollary} \label{cor:integral}
Assumptions as in Theorem \ref{thm:main}.
Suppose that $m,n<\infty$.

Then the functor $\iota \circ L_\et^\wedge: \SH(S) \to \SH(S)$ is equivalent to Bousfield localization at the homology theory $\1/(\ell^n, \rho^m)[\tau^{-1}]$.
\end{corollary}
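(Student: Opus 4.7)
Let $W = \1/(\ell^n,\rho^m)[\tau^{-1}]$ and write $L_W$ for Bousfield localization at the homology theory $W$. The plan is to verify the two standard criteria characterizing $L_W$: for every $E \in \SH(S)$, (a) $\iota L_\et^\wedge E$ is $W$-local, and (b) the unit $E \to \iota L_\et^\wedge E$ is a $W$-equivalence.

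The key preliminary, extending Theorem \ref{thm:main} from $\SH(S)_{\ell,\rho}^\wedge$ to all of $\SH(S)$, is the natural identification
\[ L_\et(E/(\ell^n,\rho^m)) \wequi E \wedge W \quad \text{for every } E \in \SH(S). \]
To establish this, I would first observe that $\1/(\ell^n,\rho^m)$ admits a finite filtration whose subquotients are copies of $\1/(\ell,\rho)$, so smashing with it factors through $(\ell,\rho)$-completion and yields $E/(\ell^n,\rho^m) \wequi E_{\ell,\rho}^\wedge/(\ell^n,\rho^m)$; then apply Theorem \ref{thm:main} to $E_{\ell,\rho}^\wedge$ together with Remark \ref{rmk:p-completion-unnec} to drop the superfluous completion on the right.

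For (b), apply the above identification twice: smashing $E$ with $W$ gives $L_\et(E/(\ell^n,\rho^m))$, while smashing $L_\et^\wedge E$ with $W$ gives $L_\et(L_\et^\wedge E/(\ell^n,\rho^m))$. Since finite torsion is insensitive to $(\ell,\rho)$-completion and $L_\et$ commutes with colimits, the latter simplifies to $L_\et L_\et(E/(\ell^n,\rho^m)) \wequi L_\et(E/(\ell^n,\rho^m))$, and naturality identifies $W \wedge E \to W \wedge L_\et^\wedge E$ with the identity on this common target. For (a), let $Z$ be $W$-acyclic; the identification above forces $L_\et Z/(\ell^n,\rho^m) \wequi 0$. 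This will imply $L_\et^\wedge Z \wequi 0$, after which the $(L_\et^\wedge, \iota)$-adjunction yields $\map(Z, \iota L_\et^\wedge E) \wequi \map(L_\et^\wedge Z, L_\et^\wedge E) = 0$, giving the desired $W$-locality. To pass from vanishing modulo $(\ell^n,\rho^m)$ to vanishing of the full $(\ell,\rho)$-completion of $X := L_\et Z$, I would use the octahedral cofiber sequences $\1/\ell^a \to \1/\ell^{a+b} \to \1/\ell^b$ and their $\rho$-analogues to argue inductively that $X/(\ell^{jn},\rho^{km}) \wequi 0$ for all $j,k \ge 1$; since the indices $(jn,km)$ are cofinal in $\N \times \N$, the limit defining $X_{\ell,\rho}^\wedge$ vanishes.

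The main obstacle is precisely this bootstrapping step: the hypothesis delivers vanishing only at a single finite level $(n,m)$, yet one needs it at every level in order to conclude that the entire $(\ell,\rho)$-completion is zero. Everything else amounts to a formal unwinding of adjunctions and repeated application of Theorem \ref{thm:main}.
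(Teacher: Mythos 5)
Your proposal is correct and follows essentially the same route as the paper's proof: verify the two Bousfield criteria by (i) showing the unit $E \to \iota L_\et^\wedge E$ becomes an equivalence after smashing with $W = \1/(\ell^n,\rho^m)[\tau^{-1}]$, using Theorem~\ref{thm:main} and idempotence of $L_\et$, and (ii) showing $W$-acyclics are sent to zero by $L_\et^\wedge$, again via Theorem~\ref{thm:main} applied to $Z/(\ell^n,\rho^m)$ together with $(\ell,\rho)$-completeness of the target. The only difference is cosmetic: you spell out the cofinality/octahedron bootstrap from vanishing at the single level $(n,m)$ to vanishing of the full $(\ell,\rho)$-completion, and the adjunction step giving $W$-locality, both of which the paper treats as immediate; this is not really a separate obstacle but a routine thick-subcategory argument.
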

\begin{proof}
Let $E \in \SH(S)$ and $\alpha: E \to \iota L_\et^\wedge E$ be the $(\ell,\rho)$-complete étale localization map.
We first show that $\alpha$ is a $\1/(\ell^n, \rho^m)[\tau^{-1}]$-equivalence, or in other words that $\alpha \wedge\1/(\ell^n, \rho^m)[\tau^{-1}]$ is an equivalence.
By Theorem \ref{thm:main}, both sides are étale local, so it is enough to show that 
\[ L_\et(\alpha \wedge \1/(\ell^n, \rho^m)[\tau^{-1}]) \wequi L_\et(\alpha) \wedge L_\et(\1/(\ell^n, \rho^m)[\tau^{-1}]) \] is an equivalence.
This is clear.

It remains to show that $\SH_\et(S)_{\ell,\rho}^\wedge$ consists of $\1/(\ell^n,\rho^m)[\tau^{-1}]$-local objects.
In other words, if $E \in \SH(S)$ with $E \wedge \1/(\ell^n,\rho^m)[\tau^{-1}] = 0$, then we need to show that $\iota L_\et^\wedge(E) = 0$.
Again by Theorem \ref{thm:main} we have \[ \iota L_\et^\wedge(E)/(\ell^n,\rho^m) \wequi \iota L_\et^\wedge(E/(\ell^n,\rho^m)) \wequi E \wedge \1/(\ell^n,\rho^m)[\tau^{-1}] = 0. \]
Since $\iota L_\et^\wedge(E)$ is $(\ell, \rho)$-complete, we deduce that $\iota L_\et^\wedge(E) = 0$, as desired.
\end{proof}

\begin{corollary} \label{cor:smashing}
Assumptions as in Theorem \ref{thm:main}.
Then étale localization is smashing on $\SH(S)_{\ell,\rho}^\wedge$.
\end{corollary}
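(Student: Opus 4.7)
The plan is to reduce to the field case of Corollary~\ref{cor:etale-locn-smashing} via the conservative family of pullbacks to points. Smashing on $\SH(S)_{\ell,\rho}^\wedge$ is the assertion that for every $E \in \SH(S)_{\ell,\rho}^\wedge$ the completed smash product $E \wedge L_\et^\wedge(\1_S)$ is étale local; once that is known, the natural étale equivalence $E \to E \wedge L_\et^\wedge(\1_S)$ (obtained by smashing $E$ with $\1_S \to L_\et^\wedge(\1_S)$, and using that $L_\et$ is symmetric monoidal) exhibits $E \wedge L_\et^\wedge(\1_S)$ as the étale localization of $E$.

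First I would invoke the conservative family $\{s^* \colon \SH(S)_{\ell,\rho}^\wedge \to \SH(s)_{\ell,\rho}^\wedge\}_{s \in S}$ from \cite[Proposition B.3]{norms}, which was already used in the proof of Theorem~\ref{thm:main}. Each $s^*$ is symmetric monoidal and cocontinuous, and being part of a premotivic adjunction it commutes with $L_\et^\wedge$, so étale locality in $\SH(S)_{\ell,\rho}^\wedge$ is both preserved and reflected by this family. It therefore suffices to show, for each $s \in S$, that
\[
s^*(E \wedge L_\et^\wedge(\1_S)) \simeq s^*E \wedge s^*L_\et^\wedge(\1_S) \simeq s^*E \wedge L_\et^\wedge(\1_s)
\]
is étale local in $\SH(s)_{\ell,\rho}^\wedge$, using $s^* L_\et^\wedge \simeq L_\et^\wedge s^*$ applied to $\1_S$ in the second equivalence. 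This is precisely the statement of Corollary~\ref{cor:etale-locn-smashing}, which further identifies the right-hand side with $L_\et^\wedge(s^*E)$.

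The main obstacle is justifying the base-change identity $s^*L_\et^\wedge(\1_S) \simeq L_\et^\wedge(\1_s)$, i.e., that pullback commutes with étale localization after $(\ell,\rho)$-completion. The uncompleted version is routine for premotivic functors since $s^*$ is cocontinuous and pulls étale hypercovers to étale hypercovers; one then observes that $s^*$ preserves $(\ell,\rho)$-completeness in the ambient category, so the commutation descends to the completed setting. Once this compatibility is in hand, the smashing property of $L_\et^\wedge$ on $\SH(S)_{\ell,\rho}^\wedge$ is a purely formal consequence of the field case, with no further use of the specific $\tau$-self-map machinery beyond what Theorem~\ref{thm:main} already provides at each point.
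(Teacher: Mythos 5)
Your proposal has a genuine gap. The crux of your argument is the assertion that $s^*L_\et^\wedge(\1_S) \simeq L_\et^\wedge(\1_s)$, and more generally that $s^*$ preserves (and reflects) étale-local objects, which you say follows ``routinely'' because $s^*$ is cocontinuous and sends étale hypercovers to étale hypercovers. But that observation only shows that $s^*$ preserves \emph{étale equivalences} (maps inverted by $L_\et$), which is a colimit-preservation statement. It does \emph{not} show that $s^*$ preserves \emph{étale-local objects}, which is a statement about limit conditions: $X$ is étale local iff $\Map(A, X) \simeq *$ for all étale-acyclic $A$, and $s^*$ does not preserve mapping spectra (it has no relevant left adjoint $s_!$, since $s$ is typically not of finite type). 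In other words, your claim that ``$s^*$ commutes with $L_\et^\wedge$'' as an endofunctor of the $(\ell,\rho)$-complete category conflates the easy compatibility $s^* \circ L_\et \simeq L_\et \circ s^*$ of \emph{localization functors into} $\SH_\et(-)_{\ell,\rho}^\wedge$ with the much harder statement that the fully faithful right adjoint $i_\et$ commutes with $s^*$. The latter is precisely the content of Theorem~\ref{thm:basechange} in the paper, which is itself deduced from Theorem~\ref{thm:main}, so your argument is circular: you are invoking as a formality the very base-change theorem that must be extracted from the same nontrivial input.

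The paper avoids this entirely by arguing directly over $S$: it reduces to the mod-$(\ell^n,\rho^m)$ case using that $L_\et(\1)$ is $(\ell,\rho)$-complete, then invokes Theorem~\ref{thm:main} twice — once to identify $L_\et(\1/(\ell^n,\rho^m))$ with a Bott inversion $\1/(\ell^n,\rho^m)[\tau_{n,m}^{-1}]$, and once more to conclude that $E \wedge \1/(\ell^n,\rho^m)[\tau_{n,m}^{-1}]$ is étale local. No reduction to points and no base-change compatibility of $L_\et^\wedge$ is needed. Your field-case input (Corollary~\ref{cor:etale-locn-smashing}) is already packaged inside the proof of Theorem~\ref{thm:main}, so re-invoking it over residue fields does not buy anything and introduces the unjustified base-change step. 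If you want to salvage your route, you must first establish Theorem~\ref{thm:basechange} from Theorem~\ref{thm:main}, at which point the argument becomes essentially equivalent to the paper's, just re-ordered.
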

\begin{proof}
Denote by $L_\et: \SH(S)_{\ell,\rho}^\wedge \to \SH(S)_{\ell,\rho}^\wedge$ the étale localization functor.
It suffices to show that, for $E \in \SH(S)_{\ell,\rho}^\wedge$, the spectrum $E' := E \wedge L_\et(\1) \in \SH(S)_{\ell,\rho}^\wedge$ is étale local.
By definition $E'$ is $(\ell,\rho)$-complete, so $E' \wequi \lim_{n,m} E'/(\ell^n,\rho^m)$, and it suffices to show that $E'/(\ell^n,\rho^m)$ is étale local (for $n, m$ sufficiently large).
Theorem \ref{thm:main} implies that \[ E'/(\ell^n,\rho^m) \wequi E \wedge L_\et(\1/(\ell^n,\rho^m)) \wequi E \wedge \1/(\ell^n,\rho^m)[\tau_{n,m}^{-1}], \] 
where $\tau_{n,m}$ denotes a $\tau$-self map mod $(\ell^n,\rho^m)$ (it follows from our assumptions that this exists, at least for $n>1$).
Again by Theorem \ref{thm:main}, $E \wedge \1/(\ell^n,\rho^m)[\tau_{n,m}^{-1}]$ is étale local.
This concludes the proof.
\end{proof}

We also obtain the following new base change result, generalizing \cite[Theorem 7.5]{elso}.
\begin{theorem} \label{thm:basechange} Assumptions as in Theorem \ref{thm:main}. Suppose that $f:T \rightarrow S$ is a morphism of finite type. Then the functor
\[
f^*:\SH(S)_{\ell,\rho}^\wedge \rightarrow \SH(T)_{\ell,\rho}^\wedge,
\]
preserves \'etale local spectra. In particular the canonical map
\[
f^*(\1_{\et})_S \rightarrow (\1_{\et})_T
\]
is an equivalence in $\SH(T)_{\ell,\rho}^\wedge$.
\end{theorem}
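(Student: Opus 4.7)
The plan is to leverage Theorem \ref{thm:main} together with the base-change stability of spherical Bott elements (Remark \ref{rmk:spherical-bott-base-change}) so that étale local objects can be \emph{recognized} by the explicit $\tau$-inverted description. First I would verify that the hypotheses of Theorem \ref{thm:main} transport along $f: T \to S$. The condition $1/\ell \in T$ follows from $1/\ell \in S$. For any $t \in T$ over $s \in S$, the residue field $k(t)$ is finitely generated over $k(s)$, so $\vcd_\ell(t) \le \vcd_\ell(s) + \trdeg(k(t)/k(s)) < \infty$ by \cite[Tag 0F0T]{stacks}. Finally, $f^*\tau$ is again a good $\tau$-self map over $T$: stability of spherical Bott elements under base change (Remark \ref{rmk:spherical-bott-base-change}) plus the construction of Lemma \ref{lemm:tau-self-map-construct} gives the $\tau$-self map property, and goodness is preserved because the multiplications/module structures on Moore spectra underlying the construction (see the appendix) are stable under pullback.

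Next I would prove the main assertion: if $E \in \SH(S)_{\ell,\rho}^\wedge$ is étale local, then so is $f^*E \in \SH(T)_{\ell,\rho}^\wedge$. Because $f^*E$ is $(\ell,\rho)$-complete we have $f^*E \wequi \lim_{n,m} f^*E/(\ell^n,\rho^m) \wequi \lim_{n,m} f^*(E/(\ell^n,\rho^m))$, and since étale local objects are closed under limits it suffices to show each $f^*(E/(\ell^n,\rho^m))$ is étale local for $m,n<\infty$. For such $m,n$, Theorem \ref{thm:main} over $S$ combined with Remark \ref{rmk:p-completion-unnec} gives $E/(\ell^n,\rho^m) \wequi E/(\ell^n,\rho^m)[\tau^{-1}]$ with no further completion needed. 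Applying the symmetric monoidal, colimit-preserving functor $f^*$ yields
\[ f^*(E/(\ell^n,\rho^m)) \wequi f^*E/(\ell^n,\rho^m)[(f^*\tau)^{-1}], \]
and by Step 1 plus Theorem \ref{thm:main} applied over $T$, this is étale local.

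For the statement about the sphere, apply the above to $E = L_\et^\wedge \1_S$ to conclude $f^*(L_\et^\wedge \1_S) \in \SH(T)_{\ell,\rho}^\wedge$ is étale local. Pulling back the unit $\1_S \to L_\et^\wedge \1_S$ and using $f^*\1_S = \1_T$ produces a map $\1_T \to f^*(L_\et^\wedge \1_S)$ which, by the universal property of étale localization in $\SH(T)_{\ell,\rho}^\wedge$, factors through a canonical comparison map $L_\et^\wedge \1_T \to f^*(L_\et^\wedge \1_S)$. To verify it is an equivalence one reduces modulo $(\ell^n,\rho^m)$: both sides are identified with $\1_T/(\ell^n,\rho^m)[(f^*\tau)^{-1}]$ via Theorem \ref{thm:main}, and the comparison map respects this identification because it is induced from the unit under $f^*$.

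I expect the main subtlety to lie not in the reduction scheme itself but in the first step: ensuring that $f^*$ of a \emph{good} $\tau$-self map remains a good $\tau$-self map over $T$, and that the factorization axiom of goodness (Definition \ref{def:tau-self-map}(3)) survives pullback. Once this is in hand—via the explicit nature of the Bott element and the behavior of Moore spectrum multiplications under base change—the remainder is a formal diagram chase using Theorem \ref{thm:main}.
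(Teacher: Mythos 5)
Your proposal is correct and takes the same route as the paper, which simply cites Theorem \ref{thm:main} together with the base-change stability of Bott elements; you have filled in the formal reduction the paper leaves implicit. A couple of small remarks: verifying that $f^*\tau$ is a $\tau$-self map over $T$ does not even require Remark \ref{rmk:spherical-bott-base-change}, since condition (2) of Definition \ref{def:tau-self-map} is quantified over \emph{all} field points $x \to S$, and every field point of $T$ is one of these — so the property is literally inherited. Your treatment of goodness is the one place that deserves care: Definition \ref{def:tau-self-map}(3) is stated abstractly and it is not obvious that an arbitrary good $\tau$-self map remains good after $f^*$; but for those constructed via Lemma \ref{lemm:tau-self-map-construct} and Corollary \ref{cor:factor-through-free} — which are the only ones the paper ever produces — the relevant ring/module structures pull back along the symmetric monoidal functor $f^*$, so goodness persists, as you say. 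Finally, the ``in particular'' clause is essentially formal once preservation of étale-local objects is known (since $f^*$ always preserves étale equivalences, $L_\et f^* \wequi f^* L_\et$ on the $(\ell,\rho)$-complete categories), and your reduction mod $(\ell^n,\rho^m)$ is a valid way to see this.
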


\begin{proof} Follows from Theorem~\ref{thm:main} and the fact that the Bott elements are stable under base change by definition.
\end{proof}

\begin{remark} \label{rem:fracture}
If $S$ is finite dimensional, then étale localization is smashing on $\SH(S)_\Q$, since it just corresponds to ``passing to the plus part''\footnote{The minus part vanishes étale locally (see e.g. the proof of \cite[Theorem 7.2]{bachmann-SHet}), and the plus part has étale hyperdescent by \cite[Corollary 4.39]{clausen-mathew}: condition (1) holds by \cite[\S10.2]{norms} and for (2) we can take $A=H\Q$ (this is where we need to be in the plus part).}, or equivalently $\rho$-completion.
In particular Corollary \ref{cor:smashing} holds rationally (instead of completed at $\ell$) as well.
Using Remark~\ref{rem:recoll}, we see that it also holds completed at $\ell$ which fail to be invertible on $S$ (provided that $\vcd_p(s) < \infty$ for all $s \in S$).
One may deduce that étale localization is smashing on $\SH(S)_{\rho}^\comp$, provided that $\vcd(s) < \infty$ for all $s \in S$ (in addition to $S$ being finite dimensional).
Similarly \[ f^*:\SH(S)_{\rho}^\wedge \rightarrow \SH(T)_{\rho}^\wedge \] preserves \'etale local spectra (under the given assumption).
\end{remark}

\appendix

\section{Multiplicative structures on Moore objects}
\label{sec:moore-mult}
\subsection{Definitions and setup} \label{sec:def-set}
Let $\scr C$ be a semiadditive \cite[Definition 6.1.6.13]{HA} symmetric monoidal $\infty$-category.
For $n \in \N$ we call 
\[
\Mp{n} = \cof(\1 \xrightarrow{n} \1)
\] the \emph{mod $n$ Moore object}, which by construction comes with a map $u: \1 \to \Mp{n}$ which we call the \emph{unit}.
By a \emph{(homotopy) multiplication} on $\Mp{n}$ we mean a map 
\[
m: \Mp{n} \otimes \Mp{n} \to \Mp{n} \in \h\scr C,
\] which is compatible with the unit in the sense that \[ m \circ (\id \otimes u) = \id = m \circ (u \otimes \id): \Mp{n} \to \Mp{n} \in \h\scr C. \]

\begin{lemma}
If $m$ is a multiplication on $\Mp{n}$, then $u: \1 \to \Mp{n}$ is a morphism of homotopy ring objects.
\end{lemma}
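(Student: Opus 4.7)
The plan is to unpack what it means to be a morphism of homotopy ring objects in a semiadditive symmetric monoidal $\infty$-category and then reduce the required identities to the two unit axioms already built into the definition of a homotopy multiplication on $\Mp{n}$. Since $\1$ is canonically a homotopy ring object (with multiplication given by the canonical equivalence $\mu_\1: \1 \otimes \1 \wequi \1$ and unit $\id_\1$), the claim reduces to two compatibilities: preservation of the unit, i.e.\ $u \circ \id_\1 = u$, and preservation of the multiplication, i.e.\ the diagram
\[
\begin{CD}
\1 \otimes \1 @>{u \otimes u}>> \Mp{n} \otimes \Mp{n} \\
@V{\mu_\1}VV @VV{m}V \\
\1 @>>{u}> \Mp{n}
\end{CD}
\]
commutes in $\h\scr C$.

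The unit compatibility is trivial. For the multiplication compatibility, I would factor $u \otimes u$ through the intermediate object $\1 \otimes \Mp{n}$ by writing
\[
u \otimes u \;=\; (u \otimes \id_{\Mp{n}}) \circ (\id_\1 \otimes u),
\]
up to the canonical coherence isomorphisms of the monoidal structure. Post-composing with $m$ and invoking the left unit axiom $m \circ (u \otimes \id_{\Mp{n}}) = \id_{\Mp{n}}$ (using the canonical identification $\1 \otimes \Mp{n} \wequi \Mp{n}$) immediately collapses the composite to $u$ itself, which is exactly $u \circ \mu_\1$ under the identification $\1 \otimes \1 \wequi \1$.

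There is no real obstacle here beyond carefully tracking the canonical identifications $\1 \otimes X \wequi X$ in the semiadditive symmetric monoidal structure; the content of the lemma is entirely extracted from the unit axiom for $m$. Note that neither associativity nor commutativity of $m$ is invoked, which is consistent with the fact that a homotopy ring object is by convention only required to be unital in an appropriate sense, and that the statement is precisely about the unit map.
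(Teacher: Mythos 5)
Your proof is correct and follows essentially the same route as the paper: factoring $u \otimes u$ through $\1 \otimes \Mp{n}$, invoking the unit axiom $m \circ (u \otimes \id) = \id$, and finishing by naturality of the monoidal coherence isomorphism. The paper packages this as a two-square commutative diagram while you present the equational version, but the argument is identical.
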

\begin{proof}
Consider the diagram
\begin{equation*}
\begin{CD}
\1 \otimes \1 @>{\id \otimes u}>> \1 \otimes \Mp{n} @>{u \otimes \id}>> \Mp{n} \otimes \Mp{n} \\
@V{m_0}VV                          @VVV                                 @VmVV                 \\
\1            @>u>>               \Mp{n}            @=                  \Mp{n}.
\end{CD}
\end{equation*}
Here, $m_0$ and the unlabelled map are canonical isomorphisms; at the same time $m_0$ is the multiplication in the canonical ring structure on $\1$.
Since the top composite is $u \otimes u$, we need to show that the outer rectangle commutes.
The left hand square commutes by definition of a symmetric monoidal category, and the right hand square commutes by assumption.
\end{proof}

If $\scr C$ is stable, then we have the cofiber sequence \[ \1 \xrightarrow{n} \1 \xrightarrow{u} \Mp{n} \xrightarrow{\partial} \1[1]. \]
The composite \[\delta = u \circ \partial: \Mp{n} \to \Mp{n}[1] \] is called the \emph{coboundary}.
A multiplication $m$ on $\Mp{n}$ is called \emph{regular} if $\delta$ is a derivation \cite[Definition 1]{oka1984multiplications}, in the sense that: \[ \delta \circ m \wequi m \circ (\id \otimes \delta + \delta \otimes \id). \]

\subsection{Review of Oka's results} \label{sec:oka}

The case $\scr C = \SH$ has been treated by Oka. Among other things the following are shown in \cite[Theorem 2]{oka1984multiplications}.
\begin{itemize}
\item $\Mp{n}$ has a multiplication if and only if $n \not\equiv 2 \pmod{4}$; the multiplication is unique if $n$ is odd.
\item Whenever $\Mp{n}$ has a multiplication, it may be chosen to be regular.
\item A multiplication on $\Mp{n}$ is (homotopy) commutative if and only if $n \equiv 0 \pmod{8}$ or $n$ is odd; it is (homotopy) associative if and only if $n \not\equiv 2 \pmod{4}$ and $n \not\equiv \pm 3 \pmod{9}$.
\item For any multiplication on any $\1/n$, the commutator \[m \circ (\id - \text{switch}): \Mp{n} \wedge \Mp{n} \to \Mp{n} \] factors through \[ \partial \wedge \partial: \Mp{n} \wedge \Mp{n} \to \1[1] \wedge \1[1]. \]
  Similarly, the associator \[ m \circ (m \wedge \id - \id \wedge m): \Mp{n}^{\wedge 3} \to \Mp{n} \] factors through\footnote{These statements are only interesting if the multiplication is not commutative/associative.} \[ \partial^{\wedge 3}: \Mp{n}^{\wedge 3} \to \1[1]^{\wedge 3}. \]
\end{itemize}

\subsection{Asymptotic unicity and associativity}
For a small idempotent complete $\infty$-category $\scr C$ with finite limits, let $\Pro(\scr C) = \Ind(\scr C^\op)^\op$ denote the category of \emph{pro-objects in $\scr C$} \cite[\S A.8.1, Remark A.8.1.2]{SAG}.
If $\scr C$ has a symmetric monoidal structure, then so does $\Pro(\scr C)$ \cite[Remark 2.4.2.7, Proposition 4.8.1.10]{HA}.
\begin{lemma} \label{lemm:smashing-loc}
Let $\scr C$ be a small stable idempotent complete symmetric monoidal $\infty$-category $\scr C$.
Then $\ell$-completion is a smashing localization on $\Pro(\scr C)$.
\end{lemma}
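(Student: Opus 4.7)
The strategy is to realize $\ell$-completion on $\Pro(\scr C)$ as the smashing functor $X \mapsto X \otimes \1_\ell^\comp$, where
\[
\1_\ell^\comp := \{\1/\ell^n\}_{n \ge 1} \in \Pro(\scr C)
\]
is viewed as the cofiltered limit of the constant pro-objects $\1/\ell^n$. Since the tensor product in $\Pro(\scr C)$ is computed from the levelwise tensor in $\scr C$, this agrees with $\lim_n X/\ell^n$ in $\Pro(\scr C)$, so the question reduces to showing that this limit has the universal property of the $\ell$-completion.

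There are two things to verify. First, $\1_\ell^\comp$ is $\ell$-complete: each constant $\1/\ell^n$ is $\ell$-complete in $\Pro(\scr C)$, since for any $Y$ on which $\ell$ acts invertibly, precomposition makes $\ell$ act invertibly on $\Map(Y, \1)$, whence $\Map(Y, \1/\ell^n) \wequi \cof(\ell^n\colon \Map(Y,\1) \to \Map(Y,\1)) \wequi 0$; and the $\ell$-complete subcategory is closed under limits. Second, $\1 \to \1_\ell^\comp$ is an $\ell$-equivalence: computing the cofiber levelwise (legitimate for maps out of a constant pro-object into a cofiltered system) yields the pro-object $\{\Sigma \1\}_n$ whose structure maps are identified, via the octahedral axiom applied to $\ell^{n+1} = \ell \cdot \ell^n$, with multiplication by $\pm \ell$ on $\Sigma \1$. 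I then check directly that $\ell$ is invertible on this pro-object: the pro-morphism $\{X_n\} \to \{X_n\}$ represented at level $n$ by $\mathrm{id}\colon X_{n+1} \to X_n$ is a two-sided inverse to $\ell$, since in either order of composition one recovers the structure map $\ell\colon X_{n+1} \to X_n$, which represents the identity in $\Pro(\scr C)$.

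Tensoring with any $X \in \Pro(\scr C)$ then preserves both properties: $X \otimes \1_\ell^\comp \wequi \lim_n X/\ell^n$ remains a limit of $\ell$-complete objects (by the same $\Map$-computation applied to $\1/\ell^n$ replaced by $X/\ell^n$), and the cofiber of $X \to X \otimes \1_\ell^\comp$ is $X$ tensored with a pro-object on which $\ell$ is invertible. Hence $X \otimes \1_\ell^\comp$ is the $\ell$-completion of $X$, and the localization is smashing by construction. The main technical point is the octahedron-based identification of the structure maps of $\cof(\1 \to \1_\ell^\comp)$, together with the somewhat delicate bookkeeping in $\Pro(\scr C)$ showing that multiplication by $\ell$ admits the shift as an inverse; the rest is a formal consequence.
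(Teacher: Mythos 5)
Your proof is correct and takes the same essential route as the paper: identify the $\ell$-completion functor on $\Pro(\scr C)$ with $E \mapsto \lim_n E/\ell^n$, observe that this is $E \otimes \1_\ell^\comp$ because the tensor product on $\Pro(\scr C)$ commutes with cofiltered limits, and conclude. The paper dispatches the claim that $\lim_n E/\ell^n$ is indeed the $\ell$-completion in a one-line remark (``it is clear that\dots''), whereas you supply the verification (that $\1_\ell^\comp$ is $\ell$-complete as a limit of constant $\ell$-torsion objects, and that the cofiber of $c(\1) \to \1_\ell^\comp$ is the pro-object $\{\Sigma\1\}$ with transition maps $\pm\ell$, on which $\ell$ is invertible via the shift map); those details are sound and are exactly what the remark leaves implicit.
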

\begin{remark} \label{rem:compl-formula}
Even though $\Pro(\scr C)$ is not presentable, $\ell$-completion (i.e., localization at the $\ell$-equivalences) exists: it is clear that $E \mapsto \lim_n E/\ell^n$ is a $\ell$-equivalence to a $\ell$-complete object.
\end{remark}
\begin{proof}
From the formula given in Remark~\ref{rem:compl-formula}, this is the case in any symmetric monoidal stable $\infty$-category where the tensor product commutes with cofiltered limits, such as $\Pro(\scr C)$. 
\end{proof}

Let \[ c: \SH^\omega \hookrightarrow \Pro(\SH^\omega) \] denote the inclusion ``at constant cofiltered systems''.
\begin{proposition} \label{prop:pro-moore}
Let $\mu_i: \1/\ell^i \wedge \1/\ell^i \to \1/\ell^i$ be a sequence of unital multiplication maps in $\SH$ 
\NB{left-unital multiplications seem to be enough?} 
(for $i \ge 2$ if $\ell=2$), such that the reductions $r: \1/\ell^{i+1} \to \1/\ell^i$ are multiplicative (up to homotopy).
Then the following hold:
\begin{enumerate}
\item The pro-spectrum $c(\1)_\ell^\comp$ is represented by the inverse system \[ \dots \xrightarrow{r} \1/\ell^4 \xrightarrow{r} \1/\ell^3 \xrightarrow{r} \1/\ell^2. \]
\item The multiplication on $c(\1)_\ell^\comp$ is represented by the system of maps \[ c(\1)_\ell^\comp \wedge c(\1)_\ell^\comp \wequi \{(\1/\ell^i)^{\wedge 2}\}_i \xrightarrow{\mu_i} \{\1/\ell^i\}_i. \]
\end{enumerate}
\end{proposition}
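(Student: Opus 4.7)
\medskip

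My plan is to prove both parts by unwinding the definitions in the pro-category, exploiting the fact that the tensor product and cofiltered limits in $\Pro(\SH^\omega)$ are computed in a very rigid way.

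For part (1), I would start from Remark~\ref{rem:compl-formula}, which gives $c(\1)_\ell^\comp \simeq \lim_n c(\1)/\ell^n$ as the limit of the tower of constant pro-objects $c(\1/\ell^n)$ linked by the reduction maps $r$. The key observation is that in $\Pro(\scr C) = \Ind(\scr C^\op)^\op$, a cofiltered limit of constant pro-objects indexed by a diagram $D \colon I^\op \to \scr C$ is computed by the pro-object $\{D(i)\}_{i \in I}$ itself. Concretely, for any $X \in \SH^\omega$,
\[
\Map_{\Pro(\SH^\omega)}(c(X), \{\1/\ell^n\}_n) \;\simeq\; \lim_n \Map_{\SH^\omega}(X, \1/\ell^n) \;\simeq\; \lim_n \Map_{\Pro(\SH^\omega)}(c(X), c(\1/\ell^n)),
\]
so that $\{\1/\ell^n\}_n$ represents the tower $\{c(\1/\ell^n)\}_n$ as a limit. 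Since $c$ is fully faithful and the objects $c(X)$ generate $\Pro(\SH^\omega)$ under cofiltered limits, this identifies $c(\1)_\ell^\comp$ with $\{\1/\ell^n\}_n$ as claimed.

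For part (2), I would use that the symmetric monoidal structure on $\Pro(\SH^\omega)$ is computed levelwise (this is the content of \cite[Proposition 4.8.1.10]{HA}), so that under the identification of part (1),
\[
c(\1)_\ell^\comp \wedge c(\1)_\ell^\comp \;\simeq\; \{\1/\ell^i \wedge \1/\ell^i\}_i \in \Pro(\SH^\omega).
\]
The hypothesis that each reduction $r \colon \1/\ell^{i+1} \to \1/\ell^i$ commutes with $\mu_{i+1}, \mu_i$ up to homotopy is precisely what is needed for $\{\mu_i\}_i$ to assemble into a morphism in $\h\Pro(\SH^\omega)$ of the form $c(\1)_\ell^\comp \wedge c(\1)_\ell^\comp \to c(\1)_\ell^\comp$. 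To identify this with the canonical multiplication coming from Lemma~\ref{lemm:smashing-loc}, I would use that $\ell$-completion is smashing, so that the multiplication on $c(\1)_\ell^\comp$ is determined (up to the data of the unit) by compatibility with the unit $c(\1) \to c(\1)_\ell^\comp$. Unitality of each $\mu_i$ then shows that $\{\mu_i\}_i$ satisfies this characterization.

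The main potential obstacle is purely bookkeeping: arguing carefully that a compatible system of homotopy-unital multiplications on the tower $\{\1/\ell^i\}_i$ actually represents the unique (up to the coherence available in $\h\Pro(\SH^\omega)$) pro-multiplication obtained from the smashing localization. This is essentially formal because the localization is smashing and because the identification in (1) is canonical; however, one must be careful that the comparison takes place in the homotopy category of $\Pro(\SH^\omega)$ rather than in a fully $\infty$-categorical sense, in line with the proposition's statement.
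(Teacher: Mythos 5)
Your argument for part (1) is correct and coincides with the paper's (both simply invoke the formula $E_\ell^\comp \simeq \lim_n E/\ell^n$ and the tautological description of cofiltered limits of constant pro-objects), so I will focus on part (2).

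For part (2) your high-level strategy matches the paper's: identify $c(\1)_\ell^\comp \wedge c(\1)_\ell^\comp$ levelwise, use that $\ell$-completion is smashing in $\Pro(\SH^\omega)$ to conclude that the multiplication on $c(\1)_\ell^\comp$ is characterized as the inverse of the unit, and then appeal to levelwise unitality of the $\mu_i$. However, there is a genuine gap where you write that the remaining comparison is ``essentially formal'' and reduces to ``bookkeeping.'' It does not. The hypothesis gives you only homotopies square-by-square, which determine a class in $\lim_m \colim_n \pi_0 \Map((\1/\ell^n)^{\wedge 2}, \1/\ell^m)$. By the Milnor exact sequence for mapping spaces in $\Pro(\SH^\omega)$,
\[
0 \to \textstyle\lim^1_m \colim_n [\Sigma \1/\ell^n, \1/\ell^m] \to [\{\1/\ell^n\}_n, \{\1/\ell^m\}_m]_{\Pro(\SH^\omega)} \to \lim_m \colim_n [\1/\ell^n, \1/\ell^m] \to 0,
\]
that class lifts to an actual pro-morphism, but the lift is well-defined only up to the $\lim^1$ term. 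Concretely: the composite $\alpha\colon \{\1/\ell^n\}_n \to \{(\1/\ell^n)^{\wedge 2}\}_n \to \{\1/\ell^n\}_n$ of the unit with your candidate multiplication agrees with $\id$ at each level, but could in principle differ from $\id$ by a ``phantom'' element of the $\lim^1$ group, in which case you could not conclude $\alpha = \id$ and hence could not invert to identify your map with the canonical multiplication.

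The paper's proof is devoted precisely to killing this obstruction: it shows $\lim^1_m \colim_n [\Sigma \1/\ell^n, \1/\ell^m] = 0$ by (i) using the Spanier--Whitehead self-duality $D(\1/\ell^n) \wequi \Sigma^{-1}\1/\ell^n$ so that $[\Sigma\1/\ell^n, \1/\ell^m] \cong \pi_2(\1/\ell^n \wedge \1/\ell^m)$, (ii) observing that for $n \gg 0$ (so $\ell^n = 0$ on $\1/\ell^m$) this splits and the colimit over $n$ stabilizes to $\pi_1(\1/\ell^m) \oplus \pi_2(\1/\ell^m)$, and (iii) noting these groups are finite, so the inverse system in $m$ is Mittag--Leffler and its $\lim^1$ vanishes. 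This computation is specific to Moore spectra over the sphere and is the real content of the proof; your sketch needs to include it (or an equivalent argument) to be complete.
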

\begin{proof}
(1) Clear by the formula for $\ell$-completion in any stable category; see Remark~\ref{rem:compl-formula}.

(2) Since $\ell$-completion is smashing in $\Pro(\SH^\omega)$ by Lemma \ref{lemm:smashing-loc}, the multiplication map is inverse to the unit map (on either side), and hence it is enough to show that the following composite map of pro-systems is homotopic to the identity
\[ \alpha: \{\1/\ell^n\}_n \wequi \{\1 \wedge \1/\ell^n\}_n \xrightarrow{r \wedge \id} \{\1/\ell^n \wedge \1/\ell^n\}_n \xrightarrow{\mu_n} \{\1/\ell^n\}_n. \]
Using the formula for mapping spaces in categories of pro-objects and the Milnor exact sequence, we obtain an exact sequence \[ 0 \to \mathrm{lim}^1_m \colim_n [\Sigma \1/\ell^n, \1/\ell^m] \to [\{\1/\ell^n\}_n, \{\1/\ell^m\}_m]_{\Pro(\SH^\omega)} \to \lim_m \colim_n [\1/\ell^n, \1/\ell^m] \to 0. \]
By assumption, $\alpha$ corresponds to the identity in the right hand group; it thus suffices to show that the $\mathrm{lim}^1$-term vanishes.
The (strong) dual of $\1/\ell^n$ is $\Sigma^{-1}\1/\ell^n$, and multiplication by $\ell^n$ is zero on $\1/\ell^m$ for $n$ sufficiently large.
It follows that \[ \colim_n [\Sigma \1/\ell^n, \1/\ell^m] \wequi \pi_1(\1/\ell^m) \oplus \pi_2(\1/\ell^m). \]
These groups are all finite (in fact independent of $m$ for $m \gg 0$) and hence the inverse system is Mittag--Leffler, and $\mathrm{lim}^1 = 0$ as desired.
\end{proof}

\begin{corollary} \label{cor:asymp}
Let $\{\mu_i\}$ be as in Proposition \ref{prop:pro-moore}.
Then for $n \gg 0$, the Moore object $\1/\ell$ can be given the structure of a homotopy associative $\1/\ell^n$-module.
\end{corollary}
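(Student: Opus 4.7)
The plan is to exploit the fact that in the pro-category $\Pro(\SH^\omega)$, the pro-object $c(\1)_\ell^\comp$ is an $E_\infty$-ring, and $c(\1/\ell)$ is tautologically one of its modules. We then extract a level-wise module structure by a Milnor-type argument analogous to the one used in the proof of Proposition \ref{prop:pro-moore}.

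First, by Lemma \ref{lemm:smashing-loc}, $\ell$-completion is a symmetric monoidal smashing localization of $\Pro(\SH^\omega)$. In particular the localization $c(\1)_\ell^\comp$ of the unit inherits the structure of an $E_\infty$-ring in $\Pro(\SH^\omega)$, whose underlying homotopy multiplication is represented by $\{\mu_n\}_n$ via Proposition \ref{prop:pro-moore}. On the other hand $c(\1/\ell)$ is $\ell$-torsion, hence $\ell$-complete, and therefore lives in $\Pro(\SH^\omega)_\ell^\comp$; there it is automatically a $c(\1)_\ell^\comp$-module. Since the localization is smashing, the completed and uncompleted tensor products agree for these objects, yielding a homotopy associative and unital action morphism
\[ a \colon c(\1)_\ell^\comp \otimes c(\1/\ell) \to c(\1/\ell) \]
in $\Pro(\SH^\omega)$.

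Next I would descend $a$ to a finite level. Because $c(\1/\ell)$ is a constant pro-system, the Milnor sequence of the proof of Proposition \ref{prop:pro-moore} has vanishing $\lim^1$-term and collapses to
\[ [c(\1)_\ell^\comp \otimes c(\1/\ell), c(\1/\ell)]_{\Pro(\SH^\omega)} = \colim_n [\1/\ell^n \otimes \1/\ell, \1/\ell]_{\SH}. \]
Thus $a$ is represented by some $a_N \colon \1/\ell^N \otimes \1/\ell \to \1/\ell$, and for every $n \ge N$ the composite $a_n := a_N \circ (r \otimes \id)$ (with $r \colon \1/\ell^n \to \1/\ell^N$ the multiplicative reduction) also represents $a$ and is clearly unital. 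Applying the same mapping-space formula to the source $c(\1)_\ell^\comp \otimes c(\1)_\ell^\comp \otimes c(\1/\ell)$, the associativity of $a$ as a pro-map translates into the assertion that for some $n \gg N$ the two finite-level maps
\[ a_n \circ (\mu_n \otimes \id_{\1/\ell}) \quad\text{and}\quad a_n \circ (\id_{\1/\ell^n} \otimes a_n) \]
from $\1/\ell^n \otimes \1/\ell^n \otimes \1/\ell$ to $\1/\ell$ are homotopic. At such an $n$, the map $a_n$ endows $\1/\ell$ with the desired homotopy associative $\1/\ell^n$-module structure; a short diagram chase using multiplicativity of reductions then shows this structure persists for all larger $n' \ge n$.

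The principal technical obstacle is justifying a coherent $E_\infty$-action of $c(\1)_\ell^\comp$ on $c(\1/\ell)$ in $\Pro(\SH^\omega)$, given that the starting data consists only of the homotopy multiplications $\mu_n$. This is precisely what forces the argument through the smashing-localization formalism of Lemma \ref{lemm:smashing-loc}: that formalism automatically promotes the pro-system of homotopy multiplications to a structured $\infty$-categorical object, and the coherence data so obtained is then reflected back onto a single finite level by the (collapse of the) Milnor sequence.
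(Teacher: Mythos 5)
Your proposal is correct and takes essentially the same approach as the paper: exhibit $c(\1/\ell)$ as an $\scr A_\infty$-module over the commutative pro-ring $c(\1)_\ell^\comp$, then descend the action map, its associator, and its unitality to a finite level using the formula for maps from a pro-object into a constant pro-object. The only cosmetic difference is how you establish the module structure: the paper observes directly that $c(\1)_\ell^\comp/\ell$ is the cofiber of $\ell$ on the ring $c(\1)_\ell^\comp$ and hence a module, whereas you invoke the general fact that for a smashing localization the local objects are exactly the modules over the localized unit — both are fine, and your detour through a Milnor sequence when mapping into the constant pro-system $c(\1/\ell)$ is unnecessary (the $\lim$ collapses without any $\mathrm{lim}^1$ discussion) but harmless.
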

\begin{proof}
Since $c(\1)_\ell^\comp$ is a commutative monoid, $M=c(\1)_\ell^\comp/\ell \wequi c(\1)/\ell$ is an $\scr A_\infty$-module under $c(\1)_\ell^\comp$.
Note that $M$ is represented by the constant pro-spectrum $\{\1/\ell\}_n$.
The multiplication map $c(\1)_\ell^\comp \wedge M \to M$ thus yields (compatible) multiplication maps $\1/\ell^n \wedge \1/\ell \to \1/\ell$, for $n$ sufficiently large.
The associator $c(\1)_\ell^\comp \wedge c(\1)_\ell^\comp \wedge M \to M$ is zero as a map of pro-objects (the module structure being $\scr A_\infty$). Hence the associator $\1/\ell^n \wedge \1/\ell^n \wedge \1/\ell \to \1/\ell$ is zero for $n$ sufficiently large. Unitality of the multiplication is handled similarly.
\end{proof}

\begin{remark}
Oka constructs similar module structures by hand (and for explicit $n$) \cite[Section 6]{oka1984multiplications}, without addressing associativity.
\end{remark}

\subsection{Uniqueness for $H\Z$}

\begin{lemma} \label{lemm:uniqueness-mult}
Let $n \ge 1$.
\begin{enumerate}
\item The Moore object $\Z/n \in D(\Z)$ admits a unique (up to homotopy) multiplication.
\item Let $m \ge 1$.
  The Moore object $\Z/n \in D(\Z)$ admits a unique (up to homotopy) structure of $\Z/mn$-module such that the canonical map $\Z/mn \to \Z/n$ is a module map.
\end{enumerate}
\end{lemma}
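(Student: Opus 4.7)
The plan is to exploit the fact that $\Z$ has global dimension $1$, so that $\mathrm{Ext}_\Z^i(A,B) = 0$ for $i \ge 2$ and every object of $D(\Z)$ decomposes (non-canonically) as a direct sum of its shifted homology groups. Uniqueness of the multiplication/module structure will then be reduced to the classification of maps by their effect on $H_0$, which the unit axiom determines.

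For (1), I would first compute $\Z/n \otimes^{\mathbb L} \Z/n$ using the free resolution $\Z \xrightarrow{n} \Z$ of $\Z/n$: tensoring gives the two-term complex $[\Z/n \xrightarrow{n} \Z/n]$, whose differential is zero, so $\Z/n \otimes^{\mathbb L} \Z/n \simeq \Z/n \oplus \Sigma \Z/n$ in $D(\Z)$. Next I would evaluate the mapping space
\[ \pi_0 \Map_{D(\Z)}(\Z/n \otimes^{\mathbb L} \Z/n, \Z/n) \cong \mathrm{Hom}_\Z(\Z/n, \Z/n) \oplus \mathrm{Ext}_\Z^{-1}(\Z/n, \Z/n) \cong \Z/n \oplus 0, \]
with classification given by the $H_0$-component. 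The unit axiom $m \circ (u \otimes \id) = \id$ forces this $H_0$-component to be the identity of $\Z/n$, so any unital multiplication is unique up to homotopy. Existence is immediate from the underived ring structure on $\Z/n$.

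For (2), the same resolution gives $\Z/mn \otimes^{\mathbb L} \Z/n \simeq [\Z/mn \xrightarrow{n} \Z/mn]$; since the kernel and cokernel of multiplication by $n$ on $\Z/mn$ are both isomorphic to $\Z/n$, this likewise splits as $\Z/n \oplus \Sigma \Z/n$. The same mapping-space computation classifies maps to $\Z/n$ by their $H_0$-component, and the unit axiom for a $\Z/mn$-module action forces that component to be the standard action $\Z/mn \otimes \Z/n \to \Z/n$ via reduction mod $n$, giving uniqueness up to homotopy. Existence is again clear since $\Z/n$ is an ordinary $\Z/mn$-module. Finally, the compatibility condition that the reduction $\Z/mn \to \Z/n$ be a module map follows automatically: both maps $\Z/mn \otimes^{\mathbb L} \Z/mn \to \Z/n$ in the relevant diagram agree on $H_0$ and are therefore homotopic by an entirely analogous classification argument.

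I do not anticipate a serious obstacle. The whole strategy rests on the global-dimension-$1$ property of $\Z$, which eliminates the higher Ext-obstructions that would otherwise complicate the classification; the remaining verifications are routine computations with the standard free resolution. The only point requiring a moment's care is noting that $\mathrm{Ext}_\Z^{-1}(\Z/n, \Z/n) = 0$, which ensures that the $\Sigma \Z/n$-summand contributes nothing to the mapping space.
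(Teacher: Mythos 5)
Your proof is correct and follows essentially the same route as the paper. The paper phrases the key step slightly more canonically: it observes directly that for $A, B, C \in D(\Z)^\heartsuit$ one has $[A \otimes^{\mathbb L} B, C] \cong [\pi_0(A \otimes^{\mathbb L} B), C]$ (because $A \otimes^{\mathbb L} B$ lives in homological degrees $0$ and $1$ by global dimension $1$, and $\mathrm{Ext}^{-1}$ vanishes), thereby reducing the question to multiplications and module structures in ordinary abelian groups, where uniqueness follows since $\Z \to \Z/mn \to \Z/n$ are surjections; you reach the same conclusion by writing out the non-canonical splitting $\Z/n \otimes^{\mathbb L} \Z/n \simeq \Z/n \oplus \Sigma\Z/n$ and computing the mapping group term by term, which is the same mechanism made explicit.
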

\begin{proof}
For $A, B, C \in D(\Z)^\heartsuit$, we have \[ [A \otimes B, C] \wequi [\pi_0(A \otimes B), C]. \]
This implies that a multiplication on $\Z/n$ is the same as a multiplication in the category of abelian groups, and similarly for the module structure of (2).
But then uniqueness follows from the fact that $\Z \to \Z/mn \to \Z/n$ are surjections (and existence is even more obvious).
\end{proof}

Equivalently, the Eilenberg--MacLane spectrum $H\Z/n \in \SH$ admits a unique $H\Z$-linear multiplication compatible with the canonical unit map.

\begin{corollary} \label{cor:multn-HZ-correct}
Let $n \ge 1$ and choose a multiplication $m: \Mp{n} \wedge \Mp{n} \to \Mp{n} \in \SH$.
Let $e: \SH \to \SH(S)$ denote the ``constant sheaf'' functor.
Then the induced multiplication on $e(\Mp{n}) \wedge H\Z \wequi H\Z/n$ is the canonical one.
Similarly for a choice of $\Mp{mn}$-module structure on $\Mp{n}$.
\end{corollary}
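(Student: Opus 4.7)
The plan is to adapt the proof of Lemma \ref{lemm:uniqueness-mult}(1) from $D(\Z)$ to $\DM(S)$. First, I would observe that the induced multiplication on $e(\Mp{n}) \wedge H\Z \wequi H\Z/n$ is automatically $H\Z$-linear, since smashing $m$ with the $\scr E_\infty$-structure on $H\Z$ factors the product through $H\Z \wedge H\Z \to H\Z$; equivalently, it descends to a unital map $\tilde m \colon H\Z/n \otimes_{H\Z} H\Z/n \to H\Z/n$ in $\DM(S)$, where unitality is immediate from the fact that $u \colon \1 \to \Mp{n}$ is a unit for $m$ and that $u \wedge H\Z$ is the canonical map $H\Z \to H\Z/n$.

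It thus suffices to show that unital $H\Z$-linear multiplications on $H\Z/n$ are homotopically unique. Smashing the defining cofiber sequence $H\Z \xrightarrow{n} H\Z \to H\Z/n$ over $H\Z$ with $H\Z/n$, and using that $n$ acts as zero on $H\Z/n$, yields a (non-canonical) splitting
\[ H\Z/n \otimes_{H\Z} H\Z/n \wequi H\Z/n \oplus \Sigma H\Z/n \]
in $\DM(S)$. In particular the source is connective with $\pi_0 = H\Z/n$. Since $H\Z/n$ lies in the heart of the natural $t$-structure on $\DM(S)$, we obtain
\[ [H\Z/n \otimes_{H\Z} H\Z/n, H\Z/n]_{\DM(S)} \wequi \mathrm{Hom}_{\DM(S)^\heartsuit}(H\Z/n, H\Z/n) \wequi \Z/n, \]
and the unital condition pins down the identity element, giving the desired uniqueness. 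Hence $\tilde m$ agrees with the canonical multiplication. The $\Mp{mn}$-module case is completely analogous, using the identification $H\Z/mn \otimes_{H\Z} H\Z/n \wequi H\Z/n \oplus \Sigma H\Z/n$ (since $mn$ vanishes on $H\Z/n$) and the fact that the canonical reduction $H\Z/mn \to H\Z/n$ being a module map pins down the unit-type constraint.

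The main point I would check most carefully is the claim that $H\Z/n$ genuinely lies in the heart of a $t$-structure on $\DM(S)$ for which the above mapping-space reduction to the abelian category is valid. This is expected from Spitzweck's construction of $H\Z$ \cite{spitzweck2012commutative} and standard properties of the motivic $t$-structure, but it is the technical step that requires the most care in the argument.
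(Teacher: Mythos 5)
Your approach diverges from the paper's in a substantive way, and the divergence introduces a gap that you yourself flag but do not close. You try to prove the uniqueness of the $H\Z$-linear multiplication on $H\Z/n$ directly in $\DM(S)$, by running the heart argument of Lemma~\ref{lemm:uniqueness-mult} in that category. The step that fails is the computation
\[
[H\Z/n \otimes_{H\Z} H\Z/n, H\Z/n]_{\DM(S)} \wequi \mathrm{Hom}_{\DM(S)^\heartsuit}(H\Z/n, H\Z/n) \wequi \Z/n.
\]
For a general base $S$ there is no established motivic $t$-structure with the properties you need (that $H\Z/n$ lies in the heart, that $H\Z/n \otimes_{H\Z} H\Z/n$ is connective in it, and that the reduction $[A,C] \wequi \mathrm{Hom}_\heartsuit(\pi_0 A, C)$ applies). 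Even granting a good $t$-structure, the last isomorphism is simply false when $S$ is disconnected: the endomorphisms of the constant sheaf $\Z/n$ are then $(\Z/n)^{\pi_0(S)}$, not $\Z/n$. One would also have to control $[\Sigma H\Z/n, H\Z/n]_{\DM(S)}$, which is a negative-degree motivic cohomology group of $S$ whose vanishing is again a nontrivial input over a general base. None of these points are addressed.

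The paper sidesteps all of this by performing the uniqueness argument entirely in $D(\Z)$ (via Lemma~\ref{lemm:uniqueness-mult}, where $\Z/n$ really does have endomorphism ring $\Z/n$ and the standard $t$-structure is available) and then transporting the conclusion to $\DM(S)$ through the commuting square
\[
\begin{CD}
\SH @>e>> \SH(S) \\
@VMVV      @VVMV   \\
D(\Z) @>e>> \DM(S),
\end{CD}
\]
using that the constant-sheaf functor $e : D(\Z) \to \DM(S)$ is symmetric monoidal. Because uniqueness is established upstream in $D(\Z)$, no structure of $\DM(S)$ beyond the existence and compatibility of this square is needed. If you want to salvage your route, you would have to restrict to a connected base (and to situations where the relevant $t$-structure is known to behave), which makes the statement strictly weaker than the one claimed; the paper's reduction to $D(\Z)$ is the cleaner and more general move.
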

\begin{proof}
Consider the commutative diagram
\begin{equation*}
\begin{CD}
\SH @>e>> \SH(S) \\
@VMVV      @VVMV   \\
D(\Z) @>e>> \DM(S),
\end{CD}
\end{equation*}
where the left (respectively right) vertical functors is given by tensoring with the Eilenberg--MacLane spectrum (respectively motivic Eilenberg--MacLane spectrum). Along the left vertical arrow, by Lemma \ref{lemm:uniqueness-mult}, we see the multiplication on $M(\1/n)$ induced by the choice of multiplication on $\1/n$ is the same as the standard multiplication on $M(\1/n) = \Z/n$.
It follows that the induced multiplication on $M(e(\1/n)) \wequi e(M(\1/n))$ is also the standard one.
Let $U: \DM(S) \to \SH(S)$ denote the right adjoint to $M$; it follows that $UMe(\1/n) \wequi U(\1) \wedge \1/n \wequi H\Z/n$ carries the correct multiplication. The same argument works for module structures.

\end{proof}

\section{Inverting elements in homotopy rings} \label{sec:invert}
A variant of the following result is stated without proof on page 1 of \cite{arthan1983localization}.
\begin{lemma} \label{lem:invert}
Let $\scr C$ be a symmetric monoidal $\infty$-category in which $\N$-indexed colimits exits and are preserved by $\otimes$ in each variable separately.
Let $E, L \in \scr C$ and suppose given a map $\tau: E \to E \otimes L$.
\begin{enumerate}
\item There exists a canonical diagram $F: \N \to \scr C$, informally described as \[ E \xrightarrow{\tau} E \otimes L \xrightarrow{\tau \otimes \id_L} E \otimes L^{\otimes 2} \to \cdots. \]
  Denote its colimit by $E[\tau^{-1}]$.

\item Given a map $u: \1 \to E$, there is a canonically induced map $\bar{u}: \1 \to E[\tau^{-1}]$.

\item Given a map $m: E \otimes E \to E$ and a homotopy $h$ as in the following diagram
\begin{equation*}
\begin{tikzcd}
E \otimes E \ar[r, "\tau \otimes \tau"] \ar[d, "m"] & E \otimes L \otimes E
\otimes L \ar[r, "\wequi"] & E \otimes E \otimes L \otimes L \ar[d, "m \otimes \id_{L} \otimes \id_L"] \\
E \ar[r, "\tau"'] \ar[urr, Leftrightarrow, "h"] & E \otimes L \ar[r, "\tau \otimes \id_L"] & E \otimes L \otimes L,
\end{tikzcd}
\end{equation*}
  there is a canonically induced map $\bar{m}: E[\tau^{-1}] \otimes E[\tau^{-1}] \to E[\tau^{-1}]$.

\item Suppose that $\scr C$ is stable and compactly generated, and $L$ invertible.
  Suppose furthermore that the diagrams 
\begin{equation*}
\begin{tikzcd}
E \otimes \1 \ar[r, "\id \otimes u"] & E \otimes E \ar[d, "m"] \quad & E \otimes \1 \ar[r, "\id \otimes \tau u"] & E \otimes E \otimes L \ar[d, "m"] \\
E \ar[u, "\wequi"] \ar[r, "\id"]                   & E               & E \ar[u, "\wequi"] \ar[r, "\tau"]                   & E \otimes L,
\end{tikzcd}
\end{equation*}
commute in $h\scr C$. (In other words, the multiplication $m$ is homotopy left unital, and $\tau$ is given by left multiplication by a certain homotopy element.) Then the composite \[ E[\tau^{-1}] \wequi E[\tau^{-1}] \otimes \1 \xrightarrow{\id \otimes \bar{u}} E[\tau^{-1}] \otimes E[\tau^{-1}] \xrightarrow{\bar{m}} E[\tau^{-1}] \] is an equivalence.
(A similar result holds for left multiplication.)
\end{enumerate}
\end{lemma}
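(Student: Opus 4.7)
The plan is to handle parts (1)--(3) as formal consequences of the given data and the cofinality of the diagonal $\Delta: \N \to \N \times \N$, with the real content concentrated in (4).

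For (1), I take $F: \N \to \scr C$ with $F(n) = E \otimes L^{\otimes n}$ and transitions $\tau \otimes \id_{L^{\otimes n}}: F(n) \to F(n+1)$, and denote its colimit by $E[\tau^{-1}]$; write $\iota_n: F(n) \to E[\tau^{-1}]$ for the structure maps. For (2), $\bar u$ is the composite $\1 \xrightarrow{u} E = F(0) \xrightarrow{\iota_0} E[\tau^{-1}]$. For (3), since $\otimes$ commutes with $\N$-indexed colimits in each variable, $E[\tau^{-1}] \otimes E[\tau^{-1}] \wequi \colim_{(i,j) \in \N \times \N} F(i) \otimes F(j)$; by cofinality of $\Delta$ this is also $\colim_n F(n) \otimes F(n)$. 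Define $\nu_n: F(n) \otimes F(n) \to F(2n)$ as the composite $E \otimes L^{\otimes n} \otimes E \otimes L^{\otimes n} \xrightarrow{\mathrm{sym}} E \otimes E \otimes L^{\otimes 2n} \xrightarrow{m \otimes \id} E \otimes L^{\otimes 2n}$; the compatibility of $\iota_{2n} \circ \nu_n$ with the transitions, needed to assemble these into a map $\bar m$ out of $\colim_n F(n) \otimes F(n) \wequi E[\tau^{-1}] \otimes E[\tau^{-1}]$, is precisely (iterated applications of) the homotopy $h$.

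For (4), set $\psi := \bar m \circ (\id \otimes \bar u)$; the crux is to identify $\psi \circ \iota_n$ with $\iota_n$ for each $n$. Unfolding definitions, $\bar m \circ (\iota_n \otimes \iota_0)$ can be computed by relocating the pair $(n, 0)$ to the diagonal position $(n, n)$ via $\id \otimes \tau^n$ in the second factor, yielding $\psi \circ \iota_n \wequi \iota_{2n} \circ \nu_n \circ (\id \otimes \tau^n u)$. Symmetric monoidal coherence reduces this composite to $\iota_{2n} \circ (\alpha \otimes \id_{L^{\otimes n}})$, where $\alpha := (m \otimes \id_{L^{\otimes n}}) \circ (\id_E \otimes \tau^n u): E \to E \otimes L^{\otimes n}$. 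The identity $\alpha = \tau^n$ is the second displayed hypothesis when $n = 1$; the general case follows by induction on $n$ using that hypothesis together with the symmetry of $\otimes$. Hence $\psi \circ \iota_n \wequi \iota_{2n} \circ (\tau^n \otimes \id_{L^{\otimes n}}) = \iota_n$ by compatibility of the structure maps with transitions. To conclude $\psi$ is an equivalence, I would invoke compact generation: for any compact $C$, $[C, E[\tau^{-1}]] \wequi \colim_n [C, F(n)]$, and post-composition with $\psi$ acts as the identity on each term (hence on the colimit) by the identification just established.

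The principal technical obstacle is the inductive identification $\alpha = \tau^n$: the right-hand side is defined by iterated composition of $\tau$, while the left-hand side treats $\tau^n u$ as a single homotopy element, so the reconciliation requires careful bookkeeping with the symmetric monoidal coherence and the second hypothesis in tandem. A secondary subtlety is that the $\infty$-categorical construction of $\bar m$ in (3) nominally requires a genuine cocone rather than merely a compatible family in $\h\scr C$; however, since the statement asks only for a map (not, say, an $\scr A_\infty$-structure), the cofinality of $\Delta$ combined with the universal property of the colimit of the diagonal sub-diagram suffices.
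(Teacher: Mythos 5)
Your argument for parts (1)--(3) closely tracks the paper's: the cofinality of the diagonal $\Delta\colon \N\to\N\times\N$ is the shared key mechanism, and your description of $\bar u$ and $\bar m$ is a streamlined version of the paper's ladder-diagram constructions. The one small point worth tightening in (3) is your remark that iterated applications of $h$ ``suffice'': to get a genuine cocone (not merely a homotopy-commutative one) one must lift to a coherent diagram, which the paper does by the inner-anodyne argument $\N'\hookrightarrow N\N$ and the tensorially compatible choices $h_i = h\otimes\id_{L^{\otimes 2i}}$. Your closing sentence acknowledges this subtlety, so this is fine.

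The genuine gap is in part (4), at the sentence ``Symmetric monoidal coherence reduces this composite to $\iota_{2n}\circ(\alpha\otimes\id_{L^{\otimes n}})$.'' This is not a coherence statement. Tracing through the definition of $\nu_n$, the map $\nu_n\circ(\id_{F(n)}\otimes\tau^n u)$ and the map $\alpha\otimes\id_{L^{\otimes n}}$ genuinely differ by the symmetry $L^{\otimes n}\otimes L^{\otimes n}\to L^{\otimes n}\otimes L^{\otimes n}$, and in a symmetric monoidal $\infty$-category that switch need not be homotopic to the identity (in spectra the switch on $S^1\wedge S^1$ is $-1$; more generally for $L$ invertible the switch on $L^{\otimes a}\otimes L^{\otimes a}$ is $\epsilon^{a^2}$ for a sign $\epsilon$ with $\epsilon^2=1$). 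Coherence theorems govern associators and unitors, not symmetries. This is exactly the point where the hypothesis ``$L$ invertible'' must enter: it forces $\epsilon^2=1$, so the switch on $L^{\otimes a}\otimes L^{\otimes a}$ is the identity precisely when $a$ is \emph{even}. The paper deals with this by passing to the cofinal subdiagram indexed by $\{2,4,8,\dots\}$, at which point all the relevant switch maps trivialize by \cite[Lemma 4.19]{dugger2014coherence}, and \emph{then} concludes via an ind-object argument. Your proof must be patched the same way: restrict to a cofinal family of even indices, cite invertibility of $L$ (not coherence) to kill the switches, and only then identify $\psi\circ\iota_n$ with $\iota_n$.

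A second, subtler point: your inductive identification ``$\alpha = \tau^n$ follows by induction using that hypothesis together with the symmetry of $\otimes$'' is not clearly justified from the stated hypotheses of (4) alone. Unwinding, the inductive step amounts to showing that right-multiplication by $\tau u$ commutes past $m$ in a way that, written out, needs an associativity-type relation for $m$ — which is not among the displayed hypotheses. The paper's phrase ``by assumption, the vertical maps are homotopic to a composite of $\tau$'s and switch maps'' is also terse at this point; in the intended applications (Corollary \ref{cor:factor-through-free}) $m$ is homotopy associative, so the identification is available there. At minimum you should record that this step uses more than the two displayed squares of (4), or re-derive it carefully from the recursion $V_n\circ\tau^{n-1,n}\simeq \tau^{2n-2,2n}\circ V_{n-1}$ supplied by the ladder.

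Finally, a small structural remark: asserting $\psi\circ\iota_n\simeq\iota_n$ for each $n$ separately and then invoking compact generation is a valid way to conclude (it shows $\psi_*$ is the identity on each $[C,F(n)]$, hence on the colimit), but it is worth noting that this step is exactly where the cofinal restriction to even $n$ is harmless: a map inducing the identity on a cofinal family of $[C,F(n)]$ still induces the identity on $\colim_n[C,F(n)]$.
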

\begin{proof}
(1) Let $\N'$ be the simplicial set with $0$-cells $0, 1, 2, \cdots$ and for each $n \in \N$ a $1$-cell from $n$ to $n+1$.
There is a canonical map of simplicial sets $\N' \to N\N$ (here we use $N$ to indicate the nerve of a category, for once not silently identifying categories with their nerves) which is an inner anodyne extension\footnote{Let $X_n = \Delta^{0,1} \coprod_{1} \Delta^{1,2} \coprod_{2} \dots \coprod_{n-1} \Delta^{n-1,n}$, so that $X_n \to \Delta^n$ is inner anodyne (see e.g., \cite[Proof of Proposition 3.2.1.13]{HTT}). Let $Y_n = \Delta^n \coprod_{X_n} \N'$. Since $\N' \wequi \colim_n X_n$ and $N\N \wequi \colim_n \Delta^n$, we have $\colim_n Y_n \wequi N\N$. Since the class of inner anodyne maps is weakly saturated (see \cite[Definition A.1.2.2]{HTT}), each map $\N' \to Y_n$ is inner anodyne, and hence so is the colimiting map $\N' \to N\N$.} and hence in particular a categorical equivalence \cite[Lemma 2.2.5.2]{HTT}.
It follows that $\Fun(\N, \scr C) \to \Fun(\N', \scr C)$ is an equivalence.
The endomorphism $\tau$ induces an element of $\Fun(\N', \scr C)$ as displayed, which hence canonically lifts as claimed.

(2) $\bar{u}$ exists by definition of a colimit diagram.
We can make it slightly more explicit as follows.
Note that $\N' \times \Delta^1 \to N\N \times \Delta^1$ is also a categorical equivalence \cite[Corollary 2.2.5.4]{HTT}, and consequently maps in $\Fun(\N, \scr C)$ can be produced as the evident ladder diagrams, with homotopies filling the squares.
We now consider the morphism of diagrams
\begin{equation*}
\begin{tikzcd}
\1 \ar[r, "\wequi"] \ar[d, "u"] &  \1 \ar[r, "\wequi"] \ar[d, "\tau u"] & \1 \ar[r, "\wequi"] \ar[d, "\tau^2 u"] & \cdots \\
E \ar[r, "\tau"] \ar[ur, Leftrightarrow] &E \otimes L \ar[r, "\tau \otimes
\id_L"] \ar[ur, Leftrightarrow] & E \otimes L \otimes L \ar [r] \ar[ur, Leftrightarrow] & \cdots,
\end{tikzcd}
\end{equation*}
where the homotopies are the tautological ones.
The induces map on colimits is $\bar{u}$.

(3) Since $\otimes$ in $\scr C$ preserves $\N$-indexed colimits in each variable separately (by assumption), we have 
\[
E[\tau^{-1}] \otimes E[\tau^{-1}] \wequi \colim_{\N \times \N} F \otimes F.
\]
The diagonal $\Delta: \N \to \N \times \N$ is cofinal and hence 
\[
E[\tau^{-1}] \otimes E[\tau^{-1}] \wequi \colim_\N F\otimes F \circ \Delta \wequi \colim_{\N'} F\otimes F \circ \Delta.
\]
The map $2: \N \to \N$ is also cofinal. Consequently in order to produce $\bar{m}$, it suffices to produce homotopies in the following diagram
\begin{equation*}
\begin{tikzcd}
E \otimes E \ar[r, "\tau \otimes \tau"] \ar[d, "m_0"] & E \otimes L \otimes E \otimes L \ar[r, "\tau \otimes \tau"] \ar[d, "m_1"] & E \otimes L^{\otimes 2} \otimes E \otimes L^{\otimes 2} \ar[r] \ar[d, "m_2"] & \cdots \\
E \ar[r, "\tau^2"] \ar[ur, Leftrightarrow, "h_0"] & E \otimes L^{\otimes 2} \ar[r, "\tau^2"] \ar[ur, Leftrightarrow, "h_1"] & E \otimes L^{\otimes 4} \ar[r] \ar[ur, Leftrightarrow, "h_2"] & \cdots.
\end{tikzcd}
\end{equation*}
Here $m_0 = m$ and $m_i$ for $i>0$ is obtained as the evident composite of $m$ and switch maps.
We are provided with $h=h_0$. If we denote the $i$-th square above as $S_i$, then there is a canonical equivalence $S_i \wequi S_0 \otimes \id_{L^{\otimes 2i}}$. We may thus choose $h_i = h \otimes \id_{L^{\otimes 2i}}$.

(4) As before we have $E[\tau^{-1}] \otimes \1 \wequi \colim_{\N \times \N} F \otimes \1 \wequi \colim_{\N'} F \otimes \1 \circ \Delta$.
The composite in question is the colimit of the following (composed ladder) diagram
\begin{equation*}
\begin{CD}
\begin{tikzcd}
E \otimes \1 \ar[r, "\tau \otimes \id"] \ar[d, "\id \otimes u"] &  E \otimes L \otimes \1 \ar[r, "\tau \otimes \id \otimes \id"] \ar[d, "\id \otimes \id \otimes \tau u"] & E \otimes L^{\otimes 2} \otimes \1 \ar[r] \ar[d, "\id \otimes \id \otimes \tau^2 u"] & \cdots \\
E \otimes E \ar[r, "\tau \otimes \tau"] \ar[d, "m"] & E \otimes L \otimes E \otimes L \ar[r, "\tau \otimes \tau"] \ar[d, "m"] & E \otimes L^{\otimes 2} \otimes E \otimes L^{\otimes 2} \ar[r] \ar[d, "m"] & \cdots \\
E \ar[r, "\tau^2"] & E \otimes L^{\otimes 2} \ar[r, "\tau^2"] & E \otimes L^{\otimes 4} \ar[r] & \cdots.
\end{tikzcd}
\end{CD}
\end{equation*}
Here, we have suppressed the homotopies and labelled all the multiplication maps $m$, even if a switch map is involved. By assumption, the vertical maps are homotopic to a composite of $\tau$'s and switch maps. 

Now, let $T \in \scr C$ be compact and denote by $\pi: \scr C \to \mathrm{Set}$ the functor $[T, \ph]$.
Then $\pi$ preserves filtered colimits and the collection of all such functors $\pi$ is conservative (by assumption).
Hence it suffices to show that when applying $\pi$ to the above morphism of diagrams, we obtain an isomorphism of ind-objects in sets.
Restricting to the cofinal subcategory $\{2, 4, 8, 16, \dots \}$ of $\N$, we obtain a diagram
\begin{equation*}
\begin{tikzcd}
\pi(E \otimes L^{\otimes 2}) \ar[r, "\tau^2"] \ar[d, "\tilde{\tau}^2"] & \pi(E \otimes L^{\otimes 4}) \ar[r, "\tau^4"] \ar[d, "\tilde{\tau}^4"] & \pi(E \otimes L^{\otimes 8}) \ar[r, "\tau^8"] \ar[d, "\tilde{\tau}^8"] & \pi(E \otimes L^{\otimes 16}R \ar[r] \ar[d, "\tilde{\tau}^{16}"] & \cdots \\
\pi(E \otimes L^{\otimes 4}) \ar[r, "\tau^4"] \ar[ur, dashrightarrow, "\id"] & \pi(E \otimes L^{\otimes 8}) \ar[r, "\tau^8"] \ar[ur, dashrightarrow, "\id"] & \pi(E \otimes L^{\otimes 16}) \ar[r, "\tau^{16}"] \ar[ur, dashrightarrow, "\id"] & \pi(E \otimes L^{\otimes 32}) \ar[r] \ar[ur, dashrightarrow, "\id"] & \cdots,
\end{tikzcd}
\end{equation*}
where $\tilde{\tau}^n$ denotes an appropriate composite of $\tau$ and switch maps.
We claim that the dashed identity maps define an inverse morphism of ind-objects. Since we are working in a 1-category, we need only verify that all triangles commute.
Since $L$ is invertible, the switch map on $L^2$ is homotopic to the identity \cite[Lemma 4.19]{dugger2014coherence}, and hence $\tilde{\tau}^{2n} = \tau^{2n}$.
This proves the desired commutativity.

\end{proof}

Hence, under the assumption that $L$ is an invertible object in $\scr C$ and $\scr C$ is compactly generated (these will hold for all our applications), we conclude that $E[\tau^{-1}]$ admits ``homotopy multiplication'' which is not quite, but close to, left unital. We need the next variant for modules.

\begin{lemma} \label{lem:modules} In the situation of Lemma~\ref{lem:invert}, further assume that we are given an object $M \in \scr C$ equipped with a map $a: E \otimes M \rightarrow M$. 
\begin{enumerate}
\item In the situation of Lemma~\ref{lem:invert}(2), consider the map
\[
\tau \cdot: M \xrightarrow{u \otimes \id} E \otimes M \xrightarrow{\tau \otimes \id} E \otimes L \otimes M \simeq L \otimes E \otimes M \xrightarrow{\id \otimes a} L \otimes M.
\]
Then there is a diagram $F_M: \N \to \scr C$ informally described as 
\[
M \xrightarrow{\tau \cdot} L \otimes M \xrightarrow{\id \otimes \tau \cdot} L \otimes L \otimes M \xrightarrow{\id \otimes (\tau \cdot)^{\otimes 2}} \cdots.
\]
Denote its colimit by $M[\tau^{-1}]$. 
\item In the situation of Lemma~\ref{lem:invert}(3), suppose further that there we are given a homotopy $h_M$:
\[
\begin{tikzcd}
E \otimes M \ar[r, "\tau \otimes \tau \cdot "] \ar[d, "\id \otimes a"] & E \otimes L \otimes L \otimes M  \ar[r, "\wequi"] &  L \otimes L \otimes E \otimes M \ar[d, "\id \otimes a"] \\
M \ar[r, "\tau \cdot "'] \ar[urr, Leftrightarrow, "h_M"] & L \otimes M \ar[r, "\tau \cdot "] & L \otimes L  \otimes M.
\end{tikzcd} 
\]
Then, there is canonically induced map
\[
\bar{a}: E[\tau^{-1}] \otimes M[\tau^{-1}] \rightarrow M[\tau^{-1}],
\]
such that the diagram
\[
\begin{tikzcd}
E \otimes M \ar{r}{a} \ar{d} & M \ar{d}\\
E[\tau^{-1}] \otimes M[\tau^{-1}] \ar{r}{\bar{a}} & M[\tau^{-1}]
\end{tikzcd}
\]
commutes.
\item Assume that we are in the situation of Lemma~\ref{lem:invert}(4) and further assume that the diagram
\[
\begin{tikzcd}
\1 \otimes M \ar[r, "u \otimes \id"] & E \otimes M \ar[d, "a"] \quad \\
M \ar[u, "\wequi"] \ar[r, "\id"]                  & M,
\end{tikzcd}
\]
commutes. Then the composite
 \[ M[\tau^{-1}] \wequi  \1 \otimes M[\tau^{-1}] \xrightarrow{\bar{u} \otimes \id} E[\tau^{-1}] \otimes M[\tau^{-1}] \xrightarrow{\bar{a}} M[\tau^{-1}] \] is an equivalence.
\end{enumerate}

\end{lemma}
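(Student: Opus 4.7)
The plan is to mirror the argument of Lemma~\ref{lem:invert} throughout, with $E$ acting on $M$ via $a$ playing the role of $E$ acting on itself via $m$, and the ``seeding'' map $u\otimes\id_M: M \to E\otimes M$ playing the role of the unit $u$. For part (1), the iterates of $\tau\cdot$ (tensored with identities on appropriate powers of $L$) assemble into an $\N'$-indexed diagram in the sense of the proof of Lemma~\ref{lem:invert}(1), and the inner anodyne equivalence $\N' \to N\N$ used there produces the required functor $F_M: \N \to \scr C$ canonically, whose colimit we define to be $M[\tau^{-1}]$.

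For part (2), since $\otimes$ preserves $\N$-indexed colimits in each variable, $E[\tau^{-1}] \otimes M[\tau^{-1}]$ is the colimit of $F \otimes F_M$ over $\N \times \N$; by cofinality of the diagonal $\Delta: \N \to \N\times\N$ and of the doubling map $2: \N \to \N$, it suffices to exhibit a morphism of $\N'$-diagrams from $\{E \otimes L^{\otimes i} \otimes M \otimes L^{\otimes i}\}_i \simeq \{L^{\otimes 2i} \otimes E \otimes M\}_i$ to the shifted target $\{L^{\otimes 2i} \otimes M\}_i$, with structure maps $\id \otimes \tau\cdot$. I will take the rungs to be $a_i := \id_{L^{\otimes 2i}} \otimes a$ (after the symmetry identifications), and the ladder homotopies $h_i$ to be $h_M \otimes \id_{L^{\otimes 2i}}$, the base case $h_0$ being exactly the provided datum $h_M$. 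The commuting square relating $a$ and $\bar a$ then holds tautologically since $a_0 = a$ and the canonical maps $M \to M[\tau^{-1}]$, $E\otimes M \to E[\tau^{-1}] \otimes M[\tau^{-1}]$ are the structural inclusions of the $0$th terms.

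For part (3), the composite $M[\tau^{-1}] \to E[\tau^{-1}] \otimes M[\tau^{-1}] \to M[\tau^{-1}]$ will be presented as the colimit of a ladder whose vertical composites at level $i$ are, by the left unitality assumption on $a$ and the construction of $\bar{u}$ via $\tau^i u$ (as in the proof of Lemma~\ref{lem:invert}(2)), homotopic to iterates of $\tau\cdot$ interlaced with switch maps on tensor powers of $L$. Probing with compact objects of $\scr C$ (and using that such functors preserve filtered colimits and form a conservative family) reduces the claim to an isomorphism of ind-sets. I then restrict along the cofinal subsequence $\{2,4,8,\dots\} \subset \N$ and propose the identity maps going one step back as the inverse; in the resulting triangles the two composites differ only by switch maps on $L^{\otimes 2^k}$, which are homotopic to the identity by \cite[Lemma 4.19]{dugger2014coherence} since $L$ is invertible, so the triangles commute in the $1$-category of sets and the inverse is verified.

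The main obstacle will be the same bookkeeping issue encountered in Lemma~\ref{lem:invert}(4): carefully tracking the switch maps on powers of $L$ that appear from the symmetries used to reorganize $E \otimes L^{\otimes i} \otimes M \otimes L^{\otimes i}$ into $L^{\otimes 2i} \otimes E \otimes M$, and invoking invertibility of $L$ to trivialize their contributions after passing to the cofinal subsequence of even exponents. Everything else is a routine transcription of the proof of Lemma~\ref{lem:invert} from the algebra case to the module case, with $(m, u)$ replaced by $(a, u\otimes\id_M)$ throughout.
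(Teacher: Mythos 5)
Your proposal is correct and follows the paper's own proof essentially verbatim: each part mirrors the corresponding part of Lemma~\ref{lem:invert} with $(a, u\otimes\id_M)$ replacing $(m,u)$, the ladder homotopies are $h_i = h_M \otimes \id_{L^{\otimes 2i}}$ as in the paper, and part (3) is concluded by probing with compact objects and trivializing switch maps on even tensor powers of $L$ along the cofinal subsequence, exactly as the paper does. There is no meaningful divergence in method.
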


\begin{proof}
(1) By the same reasoning as in Lemma~\ref{lem:invert}(1), the described diagram exists.

(2)  By the same reasoning as in Lemma~\ref{lem:invert}(3), it suffices produce a transformation
\[
F \otimes F_M \circ \Delta \Rightarrow F_M.
\]
This is displayed in the following diagram:
\begin{equation*}
\begin{tikzcd}
E \otimes M \ar[r, "\tau \otimes \tau"] \ar[d, "a_0"] & E \otimes L \otimes L \otimes M \ar[r, "\tau \otimes \tau"] \ar[d, "a_1"] & E \otimes L^{\otimes 2} \otimes L^{\otimes 2} \otimes M \ar[r] \ar[d, "a_2"] & \cdots \\
M \ar[r, "\tau^2\cdot"] \ar[ur, Leftrightarrow, "h_0"] & L^{\otimes 2} \otimes M \ar[r, "\tau^2 \cdot"] \ar[ur, Leftrightarrow, "h_1"] & L^{\otimes 4} \otimes M \ar[r] \ar[ur, Leftrightarrow, "h_2"] & \cdots.
\end{tikzcd}
\end{equation*}
Here, $a_0 = a$ and $a_i$ is a composite of the evident switch maps and $a$. The homotopy $h_0$ is $h_M$, while $h_i:= h_M \otimes \id_{L^{\otimes 2i}}$.

(3) By the same reasoning as in Lemma~\ref{lem:invert}(4), the composite is given by taking colimits:
\begin{equation*}
\begin{CD}
\begin{tikzcd}
\1 \otimes M  \ar[r, "\id \otimes \tau \cdot"] \ar[d, "u \otimes \id"] & \1 \otimes L \otimes M \ar[r, "\id \otimes \tau \cdot"] \ar[d, "\tau u \otimes \id \otimes \id"] & \1 \otimes L^{\otimes 2} \otimes  M \ar[r] \ar[d, "\tau^2 u \otimes \id \otimes \id"] & \cdots \\
E  \otimes M \ar[r, "\tau \otimes \tau"] \ar[d, "a"] & E \otimes L \otimes L \otimes M \ar[r, "\tau \otimes \tau"] \ar[d, "a"] & E \otimes L^{\otimes 2} \otimes L^{\otimes 2} \ar[r] \ar[d, "\id \otimes a"] \otimes M & \cdots \\
M \ar[r, "\tau^2"] & L^{\otimes 2} \otimes M \ar[r, "\tau^2"] &  L^{\otimes 4} \otimes M \ar[r] & \cdots,
\end{tikzcd}
\end{CD}
\end{equation*}
where we have suppressed the switch maps. By assumption, the vertical composites are homotopic to a composite of $\tau$'s and switch maps. It remains to note that the following diagram in sets

\begin{equation*}
\begin{tikzcd}
\pi(L^{\otimes 2} \otimes M) \ar[r, "\tau^2\cdot"] \ar[d, "\tilde{\tau}^2"] & \pi(L^{\otimes 4} \otimes M) \ar[r, "\tau^4\cdot"] \ar[d, "\tilde{\tau}^4\cdot"] & \pi(L^{\otimes 8} \otimes M) \ar[r, "\tau^8\cdot"] \ar[d, "\tilde{\tau}^8\cdot"] & \pi(L^{\otimes 16} \otimes M) \ar[r] \ar[d, "\tilde{\tau}^{16}\cdot"] & \cdots \\
\pi(L^{\otimes 4} \otimes M) \ar[r, "\tau^4\cdot"] \ar[ur, dashrightarrow, "\id"] & \pi(L^{\otimes 8} \otimes M) \ar[r, "\tau^8\cdot"] \ar[ur, dashrightarrow, "\id"] & \pi(L^{\otimes 16} \otimes M) \ar[r, "\tau^{16}\cdot"] \ar[ur, dashrightarrow, "\id"] & \pi(L^{\otimes 32} \otimes M) \ar[r] \ar[ur, dashrightarrow, "\id"] & \cdots
\end{tikzcd}
\end{equation*}
is commutative. This follows by the same reason as in Lemma~\ref{lem:invert}(4) --- the switch maps are homotopic to the identity.
\end{proof}

\begin{corollary} \label{cor:factor-through-free}
Let $\scr C$ be a compactly generated, stable, presentably symmetric monoidal $\infty$-category, $E \in \scr C$ a homotopy unital associative ring, $L \in Pic(\scr C)$ and $\tau: \1 \to E \otimes L$ a homotopy central element (i.e., left and right multiplication by $\tau$ induce homotopic maps $E \to E \otimes L$).
Let $M$ be a homotopy associative $E$-module (for example, $E=M$).

Then for any $X, Y \in \scr C$, any map $X \to Y \otimes M[\tau^{-1}]$ factors through $X \otimes E[\tau^{-1}]$.
\end{corollary}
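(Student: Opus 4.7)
The plan is to build the factorization directly from the two ``asymptotic unit'' results already established in Lemmas~\ref{lem:invert} and~\ref{lem:modules}. Write $\bar E = E[\tau^{-1}]$ and $\bar M = M[\tau^{-1}]$. First I would verify that the hypotheses of these lemmas are met: centrality of $\tau$ together with homotopy associativity of the multiplication on $E$ gives the homotopy $h$ appearing in Lemma~\ref{lem:invert}(3), and homotopy associativity of the $E$-module structure on $M$, again using that $\tau$ is central, gives the homotopy $h_M$ appearing in Lemma~\ref{lem:modules}(2). Since $\scr C$ is compactly generated, stable and presentably symmetric monoidal, and $L$ is invertible, the hypotheses of Lemmas~\ref{lem:invert}(4) and~\ref{lem:modules}(3) are satisfied as well. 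This yields a multiplication $\bar m$ on $\bar E$, a unit map $\bar u : \1 \to \bar E$, an action map $\bar a : \bar E \otimes \bar M \to \bar M$, and, crucially, the statements that the composites
\[
\bar E \xrightarrow{\id \otimes \bar u} \bar E \otimes \bar E \xrightarrow{\bar m} \bar E \qquad\text{and}\qquad \bar M \xrightarrow{\bar u \otimes \id} \bar E \otimes \bar M \xrightarrow{\bar a} \bar M
\]
are equivalences. In particular, $\bar M$ is a retract of $\bar E \otimes \bar M$ in the homotopy category.

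Now let $f : X \to Y \otimes \bar M$ be given. I would define $g : X \otimes \bar E \to Y \otimes \bar M$ as the composite
\[
X \otimes \bar E \xrightarrow{f \otimes \id} Y \otimes \bar M \otimes \bar E \xrightarrow{\mathrm{swap}} Y \otimes \bar E \otimes \bar M \xrightarrow{\id_Y \otimes \bar a} Y \otimes \bar M,
\]
where the middle map is the symmetry braiding. Composing with the canonical unit $X \simeq X \otimes \1 \xrightarrow{\id \otimes \bar u} X \otimes \bar E$, the resulting map $X \to Y \otimes \bar M$ unfolds to
\[
f \text{ followed by } \id_Y \otimes \bigl(\bar a \circ (\bar u \otimes \id_{\bar M})\bigr),
\]
that is, $f$ post-composed with a self-\emph{equivalence} $\phi$ of $Y \otimes \bar M$. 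Replacing $g$ by $(\id_Y \otimes \phi^{-1}) \circ g$ produces a map $\tilde g : X \otimes \bar E \to Y \otimes \bar M$ whose precomposition with $X \to X \otimes \bar E$ is homotopic to $f$ on the nose, furnishing the required factorization.

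The main obstacle is the fact that $\bar E$ is only homotopy unital (not strictly so): the composite $\bar a \circ (\bar u \otimes \id)$ is merely an equivalence rather than the identity, so the naive factorization only recovers $f$ up to a self-equivalence. The essential input that makes the correction possible is exactly the ``left-unit equivalence'' statements of Lemma~\ref{lem:invert}(4) and Lemma~\ref{lem:modules}(3), whose proofs rely on the invertibility of $L$ and the cofinality argument that lets the switch map on $L^{\otimes 2}$ be homotoped to the identity. Beyond this, the centrality hypothesis on $\tau$ is only used to guarantee existence of the homotopies $h$ and $h_M$, so the argument proceeds uniformly for the cases $M = E$ and general homotopy associative $E$-modules $M$.
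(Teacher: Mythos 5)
The proposal is correct and essentially reproduces the paper's proof: extract $\bar u$ and the $E[\tau^{-1}]$-action on $M[\tau^{-1}]$ from Lemmas~\ref{lem:invert} and~\ref{lem:modules}, observe that the composite $\bar a \circ (\bar u \otimes \id)$ is a self-equivalence $\phi$ (this is the paper's $\alpha$), form $X \otimes E[\tau^{-1}] \to Y \otimes M[\tau^{-1}]$ using the switched action map $\bar m' = \bar a \circ \mathrm{swap}$ (the same map you call $(\id_Y \otimes \bar a) \circ \mathrm{swap}$), and post-compose with $\id_Y \otimes \phi^{-1}$ to make the triangle commute on the nose. The only thing worth flagging is a minor notational slip — $\phi$ is a self-equivalence of $\bar M$, and the self-equivalence of $Y \otimes \bar M$ you need is $\id_Y \otimes \phi$ — but your correction step uses the right map.
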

\begin{proof}
We may apply the above lemmas (condition (2) is where we use centrality of $\tau$), and hence obtain
\[
\bar{u}: \1 \to E[\tau^{-1}], \bar{m}: E[\tau^{-1}] \otimes M[\tau^{-1}] \to M[\tau^{-1}]
\] such that ``right'' multiplication by $\bar{u}$ is an equivalence 
\[
\alpha: M[\tau^{-1}] \simeq M[\tau^{-1}]\otimes \1 \xrightarrow{\id \otimes \bar{u}} M[\tau^{-1}] \otimes E[\tau^{-1}] \xrightarrow{\bar{m}'}  M[\tau^{-1}].
\]
Here, $\bar{m}'$ is $\bar m$ composed with the switch map.
Let $f: X \to Y \otimes E[\tau^{-1}]$ be any map and consider the commutative diagram
\begin{equation*}
\begin{tikzcd}
X \otimes E[\tau^{-1}] \ar[r, "f \otimes \id"] & Y \otimes M[\tau^{-1}] \otimes E[\tau^{-1}] \ar[rd, "\id \otimes \bar{m}'"] \\
X \otimes \1 \ar[u, "\id \otimes \bar{u}"] \ar[r, "f \otimes \id"'] & Y \otimes M[\tau^{-1}] \otimes \1 \ar[u, "\id \otimes \id \otimes \bar{u}"] \ar[r, "\id \otimes \alpha"'] & Y \otimes M[\tau^{-1}] \ar[r, "\id \otimes \alpha^{-1}"'] & Y \otimes M[\tau^{-1}].
\end{tikzcd}
\end{equation*}
Since bottom composite is $f$, the result follows.
\end{proof}

\section{Galois descent in invertible characteristic} \label{sec:galois}
In this section we establish some folklore results to the effect that homotopy orbits and fixed points are ``the same'' and ``non-homotopical'' if the group order is invertible.
\NB{Surely there must be references for this?}\NB{These results are much stronger than what we actually use. Maybe just prove the homotopy category version?}
Then we apply this to obtain some essentially trivial Galois descent results for $\SH$ and $\DM$.
They are well-known to experts and generalize \cite[Appendix C]{elso} (via a different approach).

\subsection{Universality of homotopy fixed points}
Let $G$ be a finite group. Let $\scr C$ be an $\infty$-category with finite coproducts and finite products. Denote by $i: * \to BG$ the canonical map.
The functor $i^*: \Fun(BG, \scr C) \to \scr C$ has a left adjoint $i_\#$ and a right adjoint $i_*$, given by \[ i_\# X = \coprod_{g \in G} X \text{ and } i_* X = \prod_{g \in G} X. \]
If $\scr C$ is furthermore pointed, i.e., $(-1)$-semiadditive, there is a natural transformation $\mu: i_\# \to i_*$.
Equivalently, there is a natural transformation $i^* i_\# \to \id$; it is even natural in $\scr C$ \cite[Construction 3.13, Observation 3.14]{harpaz2017ambidexterity}.
In particular, if $F: \scr C \to \scr D$ is a functor of pointed $\infty$-categories with finite coproducts and finite products, preserving finite coproducts, and $A \in \scr C$, then the following diagram commutes canonically 
\begin{equation*}
\begin{CD}
F i_\# A @>{F \mu_A}>> F i_* A \\
@|                      @VVV   \\
i_\# F A @>{\mu_{FA}}>> i_* FA.
\end{CD}
\end{equation*}

Now suppose that $\scr C$ is semiadditive (i.e., $0$-semiadditive); then in particular $\mu_A$ is an equivalence for all $A \in \scr C$.
Denote by $\alpha: i_\# \to i_\#$ the natural transformation corresponding by adjunction to the diagonal \[
\id \to i^* i_\# \wequi \coprod_G \id \wequi \prod_G \id.
\]
Inverting the middle arrow in the following span \[ \id \xrightarrow{\eta} i_* i^* \xleftarrow{\mu i^*} i_\# i^* \xrightarrow{\alpha i^*} i_\# i^* \xrightarrow{\epsilon} \id \in \Fun(\Fun(BG, \scr C), \Fun(BG, \scr C)) \] yields a natural transformation $T: \id \to \id$ of the identity endofunctor of $\Fun(BG, \scr C)$.
This construction is natural in the semiadditive $\infty$-category $\scr C$.

\begin{remark} \label{rmk:T-reformulation}
For $A \in \Fun(BG, \scr C)$, the object $i^*A$ has a canonical endomorphism $t_A: i^*A \to i^*A$ given by $\sum_g g$.
In the notation of \cite[Definition 2.11]{carmeli2018ambidexterity} we have $T_A \wequi \int_i t_A$.
\end{remark}

\begin{example}
If $\scr C$ is the category of abelian groups, then $\Fun(BG, \scr C)$ is the category of abelian groups $A$ with an action by $G$, and $T_A: A \to A$ is given by $a \mapsto |G|\sum_{g \in G} ga$.
\tombubble{This seems somewhat stupid. We should be able to construct $\sum_g ga$. 
The problem is that this version is naturally constructed using $p_\#, p_*$ for $p: BG \to *$, 
and we do not want to assume at the outset that our functors preserve $p_*, p_\#$.}
\end{example}

\begin{lemma} \label{lemm:G-squaring}
Let $G$ have order $n$.
Then $T^2 \wequi n^2T$ as endo-transformations of the identity endofunctor of $\Fun(BG, \scr C)$.
\end{lemma}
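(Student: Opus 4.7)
My plan is to reduce the identity to an elementary computation in $\text{End}_{\scr C}(i^*A)$ using the reformulation $T_A = \int_i t_A$ from Remark~\ref{rmk:T-reformulation}. The crucial group-theoretic input is the identity
\[
t_A\circ t_A = \Bigl(\sum_{g\in G} g\Bigr)\circ\Bigl(\sum_{h\in G} h\Bigr) = \sum_{g,h}gh = n\sum_{k\in G}k = n\cdot t_A,
\]
which holds because for each $k\in G$ there are exactly $n$ pairs $(g,h)$ with $gh=k$.

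Next I would unfold the span defining $T_A$ and compute $i^*(T_A)$ explicitly. Using that $\alpha_X$ is adjunct to the semiadditive diagonal $X\to i^*i_\#X\simeq \coprod_G X$, that under the biproduct identification $i^*\eta_A$ has $h$-component equal to the $h$-action on $A$, and that $i^*\epsilon_A$ sums the $G$-actions across components, a short calculation yields
\[
i^*(T_A) = n\cdot t_A \qquad \text{in } \text{End}_{\scr C}(i^*A).
\]

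The final ingredient is a projection/module formula for the integration: for every $\phi\in\text{End}_{\Fun(BG,\scr C)}(A)$ and $s\in\text{End}_{\scr C}(i^*A)$ one has $(\int_i s)\circ\phi = \int_i(s\circ i^*\phi)$, a consequence of the naturality of the ambidexterity data of~\cite{carmeli2018ambidexterity}. Applying this with $s = t_A$ and $\phi = T_A$, and combining the two displayed identities together with the additivity of $\int_i$, gives
\[
T_A\circ T_A = \int_i\bigl(t_A\circ i^*T_A\bigr) = \int_i\bigl(n\cdot t_A^2\bigr) = \int_i\bigl(n^2\cdot t_A\bigr) = n^2\cdot T_A,
\]
and naturality in $A$ promotes this to the asserted identity of endo-transformations of $\id_{\Fun(BG,\scr C)}$. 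The main technical obstacle is the projection formula itself: in the motivating abelian-group example it is essentially tautological, but in the abstract semiadditive $\infty$-categorical setting one must extract it carefully from the coherence of the ambidexterity framework. Everything else is the bookkeeping around the defining span together with the group-theoretic identity $t_A^2 = n\cdot t_A$.
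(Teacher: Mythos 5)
Your argument is correct, but it takes a genuinely different route from the paper, whose entire proof is a citation of \cite[Corollary 3.1.14]{carmeli2018ambidexterity}. That corollary packages exactly this kind of computation (for general $\pi$-finite spaces and semiadditive heights), so the paper gets the lemma in one line at the cost of having to trust the general machinery; you instead unwind the definitions by hand. The three ingredients you isolate are the right ones: the group-theoretic identity $t_A^2 = n\,t_A$ (correct: each $k\in G$ arises from exactly $n$ pairs $(g,h)$ with $gh=k$); the identity $i^*T_A = n\,t_A$ (which the paper itself asserts in the proof of Proposition~\ref{prop:BG-n-inverted}(3), again by citing \cite[Proposition 3.13, Example 3.12]{carmeli2018ambidexterity}); and the projection formula $\bigl(\int_i s\bigr)\circ\phi \simeq \int_i\bigl(s\circ i^*\phi\bigr)$, which indeed follows immediately from naturality of the unit $\eta$ and of the norm $\mu$ in the defining span of $\int_i$. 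Chaining these gives $T_A^2 = \int_i(t_A\circ i^*T_A) = \int_i(n\,t_A^2) = \int_i(n^2 t_A) = n^2 T_A$ naturally in $A$, which is the asserted equivalence of endo-transformations. What you buy is self-containedness and a concrete reason the factor is $n^2$ (one $n$ from $i^*T = n\,t$, one from $t^2 = n\,t$); what the paper's citation buys is brevity and a statement valid for arbitrary $\pi$-finite spaces, not just $BG$. One small caveat worth making explicit if you wrote this out in full: all the identities are at the level of the homotopy category, which is all that the lemma (and its use in Proposition~\ref{prop:BG-n-inverted}) requires.
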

\begin{proof}
This follows from \cite[Corollary 3.1.14]{carmeli2018ambidexterity}.
\end{proof}

\begin{definition}
For a semiadditive $\infty$-category $\scr C$ and a finite group $G$, denote by 
\[
\Fun(BG, \scr C)^{0} \subset \Fun(BG, \scr C)
\] the full subcategory of those objects on which $T$ acts invertibly.
If $\scr C$ is additive, denote by $\Fun(BG, \scr C)^r \subset \Fun(BG, \scr C)$ the full subcategory on which $|G|^2-T$ acts invertibly.
\end{definition}

\begin{proposition} \label{prop:BG-n-inverted}
Suppose that $n=|G|$ is invertible in $\scr C$, and assume that $\scr C$ is idempotent complete and stable.
\NB{Only use stable over additive to produce idempotents. Probably unnecessary.}.
\begin{enumerate}
\item Every object of $\Fun(BG, \scr C)$ splits as $A = A^0 \oplus A^r$, with $A^i \in \Fun(BG, \scr C)^i, i = 0, r$.
\item For $A \in \Fun(BG, \scr C)^0, B \in \Fun(BG, \scr C)^r$ we have $\Map(A, B) = 0 = \Map(B, A)$.
\item Let $p: BG \to *$ be the canonical map.
  Then $p^* \scr C \subset \Fun(BG, \scr C)^0$.
\item The functor $i^*: \Fun(BG, \scr C)^0 \to \scr C$ is an equivalence.
\item The functor $p^*: \scr C \to \Fun(BG, \scr C)^0$ is an equivalence.
\end{enumerate}
\end{proposition}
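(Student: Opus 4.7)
The plan is to deduce (1) and (2) formally from Lemma \ref{lemm:G-squaring}, which provides $T^2 \simeq n^2 T$ as natural self-transformations of $\id$ on $\Fun(BG, \scr C)$. Since $n$ is invertible in $\scr C$, the natural transformation $e := T/n^2$ is idempotent, so by idempotent completeness we obtain a splitting $A \simeq eA \oplus (1-e)A$ functorial in $A$. By construction $T$ acts as $n^2$ on $eA$ (hence invertibly) and as $0$ on $(1-e)A$, so that $n^2 - T$ is $n^2$ (invertible) on the latter; this establishes (1). For (2), naturality of $T$ forces the endomorphism it induces on $\Map(A,B)$ via $T_A$ and via $T_B$ to agree. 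When $A \in \Fun(BG, \scr C)^0$ and $B \in \Fun(BG, \scr C)^r$, this endomorphism is simultaneously invertible (from $T_A$) and zero (from $T_B = 0$), forcing $\Map(A, B) = 0$; the other direction is symmetric.

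For (3), I would compute $T_{p^* X}$ directly from its definition. On $i^* p^* X = X$, the endomorphism $t_{p^* X} = \sum_g g$ equals $n \cdot \id_X$ since each $g$ acts trivially. Unpacking the span
\[
\id \xrightarrow{\eta} i_* i^* \xleftarrow[\simeq]{\mu i^*} i_\# i^* \xrightarrow{\alpha i^*} i_\# i^* \xrightarrow{\epsilon} \id
\]
at $p^* X$, I find that $\eta_{p^* X}$ is the diagonal $X \to \prod_G X$; after inverting $\mu$ we land in $\bigoplus_G X$; the map $\alpha$ there is the all-ones matrix (each entry $\id_X$), since $\alpha$ is $G$-equivariant and its $(g,e)$-entry is forced to be $\id_X$ by compatibility with the unit; and $\epsilon_{p^*X}$ is the codiagonal. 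Composition gives $T_{p^* X} = n \cdot n \cdot \id_X = n^2 \id_X$, invertible, so $p^* X \in \Fun(BG, \scr C)^0$.

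For (4) and (5), (3) shows that $p^*$ factors through $\Fun(BG, \scr C)^0$, while $i^* p^* \simeq \id_\scr C$ is immediate from $p \circ i = \id_*$. It suffices therefore to prove $p^* i^* \simeq \id$ on $\Fun(BG, \scr C)^0$. Repeating the unwinding of the previous paragraph for general $A$ yields the formula $i^* T_A = n \cdot \sum_g g_A$ in $\scr C$, so that $A \in \Fun(BG, \scr C)^0$ if and only if the averaging idempotent $s_A := n^{-1}\sum_g g_A$ equals $\id_A$. Pre-composing this identity with any $h_A$ and using the change of variables $g \mapsto hg$ in the sum gives $h_A = \id_A$ for every $h \in G$; hence every $A \in \Fun(BG, \scr C)^0$ has trivial $G$-action on its underlying object. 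For the $\infty$-categorical upgrade, I would invoke the ambidexterity formalism of \cite{carmeli2018ambidexterity}: under the standing hypothesis that $|G|$ is invertible in $\scr C$, the norm map $A_{hG} \to A^{hG}$ is an equivalence, and a diagram chase then identifies $\Map_{\Fun(BG,\scr C)}(p^*X, p^*Y) \simeq \Map_\scr C(X,Y)^{hG}$ with $\Map_\scr C(X, Y)$ via evaluation at the basepoint of $BG$. This gives full faithfulness of $p^*$ onto $\Fun(BG, \scr C)^0$, completing (5) and, since $i^* p^* \simeq \id$, also (4).

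The main obstacle is the $\infty$-categorical aspect of (4)-(5): the averaging argument in the previous paragraph trivializes the $G$-action only at the level of homotopy categories, so promoting the inclusion $p^* \scr C \hookrightarrow \Fun(BG, \scr C)^0$ to an equivalence of $\infty$-categories demands the full ambidexterity apparatus, which extends the idempotent $e$ to a coherent trivialization of all higher coherence data in the $G$-action. The 1-categorical parts (1)-(3) are elementary consequences of Lemma \ref{lemm:G-squaring} and the explicit computation of $T$ on trivial-action objects.
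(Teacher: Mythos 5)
Parts (1)--(3) of your argument match the paper's. For (1)--(2) you both take the idempotent $e=T/n^2$ coming from Lemma~\ref{lemm:G-squaring} and split via idempotent completeness; for (3) you compute $T_{p^*X}=n^2\cdot\id$ by unwinding the span directly, whereas the paper cites \cite{carmeli2018ambidexterity} to get the identity $i^*T_A = n\, t_A$ and then uses conservativity of $i^*$ --- same content, differently packaged.

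For (4)--(5) there is a genuine gap. You correctly reduce (5) to showing $p^*i^*\simeq\id$ on $\Fun(BG,\scr C)^0$, then argue that for $A$ there the averaging idempotent $s_A = n^{-1}\sum_g g_A$ equals $\id_A$, whence each $g_A=\id_A$. But, as you yourself note in your closing paragraph, this only trivializes the $G$-action at the level of the homotopy category $\h\scr C$; an object of $\Fun(BG,\scr C)$ whose underlying action is trivial in $\h\scr C$ need not lie in the image of $p^*$, because the higher coherence data of the $BG$-diagram are not addressed. Your ambidexterity step then establishes that $p^*$ is \emph{fully faithful}, but full faithfulness plus $p^*\scr C\subset\Fun(BG,\scr C)^0$ does not give $p^*\scr C=\Fun(BG,\scr C)^0$: essential surjectivity is exactly what the "coherent trivialization" you allude to would have to supply, and you do not supply it. The paper avoids this entirely by working on the $i^*$ side: since the decomposition $\Fun(BG,\scr C)\simeq\Fun(BG,\scr C)^0\times\Fun(BG,\scr C)^r$ is by a natural idempotent, the restricted $i^*\colon\Fun(BG,\scr C)^0\to\scr C$ inherits a two-sided adjoint $A\mapsto(i_*A)^0$ and remains conservative, so it is an equivalence as soon as the counit $i^*\bigl((i_*A)^0\bigr)\to A$ is an equivalence; and that is checked by applying the additive functors $\pi_i\Map(B,\ph)$, which (since they commute with the formation of $T$, $\alpha$, etc.) reduce the computation to the ordinary $1$-category of $\Z[1/n]$-modules, where it is elementary. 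If you want to salvage your route, you would need to actually carry out the ambidexterity argument that identifies $\Fun(BG,\scr C)^0$ with the essential image of $p^*$ (not merely show $p^*$ lands there and is fully faithful), which is more work than the adjoint-functor shortcut the paper uses.
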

\begin{proof}
It follows from Lemma \ref{lemm:G-squaring} that $T/n^2$ defines an idempotent of every object, and thus $1-T/n^2$ defines a complementary idempotent \cite[Warning 1.2.4.8]{HA}.
Since these idempotents are by construction preserved by all morphisms, this (together with idempotent completeness of $\scr C$ and hence $\Fun(BG, \scr C)$ 
\cite[Corollaries 4.4.5.15 and 5.1.2.3]{HTT}) immediately implies (1) and (2).

(3) It follows from Remark \ref{rmk:T-reformulation} and \cite[Proposition 3.13, Example 3.12]{carmeli2018ambidexterity} applied to the cartesian square
\begin{equation*}
\begin{CD}
G @>>> * \\
@VVV   @ViVV \\
* @>i>> BG
\end{CD}
\end{equation*}
that $i^* T_A = n t_A$.
If $A = p^* A'$, then $t_A = n$ and so $i^* T_A = n^2$ is an isomorphism.
The result follows since $i^*$ is conservative.

(4) The functor $i^*: \Fun(BG, \scr C)^0 \to \scr C$ has a left (and right) adjoint given by $A \mapsto (i_*A)^0$.
Since $i^*$ is conservative, it suffices to show that for all $A \in \scr C$ we have $i^*(i_*(A)^0) \wequi A$.
Let $B \in \scr C$.
It suffices to show that we obtain an equivalence after applying $\pi_i \Map(B, \ph)$.
Since $\pi_i \Map(B, \ph): \scr C \to (\mathrm{Mod}_{\Z[1/n]})_{\le 0}$ is additive, this reduces to $\scr C = (\mathrm{Mod}_{\Z[1/n]})_{\le 0}$, i.e., the ordinary $1$-category of $\Z[1/n]$-modules.
This case is straightforward.

(5) Since $i^* p^* \wequi (pi)^* \wequi \id$, this follows from (3) and (4).
\end{proof}

\begin{corollary} \label{cor:compute-fixed}
Assumptions as in Proposition \ref{prop:BG-n-inverted}.
For every $A \in \Fun(BG, \scr C)$ we have canonical equivalences \[ A^{hG} \wequi i^* A^0 \wequi A_{hG}. \]
\end{corollary}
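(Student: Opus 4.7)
The plan is to reduce the computation of $A^{hG} = p_\ast A$ and $A_{hG} = p_\# A$ to the ``$0$-part'' of $A$ and then observe that on this subcategory $p^\ast$ is an equivalence, so $p_\ast, p_\#$ and $i^\ast$ all agree.

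First, by Proposition~\ref{prop:BG-n-inverted}(1) we have the canonical splitting $A \wequi A^0 \oplus A^r$, and since $p_\ast, p_\#$ are additive,
\[ A^{hG} \wequi p_\ast A^0 \oplus p_\ast A^r, \qquad A_{hG} \wequi p_\# A^0 \oplus p_\# A^r. \]
I would next show that the $A^r$-contributions vanish. For any $X \in \scr C$, part (3) of Proposition~\ref{prop:BG-n-inverted} gives $p^\ast X \in \Fun(BG, \scr C)^0$, so by part (2) we have $\Map(p^\ast X, A^r) = 0 = \Map(A^r, p^\ast X)$. Applying the defining adjunctions $p^\ast \dashv p_\ast$ and $p_\# \dashv p^\ast$ then yields $\Map(X, p_\ast A^r) = 0 = \Map(p_\# A^r, X)$ for all $X$, hence $p_\ast A^r \wequi 0 \wequi p_\# A^r$. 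Therefore $A^{hG} \wequi p_\ast A^0$ and $A_{hG} \wequi p_\# A^0$.

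For the remaining contribution, I use part (5) of Proposition~\ref{prop:BG-n-inverted}: the functor $p^\ast: \scr C \to \Fun(BG, \scr C)^0$ is an equivalence. Combined with the identity $i^\ast p^\ast \wequi (pi)^\ast \wequi \id$ and part (4), this means $i^\ast$ is a quasi-inverse of $p^\ast$ on $\Fun(BG, \scr C)^0$; in particular $A^0 \wequi p^\ast(i^\ast A^0)$. Setting $B = i^\ast A^0 = i^\ast A$ (noting $i^\ast A^r \wequi 0$ because $i^\ast$ is conservative on $\Fun(BG, \scr C)^0$-complement; more simply, because the whole $r$-part is killed by the argument above applied with $X$ replaced by images under $i^\ast$—alternatively just take $B:=i^\ast A^0$), we must compute $p_\ast p^\ast B$ and $p_\# p^\ast B$.

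For the last step, fully faithfulness of $p^\ast: \scr C \hookrightarrow \Fun(BG, \scr C)$ onto $\Fun(BG, \scr C)^0$ gives, for every $X \in \scr C$,
\[ \Map(X, p_\ast p^\ast B) \wequi \Map(p^\ast X, p^\ast B) \wequi \Map(X, B), \]
so by Yoneda $p_\ast p^\ast B \wequi B$. The symmetric computation with $p_\#$ gives $p_\# p^\ast B \wequi B$. Chaining the equivalences yields the desired $A^{hG} \wequi i^\ast A^0 \wequi A_{hG}$, with naturality visible because each step is natural in $A$. I do not anticipate a real obstacle: all the heavy lifting has been done in Proposition~\ref{prop:BG-n-inverted}, and the argument is essentially a formal consequence of the fact that on the ``$T$-invertible'' piece, the three functors $i^\ast$, $p_\ast$, $p_\#$ all invert the equivalence $p^\ast$.
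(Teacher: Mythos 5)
Your argument is correct and amounts to the same idea as the paper's proof, just unpacked: the paper identifies $p^*$ with the inclusion $\Fun(BG,\scr C)^0\hookrightarrow\Fun(BG,\scr C)$ and observes that the common left and right adjoint is $A\mapsto A^0$; your steps (kill $A^r$ by orthogonality, then use full faithfulness of $p^*$ on the $0$-part) are precisely that observation spelled out. One small slip to fix: in the parenthetical you assert $i^*A^r\simeq 0$ and hence $i^*A^0=i^*A$, but this is false in general --- $i^*$ is conservative on all of $\Fun(BG,\scr C)$, so $i^*A^r\ne 0$ whenever $A^r\ne 0$. Fortunately you do not use this; simply define $B:=i^*A^0$ as you do at the end of the sentence, and the rest of the argument goes through unchanged.
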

\begin{proof}
The functors $(\ph)^{hG}$ and $(\ph)_{hG}$ are given by $p_*$ and $p_\#$, respectively.
Proposition \ref{prop:BG-n-inverted} implies that $p^*$ can be identified with the inclusion $\Fun(BG, \scr C)^0 \to \Fun(BG, \scr C)$, and that its common right and left adjoint is given by $A \mapsto A^0$.
The result follows.
\end{proof}

\begin{corollary}\label{cor:universal-fixed}
Let $F: \scr C \to \scr D$ be an additive functor of additive, idempotent complete $\infty$-categories on which $|G|$ is invertible.
Then for any $A \in \Fun(BG, \scr C)$ we have \[ (FA)^{hG} \wequi F(A^{hG}) \wequi F(A_{hG}) \wequi (FA)_{hG}. \]
\end{corollary}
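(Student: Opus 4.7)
The plan is to build directly on Corollary \ref{cor:compute-fixed}, which identifies both $A^{hG}$ and $A_{hG}$ with $i^* A^0$, where $A^0$ is the summand of $A$ on which the endo-transformation $T$ acts invertibly. Since everything is characterized in terms of the idempotent $T/n^2$ and the restriction functor $i^*$, the task reduces to showing that any additive functor $F$ respects this data.

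The first step will be to verify that the construction of $T$ is natural with respect to additive functors. The text already notes that $\mu: i_\# \to i_*$ and the whole apparatus ($\eta$, $\epsilon$, the diagonal $\alpha$) are natural in $\scr C$ provided $F$ preserves finite coproducts; since $F$ is additive and $\scr C$, $\scr D$ are semiadditive, $F$ preserves finite coproducts (and products, and biproducts). Consequently, applying $F$ pointwise induces a functor $\Fun(BG,\scr C) \to \Fun(BG,\scr D)$ (still denoted $F$) satisfying $F(T_A) = T_{FA}$ for every $A \in \Fun(BG, \scr C)$.

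Next, since $|G|$ is invertible in both $\scr C$ and $\scr D$, Proposition \ref{prop:BG-n-inverted} applies, giving the idempotent decompositions $A \wequi A^0 \oplus A^r$ and $FA \wequi (FA)^0 \oplus (FA)^r$. Because $F$ is additive and $F(T_A) = T_{FA}$, it sends the idempotent $T_A/n^2$ to $T_{FA}/n^2$, and hence sends the splitting for $A$ to the splitting for $FA$; in particular $F(A^0) \wequi (FA)^0$ canonically. Finally, post-composition with $F$ commutes with restriction along $i: * \to BG$, so $F \circ i^* \wequi i^* \circ F$ as functors $\Fun(BG,\scr C) \to \scr D$.

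Stringing these together gives a chain of canonical equivalences
\[
F(A^{hG}) \wequi F(i^* A^0) \wequi i^* F(A^0) \wequi i^* (FA)^0 \wequi (FA)^{hG},
\]
and identically with $A_{hG}$ in place of $A^{hG}$ (by the other half of Corollary \ref{cor:compute-fixed}). I expect the only slightly subtle point is justifying the naturality $F(T_A) = T_{FA}$: one must check that each ingredient used in defining $T$ (the unit $\eta$, counit $\epsilon$, diagonal $\alpha$, and the inverse of $\mu$) is preserved by $F$; this is automatic for $\eta, \epsilon, \alpha$ because they come from the universal properties of $i_\#, i_*$ together with the semiadditive diagonal, all of which an additive functor respects, while preservation of $\mu^{-1}$ follows from the general principle recalled in the discussion just before Proposition \ref{prop:BG-n-inverted}.
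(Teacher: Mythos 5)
Your proposal is correct and takes essentially the same route as the paper's one-line proof, which simply asserts that additive functors commute with formation of $(-)^0$ and with $i^*$ and then cites Corollary \ref{cor:compute-fixed}. You are just unpacking the naturality claim (that $F(T_A) \wequi T_{FA}$, hence $F$ preserves the idempotent $T/n^2$ and the associated splitting) that the paper leaves implicit.
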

\begin{proof}
Any additive functor commutes with formation of $(\ph)^0$ (and $i^*$), so this follows from Corollary \ref{cor:compute-fixed}.
\end{proof}

\subsection{Galois descent}
Fix a finite group $G$.
In the category $\Fin_G$ we have the object $G$, with automorphism group canonically isomorphic to $G$ itself.
In this way we obtain an action of the group $G$ on $G \in \Fin_G$, i.e., $G: BG \to \Fin_G$.
Composing with the canonical functor $\Fin_G \to \Span(\Fin_G)$ we obtain an action of $G$ on $G \in \Span(\Fin_G)$.
In the category $\Span(\Fin_G)$ we also have the maps $a: * \to G$ and $b: G \to *$, corresponding to the spans \[ * \xleftarrow{} G \xrightarrow{\wequi} G \text{ and } G \xleftarrow{\wequi} G \to *. \]
It is straightforward to check that these refine to $G$-equivariant maps, where we let $G$ act trivially on $*$.
The composite $ba: * \to *$ is given by the span $* \leftarrow G \to *$ and denoted by $(G)$.

In the following result, we denote by $\SH(BG)$ the genuine $G$-equivariant stable category; see for example \cite[\S9]{norms}.
\begin{lemma} \label{lemm:tautological-fixed-points}
Consider the functor $\sigma: \Span(\Fin_G) \to \SH(BG)[1/|G|, 1/(G)]$.
Then $\sigma(a)$ exhibits $\sigma(*)$ as $\sigma(G)^{hG}$.
\end{lemma}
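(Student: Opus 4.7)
The plan is to invoke the ambidexterity machinery developed earlier in this subsection. Set $\scr C := \SH(BG)[1/|G|, 1/(G)]$; this is a stable, idempotent complete, symmetric monoidal $\infty$-category on which $|G|$ acts invertibly (as a localization of a presentable stable category). Applying Corollary \ref{cor:compute-fixed} in $\Fun(BG, \scr C)$ identifies $\sigma(G)^{hG} \wequi i^* \sigma(G)^0$, where $\sigma(G) \wequi \sigma(G)^0 \oplus \sigma(G)^r$ is the splitting provided by Proposition \ref{prop:BG-n-inverted}(1); moreover, by Proposition \ref{prop:BG-n-inverted}(5), $\sigma(G)^0$ is determined, via $p^*$, by its underlying object.

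Next, I would use that the spans $a : * \to G$ and $b : G \to *$ refine to $G$-equivariant maps for the trivial action on $*$. Thus $\sigma(a)$ is a morphism in $\Fun(BG, \scr C)$ whose source $p^*\sigma(*)$ lies in $\Fun(BG, \scr C)^0$ by Proposition \ref{prop:BG-n-inverted}(3); Proposition \ref{prop:BG-n-inverted}(2) then forces $\sigma(a)$ to factor through $\sigma(G)^0$. Similarly $\sigma(b)$ annihilates $\sigma(G)^r$. Since by construction $\sigma(b) \circ \sigma(a) = \sigma((G))$ is invertible in $\scr C$, the restriction $(G)^{-1}\sigma(b) : \sigma(G)^0 \to \sigma(*)$ is a retraction of $\sigma(a): \sigma(*) \to \sigma(G)^0$, exhibiting $\sigma(*)$ as a direct summand of $\sigma(G)^0$.

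The remaining, and main, step is to show this summand inclusion is an equivalence, i.e.\ that the idempotent $e := \sigma(a) \circ (G)^{-1}\sigma(b)$ on $\sigma(G)$ coincides with the projector onto $\sigma(G)^0$ furnished by the proof of Proposition \ref{prop:BG-n-inverted}(1), namely (a scalar multiple of) $T_{\sigma(G)}$. Using Remark \ref{rmk:T-reformulation}, $T_{\sigma(G)} = \int_i t_{\sigma(G)}$, where $t_{\sigma(G)} : \sigma(G) \to \sigma(G)$ is $\sum_{g\in G} \sigma(g) = \sigma(\sum_g g)$. The plan is then to check, entirely inside $\Span(\Fin_G)$, the combinatorial identity that the endomorphism $\sum_g g : G \to G$ of the free $G$-set $G$ agrees, up to an explicit invertible factor involving $(G)$, with the composite $a \circ b$; since $\sigma$ is semiadditive-monoidal and $(G)$ is invertible in $\scr C$, this translates into the required identification of idempotents. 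The hard part will be making this last combinatorial identification precise and tracking the normalization through the integration $\int_i$, after which the equivalence $\sigma(*) \wequi \sigma(G)^0$, and hence $\sigma(*) \wequi \sigma(G)^{hG}$ via $\sigma(a)$, follows formally.
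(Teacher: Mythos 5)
Your proposal is correct in outline and relies on the same central input as the paper's proof: the identity, in $\Span(\Fin_G)$, between $\sum_{g\in G} g\colon G\to G$ and the composite $ab$, combined with the ambidexterity formalism ($T_A/n^2$ is the projector, $i^*T_A = n\, t_A$). What you call "the hard part" — the combinatorial identification — is in fact elementary and exact: both $\sum_g g$ and $ab$ are given by the span $G \leftarrow G\times G \to G$ with the two projections (no correction factor involving $(G)$ is needed). The paper asserts this "by construction" and then finishes with a short matrix computation in the splitting $\sigma(G)\wequi \sigma(G)^{hG}\oplus \sigma(G)'$: one has $a_2=0$ by your step from Proposition \ref{prop:BG-n-inverted}(2), $b_1a_1=(G)$ invertible, and $a_1b_1$ invertible because $\sigma(ab)/n$ is the projector; hence $a_1$ is an equivalence.

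Your final step — showing the idempotent $e=\sigma(a)\circ(G)^{-1}\sigma(b)$ equals the projector $T_{\sigma(G)}/n^2$ — is a workable but more delicate route. The subtlety you do not address is that $e=\sigma(ab)/(G)$ whereas $T_{\sigma(G)}/n^2=\sigma(ab)/n$, and $(G)\ne n$ in $\SH(BG)$ even after the localizations. These two idempotents nevertheless agree, but only because the identity $T^2 = n^2 T$ from Lemma \ref{lemm:G-squaring} forces $((G)-n)\sigma(ab)=0$; without observing this, "tracking the normalization through $\int_i$" would not obviously close the gap. So your plan is not wrong, but the paper's matrix argument sidesteps the idempotent comparison and the normalization issue entirely once $\sigma(ab)=i^*T_{\sigma(G)}/n$ is in hand; I recommend adopting it, or at least making the above reconciliation of the two idempotents explicit.
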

\begin{proof}
The composite $ab: G \to G$ is homotopic to $T_G/|G|$ by construction.
It follows from Corollary \ref{cor:compute-fixed} that in any category where $|G|$ is invertible, $G^{hG}$ is given by the summand of $G$ corresponding to the idempotent $ab/n$.

After inverting $|G|$, we obtain a splitting $G \wequi G^{hG} \oplus G'$ and we can write the maps $a, b$ in matrix form as $a = (a_1 a_2)$ and $b = (b_1 \, b_2)$.
Since $a$ factors through $G^{hG}$ we have $a_2 = 0$.
If we further invert $(G)$, then $(G) = ba = b_1a_1 + b_2 a_2 = b_1a_1$ is an equivalence, and also 
\[
\begin{pmatrix}\id & 0 \\ 0 & 0\end{pmatrix} = ab = \begin{pmatrix}a_1b_1 & a_1b_2 \\ a_2b_1 & a_2b_2\end{pmatrix},
\] so $a_1b_1 = \id$.
It follows that $a = a_1: \1 \to G^{hG}$ is an equivalence.
This was to be shown.
\end{proof}

Now let $S$ be a scheme and $S'/S$ a finitely presented finite étale scheme with a free and transitive $G$-action; i.e., a $G$-torsor, or in other words a $G$-Galois extension of $S$.
We then have a functor $\SH(BG) \to \SH(S)$.
In fact Grothendieck's Galois theory supplies us with a functor \[ g: \Fin_G \to \Sm_S, X \mapsto X \times_G S' \] and we have a natural transformation \cite[Proposition 10.6]{norms} \[ c: \SH(\ph) \to \SH(g(\ph)) \in \Fun(\Fin_G, \Cat). \]

\begin{lemma} \label{lemm:invert-G-plus}
The functors \[ \SH(BG) \to \SH(S) \to \SH(S)[1/2, 1/|G|]^+ \] and \[ \SH(BG) \to \SH(S) \to \DM(S, \Z[1/|G|]) \] invert the endomorphism $(G)$ of $\1 \in \SH(BG)$.
\end{lemma}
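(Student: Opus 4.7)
The plan is to identify the image of $(G) \in \mathrm{End}_{\Span(\Fin_G)}(*)$ under each functor with the Euler characteristic of the Galois cover $f\colon S' \to S$ (where $S' = g(G)$), and then to observe that this element becomes a unit in each target because it has rank $|G|$ and $|G|$ is invertible.

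First I would unravel the functorial picture. The span $(G) = (* \xleftarrow{} G \xrightarrow{} *)$ in $\Span(\Fin_G)$ is sent under the composite $\Span(\Fin_G) \to \SH(BG) \to \SH(S)$ to the composite
\[
\1_S \xrightarrow{\eta} f_* f^* \1_S \wequi f_* \1_{S'} \xrightarrow{\epsilon} \1_S,
\]
of the unit and counit of the adjunction $f^* \dashv f_*$. Indeed, forward maps $G \to *$ in $\Fin_G$ go under $c$ to the pushforward $f_\#$ along $f\colon S' \to S$, which agrees with $f_*$ since $f$ is finite étale; backward maps go to $f^*$. By definition this composite is the motivic Euler characteristic $\chi(S'/S) \in \mathrm{End}_{\SH(S)}(\1_S) \wequi \pi_0(\1_S)$.

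For the functor to $\DM(S, \Z[1/|G|])$: the image of $\1_S$ is $H\Z[1/|G|]$, and a standard computation (compatibility of $f_*$ with motivic cohomology) identifies the image of $\chi(S'/S)$ with multiplication by $\deg(f) = |G|$, which is a unit in $\Z[1/|G|]$ by construction.

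For the functor to $\SH(S)[1/2, 1/|G|]^+$: a standard calculation shows that for any field $k$, the rank map induces an isomorphism $\GW(k)[1/2]^+ \wequi \Z[1/2]$; indeed, after inverting $2$, $\GW(k)[1/2] = \Z[1/2] \oplus W(k)[1/2]$, and in the Witt ring $\langle -1 \rangle = -1$ (because the hyperbolic form $h = 1 + \langle -1 \rangle$ vanishes in $W$), so the condition $\langle -1 \rangle = 1$ defining the $+$-part kills the Witt-ring summand after inverting $2$. Hence stalkwise the rank of $\chi(S'/S)$ is $|G|$, which is invertible in $\Z[1/2, 1/|G|]$, so $\chi(S'/S)$ is an equivalence. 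Since pullbacks to points form a conservative family on $\SH(S)[1/2, 1/|G|]^+$, the image of $(G)$ is an equivalence globally.

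The main technical point is the identification of $(G)$ with $\chi(S'/S)$ in the $\infty$-categorical span framework; once this is set up, the remaining verifications are routine using the structure of the Grothendieck--Witt ring and the rank map.
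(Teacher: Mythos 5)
Your proof follows essentially the same route as the paper's: reduce by conservativity to the case of a field, then observe that the image of $(G)$ under the rank map is $|G|$, which is invertible. The paper appeals to \cite[Theorem 10.12]{norms} for the identification with the rank map, whereas you spell out the $\GW[1/2]^+$ computation directly; both are fine. One small technical gap: conservativity of pullback to points (cited as \cite[Proposition B.3]{norms}) requires $S$ to be finite dimensional, which is not assumed in the lemma. The paper handles this by first reducing to $S$ affine (Zariski locality) and then to $S$ of finite type over $\Z$ via Noetherian approximation — since $S'$ is finitely presented, the $G$-torsor descends to such a base. You should include this reduction before invoking conservativity.
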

\begin{proof}
The problem is Zariski local on $S$, so we may assume that $S$ is affine.
Then since $S'$ is finitely presented, it is already defined (as a $G$-torsor) over some scheme $T$ under $S$ of finite type over $\Spec(\Z)$.\NB{i.e., basically a very easy case of Noetherian approximation}
We may thus assume that $S$ is finite dimensional.
In this situation pullback to fields is conservative \cite[Proposition B.3]{norms}, and so we reduce to the case when $S$ is the spectrum of a field.
In this case the morphism \[ A(G) \wequi [\1, \1]_{\SH(BG)} \to [\1, \1]_{\SH(S)[1/2]^+} \wequi \Z[1/2] \] is given by the rank homomorphism \cite[Theorem 10.12]{norms},
\NB{this is an overkill reference}, 
and similarly for \[ A(G) \wequi [\1, \1]_{\SH(BG)} \to [\1, \1]_{\DM(k)} \wequi \Z. \] 
The result follows.
\end{proof}

Now let $E \in \SH(S)$ and write $f: S' \to S$ for the canonical map.
The object $f_*f^* E$ acquires a $G$-action, coming from the action of $G$ on $S'$.

\begin{corollary} \label{cor:galois-descent}
\begin{enumerate}
\item The natural map $E \to f_* f^* E$ refines to a $G$-equivariant map (where the source is given the trivial $G$-action).
\item If $E \in \SH(S)[1/2,1/|G|]^+$ or $\DM(S, \Z[1/|G|])$ then the above map presents $E$ as $(f_*f^*E)^{hG}$.
  Moreover this limit diagram is preserved by any additive functor.
\end{enumerate}
\end{corollary}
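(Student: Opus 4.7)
The plan is to package both statements via the universal equivariant category $\SH(BG)$, transport through the Galois-theoretic functor $c: \SH(BG) \to \SH(S)$, and then invoke the universal fixed-point machinery from Proposition \ref{prop:BG-n-inverted} and Corollary \ref{cor:universal-fixed}.

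For part (1), I would start from the observation that the span $a: * \to G$ in $\Span(\Fin_G)$ is tautologically $G$-equivariant (with $*$ carrying the trivial action and $G$ its canonical action). Transporting along $\Span(\Fin_G) \to \SH(BG) \xrightarrow{c} \SH(S)$, and identifying $c(*) \wequi \1_S$ and $c(G) \wequi f_*f^*\1_S$ (the latter via ambidexterity $f_\# \wequi f_*$ for the finite étale map $f$), one produces a $G$-equivariant morphism $\1_S \to f_*f^*\1_S$. I would then check that its underlying non-equivariant map coincides with the unit of the $f^* \dashv f_*$ adjunction; this is essentially by construction of $c$, since the span $* \leftarrow G \xrightarrow{=} G$ is the one that encodes the unit. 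Smashing with $E$ (given the trivial $G$-action) and invoking the projection formula $E \wedge f_*f^*\1_S \wequi f_*f^*E$ yields the desired equivariant refinement of $E \to f_*f^*E$.

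For part (2), Lemma \ref{lemm:invert-G-plus} tells us that the composites $\SH(BG) \to \SH(S)[1/2,1/|G|]^+$ and $\SH(BG) \to \DM(S,\Z[1/|G|])$ invert both $|G|$ and the endomorphism $(G)$, so they factor through $\SH(BG)[1/|G|, 1/(G)]$. By Lemma \ref{lemm:tautological-fixed-points}, inside that localization the map $a$ already exhibits $\sigma(*)$ as $\sigma(G)^{hG}$. Since the transporting functor is additive and $|G|$ is invertible in the target, Corollary \ref{cor:universal-fixed} lets us conclude that $\1_S \to f_*f^*\1_S$ exhibits $\1_S$ as $(f_*f^*\1_S)^{hG}$ in the target. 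Smashing with $E$ is a further additive endofunctor of a category in which $|G|$ is invertible, so one more application of Corollary \ref{cor:universal-fixed} together with the projection formula yields the desired $E \wequi (f_*f^*E)^{hG}$. The concluding claim that the limit diagram is preserved by any additive functor is precisely Corollary \ref{cor:universal-fixed}: in the presence of $1/|G|$, $(-)^{hG}$ is computed as the image of the idempotent $T/|G|^2$ supplied by Proposition \ref{prop:BG-n-inverted}, and additive functors preserve idempotent splittings.

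The one step that is not purely formal is the bookkeeping in part (1), identifying the abstractly constructed equivariant map with the honest unit of the $f^* \dashv f_*$ adjunction. Once that identification is in place, part (2) is a direct chain of invocations of Lemmas \ref{lemm:invert-G-plus} and \ref{lemm:tautological-fixed-points} and the universal fixed-point description from \S3.1.
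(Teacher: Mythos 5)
Your proposal follows essentially the same route as the paper's proof: both reduce to $E=\1$ via the projection formula, realize the equivariant unit as $c(a)$ transported from $\Span(\Fin_G)$ through $\SH(BG)$, and then invoke Lemma~\ref{lemm:tautological-fixed-points}, Lemma~\ref{lemm:invert-G-plus}, and Corollary~\ref{cor:universal-fixed}. You correctly flag as the non-formal step the identification of $c(a)$ with the honest adjunction unit; the paper handles this by exhibiting the exchange transformations $f_\# c_{S'} \to c_S f_\#$ and $c_S f_* \to f_* c_{S'}$ as equivalences (the latter via ambidexterity $f_\# \wequi f_*$) and then checking the action directly in $\Span(\Fin_G)$.
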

\begin{proof}
We claim that the $G$-equivariant map $E \to f_* f^* E$ is given by $c(a) \wedge E$, where $a$ is the map from Lemma \ref{lemm:tautological-fixed-points}.
The same lemma, together with Lemma \ref{lemm:invert-G-plus} (and Corollary \ref{cor:universal-fixed}), then implies the result.

To prove the claim, we note that by the projection formula we indeed have $f_* f^* E \wequi (f_* f^* \1) \wedge E$, compatibly with the $G$-actions.
This reduces to $E = \1$.
The transformation $c: \SH(\ph) \to \SH(g(\ph))$ induces by passage to adjoints exchange transformations \[ f_\# c_{S'} \to c_S f_\# \text{ and } c_S f_* \to f_* c_{S'}. \]
The former is an equivalence essentially by construction, and the latter is an equivalence because $f_\# \wequi f_*$ on both sides (compatibly).
\NB{Say more here?}.
It follows that the $G$-action on $f_* f^* \1_{\SH(S)}$ is the same as the one induced by $c$ from the $G$-action on $f_* f^* \1_{\SH(BG)}$.
We have thus reduced to showing that the $G$-action on $f_* f^* \1 \wequi f_\# f^* \1 \wequi \Sigma^\infty_+ G \in \SH(BG)$ is the same as the one we constructed at the beginning of this subsection.
This can be verified directly by working in $\Span(\Fin_G)$.
\end{proof}

\begin{remark}
In the above two results, we could have more generally replaced $\DM$ by the category of modules over any ring spectrum in which the Hopf map $\eta$ is trivial, 
such as algebraic cobordism $\MGL$.
\end{remark}

\bibliographystyle{alpha}
\bibliography{bott}

\end{document}